\newtheorem{thm}{Theorem}[section]
\newtheorem*{jthm}{Theorem}
\newtheorem*{thm14}{Theorem 1.4}
\newtheorem{cor}[thm]{Corollary}
\newtheorem{lem}[thm]{Lemma}
\newtheorem{prop}[thm]{Proposition}
\theoremstyle{definition}
\newtheorem{defn}[thm]{Definition}
\begin{document}

\title{Asymptoticity of grafting and Teichm\"{u}ller rays}
\author{Subhojoy Gupta}
\address{Department of Mathematics, Yale University, New Haven, CT 06520, USA}
\curraddr{Center for Quantum Geometry of Moduli Spaces, Ny Munkegade 118, DK 8000 Aarhus C, Denmark. }
\email{sgupta@qgm.au.dk}
\date{December 23, 2012}

\begin{abstract}
We show that any grafting ray in Teichm\"{u}ller space determined by an arational lamination or a multi-curve is (strongly) asymptotic to a Teichm\"{u}ller geodesic ray. As a consequence the projection of a generic grafting ray to moduli space is dense. We also show that the set of points in Teichm\"{u}ller space obtained by integer ($2\pi$-) graftings on any hyperbolic surface projects to a dense set, which implies that complex projective surfaces with any fixed Fuchsian holonomy are dense in moduli space. 
\end{abstract}

\maketitle
\tableofcontents
\section{Introduction}
A \textit{complex projective structure} on a surface $S_g$ of genus $g$ is an atlas of charts to $\mathbb{C}P^1$ such that the transition maps are in $PSL_2(\mathbb{C})$, and as this also determines a marked conformal structure, the space $\mathcal{P}(S_g)$ of such structures forms a bundle over Teichm\"{u}ller space $\mathcal{T}_g$.  In particular, a hyperbolic structure on a surface can be thought of as a complex projective structure with Fuchsian (or real) holonomy, and the operation of \textit{projective grafting} on a simple closed curve deforms such a complex projective structure by inserting a projective annulus along a geodesic representative of that curve. By taking limits, this procedure extends to geodesic laminations and gives a geometric parametrization of $\mathcal{P}(S_g)$ (see for example \cite{KamTan}, \cite{Tan}, \cite{McM}). In this paper we shall consider \textit{conformal grafting rays} which are the image in $\mathcal{T}_g$ of such deformations of complex projective structures, and establish a (strong) asymptoticity with Teichm\"{u}ller geodesics (Theorem 1.1) that is used to show a density result concerning the set of complex projective structures with Fuchsian holonomy (Theorem 1.4).\\

The conformal grafting ray determined by a pair $(X,\lambda)$ of a hyperbolic surface $X$ and a measured geodesic lamination $\lambda$ shall be denoted by $gr_{t\lambda}X$, and is a real-analytic one-parameter family of conformal structures obtained, roughly speaking, by cutting along $\lambda$ on $X$  and inserting a euclidean metric whose width increases along the ray (a more precise description is given later). Associated to a pair $(X,\lambda)$ there is also a \textit{Teichm\"{u}ller geodesic ray} starting from $X$ and in the ``direction" $\lambda$ (see Definition \ref{defn:teichray}). This paper establishes a strong asymptoticity between the two.\\

Two rays $\Theta$ and $\Psi$ in $\mathcal{T}_g$ are said to be asymptotic if the \textit{Teichm\"{u}ller distance} (defined in \S2) between them goes to zero, after reparametrizing if necessary. More concisely, $\lim\limits_{t\to\infty}\inf\limits_{Z\in \Theta}d_{\mathcal{T}}(Z, \Psi(t)) =0$. We shall prove here that:

\begin{thm}\label{thm:thm1} Let $X\in \mathcal{T}_g$ and let $\lambda \in \mathcal{ML}$ be such that $\lambda$ is arational, or a multicurve. Then there exists a $Y\in \mathcal{T}_g$ such that the grafting ray determined by $(X,\lambda)$ is asymptotic to the Teichm\"{u}ller ray determined by $(Y,\lambda)$.
\end{thm}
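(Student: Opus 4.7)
The plan is to construct, for each $t$, a quasiconformal homeomorphism $f_t : gr_{t\lambda}X \to \Psi(s(t))$ from the grafted surface to a point on a Teichm\"{u}ller ray $\Psi$ starting from some $Y$, such that the dilatation $K(f_t) \to 1$ as $t \to \infty$. Since $d_{\mathcal{T}}(Z_1, Z_2) \leq \tfrac{1}{2}\log K(f)$ for any quasiconformal $f\colon Z_1 \to Z_2$, this gives the required asymptoticity. The candidate Teichm\"{u}ller ray is determined by the Hubbard--Masur quadratic differential $q_Y$ on $Y$ whose horizontal foliation lies in the projective class of $\lambda$; however, $Y$ is not given a priori and must be produced during the argument.

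For $\lambda$ a multicurve, write $\lambda = \sum c_i \gamma_i$. Conformal grafting cuts along each $\gamma_i$ and glues in a flat annulus $C_i^t$ of circumference $\ell_i = \ell_{\gamma_i}(X)$ and height $tc_i$, so $\operatorname{mod}(C_i^t) = tc_i/\ell_i$. The horizontal cylinders of the corresponding Jenkins--Strebel differential $q_Y$ have prescribed heights $c_i$ and circumferences $a_i$ depending on $Y$, and under Teichm\"{u}ller flow they stretch to modulus $e^{2s}c_i/a_i$. Matching these moduli up to a uniform factor for all $i$ requires $a_i \propto \ell_i$, a codimension-$(k{-}1)$ condition on $Y$ that is solvable by varying $Y$ and using continuity of the Jenkins--Strebel circumferences across $\mathcal{T}_g$. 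With this $Y$, define $f_t$ to be affine on each $C_i^t$ (sending it onto the stretched Strebel cylinder) and equal to a fixed quasiconformal identification on the complementary bordered hyperbolic surface. The affine pieces have dilatation tending to $1$; the fixed piece has bounded dilatation but negligible extremal length relative to the growing cylinders, so a slow interpolation in a collar of each cylinder boundary absorbs the error and yields $K(f_t) \to 1$.

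For $\lambda$ arational there are no cylinders: the Thurston metric on $gr_{t\lambda}X$ is a path metric consisting of a hyperbolic part and a Euclidean region foliated in the direction of $\lambda$, the latter exhausting the surface as $t \to \infty$. To locate $Y$, I would invoke results after Dumas that the Schwarzian $\phi_t$ of the projective grafting $Pr_{t\lambda}X$, rescaled by $t^{-2}$, converges in the bundle of meromorphic quadratic differentials over $\mathcal{T}_g$ to a holomorphic quadratic differential $q_Y$ on a limit Riemann surface $Y$, whose horizontal foliation lies in the projective class of $\lambda$. This $Y$ is the sought basepoint. One then constructs $f_t$ by approximating both $(gr_{t\lambda}X, \text{Thurston metric})$ and the Teichm\"{u}ller-stretched flat surface $(\Psi(s(t)), |q_Y|)$ by a common combinatorial model coming from a train track carrying $\lambda$: each branch rectangle has near-affine representatives on both sides, with dilatation tending to $1$ as the rectangles elongate in the $\lambda$-direction.

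The main obstacle lies in the arational case, specifically in controlling the quasiconformal dilatation across the transition between the Euclidean/foliated region and the hyperbolic part of the Thurston decomposition, since an arational lamination does not cleanly split the surface. Unique ergodicity of arational measured laminations (Masur) should provide the uniformity needed to ensure that train-track approximations of $\lambda$ on both sides remain balanced. A more delicate point is that the convergence of the rescaled Schwarzians must be upgraded to a genuinely \emph{geometric} comparison of the two flat structures, so that the limiting $Y$ is actually the basepoint of the asymptotic Teichm\"{u}ller ray and not merely a projective limit.
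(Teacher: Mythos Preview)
Your overall strategy---build almost-conformal maps from $gr_{t\lambda}X$ to points on a Teichm\"{u}ller ray---is the right one, and is what the paper does. But in both cases your identification of the basepoint $Y$ is where the argument breaks down.

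\textbf{Multicurve case.} You propose to find $Y$ by solving the codimension-$(k{-}1)$ constraint $a_i\propto\ell_i$ via continuity, then use a fixed quasiconformal identification on the complementary hyperbolic piece with ``slow interpolation'' to absorb the bounded dilatation. This does not give $K(f_t)\to 1$: the complement $X\setminus\lambda$ has a definite conformal structure that must match the corresponding piece of the flat surface, and merely matching cylinder circumferences does not arrange this. Spreading a fixed $K$-quasiconformal map over a long collar does not reduce its dilatation; you would need something like the paper's extension lemma (replacing the map on a region surrounded by a large-modulus annulus), but that only works if the map is already almost-conformal outside the small bad region---which presupposes the conformal match you have not established. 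The paper avoids this entirely: it passes to the conformal limit $X^\infty$ (the infinitely grafted surface), applies Strebel's theorem \emph{on $X^\infty$} with residues prescribed so the cylinder circumferences equal $\ell_i$ exactly, and takes $Y$ to be the surface obtained by truncating and regluing those infinite cylinders. The map $X^\infty\to Y^\infty$ is then \emph{conformal}, and a local lemma (a conformal map of a half-cylinder is asymptotically an isometry) lets one cut it off to an almost-conformal map $X_t\to Y_s$.

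\textbf{Arational case.} Two problems. First, arational does not imply uniquely ergodic; the paper's definition is maximal plus a single minimal component, and no ergodicity is used in the construction. Second, and more seriously, invoking convergence of rescaled Schwarzians to locate $Y$ is both indirect and---as you yourself note---does not by itself give the geometric comparison you need. The paper instead constructs $Y$ explicitly: collapse each ideal-triangle complementary region of $\lambda$ along the transverse horocyclic foliation to obtain a singular flat surface $\hat{X}_t$; these surfaces lie on a common Teichm\"{u}ller ray with basepoint $Y=\hat{X}_0$. The almost-conformal map $X_t\to\hat{X}_t$ is then built by hand on a train-track decomposition into rectangles and pentagons, with the only regions of uncontrolled dilatation being small neighborhoods of the ideal-triangle centers; these sit inside annuli of modulus $\gtrsim\ln(1/\epsilon)$, and a quasiconformal extension lemma replaces the map there by an almost-conformal one. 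No limiting or compactness argument is needed to find $Y$.
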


Here a measured lamination $\lambda$ is said to be \textit{arational} when it is both \textit{maximal} (complementary regions are all triangular) and \textit{irrational} (has a single minimal component that is not a closed geodesic). Such laminations are in fact of full measure in $\mathcal{ML}$ - we refer to \S 2 for a fuller discussion of the structure theory of geodesic laminations. In a sequel to this paper (\cite{Gup2}) we generalize Theorem 1.1 to the case of a general lamination. \\

The following are immediate corollaries of Theorem 1.1 and the work of Masur (\cite{Mas}, \cite{MasErg}):
\begin{cor}
Let  $X, Y$ be any two hyperbolic surfaces and let $\lambda$ be a maximal uniquely-ergodic lamination. Then the grafting rays determined by $(X,\lambda)$ and $(Y,\lambda)$ are asymptotic.
\end{cor}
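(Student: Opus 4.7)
The plan is to combine Theorem~\ref{thm:thm1} with Masur's asymptoticity theorem for Teichm\"uller rays with a uniquely ergodic vertical foliation, and then invoke transitivity of asymptoticity in $\mathcal{T}_g$.

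First I would check that a maximal uniquely ergodic lamination $\lambda$ is \emph{arational} in the sense defined after Theorem~\ref{thm:thm1}, so that Theorem~\ref{thm:thm1} is applicable. Unique ergodicity forces the support of $\lambda$ to be a single minimal component, and maximality (ideal-triangle complementary regions) prevents that component from being a closed geodesic. Applying Theorem~\ref{thm:thm1} to the pairs $(X,\lambda)$ and $(Y,\lambda)$ then yields points $Y_X, Y_Y \in \mathcal{T}_g$ and Teichm\"uller rays $\mathcal{R}_X$ from $Y_X$, $\mathcal{R}_Y$ from $Y_Y$, each in direction $\lambda$, such that $gr_{t\lambda}X$ is (strongly) asymptotic to $\mathcal{R}_X$ and $gr_{t\lambda}Y$ is (strongly) asymptotic to $\mathcal{R}_Y$.

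Next I would invoke Masur's theorem (\cite{Mas}, \cite{MasErg}): when the vertical foliation is uniquely ergodic, Teichm\"uller rays in that direction starting from any two points of $\mathcal{T}_g$ are asymptotic. Applied to $\mathcal{R}_X$ and $\mathcal{R}_Y$, whose common direction $\lambda$ is uniquely ergodic by hypothesis, this gives asymptoticity of $\mathcal{R}_X$ and $\mathcal{R}_Y$. Chaining the three asymptoticity statements $gr_{t\lambda}X \sim \mathcal{R}_X \sim \mathcal{R}_Y \sim gr_{t\lambda}Y$ via the triangle inequality for $d_{\mathcal{T}}$ then gives the corollary.

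The only point requiring care is bookkeeping of reparametrizations, since each of the three asymptoticities comes with its own (a priori different) reparametrization, and one has to verify that the composed relation still satisfies $\lim_{t\to\infty}\inf_{Z\in\Theta} d_\mathcal{T}(Z,\Psi(t)) = 0$. This reduces to a direct $\varepsilon/3$ argument using the definition from \S 2 and presents no genuine obstacle; the real content of the corollary is entirely in Theorem~\ref{thm:thm1} and in Masur's ergodicity results.
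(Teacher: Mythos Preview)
Your proposal is correct and follows essentially the same route as the paper: apply Theorem~\ref{thm:thm1} (in its arational case, Proposition~\ref{prop:maxcase}) to each grafting ray, invoke Masur's result that Teichm\"uller rays in a uniquely ergodic direction are mutually asymptotic, and conclude by the triangle inequality. The paper's proof is terser and does not spell out the arationality check or the reparametrization bookkeeping, but the argument is the same.
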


\begin{cor}\label{cor:cor1}
For every $X\in \mathcal{T}_g$ and almost every $\lambda \in \mathcal{ML}$ in the Thurston measure, the projection of the grafting ray determined by $(X,\lambda)$ is dense in moduli space $\mathcal{M}_g$.
\end{cor}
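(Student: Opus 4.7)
The plan is to derive density of the grafting ray in moduli space by transferring density along the asymptotic Teichm\"uller ray provided by Theorem \ref{thm:thm1}. The argument has three ingredients: (i) density of a ray's projection in $\mathcal{M}_g$ is preserved under the asymptoticity relation in $\mathcal{T}_g$; (ii) almost every $\lambda \in \mathcal{ML}$ (in Thurston measure) is maximal and uniquely ergodic, hence arational, so Theorem \ref{thm:thm1} applies and yields some $Y$ and an asymptotic Teichm\"uller ray $\Psi(t) = \mathrm{Teich}_{t\lambda}(Y)$; and (iii) by the ergodicity of the Teichm\"uller geodesic flow (Masur \cite{Mas}, \cite{MasErg}) and standard Hopf-type arguments, for every fixed basepoint $Y \in \mathcal{T}_g$ and almost every $\lambda \in \mathcal{ML}$, the Teichm\"uller ray from $Y$ in the direction $\lambda$ projects to a dense subset of $\mathcal{M}_g$.

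I would first verify (i) directly from the definitions. Let $\Theta = gr_{t\lambda}X$ and $\Psi$ be asymptotic, and suppose $\pi \circ \Psi$ is dense in $\mathcal{M}_g$, where $\pi \colon \mathcal{T}_g \to \mathcal{M}_g$ is the quotient by the mapping class group. Given an open set $U \subset \mathcal{M}_g$ and a point $p \in U$, choose $\epsilon > 0$ so that the $\epsilon$-ball around $p$ (in the quotient metric induced by $d_{\mathcal{T}}$, which is a local isometry away from orbifold points) lies in $U$. By density pick $t_n \to \infty$ with $d_{\mathcal{M}}(\pi(\Psi(t_n)), p) < \epsilon/2$, and by asymptoticity pick $Z_n \in \Theta$ with $d_{\mathcal{T}}(Z_n, \Psi(t_n)) < \epsilon/2$ for $n$ large. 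Since $\pi$ is $1$-Lipschitz, $\pi(Z_n) \in U$, so $\pi(\Theta)$ meets every nonempty open subset of $\mathcal{M}_g$.

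Next I would dispatch (ii): the Thurston-full-measure set of maximal uniquely ergodic laminations (Masur \cite{Mas}) is contained in the arational laminations, so Theorem \ref{thm:thm1} furnishes, for each such $\lambda$, a basepoint $Y = Y(X,\lambda)$ and an asymptotic Teichm\"uller ray in direction $\lambda$. For (iii), the Masur--Veech ergodicity of the Teichm\"uller flow on the moduli space of unit-area quadratic differentials, together with Birkhoff's ergodic theorem, implies that a.e.\ Teichm\"uller geodesic is equidistributed (hence dense) in $\mathcal{M}_g$; a Fubini/Hopf argument, using the product-like structure of the Masur--Veech measure over directions from a fixed basepoint, converts this to the statement that for the fixed $Y$ constructed above, a.e.\ choice of $\lambda$ yields a dense Teichm\"uller ray.

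Combining these, the set of $\lambda \in \mathcal{ML}$ for which the grafting ray $gr_{t\lambda}X$ is dense in moduli space contains the intersection of two full-Thurston-measure sets and is therefore itself of full measure. The main technical subtlety, and the step I would be most careful with, is (iii): one must ensure that Masur's density statement, which is naturally phrased for almost every quadratic differential, specializes to almost every $\lambda$ in the Thurston measure on $\mathcal{ML}$ from a \emph{fixed} basepoint $Y$ (rather than merely from a.e.\ basepoint). This is standard but requires invoking that the map from $(Y,\lambda)$ to its associated quadratic differential is measure-class preserving and that the basepoint can be fixed via the Hopf argument; once these are in place, the corollary follows at once.
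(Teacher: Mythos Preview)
Your proposal is correct and follows essentially the same route as the paper: arational laminations have full Thurston measure, Theorem \ref{thm:thm1} provides an asymptotic Teichm\"uller ray, generic Teichm\"uller rays are dense by ergodicity of the Teichm\"uller flow, and density transfers along asymptotic rays.

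The one place where you make your life harder than necessary is step (iii). You correctly flag that the basepoint $Y = Y(X,\lambda)$ depends on $\lambda$, and propose a Hopf/Fubini argument to push the a.e.\ statement through this dependence. The paper sidesteps this entirely: it observes that for a generic arational $\lambda$ the Teichm\"uller ray is dense for \emph{any} choice of basepoint. The reason is that for uniquely ergodic $\lambda$ (a full-measure condition), Masur's theorem says all Teichm\"uller rays in direction $\lambda$ are mutually asymptotic, so density in $\mathcal{M}_g$ is a property of $\lambda$ alone. Hence the full-measure set of ``good'' $\lambda$ is basepoint-independent, and the dependence of $Y$ on $\lambda$ is harmless---no measure-theoretic gymnastics required.
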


Let $\mathcal{S}$ be the set of simple closed geodesic multicurves on $S_g$. As a further application of the techniques in the proof of Theorem \ref{thm:thm1} we show: 

\begin{thm}\label{thm:thm1.5}
For any $X\in\mathcal{T}_g$, the set of integer graftings $\{\pi(gr_{2\pi\gamma}X)\vert$ $\gamma\in \mathcal{S}\}$ is dense in $\mathcal{M}_g$.
\end{thm}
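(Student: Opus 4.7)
The plan is to prove Theorem \ref{thm:thm1.5} by approximating an arbitrary target point $Z\in\mathcal{M}_g$ via a generic grafting ray (which is dense in moduli space by Corollary \ref{cor:cor1}) and then approximating a suitable point of that ray by an integer grafting along a simple multicurve.

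Fix $X\in\mathcal{T}_g$, $Z\in\mathcal{M}_g$ and $\epsilon>0$. First I would invoke Corollary \ref{cor:cor1} to choose a uniquely ergodic $\lambda\in\mathcal{ML}$ whose grafting ray projects densely in $\mathcal{M}_g$; since Theorem \ref{thm:thm1} asymptotically identifies this grafting ray with a Teichm\"{u}ller geodesic, Masur's ergodicity of the Teichm\"{u}ller flow on $\mathcal{M}_g$ implies that the set
\[
G=\bigl\{\,t>0:\pi(gr_{t\lambda}X)\in B_{\epsilon/2}(Z)\,\bigr\}
\]
has positive lower density in $(0,\infty)$. Next, working in train-track coordinates $\lambda=(\lambda_1,\ldots,\lambda_k)$ -- which may be taken $\mathbb{Q}$-linearly independent by a generic choice of $\lambda$ -- Kronecker--Weyl equidistribution on the torus $\mathbb{T}^k$ shows that for every $\delta>0$ the set
\[
K(\delta)=\bigl\{\,t>0:\exists\,n\in\mathbb{Z}^k_{\geq 0}\text{ with }\|t\lambda-2\pi n\|<\delta\,\bigr\}
\]
also has positive density. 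Joint ergodicity of the Teichm\"{u}ller flow on $\mathcal{M}_g$ and the linear flow on $\mathbb{T}^k$ (generic in $\lambda$) then yields a time $t^*\in G\cap K(\delta)$, and the lattice vector $n$ realizing the approximation at $t^*$ determines a simple integer multicurve $\gamma^*\in\mathcal{S}$ with $\|t^*\lambda-2\pi\gamma^*\|<\delta$.

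Continuity of the conformal grafting map $\mu\mapsto gr_\mu X$ then gives $d_{\mathcal{T}_g}(gr_{2\pi\gamma^*}X,\,gr_{t^*\lambda}X)<\epsilon/2$ provided $\delta$ was chosen small enough, whence $\pi(gr_{2\pi\gamma^*}X)\in B_\epsilon(Z)$, which would complete the proof. The main obstacle is obtaining a modulus of continuity for the grafting map that is controlled uniformly as $t^*\to\infty$: the absolute error $\delta$ furnished by the Kronecker step is fixed while the base lamination $t^*\lambda$ has growing total measure, so a naive continuity estimate degrades. I plan to overcome this by invoking the asymptotic description built in the proof of Theorem \ref{thm:thm1}: both $gr_{t^*\lambda}X$ and $gr_{2\pi\gamma^*}X$ lie close to explicit points on Teichm\"{u}ller rays whose direction and time depend continuously on the projective class and norm of the underlying lamination, so near-agreement of $(t^*\lambda,2\pi\gamma^*)$ in $\mathcal{ML}$ at large scale will transfer to closeness in $\mathcal{T}_g$, yielding the theorem.
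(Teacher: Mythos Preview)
Your overall strategy---pick a dense grafting ray, approximate a point on it by an integer grafting, apply the triangle inequality---is exactly the paper's, but the middle step is both overcomplicated and misdiagnoses the real issue.

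The Kronecker--Weyl/joint-ergodicity apparatus is unnecessary, and the ``joint ergodicity'' of the Teichm\"uller flow with a linear torus flow is not something you can simply invoke. The paper replaces all of this by an elementary linear-algebra observation (Lemma~\ref{lem:linalg}): the switch conditions on a train track form a homogeneous integer linear system, so its real solution space carries a full-rank integer lattice, and therefore \emph{every} real solution lies within a fixed distance $C$ (depending only on genus) of an integer solution. In particular, for \emph{every} sufficiently large $t$ there is an integer multicurve $\gamma_t$ carried by the same train track with branch weights differing from those of $t\lambda$ by at most a bounded constant. No Diophantine condition on $\lambda$ and no search for special times $t^*$ is needed; you simply pick any $t_i$ from the dense sequence.

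More importantly, you have inverted the difficulty in the final step. You worry that a fixed absolute error $\delta$ against a growing lamination $t^*\lambda$ makes continuity estimates degrade. In fact this is precisely why the argument \emph{works}: the train-track rectangles on $gr_{2\pi t\lambda}X$ have euclidean width $\sim 2\pi t w_i$, and the corresponding rectangles on $gr_{2\pi\gamma_t}X$ have width differing by the bounded constant $C$, so the \emph{relative} error is $O(C/t)\to 0$. The paper exploits this directly (Lemma~\ref{lem:close}) by building an explicit almost-conformal map between the two grafted surfaces, rectangle by rectangle, using the $(\epsilon,C)$-good extension lemma (Lemma~\ref{lem:ext}); no appeal to abstract continuity of $\mu\mapsto gr_\mu X$ or to the asymptotic Teichm\"uller-ray description is required. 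Your proposed fix via Theorem~\ref{thm:thm1} would need you to control how the basepoint $Y$ of the asymptotic Teichm\"uller ray depends on the lamination, which the paper does not provide and which is harder than the direct comparison.
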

\index{Goldman}
Using the work of Goldman in \cite{Gold}, this implies the following:

\begin{thm}\label{thm:thm2}
Let $\mathcal{P}_\rho$ be the set of complex projective structures on a surface $S_g$ with a  fixed holonomy $\rho\in Rep(\pi_1(S_g),PSL_2(\mathbb{C}))$. Then for any Fuchsian representation $\rho$, the projection of $\mathcal{P}_\rho$ to $\mathcal{M}_g$ has a dense image.
\end{thm}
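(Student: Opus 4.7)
The plan is to deduce Theorem \ref{thm:thm2} by combining Theorem \ref{thm:thm1.5} with Goldman's classification of projective structures having Fuchsian holonomy. The crucial point is that Goldman's theorem in \cite{Gold} gives an explicit description of the fiber $\mathcal{P}_\rho$: when $\rho\in Rep(\pi_1(S_g),PSL_2(\mathbb{C}))$ is Fuchsian, $\rho$ is the holonomy of a unique hyperbolic structure $X_\rho\in\mathcal{T}_g$, and every element of $\mathcal{P}_\rho$ is obtained by $2\pi$-grafting $X_\rho$ along some simple closed multicurve $\gamma\in\mathcal{S}$ (with the convention that $\gamma=\emptyset$ gives back the Fuchsian uniformization $X_\rho$ itself). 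Thus the image of $\mathcal{P}_\rho$ in $\mathcal{T}_g$ is exactly the collection $\{X_\rho\}\cup\{gr_{2\pi\gamma}X_\rho : \gamma\in\mathcal{S}\}$.

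The first step in the argument is therefore to recall this classification and identify the Fuchsian point $X_\rho$ associated to $\rho$. Next, I would project to moduli space to get
\[
\pi(\mathcal{P}_\rho) \;=\; \{\pi(X_\rho)\}\cup\{\pi(gr_{2\pi\gamma}X_\rho) : \gamma\in\mathcal{S}\}.
\]
Finally, I would apply Theorem \ref{thm:thm1.5} with the choice $X=X_\rho$, which asserts precisely that $\{\pi(gr_{2\pi\gamma}X_\rho) : \gamma\in\mathcal{S}\}$ is dense in $\mathcal{M}_g$. Since this set is already contained in $\pi(\mathcal{P}_\rho)$, density of the latter follows immediately.

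In this derivation there is essentially no obstacle beyond invoking Goldman's structure theorem correctly; the real work has been done in establishing Theorem \ref{thm:thm1.5}. The only mild subtlety is to make sure one is using Goldman's description in the right direction, namely that \emph{every} projective structure with Fuchsian holonomy $\rho$ (rather than just those one easily constructs) is a $2\pi$-grafting of the Fuchsian uniformization along a simple multicurve, so that the image $\pi(\mathcal{P}_\rho)$ is \emph{exactly} the dense set produced by Theorem \ref{thm:thm1.5} and not merely a subset or superset of it.
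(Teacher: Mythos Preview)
Your proposal is correct and follows exactly the paper's approach: reduce Theorem \ref{thm:thm2} to Theorem \ref{thm:thm1.5} via Goldman's theorem. One small correction to your last paragraph: you actually only need the \emph{easier} direction of Goldman's result, namely that $2\pi$-grafting along a multicurve preserves Fuchsian holonomy, so that $\{gr_{2\pi\gamma}X_\rho : \gamma\in\mathcal{S}\}\subset p(\mathcal{P}_\rho)$; since Theorem \ref{thm:thm1.5} makes the left-hand side dense in $\mathcal{M}_g$ after projecting, density of $\pi(p(\mathcal{P}_\rho))$ follows immediately, and the converse (that every element of $\mathcal{P}_\rho$ arises this way) is not required.
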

\index{Faltings}
In \cite{Falt} Faltings had first conjectured that this projection of the set $\mathcal{P}_\rho$ is infinite, and this result can be thought of as the strongest possible affirmation of that.\\

The asymptotic behavior (as in Theorem \ref{thm:thm1}) of two Teichm\"{u}ller rays with respect to the Teichm\"{u}ller metric is well known (see \cite{Mas}, \cite{Iv},\cite{LenMas}). In \cite{Mas} Masur proved that if $\lambda$ is uniquely ergodic, then for any two initial surfaces $X,Y\in \mathcal{T}_g$  the Teichm\"{u}ller rays determined by $(X,\lambda)$ and $(Y,\lambda)$ are asymptotic. The comparison of grafting rays and Teichm\"{u}ller rays has been less explored, however a recent result along these lines (see also \cite{DiazKim}) is the following ``fellow-traveling" result in \cite{RafiDumasChoi}:

\begin{jthm}[Choi-Dumas-Rafi]
For any $X\in \mathcal{T}_g$ and any unit length lamination $\lambda$, the grafting ray determined by $(X,\lambda)$ and the Teichm\"{u}ller ray determined by $(X,\lambda)$ are a bounded distance apart, where the bound depends only on the injectivity radius of $X$.
\end{jthm}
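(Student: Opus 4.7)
The plan is to bound the Teichm\"{u}ller distance from $gr_{t\lambda}X$ to a suitably reparametrized point on the Teichm\"{u}ller ray by constructing, for each large $t$, an explicit quasiconformal map whose dilatation is controlled by a constant depending only on $\mathrm{inj}(X)$. Since Teichm\"{u}ller distance equals one-half the logarithm of the minimal dilatation between two marked conformal structures, such a map immediately yields the uniform fellow-traveling bound.

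First I would set up both rays through the quadratic differential picture. Let $q$ be the Hubbard--Masur quadratic differential on $X$ whose horizontal measured foliation is equivalent to $\lambda$; the Teichm\"{u}ller point $R_s$ is then $X$ with its conformal structure stretched by $e^{s}$ horizontally and compressed by $e^{-s}$ vertically in the flat coordinates of $q$. The grafted surface $gr_{t\lambda}X$ is obtained by cutting $X$ along (the geodesic representative of) $\lambda$ and inserting a Euclidean piece $A_t$ of total transverse width proportional to $t$, foliated by leaves modeled on $\lambda$. Matching the area (equivalently, the total horizontal length) of $A_t$ with that of the horizontally stretched piece of $R_s$ yields a natural reparametrization $s=s(t)\sim \tfrac{1}{2}\log t$. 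I would then use the compatible decompositions: on the grafting side, $gr_{t\lambda}X = A_t \cup (X\setminus\lambda)$, where $A_t$ is a flat cylinder if $\lambda$ is a multicurve and a train-track neighborhood in general; on the Teichm\"{u}ller side, $R_{s(t)}$ decomposes into the stretched horizontal strips of $q$ together with small neighborhoods of its zeros. Define the candidate quasiconformal map to be affine on the Euclidean piece, the identity on the hyperbolic/zero-neighborhood piece, and a quasiconformal interpolation through a narrow collar along their common boundary.

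The main obstacle, and the place where the injectivity radius of $X$ enters, is showing that the dilatation of this interpolation stays bounded as $t\to\infty$. On the bulk of $A_t$ the affine map has a constant (in fact, with the right normalization, identically $1$) dilatation, and on $X\setminus\lambda$ the identity is conformal; the entire distortion is therefore concentrated in the collar regions where the inserted Euclidean ends meet the zeros or cusps of $q$. For a multicurve this reduces to a direct comparison between the modulus of a grafting cylinder of height $t\ell(\gamma)$ and the corresponding Jenkins--Strebel cylinder, together with an easy extremal-length estimate on a bounded hyperbolic annulus. For a general lamination the collar regions are not annuli but train-track chunks, and one needs to argue that their hyperbolic geometry inside $X\setminus\lambda$ is governed only by the systole: the injectivity radius bound prevents the complementary pieces from pinching, and hence bounds the local geometric mismatch between the Euclidean and hyperbolic structures uniformly in $t$ and in $\lambda$. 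Establishing this uniform collar estimate is the technical heart of the argument, and would naturally invoke Minsky's product-regions description of the thin parts of $\mathcal{T}_g$ to convert the collar estimate into a Teichm\"{u}ller distance bound.
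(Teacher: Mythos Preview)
This statement is not proved in the present paper: it is the theorem of Choi--Dumas--Rafi, quoted from \cite{RafiDumasChoi} for context and comparison with Theorem~1.1. The paper gives no argument for it, so there is no ``paper's own proof'' against which to judge your proposal.

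That said, what you have written is a plan, not a proof. You correctly identify the shape of a direct-comparison argument (decompose both surfaces, build a piecewise map, control dilatation on the interface), but you explicitly defer the only genuinely hard step: the uniform-in-$t$ and uniform-in-$\lambda$ estimate on the collar/interface regions for a general lamination. Saying this ``would naturally invoke Minsky's product-regions description'' is a gesture, not an argument; you would still need to show precisely how the injectivity-radius lower bound controls the geometry of the complementary pieces of $\lambda$ and how that translates into a dilatation bound independent of $\lambda$. For the multicurve case your sketch is close to workable, but for an arational $\lambda$ the ``collar regions'' you describe are not well-defined objects and the ``identity on $X\setminus\lambda$'' does not make sense as stated, since $X\setminus\lambda$ and the complement of the stretched strips on $R_{s(t)}$ are not naturally identified. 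The actual proof in \cite{RafiDumasChoi} proceeds rather differently, via the Schwarzian parametrization of projective structures and estimates relating the Thurston and Teichm\"{u}ller norms, together with thick-thin decomposition; your direct quasiconformal-construction approach, if it could be made to work uniformly in $\lambda$, would be closer in spirit to the methods of the present paper (which, however, obtain asymptoticity rather than uniform fellow-traveling, and do not establish injectivity-radius-only dependence).
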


Theorem 1.1 makes a finer but less uniform comparison involving the stronger notion of asymptoticity defined above.\\

A crucial difference between grafting and Teichm\"{u}ller rays is that the latter are lines of a flow, whereas the former are not:
\begin{equation*}
gr_{t\lambda} \circ gr_{s\lambda} \neq gr_{(t+s)\lambda} 
\end{equation*}

Our results show that if one waits till $t$ is sufficiently large, however, it approximates the Teichm\"{u}ller geodesic flow. We aim to discuss this and a more quantitative version of Theorem \ref{thm:thm1} in forthcoming work.\\\

The proof of Theorem 1.1 is achieved by constructing quasiconformal maps of small dilatation from sufficiently large grafted surfaces along the grafting ray, to  singular flat surfaces that lie along a common Teichm\"{u}ller ray. It involves a comparison of the \textit{Thurston metric}, a hybrid of a hyperbolic and euclidean metric underlying a complex projective surface on one hand, and a singular flat metric induced by a holomorphic quadratic differential on the other. The case when the lamination $\lambda$ is arational is handled in \S 4, and the case when the lamination is a multicurve is dealt with in \S 5. The proof of Theorem \ref{thm:thm1.5} in \S6 is obtained by careful approximation of an arational lamination with multicurves. An outline of the strategy of the proofs of both theorems are provided in \S 3. \\

\textbf{Acknowledgements.} This paper represents part of my Ph.D thesis at Yale University. I wish to thank my advisor Yair Minsky for his generous help and patient guidance during this project. 

\section{Preliminaries}

\textbf{Teichm\"{u}ller space $\mathcal{T}_g$.}
For a closed orientable genus-$g$ surface $S_g$,  the Teichm\"{u}ller space $\mathcal{T}_{g}$ is the space of marked conformal (or equivalently, complex) structures on $S_g$ with the equivalence relation of isotopy (see \cite{ImTan}, \cite{Hubb}  for a treatment of the subject). Note that although for this article, we assume the surfaces have no punctures, the results still hold for the punctured case with a slight modification of the arguments.\newline

The distance between two points $X$ and $Y$ in $\mathcal{T}_g$ in the \textit{Teichm\"{u}ller metric} is defined to be
\begin{equation*}
d_{\mathcal{T}}(X,Y) = \frac{1}{2}\inf\limits_{f} \ln K_f
\end{equation*}
where $f:X\to Y$ is a quasiconformal homeomorphism, and $K_f$ is its quasiconformal dilatation. The infimum is realized by the \textit{Teichm\"{u}ller map} between the surfaces.\newline

A thorough discussion of the definitions of a quasiconformal map and its dilatation (also referred to as its quasiconformal \textit{distortion}) is provided in Appendix A. It suffices to point out here that roughly speaking, a quasiconformal map takes infinitesimal circles on the domain to infinitesimal ellipses, and the dilatation is a measure of the maximum eccentricity of the image ellipses. The `difference' or distance between two conformal structures is then measured in terms of the dilatation of the \textit{least} distorted quasiconformal map between them.\newline

A consequence of the above definition is that if there exists a map $f:X\to Y$ which is $(1 + O(\epsilon))$-quasiconformal (i.e, $K_f = 1 + O(\epsilon)$), then 
\begin{equation*}
d_{\mathcal{T}}(X,Y) = O(\epsilon).
\end{equation*}
\textit{Notation.} Here, and throughout this article, $O(\alpha)$ refers to a quantity bounded above by $C\alpha$ where $C>0$ is some constant depending only on genus $g$ (which remains fixed), the exact value of which can be determined \textit{a posteriori}.

\begin{defn}\label{defn:ac}
For the ease of notation, an \textit{almost-conformal map} shall refer to a map which is $(1+ O(\epsilon))$-quasiconformal.
\end{defn}

\textbf{Hyperbolic surfaces and geodesic laminations.}
Any conformal structure on a surface of genus $g\geq 2$ has a unique hyperbolic structure (a Riemannian metric of constant negative curvature $-1$) in its conformal class via uniformization. Thus Teichm\"{u}ller space is, equivalently, the space of marked hyperbolic structures and this gives rise to a rich interaction between two-dimensional hyperbolic geometry and complex analysis.\newline

A \textit{geodesic lamination} on a hyperbolic surface is a closed subset of the surface which is a 
union of disjoint simple geodesics. A \textit{maximal} lamination is a geodesic lamination such that any component of its complement is an ideal hyperbolic triangle. There is a rich structure theory of geodesic laminations (see \cite{CassBl} for example): in particular, any geodesic lamination $\lambda$ is a  disjoint union of sublaminations
\begin{equation}\label{eq:lamdec}
\lambda = \lambda_1\cup \lambda_2 \cup\cdots \lambda_m \cup \gamma_1 \cup \gamma_2\cdots \gamma_k
\end{equation}
where $\lambda_i$s are minimal components (with each half-leaf dense in the component) which consist of uncountably many geodesics (a Cantor set cross-section) and the $\gamma_j$s are isolated geodesics.\newline

A \textit{measured} geodesic lamination is equipped with a transverse measure $\mu$, that is a measure on arcs transverse to the lamination which is invariant under sliding along the leaves of the lamination. It can be shown that for the support of a measured lamination the isolated leaves in (\ref{eq:lamdec}) above are weighted simple closed curves (ruling out the possibility of isolated geodesics spiralling onto a closed component).  We call a lamination \textit{arational} if every simple closed curve intersects it, and it can be shown that for a \textit{measured} lamination this condition is equivalent to being maximal and \textit{irrational}, which is that it consists of a single minimal component and no isolated leaves. A measured lamination is \textit{uniquely-ergodic} if such a measure is unique. A maximal, uniquely ergodic lamination is necessarily arational. The set of weighted simple closed curves is dense in $\mathcal{ML}$, the space of measured geodesic laminations equipped with the weak-* topology. $\mathcal{ML}$ also has a piecewise-linear structure and a corresponding \textit{Thurston measure}.
 \newline

\textbf{Train tracks.}
A \textit{train-track} on a surface is a graph with a labeling of incoming and outgoing half-edges at every vertex, and an assignment of (non-negative) weights to the edges (or \textit{branches}) that are \textit{compatible}, such that at every vertex, the sum of the weights of the incoming edges is equal to the sum of the edges of the outgoing edges. This provides a convenient combinatorial encoding of a lamination (see, for example, \cite{FLP} or \cite{Thu0}) - in particular, for an assignment of \textit{integer} weights, one can place a number of strands along each branch equal to the weight, and the compatibility condition ensures that these can be ``hooked" together to form a multicurve.\\

\textbf{Quadratic differentials and Teichm\"{u}ller rays.}
	Any measured geodesic lamination corresponds to a unique \textit{measured foliation} of the surface, obtained by `collapsing' the complementary components. Conversely, any measured foliation can be `tightened' to a geodesic lamination. The space of measured foliations $\mathcal{MF}$ is homeomorphic to $\mathcal{ML}$ via this correspondence (see, for example, \cite{Kap}).\newline
	A holomorphic quadratic differential $\phi$ (see \cite{Streb} for a treatment of the subject) is locally of the form $\phi(z)dz^2$  where $\phi(z)$ is a holomorphic function, and has a vertical foliation given by the level sets of $Im(\int\limits_0^z\sqrt{\phi(z)}dz)$, which has singularities at the zeroes of $\phi(z)$. These singularities are also the cone-points in the singular flat metric given by $\lvert \phi(z)\rvert \lvert dz\rvert^2$.\newline
	For a hyperbolic surface $X$, the map that assigns the vertical measured foliation $\mathcal{F}_v(\phi)$ to a quadratic differential $\phi(X)$ defines a homeomorphism between the space of quadratic differentials $Q(X)$ and $\mathcal{MF}$ (\cite{HubbMas}). Composed with the previous correspondence between measured laminations and foliations, we get a homeomorphism
\begin{equation*}
q_L:Q(X) \to \mathcal{ML}
\end{equation*}
\begin{defn}\label{defn:teichray}
A \textit{Teichm\"{u}ller ray} from a point $X$ in $\mathcal{T}_g$ and in a direction determined by a holomorphic quadratic differential $\phi$ is a subset $\{X_t\}_{t\geq 0}$ of $\mathcal{T}_g$ where $X_t$ is obtained by starting with the surface $X$ with the horizontal and vertical foliations $\mathcal{F}_h(\phi)$ and $\mathcal{F}_v(\phi)$ and scaling the horizontal foliation by a factor of $e^t$ and the vertical foliation by a factor of $e^{-t}$. 
\end{defn}
This ray is geodesic in the Teichm\"{u}ller metric (see, for example,  \cite{ImTan}).\newline

By the above correspondence, we define a \textit{Teichm\"{u}ller ray determined by a pair $(X,\lambda) \in \mathcal{T}_g\times \mathcal{ML}$} to be the the Teichm\"{u}ller ray starting from $X$ in the direction determined by $q_L^{-1}(\lambda)$.\\

\textbf{Complex projective structures and grafting.}
An excellent exposition of this material can be found in \cite{Dum0}, the following is a brief summary.\newline

A \textit{complex projective structure} on a surface $S_g$  is a pair $(dev, \rho)$ where $dev:\tilde{S_g}\to \mathbb{C}P^1$ is a local homeomorphism which is the \textit{developing} map of its universal cover and $\rho:\pi_1(S_g)\to PSL_2(\mathbb{C})$ is the \textit{holonomy} representation that satisfies
\begin{equation*}
dev\circ \gamma = \rho(\gamma) \circ dev
\end{equation*}
for each $\gamma \in \pi_1(S_g)$.\newline

Equivalently, a complex projective structure is a collection of charts on the surface to $\mathbb{C}P^1$ such that overlapping charts differ by a projective (or M\"{o}bius) transformation. Since M\"{o}bius transformations are holomorphic, any complex projective structure has an underlying conformal structure. This gives rise to a forgetful map from the space of complex projective structures $p:\mathcal{P}(S_g)\to \mathcal{T}_g$.\newline

A hyperbolic surface arises from a \textit{Fuchsian} representation $\rho:\pi_1(S_g)\to PSL_2(\mathbb{R})$ and thus has a canonical complex projective structure.
\textit{Grafting}, introduced by Thurston (see \cite{KamTan}, \cite{Tan},\cite{ScanWolf}, \cite{Dum1} for subsequent development) can be thought of as a way to deform the Fuchsian complex projective structure.\newline

In the case of grafting along a simple closed geodesic $\gamma$ with transverse measure (weight) $s$, the process can be described as follows:\newline
Embed the universal cover $\tilde{X}$  of the hyperbolic surface $X$ as the equatorial plane in the ball-model of $\mathbb{H}^3$. The hyperbolic Gauss map to the $\partial \mathbb{H}^3 = \mathbb{C}P^1$ (the inverse of the \textit{nearest point rectraction}) provides the developing map of the Fuchsian structure. The curve $\gamma$ lifts to a collection of geodesic lines on this plane. Now we bend along these lifts equivariantly by an angle $s$ such that one gets a convex pleated plane. Via the Gauss map, this corresponds to inserting crescent-shaped regions of angle $s$ along the images of the lifts of $\gamma$ (in Figure 10 in \S \ref{fig:graft1} this is shown in the upper half-plane model where the imaginary axis is a lift of $\gamma$). This defines the image of the developing map of the new projective structure. Of course, for an angle $s\geq 2\pi$, the image wraps around $\mathbb{C}P^1$, and the developing map is not injective. The new holonomy is the $PSL_2(\mathbb{C})$-respresentation  compatible with this new developing map in the sense described above.\newline

Grafting for a \textit{general} measured lamination $\lambda$ is defined by taking the limit of a sequence of approximations of $\lambda$ by weighted simple closed curves, that is, a sequence $s_i\gamma_i \to \lambda$ in $\mathcal{ML}$. For each $s_i\gamma_i$, one can form a pleated plane by equivariant bending, and a corresponding complex projective structure, as above.  It follows from the foundational work of Epstein-Marden (\cite{EpMar}) and Bonahon (\cite{Bon}) that 
\begin{center}

\textit{the convex pleated planes converge in the Gromov-Hausdorff sense}

\end{center}
which implies that
\begin{center}

\textit{the developing maps converge uniformly on compact sets}

\end{center}
and
\begin{center}

\textit{the corresponding holonomy representations converge algebraically}

\end{center}
so, in particular, there is a limiting complex projective structure on the surface $S_g$.\\

 It was an observation of Thurston (see \cite{KamTan},\cite{Tan}) that the map
\begin{equation*}
(X,\lambda)\mapsto Gr_{\lambda}X
\end{equation*}
is a homeomorphism of $ \mathcal{T}_g\times \mathcal{ML}$ to $\mathcal{P}_g$, where $Gr_{\cdot}\cdot$ refers to the \textit{projective} surface obtained by the above operation.\newline

In this paper we are concerned with \textit{conformal grafting} $gr:  \mathcal{T}_g\times \mathcal{ML}\to  \mathcal{T}_g$, where we consider only the conformal structure underlying the complex projective structure (so that $gr = p\circ Gr$ where $p:\mathcal{P}_g\to \mathcal{T}_g$ is the usual projection).  It is known that for any fixed lamination $\lambda$, the grafting map ($X\mapsto gr_{\lambda}X$) is a self-homeomorphism of $\mathcal{T}_g$ (\cite{ScanWolf}).\\

\textbf{Thurston metric.} 
A complex projective structure on a surface $S_g$ determines a pair of the developing map $dev$  from the universal cover $\widetilde{S_g}$ to $\mathbb{C}P^1$ and the holonomy representation $\rho$.
There is a canonical stratification of the image of $dev$ on $\mathbb{C}P^1$ (\cite{KulPink}), and in particular, one can speak of maximally embedded round disks there. One can in fact recover the locally convex pleated plane (that one had in the bending description above) by taking an envelope of the convex hulls (domes) over these disks.

\begin{defn}\label{defn:projmet}
The \textit{projective metric} on the universal cover $\widetilde{S_g}$ is defined by pulling back the Poincar\'{e} metric on the maximal disks, via the developing map $dev$. The projective metric descends to the \textit{Thurston metric} on the surface, under the quotient by the action of $\rho(\pi_1(S_g))$.
\end{defn}

\begin{figure}
  \centering
  \includegraphics[scale=0.45]{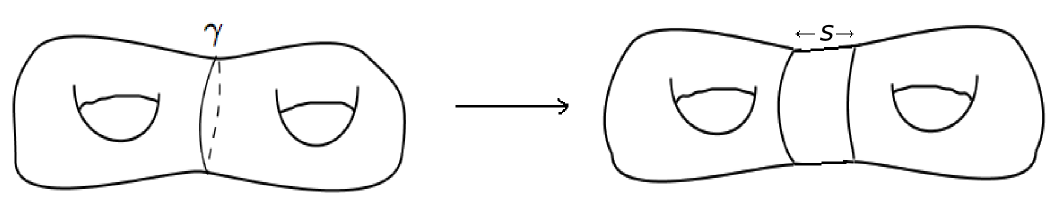}\\
  \caption{The grafting map along the weighted curve $s\gamma$. }
\end{figure}

When $\gamma$ is a simple closed curve, the equivariant collection of $s$-crescents obtained by bending the lifts of $\gamma$ on the equatorial plane in $\mathbb{H}^3$  (see the previous section) descend to a annulus inserted at the closed geodesic $\gamma$. So in the Thurston metric on  $gr_{s\gamma} X$, the inserted annulus is flat (euclidean), of width $s$, and the rest of the surface remains hyperbolic.\newline

For a general lamination, the Thurston metric on $gr_{\lambda}X$  is the limit of the Thurston metrics on $gr_{s_i\gamma_i}X$ where $s_i\gamma_i\to \lambda$ is an approximating sequence of weighted simple closed curves (see\cite{ScanWolf}). The length in the Thurston metric on $gr_{\lambda}X$ of an arc $\tau$  intersecting $\lambda$, is its hyperbolic length on $X$ plus its transverse measure.

\begin{defn}
A \textit{grafting ray} from a point $X$ in $\mathcal{T}_g$ in the `direction' determined by a lamination $\lambda$ is the $1$-parameter family $\{X_t\}_{t\geq 0}$ where $X_t = gr_{t\lambda}(X)$.
\end{defn}

 \section{An overview of the proofs}
The proofs of both Theorem \ref{thm:thm1} and Theorem \ref{thm:thm2} involve understanding the geometry of the grafted surfaces $X_t = gr_{t\lambda} X$ along the grafting ray determined by a pair $(X,\lambda)$, for large $t$. By the definition of grafting, as described in \S2, these surfaces carry a conformal metric which is euclidean on the grafted region and hyperbolic elsewhere. A convenient way to picture this is to consider a thin ``train-track" neighborhood  $\mathcal{T} \subset X$ that contains the lamination $\lambda$. The intuition is  that as one grafts, the subsurface $\mathcal{T}$ widens in the transverse direction (along the ``ties" of the train-track), and conformally approaches a union of wide euclidean rectangles.\\

The complement $X\setminus \mathcal{\lambda}$ is unaffected by grafting: in the case when $\lambda$ is \textit{arational}, it comprises finitely many hyperbolic ideal triangles (the number depends only on genus) and for $\lambda$ non-arational this complement  might consist of ideal polygons or  subsurfaces with moduli. The former case therefore is simpler and allows for more explicit constructions, and we focus on that first. For the latter case, this paper shall deal with the case when these subsurfaces have boundary components consisting of closed curves, and the general case is deferred to a subsequent paper. In either case, the underlying intuition is that this  complementary hyperbolic part becomes negligible compared to the euclidean part of the Thurston metric, for a sufficiently large grafted surface. (This has been exploited before in (\cite{Dum2}.)

\subsection{The arational case}
\subsection*{The surface $\hat{X_t}$.}

For $\lambda$ arational, one can consider the associated (singular) transverse horocyclic foliation which we denote by $\mathcal{F}$. This is obtained as follows:\\
Since the lamination $\lambda$ is maximal, it lifts to the universal cover of the surface to give a tessellation of the hyperbolic plane $\mathbb{H}^2$ by ideal hyperbolic triangles. Each ideal hyperbolic triangle has a partial foliation by horocyclic arcs belonging to horocycles tangent to each of the three ideal vertices. It can be shown (see \cite{Thu}) that by some slight modification this can be extended across the ideal triangles and to the central region (missed by the horocyclic arcs) to form a foliation $\mathcal{F}$ of the surface which is transverse to $\lambda$, with ``$3$-prong" singularities in the center of each ideal triangle, and the foliation being $C^1$ away from the singularities (the leaves are flowlines of a Lipschitz vector field).
\begin{figure}[h]
  \centering
  \includegraphics[scale=0.3]{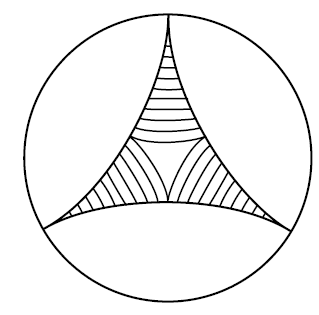}\\
  \caption[The partial horocyclic foliation of an ideal hyperbolic triangle. ]{The partial horocyclic foliation of an ideal hyperbolic triangle. This can be modified to extend it to a singular foliation. }
\end{figure}

This transverse foliation $\mathcal{F}$ persists under grafting along $\lambda$, with the leaves getting longer along a grafting ray.\newline

Then there is an associated Riemann surface $\hat{X}_t$ with a singular flat metric obtained by collapsing the ideal triangle components of $X\setminus \lambda$ along the leaves of $\mathcal{F}$, but preserving the transverse measure. The singularities of the flat metric on $\hat{X_t}$ are $3$-pronged conical singularities that arise by collapsing the central (unfoliated) region of each ideal triangle in the complement of the lamination.\\

There are a couple of ways one can think of the collapsed surface: one is to think of an explicit collapsing map (see \cite{CassBl}) that ``blows up" the lamination (that is locally a Cantor set cross interval) by a function that is the Cantor function on each transverse cross section. The other is to think of $\hat{X}_t$ as a singular flat surface obtained by gluing up euclidean rectangles with the same combinatorics of the gluing as dictated by the structure of the lamination (or equivalently the corresponding train-track $\mathcal{T}$) on $X_t$. \\

The singular flat surfaces $\hat{X_t}$ acquire a horizontal foliation that is measure-equivalent to $\mathcal{F}$, and a vertical foliation that is measure-equivalent to $t\lambda$.  Hence as $t$ varies they lie on a common Teichm\"{u}ller ray. The ``collapsing map" itself is far from being quasiconformal (it is not even a homeomorphism), and main idea behind the proof of Theorem \ref{thm:thm1} is to use the additional grafted region to ``diffuse out" the collapsing to get a \textit{quasiconformal} map from $X_t$ to $\hat{X_t}$, which is moreover almost-conformal (see Definition \ref{defn:ac}).\\

\textbf{Outline of the proof (arational case):}

\textit{Step 1. The decomposition of $X_t$.} One first decomposes the grafted surface into rectangles and pentagonal pieces  (\S 4.1) that essentially make up the train track neighborhood $\mathcal{T}$, and a slight thickening of the truncated ideal triangles in its complement, respectively. Lemmas \ref{lem:thinrect} and \ref{lem:widerect} are concerned with the dimensions of the resulting pieces.\\

\begin{figure}[h]
  \centering
  \includegraphics[scale=0.6]{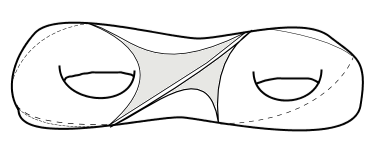}\\
  \caption[A partial picture of a maximal lamination $\ldots$]{A partial picture of a maximal lamination, with two truncated ideal triangles in the complement shown shaded. }
\end{figure}

\textit{Step 2. Mapping the pieces.} Next, one constructs quasiconformal maps that map the pieces in the decomposition to (singular) flat regions, by mapping the leaves of $\mathcal{F}$ in a suitable manner. The rectangular pieces of the grafted surface that ``carries" the lamination need a finite approximation argument, which is carried out in \S 4.3, and culminates in Lemma \ref{lem:rectModel}. The maps for the pentagonal pieces (section \S 4.4) are constructed by first constructing maps of ``truncated sectors" . Much of these constructions depend on explicit constructions of $C^1$ maps of controlled dilatation that one can build between various hyperbolic or euclidean regions, which are compiled in \S 4.2. The assumption of $C^1$-regularity is justified by the corresponding regularity of the Thurston metric (see \S 4.3) and the horocyclic foliation $\mathcal{F}$.\\

\begin{figure}[h]
  \centering
  \includegraphics[scale=0.45]{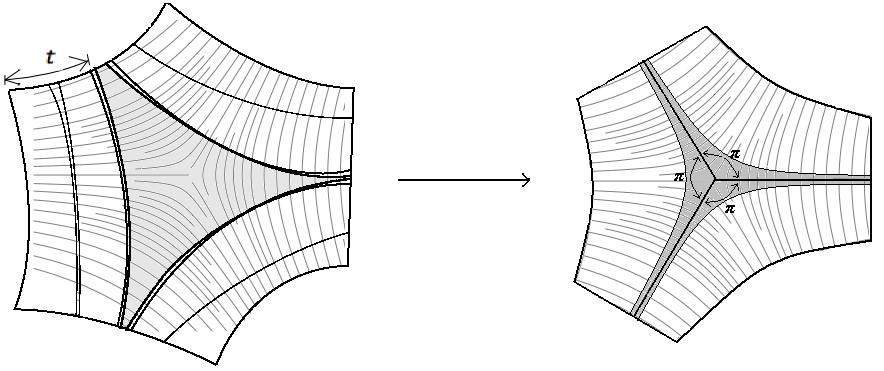}\\
  \caption[In Step 2, the maps of the pieces assemble to  give a quasiconformal map $\ldots$]{In Step 2, the maps of the pieces assemble to give a quasiconformal map of the portion of $X_t$ shown on the left to the singular flat region on $\hat{X_t}$ shown on the right. The (shaded) hyperbolic region is taken to a neighborhood of the central ``tripod".}
\end{figure}

\textit{Step 3. Gluing the maps of the pieces.} To assemble the maps of the pieces to a map of the grafted surface $X_t$ to the singular flat surface $\hat{X_t}$,  they need to be adjusted on the boundary. This is possible by an additional property of the maps called \textit{almost-isometry} (Definition \ref{defn:aisom}) which allows this adjustment to be made maintaining the almost-conformality (see Lemma \ref{lem:qcnoskew} in \S 4.2). At this stage one has a quasiconformal map that is almost-conformal for \textit{most} of the surface (Lemma \ref{lem:Map}).\\

\textit{Step 4. Adjusting to an almost-conformal map.} The quasiconformal map from Lemma \ref{lem:Map} is then adjusted to be almost-conformal \textit{everywhere}. This relies on the fact that the regions of no control of quasiconformal distortion are contained in portions of the surface surrounded by annuli of large modulus (Lemma \ref{lem:largemod}), and a technical lemma on quasiconformal extensions  (Lemma \ref{lem:qclem}) whose proof we defer to the Appendix.

\subsection{The multicurve case}

In contrast to the case of an arational (and therefore maximal) lamination, for a general lamination the complementary subsurface $X\setminus \mathcal{T}$ might contain an ideal polygon or non-simply-connected subsurface with a non-trivial parameter space (moduli) of conformal structures. Here $ \mathcal{T}$ is a train-track neighborhood of the lamination, as before.  This subsurface remains is unaffected by the procedure of grafting.\newline

In this article we shall consider the case when $\lambda$ is a multicurve, and the general case shall be handled in a subsequent article (\cite{Gup2}). The following is the outline of the argument for the special case when $\lambda$ is a single non-separating simple closed geodesic $\gamma$:\\

The surface $X_t$ appearing along the grafting ray has a euclidean cylinder of length $t$ inserted at $\gamma$. As $t\to \infty$, the surface has a conformal limit  $X^\infty$, which is an ``infinitely grafted" surface $X^\infty$ obtained by gluing semi-infinite cylinders on the two boundary components of $X_t \setminus \gamma$. By abuse of notation we shall also think of $X^\infty$ as the \textit{compact} Riemann surface with two marked points obtained by filling in the punctures.\\

\index{Strebel}\index{quadratic differential!meromorphic}
To find the singular flat surface $\hat{X_t}$ to map the grafted surface $X_t$ to, one starts by defining the singular flat surface  $Y^\infty$ that appears as the limit of the Teichm\"{u}ller ray, that by the purported asymptoticity, will be conformally equivalent to $X^\infty$. This is obtained by prescribing a meromorphic quadratic differential on $X^\infty$.\\

\begin{figure}
  \centering
  \includegraphics[scale=0.35]{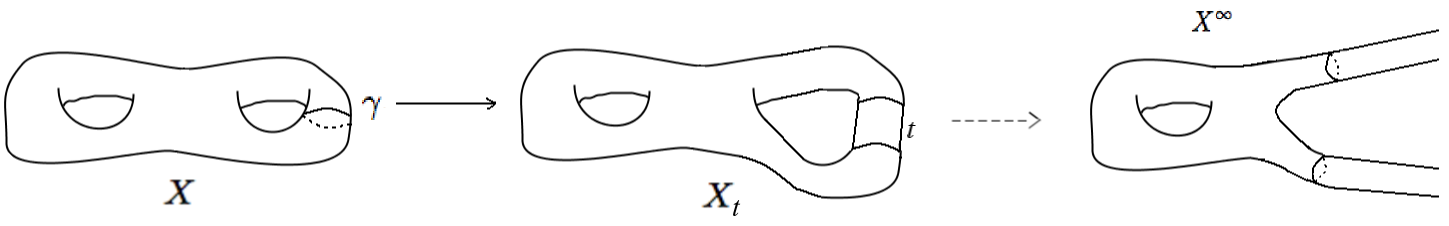}\\
  \caption[Grafting along a simple closed curve]{The surface $X^\infty$ appears as a conformal limit as one grafts along a simple closed non-separating curve $\gamma$.}
\end{figure}

Namely, by a theorem of Strebel (Proposition \ref{prop:streb}), there is a meromorphic quadratic differential on $X^\infty$ with two double poles and closed horizontal trajectories, that induces a singular flat metric which makes the surface isometric to two semi-infinite euclidean cylinders glued along the boundary (we call this $Y^\infty$ to distinguish this from $X^\infty$, though they are conformally equivalent).\\
 
The singular flat surface $\hat{X_t}$ is then obtained by truncating these infinite cylinders of $Y^\infty$ at some ``height" and gluing them along the truncating circles, and this lies along the Teichm\"{u}ller ray determined by $\lambda$. It only remains to adjust the conformal map between $X^\infty$ to $Y^\infty$ to an \textit{almost-conformal} map between $X_t$ and $\hat{X_t}$ - this is possible because a conformal map ``looks" affine at small scales, or (in this case) like an isometry far out a cylindrical end (see Lemma \ref{lem:conf1}).

\subsection{Idea of the proof of Theorem \ref{thm:thm2}:}
When the lamination $\lambda$ is arational at least one of the weights on its train-track representation is irrational. As described at the beginning of \S3, the train track neighborhood $\mathcal{T}$ of $\lambda$ widens along the grafting ray, and a typical rectangular piece (corresponding to a branch of the train track) looks more and more euclidean, with its euclidean width at time $t$ equal to the initial weight times $t$.\\

A key observation is that since the switch conditions for the train-track reduce to a system of linear equations with integer coefficients, there is always an assignment of \textit{integer} weights that approximate those of the arational lamination, such that for each branch the integer weight is within a bound that depends only on the genus (Lemma \ref{lem:linalg}). For sufficiently large $t$, this difference is small in proportion to the entire width, and this allows the construction of an almost-conformal map from a surface along the grafting ray to a surface obtained by grafting along the multi-curve corresponding to the integer solution (Lemma \ref{lem:close}).\\

This construction together with a choice of $\lambda$ that provides a \textit{dense} grafting ray (Corollary \ref{cor:cor1} of Theorem \ref{thm:thm1}) shows that integer graftings are dense in moduli space (Theorem \ref{thm:thm1.5}), and Theorem \ref{thm:thm2} follows as they also preserve the Fuchsian holonomy.

\section{The arational case}
In this section we prove the following proposition, a special (and generic) case of Theorem \ref{thm:thm1}:

\begin{prop}\label{prop:maxcase}
Let $X\in \mathcal{T}_g$ and let $\lambda$ be an arational (ie. maximal and irrational) geodesic lamination. Then there exists a $Y\in \mathcal{T}_g$ such that the grafting ray determined by $(X,\lambda)$ is asymptotic to the Teichm\"{u}ller ray determined by $(Y,\lambda)$.
\end{prop}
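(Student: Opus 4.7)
The plan is to construct, for each sufficiently large $t$, an explicit quasiconformal map $f_t : X_t \to \hat{X}_t$ whose dilatation satisfies $K_{f_t} = 1 + O(\epsilon_t)$ with $\epsilon_t \to 0$. Here $\hat{X}_t$ is the singular flat surface produced by collapsing the ideal-triangle components of $X_t \setminus \lambda$ along the leaves of the horocyclic foliation $\mathcal{F}$. Because the horizontal foliation of $\hat{X}_t$ is measure-equivalent to $\mathcal{F}$ and the vertical foliation is measure-equivalent to $t\lambda$ independently of $t$, the family $\{\hat{X}_t\}_{t\geq 0}$ sits on a single Teichm\"uller ray, so producing such almost-conformal maps (Definition \ref{defn:ac}) is equivalent to proving the desired asymptoticity with $Y$ taken to be the base surface of that ray.

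To build $f_t$, I would first fix a thin train-track carrier $\mathcal{T}$ of $\lambda$ and decompose $X_t$ into two types of pieces: long rectangular strips, one per branch of $\mathcal{T}$, whose transverse width under grafting is comparable to $t$ times the branch weight; and a finite collection of pentagonal pieces, one per complementary ideal triangle, obtained by slightly fattening the truncated triangle. Preliminary estimates (the role of Lemmas \ref{lem:thinrect} and \ref{lem:widerect}) show the rectangles are long and thin in the euclidean direction and bounded in the $\mathcal{F}$ direction. On each rectangular piece I would construct a quasiconformal model map to the corresponding euclidean rectangle in $\hat{X}_t$ by sending leaves of $\mathcal{F}$ to horizontal segments, approximating the (non-homeomorphic) Cantor collapsing map by a smooth map coming from a finite approximation of the transverse measure; once the branch is wide enough, this approximation has dilatation $1 + O(\epsilon_t)$. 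On each pentagonal piece I would assemble an almost-conformal model from explicit maps of truncated hyperbolic and euclidean sectors (the toolkit I expect to develop in \S4.2), sending the hyperbolic interior of the piece to a small neighborhood of a $3$-prong singularity of $\hat{X}_t$.

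Next I would glue these piecewise models into a map $F_t: X_t \to \hat{X}_t$ defined on all but a small subset of $X_t$. The obstacle to direct gluing is that the models need not agree on common boundary segments; this is where I would require each local model to be an \emph{almost-isometry} (Definition \ref{defn:aisom}) in a collar of the boundary so that a controlled interpolation, via a lemma of the form ``almost-isometric boundary adjustment preserves almost-conformality'' (Lemma \ref{lem:qcnoskew}), produces a globally continuous quasiconformal map with $K = 1+O(\epsilon_t)$ away from some uncontrolled regions. Finally I would observe that each uncontrolled region lies inside an annulus whose modulus tends to infinity with $t$ (as in Lemma \ref{lem:largemod}), so a standard quasiconformal extension lemma (Lemma \ref{lem:qclem}) allows the map to be redefined inside these annuli while remaining almost-conformal everywhere, yielding the sought $f_t$.

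The main obstacle is Step 2 on the rectangular pieces: the natural ``collapsing'' of a Cantor-laminated strip onto a euclidean rectangle is not even a homeomorphism, and turning it into a $(1+O(\epsilon_t))$-quasiconformal map depends crucially on using the added euclidean width produced by grafting to diffuse the collapse and on the $C^1$ regularity of the Thurston metric and of $\mathcal{F}$. Quantifying how the finite-level approximation of the transverse measure interacts with this widening — and matching the result to the pentagonal models along shared boundary arcs while preserving the almost-isometry property — is the technical heart of the proof and is what forces the detailed machinery of \S4.1--4.4.
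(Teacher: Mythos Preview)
Your proposal is correct and follows essentially the same approach as the paper's proof: the decomposition $\mathcal{D}$ into rectangular and pentagonal pieces, the construction of almost-conformal model maps via finite approximation and the almost-isometry gluing condition, and the final adjustment on the uncontrolled regions using the large-modulus annuli and the quasiconformal extension lemma all match \S4.1--4.7. The only cosmetic difference is that the paper fixes an arbitrary $\epsilon>0$ and produces a threshold $T=T(\epsilon)$ rather than a varying $\epsilon_t$, but these formulations are equivalent.
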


An outline of the proof was provided in \S 3.1, and we refer to that section for the notation used here.

\subsection{A train-track decomposition of the grafted surface}
We begin by constructing a subsurface $\mathcal{T}_{\epsilon}\subset X$ containing the arational lamination $\lambda$, that is further decomposed into rectangles. This can be thought of as a physical realization of a train track carrying $\lambda$, and the rectangles correspond to the branches of the train-track. 

\subsubsection{The return map}

From \S3.1 recall that there is a \textit{horocyclic foliation} $\mathcal{F}$ with $C^1$ leaves transverse to $\lambda$, obtained by integrating the Lipschitz line field along the horocylic arcs of the ideal triangles in its complement. Choose an oriented segment $\tau$ from a leaf of $\mathcal{F}$ away from its singularities, such that the endpoints of $\tau$ are on leaves of the lamination $\lambda$ which are isolated on the side away from $\tau$. We shall choose $\tau$ so that its hyperbolic length is small enough, depending on $\epsilon$ - this shall be spelled out in Lemma \ref{lem:thinrect}.\newline

In what follows we use the first return map of $\tau$ to itself (following leaves of $\lambda$) to form a collection of rectangles with vertical geodesic sides, and horizontal sides lying on $\tau$. This is similar to the standard method constructing the suspension of interval exchange transformations,  and in particular to the decomposition in \cite{Mas} where it is done for a quadratic differential metric and the associated foliations.\newline

The outcome of this decomposition is a collection of rectangles $R_1,R_2,\ldots R_n$ on the surface $X$ whose union contains the lamination $\lambda$, and truncated ideal triangles $T_1,T_2,\ldots T_m$ in their complement.\\

\begin{figure}
  \centering
  \includegraphics[scale=0.6]{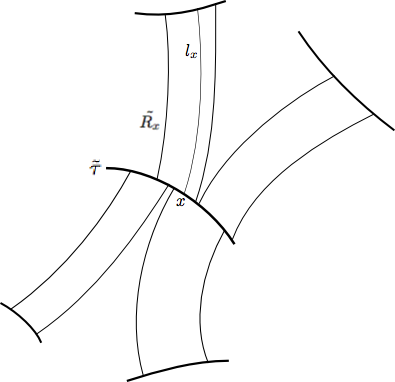}\\
  \caption{The lift of $\lambda$ to the universal cover is decomposed into  rectangular ``flowboxes" by the lifts of the transversal arc $\tau$.}
\end{figure}

For convenience, we lift the lamination to $\tilde{\lambda}$ on the universal cover $\tilde{X}$ of the surface, and pick $\tilde{\tau}$, a choice of lift of the arc $\tau$. Pick any $x\in \tilde{\lambda} \cap \tilde{\tau}$. The fact that $\lambda$ is minimal implies that any half-leaf of $\tilde{\lambda}$ emanating from $x$ intersects another lift of $\tau$. Consider such a segment $l_x$ of a leaf of $\tilde{\lambda}$ between the two lifts of $\tau$. Each point of $l_x$ is contained in a ``flow-box", a rectangle where the lamination $\tilde{\lambda}$ is embedded as $K \times J$, for a Cantor set $K$ and an interval $J$. Using the compactness of $l_x$, one can take a finite subcover by these flow-boxes,  and find a single rectangular flow-box $\tilde{R_x}$ between the two lifts of $\tau$ that contains the entire segment $l_x$. This intersects $\tilde{\tau}$ in an interval $\tilde{I_x}$, and by using the compactness of $\tilde{\tau}$, one can find a finite subcover $\tilde{I_1},\ldots \tilde{I_n}$, and hence finitely many rectangles $\tilde{R_1},\ldots \tilde{R_n}$ that contain all the leaf segments $l_x$ where $x\in \tilde{\lambda} \cap \tilde{\tau}$. Here we assume that  we merge two adjacent rectangles to a bigger rectangle whenever possible, so that the horizontal sides of these rectangles are $\tilde{\tau}$ and \textit{distinct} lifts of $\tau$, and we also assume that the vertical sides are geodesic segments of leaves of $\tilde{\lambda}$.\\

These rectangles descend to the collection of rectangles $R_1,R_2,\ldots R_n$ on the surface $X$ whose union contains the lamination $\lambda$. Their horizontal sides determine intervals $I_1,\ldots I_n$ on the arc $\tau$. Since $\lambda$ is arational, it has  complementary regions $T_1,T_2,...T_m$  which are truncated ideal hyperbolic triangles (here $n$ and $m$ depend only on the genus).\\

We define
\begin{equation}
\mathcal{T}_\epsilon = R_1\cup R_2\cup \cdots R_n
\end{equation}

Recall here that $\epsilon$ determines the choice of length of $\tau$ above - as shall be specified in  Lemma \ref{lem:thinrect}.\\

Along the grafting ray the rectangles get wider, and one gets a decomposition of each grafted surface $X_t$ with the same combinatorics of the gluing.

\begin{figure}
  \centering
  \includegraphics[scale=0.6]{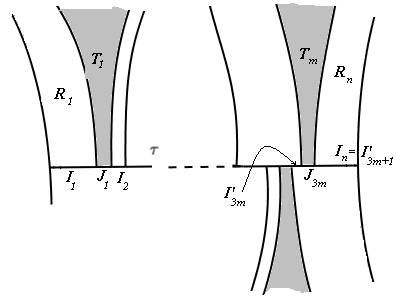}\\
  \caption{The train track decomposition and the labelling of the sub-arcs of $\tau$.}
\end{figure}

\subsubsection{Labelling sub-arcs}

For later use, we denote the sub-arcs of the arc $\tau$ that are the horocyclic edges of $T_1,\ldots T_m$ by $J_1,\ldots J_{3m}$ (these in fact belong to the collection $\{J_i^\pm\}$) and the remaining sub-arcs in $\tau\setminus \{J_1\cup\cdots J_{3m}\}$ by $I^\prime_1,\ldots I^\prime _{3m+1}$. Note that for each $1\leq i\leq n$ we have
\begin{equation}\label{eq:finer}
I_i = \bigcup\limits_{k\in S_i} I^\prime_k
\end{equation}
where $S_i$ is a finite subset of $\{1,2,\ldots 3m+1\}$, and $S_i\cap S_j = \emptyset$ for $i\neq j$.

\subsubsection{Dimensions}
The \textit{total width} of a rectangle $R_i$ shall be the supremum of the lengths in the Thurston metric  (see Definition \ref{defn:projmet}) of the segments $l\cap R_i$, where $l$ is a leaf of the transverse foliation $\mathcal{F}$.\\

The \textit{hyperbolic width} of a rectangle $R_i$ is defined to be the supremum of the hyperbolic lengths of the leaves of $\mathcal{F}$ that intersect $R_i$. The \textit{euclidean width} of a rectangle $R_i$ is defined to be its total width \textit{minus} its hyperbolic width.\\ 

The \textit{height} of the rectangle is the hyperbolic length of one of the vertical geodesic sides.

\begin{lem}[Long, thin train-track]\label{lem:thinrect}
If  the hyperbolic length of $\tau$ is sufficiently small, the height of each of the rectangles $R_1,R_2,\ldots R_n$ is greater than $\frac{1}{\epsilon^2}$, and the hyperbolic width is less than $\epsilon$.
\end{lem}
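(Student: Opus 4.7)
The strategy is to choose $\tau$ with hyperbolic length less than some $\delta=\delta(\epsilon)$; the width bound will follow directly from the behavior of $\mathcal{F}$ near $\tau$, while the height bound will follow from the minimality of $\lambda$. The two requirements depend on $\delta$ independently, so there is no tension in combining them.

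For the hyperbolic width, each $R_i$ is a $\lambda$-flow-box whose two horizontal sides are sub-arcs of $\tau$ (the initial sub-arc and its image under the first-return map of the $\lambda$-flow). Away from the $3$-prong singular set of $\mathcal{F}$, which is confined to the complementary truncated ideal triangles $T_j$, the foliation $\mathcal{F}$ is $C^1$ and its leaves are horocyclic; in particular the $\mathcal{F}$-leaves inside $R_i$ join the two horizontal sides and are $C^1$-close to a sub-arc of $\tau$. Their hyperbolic lengths are therefore bounded by a uniform multiple of $\ell(\tau)$, and taking $\delta$ smaller than $\epsilon$ divided by this uniform constant gives the desired bound.

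For the height bound, let $h_{\min}(\tau)$ denote the minimum hyperbolic length of a vertical side among the rectangles $R_i$; this is exactly the minimum first-return time of the $\lambda$-flow to $\tau$. I claim $h_{\min}(\tau)\to\infty$ as $\ell(\tau)\to 0$, from which $h_{\min}(\tau)>1/\epsilon^2$ follows for $\delta$ small. Indeed, if along some sequence $\tau_n$ with $\ell(\tau_n)\to 0$ one had $h_{\min}(\tau_n)\le T$, one could pick $x_n\in \tau_n\cap\lambda$ whose $\lambda$-trajectory returns to $\tau_n$ within hyperbolic time $T$ and pass, via compactness, to a limit leaf through $x=\lim x_n$ that returns to an arbitrarily small $\mathcal{F}$-neighborhood of itself within time $T$; combined with the Cantor cross-section structure of the minimal lamination $\lambda$ this forces either a closed leaf or a proper sublamination in the orbit closure, contradicting arationality.

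The main obstacle is making this compactness/limit step in the height bound precise. A cleaner and more quantitative alternative is to use the interval exchange induced by the first-return map on $\tau$: the total-area relation $\sum_i \mathrm{width}(I_i)\cdot\mathrm{height}(R_i) \approx \mathrm{area}(X)$ forces the average height to grow like $1/\ell(\tau)$, and because the IET has boundedly many intervals (the number depending only on the genus), the minimum height is comparable to the average, yielding $h_{\min}(\tau)\ge C/\ell(\tau)$. Choosing $\ell(\tau)<\min(\epsilon/C_1,\,C\epsilon^2)$ then delivers both bounds simultaneously.
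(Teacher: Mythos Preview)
Your width bound is essentially the paper's: both horizontal sides of each $R_i$ lie on $\tau$, so a horocyclic leaf across $R_i$ has hyperbolic length $O(\ell(\tau))$.

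For the height bound your two arguments have very different status.

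\medskip
\textbf{The compactness argument is fine, but you have overcomplicated it.} If $h_{\min}(\tau_n)\le T$ along a sequence with $\ell(\tau_n)\to 0$, pick $x_n\in\tau_n\cap\lambda$ whose leaf returns to $\tau_n$ at $y_n$ within arclength $T$. Passing to a subsequence $x_n\to x\in\lambda$; since $y_n\in\tau_n$ and $\mathrm{diam}(\tau_n)\to 0$, also $y_n\to x$. Continuity of the geodesic flow gives a leaf of $\lambda$ through $x$ returning to $x$ in time $\le T$, i.e.\ a closed leaf --- directly contradicting arationality. No ``Cantor cross-section'' or ``proper sublamination'' is needed. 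This is a valid proof of the lemma, though it yields no rate.

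\medskip
\textbf{The area argument is incorrect.} In the hyperbolic metric the relation $\sum_i \mathrm{width}(I_i)\cdot h_i \approx \mathrm{area}(X)$ simply fails. A hyperbolic ``rectangle'' with geodesic sides of length $h$ and horocyclic sides of length $w$ has area $w(1-e^{-h})\approx w$, not $wh$. Consequently the total hyperbolic area of $R_1\cup\cdots\cup R_n$ is $O(\ell(\tau))\to 0$, not $\mathrm{area}(X)$: indeed the complementary truncated ideal triangles each have area $\to\pi$, and since there are $4g-4$ of them their total area absorbs all of $\mathrm{area}(X)=2\pi(2g-2)$. So the area relation gives no growth of the average height, and the further claim that ``minimum is comparable to average because $n$ is bounded'' is also unjustified for IETs in general.

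\medskip
\textbf{What the paper actually does} is neither dynamical argument. It observes that each vertical side of $R_i$ is simultaneously a geodesic side of one of the complementary truncated ideal triangles $T_j$, and that $T_j$ is truncated along a horocyclic arc of length at most $\ell(\tau)=L$. A one-line cusp computation (in the upper half-plane, horocycle $y=c$ has length $1/c$ across a unit-width cusp) then gives that this geodesic side has hyperbolic length at least $\ln(1/L)$. This yields the height bound with an explicit rate and no compactness.
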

\begin{proof}
Let the hyperbolic length of the arc $\tau$ be $L$. Then the two horizontal sides of a rectangle being intervals on the segment $\tau$ have length less than $L$. It follows from elementary hyperbolic geometry that on the (ungrafted) surface $X$ the two vertical geodesic sides remain within $L$ of each other, and each piecewise-horocyclic leaf of $\mathcal{F}$ between them has length at most $O(L)$. It is a fact that if the horocyclic edges of a truncated ideal triangle have length $O(L)$, then the height at which the triangle is truncated is at least $\ln\frac{1}{L}$. This is the height of the rectangle. Clearly $\ln\frac{1}{L}\to \infty$ as $L\to 0$, so we can choose $L<\epsilon$ small enough so that the statement of the lemma holds.
\end{proof}

Henceforth, we shall assume that $\tau$ was chosen short enough such that the conclusions of the above lemma hold.\\

Each rectangle has a euclidean width equal to the transverse measure $\mu(t\lambda \cap R)$ which goes to infinity as the grafting time $t\to \infty$. Hence we also have:

\begin{lem}[Total width]\label{lem:widerect}
There is a $T>0$ such that for all $t>T$, the total width of each rectangle in the above decomposition is greater than $1/\epsilon^2$.
\end{lem}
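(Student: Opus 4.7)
The proof is a direct computation unpacking the definitions. The total width of $R_i$ is the sum of its hyperbolic width (bounded in terms of $\tau$ by Lemma \ref{lem:thinrect}) and its euclidean width, and along the grafting ray only the euclidean contribution changes with $t$. By the geometric description of grafting recalled in \S 2, inserting a euclidean strip of width given by the transverse measure along each leaf of $\lambda$, the euclidean width of $R_i$ on $X_t = gr_{t\lambda} X$ is exactly $t\cdot \mu(\lambda\cap R_i)$, where $\mu$ is the transverse measure of $\lambda$. So the total width of $R_i$ on $X_t$ is at least $t\cdot \mu(\lambda\cap R_i)$.

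The one thing to verify is that this per-rectangle weight is strictly positive. Each $R_i$ was constructed from a flow-box for $\tilde\lambda$ in the universal cover containing a leaf segment $l_x$ with $x\in \tilde\lambda \cap \tilde\tau$, so the interior of $R_i$ meets the lamination in a Cantor set cross an interval. Since $\lambda$ is arational, it has a single minimal component and the support of the transverse measure $\mu$ is all of $\lambda$ (in particular $\mu$ has no isolated atoms, since there are no isolated leaves). Consequently every non-empty relatively open subset of $\lambda$ has positive transverse measure, and in particular
\begin{equation*}
w_i := \mu(\lambda \cap R_i) > 0 \quad \text{for every } i=1,\ldots,n.
\end{equation*}

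Setting $w := \min_{1\leq i\leq n} w_i > 0$ (a finite minimum of positive numbers), and taking
\begin{equation*}
T := \frac{1}{\epsilon^2\, w},
\end{equation*}
we conclude that for all $t>T$ the euclidean width of each rectangle $R_i$ on $X_t$ exceeds $\frac{1}{\epsilon^2}$, and hence so does the total width. The only conceptual point (and the only place where arationality is used) is the positivity of $w_i$; once that is granted the argument is a one-line estimate, and no issue of uniformity arises since the number of rectangles $n$ depends only on the genus.
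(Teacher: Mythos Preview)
Your proof is correct and follows the same approach as the paper. The paper's proof is in fact just the single sentence preceding the lemma (``Each rectangle has a euclidean width equal to the transverse measure $\mu(t\lambda \cap R)$ which goes to infinity as the grafting time $t\to \infty$''), and you have simply made explicit what the paper leaves implicit: the linear growth $t\cdot\mu(\lambda\cap R_i)$, the positivity of each $w_i$ (which the paper takes for granted here, though a similar minimality argument appears later in \S6.3), and the resulting choice of $T$.
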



\subsubsection*{The decomposition $\mathcal{D}$}
It will be useful to organize the above collection of rectangles and truncated ideal triangles into the following decomposition of a sufficiently grafted surface into rectangles and pentagons:\newline

Namely, first decompose each ideal triangle $T_j$ into three pentagons by including geodesic edges from its centroid $p_j$ to the midpoints of the horocyclic sides. Each pentagon is thus a `$2\pi/3$-sector' of a truncated ideal hyperbolic triangle.\\

Each $T_j$  is adjacent to rectangles from the collection $\{R_i\}$ on its three geodesic sides. By Lemma \ref{lem:widerect}, for a sufficiently-grafted surface there is a grafted portion of euclidean width much greater than $4$ adjacent to each geodesic side. We thicken the pentagons by  trimming subrectangles of euclidean width $2$ from the adjacent rectangles and appended to the truncated sectors of $T_j$ (obtained above). \newline

\begin{figure}[h]
  \centering
  \includegraphics[scale=0.5]{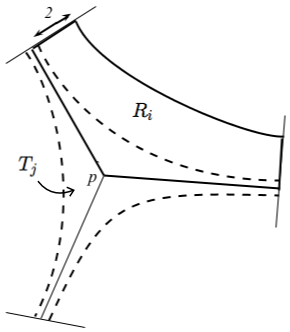}\\
  \caption{In the decomposition $\mathcal{D}$, each  truncated ideal triangle $T_j$ is divided into three sectors which are thickened to form pentagons, by appending a portion of the rectangle adjacent to the geodesic sides of $T_j$ (shown dotted).}
\end{figure}

This trimming-and-appending results in the euclidean (and total) widths of the rectangles $\{R_i\}$ decreasing by $4$. By abuse of notation, we continue to denote these trimmed rectangles as  $\{ R_i\}_{1\leq i\leq n}$, and we denote the pentagons (thickened sectors) as $\{ P_j\}_{1\leq j\leq 3m}$. These form the pieces of this new decomposition, which we shall refer to as the decomposition $\mathcal{D}$.

\subsection{A compendium of quasiconformal maps}

There are two types of pieces in the decomposition $\mathcal{D}$  of the grafted surface described in the previous section, rectangles $R_i$, and pentagons  $P_j$, which can be further decomposed into truncated $2\pi/3$-sectors and width-$2$ rectangles. We shall eventually construct some quasiconformal maps of these pieces to the euclidean plane which we shall glue to form a quasiconformal map of the grafted surface to the singular flat surface.\newline

We first isolate as lemmas a few quasiconformal maps that will be useful at several steps of the actual construction. Since we would need some control on the quasiconformal dilatation of the final map, we take care to ascertain the distortion at all points of the domain.\newline

We use repeatedly  the following facts about quasiconformal maps (see, for example, \cite{Ah1}):\newline

A. If the partial derivatives of a $C^1$ map $f$  between planar domains  satisfy\\

\begin{center}
$\left\vert \frac{\lVert f_x\rVert}{\lVert f_y\rVert} -1 \right\vert< \epsilon$ ,    and    $\lvert \langle f_x,f_y\rangle \rvert < \epsilon$\newline
\end{center}
at a point (where recall $\epsilon$, as throughout this paper, is sufficiently small), its quasiconformal dilatation there is $1 + C\epsilon$ for some universal $C>0$.\\

B. If a map $f$ is a homeomorphism onto its image, and it is quasiconformal on the domain except for a measure zero set (typically a collection of $C^1$ arcs), then $f$ is quasiconformal everwhere on the domain.\newline

C. If a homeomorphism $f$ is $K$-quasiconformal, so is its inverse $f^{-1}$.

\subsubsection*{Straightening map}

\begin{defn}\label{defn:avert}
An arc on the euclidean plane to be $\epsilon$-\textit{almost-vertical} if it is a portion of a graph $x=g(y)$ where $g$ is a $C^1$-function and $\lvert g'(y)\rvert <\epsilon$.
\end{defn}

The following lemma will be used in various contexts to map a rectangle with ``almost-vertical" sides to an actual rectangle by an almost-conformal map that is also height-preserving.

\begin{lem}\label{lem:qcstraight}
Let $R$ be a planar region that is bounded by two sides that are parallel horizontal line segments and two arcs which are the graphs $x = g_1(y)$ and $x=g_2(y)$ over the interval $0\leq y\leq a$ on the $y$-axis, where $g_1$, $g_2$ are $C^1$-functions such that $g_2(y)>g_1(y)>0$ for all $0\leq y\leq a$. Then there is a height-preserving quasiconformal map $f$ from $R$ to a euclidean rectangle of height $a$ and width $b$, where $b = \sup\limits_{0\leq y\leq a} \left(g_2(y)-g_1(y)\right)$.\\
Moreover, if at any point $p\in R$ at height $y$ we have:\\
(i) $\left\vert g_1^\prime(y)\right\vert <\epsilon$,\\
(ii) $\left\vert g_2^\prime(y)\right\vert <\epsilon$, and \\
(iii) $\left\vert \frac{b}{g_2(y) - g_1(y)} - 1\right\vert <\epsilon$, \\
then the quasiconformal distortion of $f$ at $p$ is $(1 + C\epsilon)$, for some universal constant $C>0$.
\end{lem}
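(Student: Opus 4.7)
The natural candidate is the fiberwise affine rescaling, namely
\begin{equation*}
f(x,y) = \left(\frac{b\,(x - g_1(y))}{g_2(y) - g_1(y)},\ y\right),
\end{equation*}
which at each height $y$ stretches the horizontal slice $[g_1(y), g_2(y)]$ linearly onto $[0,b]$. It is height-preserving by construction. Since $g_1, g_2 \in C^1([0,a])$ with $g_2 > g_1 > 0$ on the compact interval $[0,a]$, the width $w(y):=g_2(y)-g_1(y)$ is bounded below by a positive constant, so $f$ is a $C^1$-diffeomorphism from $R$ onto the euclidean rectangle $[0,b]\times[0,a]$ with Jacobian uniformly bounded above and away from zero; this already gives quasiconformality on $R$ for some constant.

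For the pointwise distortion bound at a point $p$ satisfying (i)--(iii), I would compute the Jacobian columns and feed them into Fact A. A direct differentiation gives
\begin{equation*}
f_x = \left(\frac{b}{w},\,0\right), \qquad f_y = (u_y,\,1), \quad \text{with}\quad u_y = -\frac{b\,g_1'(y)}{w} - \frac{b\,(x-g_1(y))\,w'(y)}{w^2},
\end{equation*}
where $w'(y)=g_2'(y)-g_1'(y)$. The key estimates follow immediately from the hypotheses: (iii) yields $b/w = 1 + O(\epsilon)$, while (i) and (ii) give $|g_1'|<\epsilon$ and $|w'|<2\epsilon$; combined with the trivial bound $(x-g_1(y))/w(y)\in[0,1]$, both summands in $u_y$ are $O(\epsilon)$, so $u_y=O(\epsilon)$. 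Consequently $\|f_x\|=1+O(\epsilon)$, $\|f_y\|=\sqrt{1+u_y^2}=1+O(\epsilon^2)$, and $\langle f_x,f_y\rangle=(b/w)\,u_y=O(\epsilon)$, so Fact A delivers the $1+C\epsilon$ distortion bound at $p$.

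I do not expect any genuine obstacle here: the content of the lemma is that this affine rescaling is the right candidate, and that conditions (i)--(iii) are precisely what is needed to control its Jacobian pointwise. The one caveat worth flagging is that outside the set where (i)--(iii) hold the distortion can be large -- for instance where $w(y)\ll b$, so the horizontal stretch is severe, or where one of the bounding arcs is steeply sloped at $p$ -- but the lemma only claims the $1+C\epsilon$ bound at the designated good points, while the uniform (possibly large) quasiconformality of $f$ on the rest of $R$ is already taken care of by the compactness argument above.
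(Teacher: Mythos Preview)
Your proposal is correct and matches the paper's proof essentially line for line: the paper uses the same fiberwise affine map $(x,y)\mapsto\bigl(b\,(x-g_1(y))/(g_2(y)-g_1(y)),\,y\bigr)$, computes the same partial derivatives, and invokes the same Fact~A via the same estimates. Your added remark that compactness of $[0,a]$ and $w>0$ give global quasiconformality is a welcome clarification the paper leaves implicit.
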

\begin{proof}
The map $f$ one constructs is one that ``stretches"  horizontally the right amount at each height:
\begin{equation*}
(x,y)\mapsto \left( b\frac{x - g_1(y)}{g_2(y) - g_1(y)}, y\right)
\end{equation*}

Computing the partial derivatives of $f$ we get:
\begin{equation*}
 f_x  = \left\langle \frac{b}{g_2(y) - g_1(y)}, 0\right\rangle
\end{equation*}
\begin{equation*}
 f_y = \left\langle   -A(y)g_1^\prime(y) - A(y)^2(g_2^\prime(y)-g_1^\prime(y))\left( \frac{x-g_1(y)}{b}\right),1\right\rangle
\end{equation*}
where $A(y) =  \frac{b}{g_2(y) - g_1(y)}$.\\

At a height $y$ where the estimates (i)-(iii) hold, using them and the observation that $\frac{x-g_1(y)}{b} \leq 1$, we get:
\begin{center}
$\left\vert \frac{\lVert f_x\rVert}{\lVert f_y\rVert} -1 \right\vert <C\epsilon$ ,    and    $\lvert \langle f_x,f_y\rangle \rvert <C \epsilon$.\newline
\end{center}
for some universal constant $C>0$, and the statement on quasiconformal distortion follows from property A above.
\end{proof}
 
\subsubsection*{Maps straightening a horizontal foliation}

\begin{figure}
  \centering
  \includegraphics[scale=0.4]{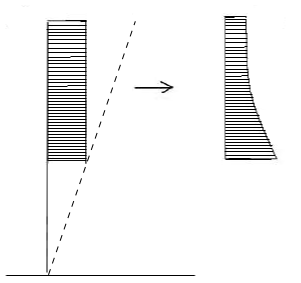}\\
  \caption{The map to $\mathbb{R}^2$ that straightens the horocyclic foliation of an $\epsilon$-thin hyperbolic region (shown on the left in the upper half plane) is almost-conformal (Lemma \ref{lem:qcthin}).}\label{fig:img}
\end{figure}

 \begin{lem}\label{lem:qcthin}
Let $R$ be a rectangular region in $\mathbb{H}^2$ in the upper half-plane model bounded by two (vertical) geodesic sides and two (horizontal) horocyclic sides such that the hyperbolic width is $\epsilon$ (sufficiently small). Note that $R$ is foliated by horocyclic segments.\newline
Then the map $f:R\to \mathbb{R}^2$ that \newline
(i) takes the left edge to a vertical segment\newline
(ii) maps each horocyclic leaf to a horizontal line, and \newline
(iii) is distance-preserving along each horocyclic leaf,\newline
is $(1+C\epsilon)$-quasiconformal for some universal constant $C>0$.\newline
Moreover, the right edge of $R$ is mapped to an almost-vertical segment.
\end{lem}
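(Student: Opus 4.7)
The plan is to place $R$ in coordinates in which the required map can be written down by a closed formula, and then read off both the quasiconformal and the almost-vertical conclusions by direct inspection. First I would normalize by an isometry of $\mathbb{H}^2$ so that the two geodesic sides of $R$ lie on the vertical lines $x=0$ and $x=d$ and the two horocyclic sides on the horizontal lines $y=y_1 < y_2$; the leaves of $\mathcal{F}$ inside $R$ are then the horizontal segments of euclidean length $d$, and the hyperbolic length of such a segment at height $y$ is $d/y$, which is maximized at $y=y_1$. The hypothesis that $R$ has hyperbolic width $\epsilon$ thus becomes the single identity $d/y_1 = \epsilon$, and in particular $x/y \le \epsilon$ everywhere on $R$. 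This is the only inequality that will drive the entire argument.

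Next, I would define
\[
f(x,y) \;=\; (x/y,\ \log y).
\]
Properties (i)--(iii) of the lemma are immediate from this formula: the left edge $x=0$ maps to the vertical segment $\{0\}\times[\log y_1,\log y_2]$; each horocyclic leaf $\{y=c\}$ maps into the horizontal line $\{Y=\log c\}$; and the euclidean distance between $f(0,c)$ and $f(x,c)$ is $x/c$, which coincides with the hyperbolic length of the horocyclic arc in $R$ from $(0,c)$ to $(x,c)$.

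For the quasiconformal estimate I would compute $f_x = (1/y,0)$ and $f_y = (-x/y^2,1/y)$ and assemble the Beltrami coefficient $\mu_f = f_{\bar z}/f_z$. The only source of skew is the $-x/y^2$ entry of $f_y$, and its magnitude relative to $f_z$ is controlled by $x/y \le \epsilon$; a short calculation gives $|\mu_f| \le \tfrac{1}{2}(x/y) \le \epsilon/2$ pointwise, and hence $K_f = 1 + O(\epsilon)$ via $K = (1+|\mu|)/(1-|\mu|)$. Equivalently, after rescaling both partial derivatives by $y$, one can verify the two inequalities of property A of \S4.2 directly. Finally, the image of the right edge is the curve $(d/y, \log y)$, which in the target coordinates is the graph $X = d\,e^{-Y}$; its slope $|dX/dY| = d/y \le d/y_1 = \epsilon$ everywhere, so the right edge maps to an $\epsilon$-almost-vertical arc in the sense of Definition~\ref{defn:avert}.

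The only step requiring care is the uniformity of the distortion bound in the height $y$: since $R$ may be hyperbolically tall, $y$ can range over a large interval, and one wants the same estimate at every point. This is automatic because the relevant dimensionless quantity is $x/y$, controlled uniformly by $\epsilon$, and the choice $f(x,y) = (x/y, \log y)$ is exactly the one that isolates this single parameter in the computation.
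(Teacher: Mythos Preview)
Your proof is correct and follows the same approach as the paper: the same normalization, the same explicit map $f(x,y)=(x/y,\log y)$, and the same controlling quantity $x/y\le\epsilon$. The only cosmetic difference is that you estimate the dilatation via the Beltrami coefficient $|\mu_f|\le \tfrac12(x/y)$, whereas the paper normalizes so that $y\ge 1$ and then checks property~A of \S4.2 on the Euclidean partials; your route is slightly cleaner since it is manifestly scale-invariant and does not require the extra normalization $y\ge 1$. Your explicit parametrization of the image of the right edge as $X=d\,e^{-Y}$ with $|dX/dY|\le\epsilon$ is exactly what the paper means by ``the image of the right edge is a graph over the $y$-axis of small derivative.''
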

\begin{proof}
Let  $R$ to be a rectangle in the upper-half plane model of $\mathbb{H}^2$ as described, lying above the line $y=1$ and the left side lying on the $y$-axis. The map $f$ is 
\begin{equation}
(x,y) \mapsto\left (\frac{x}{y},\ln y\right)
\end{equation}
Note that the hyperbolic width of $R$ being at most $\epsilon$ implies that  
\begin{equation}\label{L1}
\left\lvert\frac{x}{y}\right\rvert = O(\epsilon)
\end{equation}
for all points in $R$.\newline
One can compute the derivatives of the above map to get the dilatation:
\begin{equation}
\frac{\lVert f_x\rVert}{\lVert f_y\rVert} = \frac{\frac{1}{y}}{\sqrt{\frac{x^2}{y^4} + \frac{1}{y^2}}} = \frac{1}{\sqrt{\frac{x^2}{y^2} + 1}} =1 + O(\epsilon)
\end{equation}
and
\begin{equation}
\lvert \langle f_x,f_y\rangle \rvert = \frac{x}{y^3} = O(\epsilon)
\end{equation}
by \eqref{L1} and the fact that $y\geq 1$.\newline

This computation also verifies that the image of the right edge is a graph over the $y$-axis of small derivative.
\end{proof}
 
A similar proof yields the following:

\begin{lem}[Inside-out version]\label{lem:thinflipped}
Let $R$ be the region as in the previous lemma. Assume that the height of $R$ is greater than $1$. Then the map $f:R\to \mathbb{R}^2$ that takes the \textit{right} edge to a vertical segment, and satisfies (ii) and (iii), is almost-conformal.
\end{lem}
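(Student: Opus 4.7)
The plan is to mimic the proof of Lemma~\ref{lem:qcthin}, changing only how $R$ is placed inside the upper half-plane. Since the right edge of $R$ is a vertical geodesic of $\mathbb{H}^2$, I first apply a hyperbolic isometry — a horizontal translation composed with a vertical homothety $(x,y)\mapsto(\lambda x,\lambda y)$ — to reposition $R$ so that its \emph{right} edge lies on the $y$-axis and the whole rectangle sits in $\{x\leq 0,\ y\geq 1\}$. The hypothesis that the hyperbolic height of $R$ exceeds $1$ is what gives enough room to perform this normalization while keeping the entire rectangle above the line $y=1$.

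With $R$ in this position, I take the same map as in the previous lemma,
\[
f(x,y)=\left(\frac{x}{y},\ \ln y\right).
\]
The right edge $\{x=0\}$ is sent to the $y$-axis, hence to a vertical segment, giving the prescribed boundary behavior. Each horocyclic leaf $\{y=c\}\cap R$ maps to the horizontal line $\{v=\ln c\}$, and along it the differential $dx/y$ agrees with the hyperbolic arc-length element of the horocycle, so $f$ is distance-preserving along leaves. Properties (ii) and (iii) thus follow at once.

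For the quasiconformal estimate, the derivative computation in the proof of Lemma~\ref{lem:qcthin} relied only on two inputs: the bound $|x|/y = O(\epsilon)$ coming from the hyperbolic-width hypothesis, and the inequality $y\geq 1$. Both continue to hold verbatim on the negative-$x$ side of the axis, so the same calculation yields
\[
\left\vert\frac{\lVert f_x\rVert}{\lVert f_y\rVert}-1\right\vert = O(\epsilon),\qquad \lvert\langle f_x,f_y\rangle\rvert = O(\epsilon),
\]
and hence $f$ is $(1+O(\epsilon))$-quasiconformal by property A of Section~4.2. I do not foresee any real obstacle here: the left/right symmetry of $\mathbb{H}^2$ under horizontal translation makes this lemma a direct transcription of Lemma~\ref{lem:qcthin} with the two vertical sides interchanged.
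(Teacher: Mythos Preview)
Your argument is correct and is precisely the ``similar proof'' the paper has in mind: reposition $R$ so the right edge sits on the $y$-axis and run the identical derivative estimate from Lemma~\ref{lem:qcthin}, which uses only $|x|/y=O(\epsilon)$ and $y\ge 1$.

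One small remark: your justification for the hypothesis ``height $>1$'' is not quite right. The normalization you describe --- horizontal translation followed by the homothety $(x,y)\mapsto(\lambda x,\lambda y)$ placing the bottom horocycle at $y=1$ --- works for any positive height, so that hypothesis is not what ``gives enough room'' for the repositioning. The height condition plays no role in the almost-conformality estimate itself; it appears because in the applications (e.g.\ Lemma~\ref{lem:trunc} and the later rectangle lemmas) the relevant regions always have height at least $1$, and the paper records this standing assumption here. This does not affect the validity of your proof.
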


The following observation involves euclidean regions one gets by grafting (see Figure 9 in \S4.3).

\begin{lem}\label{lem:qceuc}
Let $R$ be the region on the upper half-plane consisting of all $z\in \mathbb{C}$ satisfying $a \leq \lvert z\rvert \leq b$, and $\alpha \leq arg(z)\leq \pi/2$, for some $0<\alpha <\pi/2$. We equip $R$ with the metric $dz/\lvert z \rvert$. This region $R$ is foliated by arcs $F_l =\{ le^{i\theta}\vert  \alpha \leq \theta \leq \pi/2\}$ for $a\leq l\leq b$. Then there is an isometry $g$ from $R$ to a rectangle in $\mathbb{R}^2$ of height $\ln{\frac{b}{a}}$ and width $\pi/2-\alpha$, such that it maps each horizontal leaf  of $F$ to a horizontal line in a length-preserving way. 
\end{lem}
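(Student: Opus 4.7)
The strategy is to recognize that the conformal metric $|dz|/|z|$ on the upper half-plane is flat in logarithmic polar coordinates, so the map is essentially a change of variables. Writing $z=re^{i\theta}$, I first compute $|dz|^2=dr^2+r^2d\theta^2$, which gives $|dz|^2/|z|^2=(dr/r)^2+d\theta^2$. Substituting $u=\ln(r/a)$ converts this to $du^2+d\theta^2$, the Euclidean flat metric on the parameter rectangle $[0,\ln(b/a)]\times[\alpha,\pi/2]$.

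Accordingly, I would define $g:R\to\mathbb{R}^2$ by $g(re^{i\theta})=(\theta-\alpha,\,\ln(r/a))$. By the calculation above, $g$ is an isometry from $R$ onto the Euclidean rectangle $[0,\pi/2-\alpha]\times[0,\ln(b/a)]$, which has the advertised dimensions. Injectivity and surjectivity are immediate from the fact that $(\theta,r)\mapsto(\theta-\alpha,\ln(r/a))$ is a diffeomorphism of the respective parameter domains.

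Finally, I would verify the behavior on the foliation. The leaf $F_l$ is parametrized by $\theta\mapsto le^{i\theta}$ for $\alpha\le\theta\le\pi/2$, so its image is the horizontal segment $\{(\theta-\alpha,\ln(l/a)):\alpha\le\theta\le\pi/2\}$ at height $\ln(l/a)$. Since the arc length element along $F_l$ in the metric $|dz|/|z|$ is $d\theta$ (as $dr=0$ along $F_l$), and the arc length along its image is also $d\theta$, the restriction of $g$ to each leaf is length-preserving, as required.

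There is no genuine obstacle here: the lemma reduces, once the metric is rewritten in polar coordinates, to the observation that $(\theta,\ln r)$ are isothermal coordinates for $|dz|/|z|$. The only thing worth being slightly careful about is the orientation convention (which coordinate is taken as ``horizontal'' in the target), but the choice of ordering in $g$ above places the foliation $\{F_l\}$ onto horizontal lines, matching the statement.
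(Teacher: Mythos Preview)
Your proof is correct and takes essentially the same approach as the paper. The paper simply writes down the conformal map $z\mapsto \pi/2 + i\ln z$ and asserts it is the required isometry; your version carries out the underlying metric computation in polar coordinates explicitly. Your map $g(re^{i\theta})=(\theta-\alpha,\ln(r/a))$ differs from the paper's by a reflection in the first coordinate and a translation in the second, but since the lemma does not fix an orientation this is immaterial.
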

\begin{proof}
It can be checked that the conformal map  $z \mapsto \pi/2 + i\ln z$ is the required isometry.
\end{proof}

\subsubsection*{Almost-isometries and quasiconformal extensions}

\begin{defn}\label{defn:aisom}
Let $L$ and $L^\prime$ be two intervals with a metric (eg., two line segments on the plane). Then a homeomorphism $f:L\to L^\prime$ is said to be an \textit{$\epsilon$-almost-isometry} if\newline
(1) $f$ is $C^1$ with dilatation $d$ (the derivative of $f$ when $L$ is parametrized by arclength) satisfies $\lvert d-1\rvert <\epsilon$.\newline
(2) the lengths of any sub-interval of $L$ and its image in $L^\prime$ differ by an \textit{additive} error less than $\epsilon$.
\end{defn}
\textit{Remark.} For brevity, we shall often use `$\epsilon$-almost-isometric' or just `almost-isometric' to mean `$M\epsilon$-almost-isometry' for some (universal) constant $M>0$. \newline

Here are some immediate observations, whose proofs we omit:

\begin{lem}\label{lem:noskew0}
If $f:L\to L^\prime$ and $g:L^\prime \to L^{\prime\prime}$ are $\epsilon$-almost-isometries, then $f^{-1}$ and $g\circ f$ are $2\epsilon$-almost-isometries.
\end{lem}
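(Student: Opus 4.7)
The plan is to verify both conditions of Definition \ref{defn:aisom} separately for $f^{-1}$ and for $g \circ f$, working directly from the definitions. The only tools required are the chain rule, the elementary identity $\tfrac{1}{d} - 1 = \tfrac{1-d}{d}$, and the triangle inequality applied to additive length errors; the statement is essentially bookkeeping for the two constants appearing in the definition.

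For $f^{-1}$, I would parametrize $L$ and $L'$ by arclength. At a point $t = f(s) \in L'$ the inverse has derivative $1/f'(s)$, and since $|f'(s) - 1| < \epsilon$ with $\epsilon$ small, $f'(s) \geq 1 - \epsilon > 0$, so
\[
\left| \tfrac{1}{f'(s)} - 1 \right| \;=\; \frac{|f'(s) - 1|}{|f'(s)|} \;<\; \frac{\epsilon}{1-\epsilon} \;<\; 2\epsilon,
\]
verifying condition (1). For condition (2), every subinterval of $L'$ has the form $f(I)$ for some $I \subset L$, and condition (2) for $f$ reads $\bigl|\,|f(I)| - |I|\,\bigr| < \epsilon$; since $|I| = |f^{-1}(f(I))|$, this is exactly condition (2) for $f^{-1}$ with the same $\epsilon < 2\epsilon$.

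For $g \circ f$, the chain rule writes the dilatation as $g'(f(s))\,f'(s) = (1+a)(1+b)$ with $|a|,|b| < \epsilon$, so
\[
\bigl|\,g'(f(s))\,f'(s) - 1\,\bigr| \;=\; |a + b + ab| \;\leq\; 2\epsilon + \epsilon^2,
\]
which is $2\epsilon$ to leading order and within the convention flagged in the remark following Definition \ref{defn:aisom}. Similarly, for any subinterval $I \subset L$ the triangle inequality yields
\[
\bigl|\,|g(f(I))| - |I|\,\bigr| \;\leq\; \bigl|\,|g(f(I))| - |f(I)|\,\bigr| + \bigl|\,|f(I)| - |I|\,\bigr| \;<\; \epsilon + \epsilon \;=\; 2\epsilon,
\]
using condition (2) for $g$ on $f(I) \subset L'$ and for $f$ on $I$. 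The only delicate point — which I would flag rather than call an obstacle — is absorbing the $O(\epsilon^2)$ correction in the composition's derivative bound into the $2\epsilon$ estimate, which is permitted by the standing convention on the constants.
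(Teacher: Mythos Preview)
Your argument is correct and is exactly the straightforward verification one would expect; the paper itself omits the proof entirely, listing the lemma among ``immediate observations, whose proofs we omit.'' Your handling of the $O(\epsilon^2)$ term via the remark following Definition~\ref{defn:aisom} is appropriate and matches the paper's stated conventions.
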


\begin{lem}\label{lem:noskew1}
If  the difference of the lengths of the segments $\lvert l(L)-l(L^\prime)\rvert < \epsilon$ then the (orientation-preserving) affine map $f:L\to L^\prime$ is an $\epsilon$-almost-isometry.
\end{lem}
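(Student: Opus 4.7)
The plan is a direct computation verifying the two conditions of Definition \ref{defn:aisom} for the affine map. Set $a = l(L)$ and $b = l(L')$ and parametrize $L$ by arclength; after a rigid motion identifying endpoints, the orientation-preserving affine map $f : L \to L'$ takes the form $f(s) = (b/a)\,s$. It is therefore $C^1$ with constant dilatation $d = b/a$, so both conditions reduce to estimates on $|b/a - 1|$ using the hypothesis $|b-a| < \epsilon$.

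I would first dispatch condition (2), which is the cleaner of the two: any sub-interval of $L$ of length $\ell \leq a$ maps to a sub-interval of $L'$ of length $\ell \cdot (b/a)$, so the additive discrepancy between their lengths is
\[
\ell \,|b/a - 1| \;=\; (\ell/a)\,|b - a| \;\leq\; |b - a| \;<\; \epsilon,
\]
which is the required additive bound independent of the choice of sub-interval.

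For condition (1) I would compute $|d - 1| = |b-a|/a < \epsilon/a$. The only step that requires any care is to control the denominator. In every intended application of this lemma, the intervals $L, L'$ arise as sides of the pieces in the decomposition $\mathcal{D}$ of a sufficiently-grafted surface, and by Lemmas \ref{lem:thinrect} and \ref{lem:widerect} these sides have length bounded below by a universal constant (indeed of order $1/\epsilon^2$). Consequently $|d - 1| < M\epsilon$ for some universal $M > 0$, and invoking the remark following Definition \ref{defn:aisom} allows us to state the conclusion as $f$ being an $\epsilon$-almost-isometry. There is no real obstacle to overcome; the only mild subtlety is precisely this need for a lower bound on $l(L)$ in order to pass from the additive smallness of $|b-a|$ to the multiplicative smallness of $|b/a - 1|$, and this is absorbed by the standing convention about the constant $M$.
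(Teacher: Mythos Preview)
Your proof is correct and complete; the paper itself omits the proof entirely, listing this lemma among ``immediate observations, whose proofs we omit.'' You have also correctly identified the one genuine subtlety --- that condition (1) of Definition~\ref{defn:aisom} requires a lower bound on $l(L)$, which is supplied in every application by Lemmas~\ref{lem:thinrect} and~\ref{lem:widerect} and absorbed via the remark following the definition.
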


\begin{lem}\label{lem:noskew2}
If $L$ is subdivided into subintervals $A_1,A_2,\ldots A_N$ and $L^\prime$ into subintervals $A^\prime_1,A^\prime_2,\ldots A^\prime_N$ and the restrictions $f_{\vert A_i}:A_i\to A^\prime_i$ are $\epsilon$-almost-isometries.
Then $f:L\to L^\prime$ is an $N\epsilon$-almost-isometry.
\end{lem}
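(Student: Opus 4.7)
The plan is to verify the two conditions of Definition \ref{defn:aisom} separately, propagating the piecewise estimates on the $A_i$ to a global estimate via the triangle inequality; the factor $N$ in the bound $N\epsilon$ will arise precisely from the number of pieces.

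For condition (1) (the derivative bound), on each piece $A_i$ the hypothesis gives a dilatation $d_i$ satisfying $|d_i-1|<\epsilon$. Away from the finitely many endpoints that separate consecutive pieces, the dilatation of $f$ on $L$ coincides with the piecewise dilatation, so $|d-1|<\epsilon\le N\epsilon$ on a full-measure open subset of $L$. The only mild subtlety is $C^1$-regularity at the junction points; since these form a finite (measure-zero) set, one either reads the definition as piecewise $C^1$ (which is enough for all subsequent applications, as in the proofs of Lemmas \ref{lem:qcstraight}--\ref{lem:qceuc}) or performs an arbitrarily small $C^1$-smoothing in neighborhoods of the junctions without affecting either bound.

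For condition (2) (the additive length bound), given any sub-interval $I\subseteq L$, set $I_i := I\cap A_i$ so that $I=\bigcup_{i=1}^N I_i$ is a decomposition into sub-intervals and $f(I)=\bigcup_{i=1}^N f(I_i)$ is the corresponding decomposition of the image. Since each $f|_{A_i}$ is an $\epsilon$-almost-isometry and each $I_i$ is a sub-interval of $A_i$, the hypothesis yields $|l(I_i)-l(f(I_i))|<\epsilon$ for every $i$. Adding these $N$ inequalities,
\[
\bigl| l(I) - l(f(I)) \bigr| \;\le\; \sum_{i=1}^{N} \bigl| l(I_i) - l(f(I_i)) \bigr| \;<\; N\epsilon,
\]
which is the required estimate. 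The proof is essentially bookkeeping; there is no substantive obstacle beyond the regularity remark at the junction points flagged above.
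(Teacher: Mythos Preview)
Your argument is correct and is exactly the elementary bookkeeping one expects; the paper in fact omits the proof entirely, listing this lemma among ``immediate observations, whose proofs we omit,'' so there is no alternative approach to compare against. Your flag about $C^1$-regularity at the junction points is apt and is implicitly absorbed by the paper's Remark following Definition~\ref{defn:aisom} (allowing ``$M\epsilon$-almost-isometry'' and, in practice, piecewise-$C^1$ maps as in Lemma~\ref{lem:qcnoskew}).
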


The following lemma is how the condition would be useful for in our constructions.

\begin{lem}\label{lem:qcnoskew}
Let $R$ and $R^\prime$ be two planar rectangles of the same height $h>1$ and moduli $m,m^\prime$ greater than $1$. Suppose $f:\partial R\to \partial R^\prime$ is vertex-preserving homeomorphism that maps the left and right edges by an isometry and is a $\epsilon$-almost-isometry on the top and bottom edges. If $\epsilon$ is sufficiently small, then $f$ can be extended to a $(1+C\epsilon)$-quasiconformal map from $R$ to $R^\prime$,where $C>0$ is some universal constant.
\end{lem}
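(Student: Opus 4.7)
The plan is to extend $f$ to the interior of $R$ by a vertical linear interpolation between the prescribed boundary values on the top and bottom edges, and then verify that the extension is almost-conformal via property A from the compendium. Identify $R = [0, w] \times [0, h]$ and $R' = [0, w'] \times [0, h]$ as axis-aligned rectangles in $\mathbb{R}^2$ (so $w, w' > h > 1$), and record the boundary values as $f(x, 0) = (f_0(x), 0)$, $f(x, h) = (f_1(x), h)$, and $f = \mathrm{id}$ on the two vertical edges (they have equal length $h$ and are mapped by isometry, hence pointwise the identity after the chosen affine identification). Define
\[
F(x, y) = \left(\frac{h - y}{h}\, f_0(x) + \frac{y}{h}\, f_1(x),\; y\right).
\]
Since $f_0(0) = f_1(0) = 0$ and $f_0(w) = f_1(w) = w'$, this agrees with $f$ on $\partial R$; and since the first coordinate of $F_x$ is strictly positive (see below), $F$ is a homeomorphism onto $R'$ sending horizontal lines to horizontal lines.

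The next step is to compute
\[
F_x = \left(\tfrac{(h-y)\, f_0'(x) + y\, f_1'(x)}{h},\; 0\right), \qquad F_y = \left(\tfrac{f_1(x) - f_0(x)}{h},\; 1\right).
\]
Clause (1) of Definition \ref{defn:aisom} gives $f_0'(x), f_1'(x) \in (1 - \epsilon, 1 + \epsilon)$, so their convex combination yields $\|F_x\| = 1 + O(\epsilon)$ at every interior point. The crucial input comes from clause (2): applied to the sub-interval $[0, x] \subset [0, w]$ (whose $f_i$-image has length $f_i(x)$), it gives $|f_i(x) - x| < \epsilon$ for $i = 0, 1$, hence
\[
|f_1(x) - f_0(x)| < 2\epsilon.
\]
Combined with $h > 1$, this produces $\|F_y\|^2 = 1 + O(\epsilon^2)$ and $|\langle F_x, F_y\rangle| = \|F_x\|\cdot|f_1(x) - f_0(x)|/h = O(\epsilon)$. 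Property A then furnishes the desired $(1 + C\epsilon)$-dilatation at every interior point, and property B patches across the boundary if needed.

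The one step that needs genuine care — and the point I would flag as the main obstacle — is the estimate on $|f_1(x) - f_0(x)|$. If one only had the pointwise $C^1$-dilatation bound of clause (1), the best one could integrate to would be $|f_i(x) - x| = O(\epsilon w)$, which in a rectangle of very large modulus would cause $\|F_y\|$ to blow up and the whole interpolation to fail. It is precisely the \emph{additive} error in clause (2) of Definition \ref{defn:aisom} that rescues the argument and makes the bound on the quasiconformal distortion independent of the moduli $m, m'$; this is the structural reason the almost-isometry condition was defined with the additive clause in the first place, and it is what allows the pieces in the decomposition $\mathcal{D}$ to be glued along their boundaries in \S 4.3--4.4 without any further correction.
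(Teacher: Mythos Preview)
Your proof is correct, and it is genuinely simpler than the paper's. The paper does not interpolate directly; instead it subdivides $R$ by vertical segments into sub-rectangles $R_1,\ldots,R_n$ each of modulus in $[1,2]$, observes (using clause~(2) of Definition~\ref{defn:aisom} and $h>1$) that the image segments joining $f(p_i)$ to $f(q_i)$ are $\epsilon$-almost-vertical, straightens each image piece via Lemma~\ref{lem:qcstraight}, and finally invokes the Ahlfors--Beurling extension on each sub-rectangle to correct the boundary values. The point of the subdivision is that the Ahlfors--Beurling constant depends on modulus, so one must first arrange the moduli to lie in a fixed compact interval. Your vertical linear interpolation bypasses all of this machinery: the explicit formula for $F$ is globally $C^1$, and the derivative estimate goes through uniformly because $|f_1(x)-f_0(x)|<2\epsilon$ holds \emph{independently of the width} $w$, exactly as you emphasize. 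Both proofs ultimately hinge on the same observation --- that clause~(2) controls the vertical shear $f_1-f_0$ additively rather than multiplicatively --- but your argument extracts the conclusion in one step, while the paper's route trades directness for a template (subdivide, straighten, Ahlfors--Beurling) that it reuses elsewhere.
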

\begin{proof}
Let $S$ be a Euclidean rectangle of modulus $m$. Then by mapping the rectangle to a unit disk and applying the Ahlfors-Beurling extension (\cite{AB}), any vertex-preserving piecewise-affine $C^1$ homeomorphism $f:\partial S \to \partial S$ of dilatation of the order of $1 + C\epsilon$ can be extended to a homeomorphism $f: S \to S$ which is $(1 + K\epsilon)$-quasiconformal, where $K$ depends only on $C$ and the modulus $m$ of $R$..  $K$ gets worse (larger) for the modulus $m$ very large or very small, but if $m$ lies in a compact set, say $[1,2]$, we get a uniform upper bound for $K$.\newline

The strategy is to subdivide $R$ into smaller rectangles of moduli between $1$ and $2$, and use the above fact.\newline

\begin{figure}
  \centering
  \includegraphics[scale=0.6]{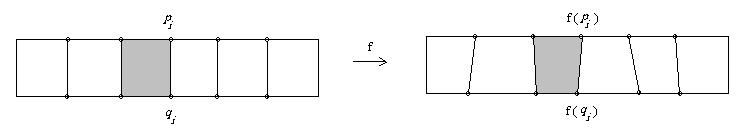}\\
  \caption{A boundary map that is ``almost-isometric" can be extended to an almost conformal map, by subdividing into smaller rectangles (Lemma \ref{lem:qcnoskew}). }\label{fig:img}
\end{figure}
Let $p$ and $q$ be the top and bottom corners on the left side. Choose a collection of points $p_1,p_2,\ldots,p_n$ on the top side, and $q_1,q_2,\ldots,q_n$  on the bottom such that\newline
(i) $l(\bar{pp_i})=\l(\bar{qq_i})$ for each $1\leq i\leq n$.\newline
(ii) On each side the points subdivide it into subintervals having lengths between $h$ and $2h$. (This is possible because the modulus $m>1$.)\newline

Consider the rectangles $R_1,R_2,\ldots,R_n$ obtained by connecting each pair $p_i,q_i$ by a straight line. By (ii) above, the modulus of each $R_i$ is between $1$ and $2$. \newline

Consider the images $f(p_i)$ and $f(q_i)$ on the top and bottom sides of $\partial R^\prime$.
By (i) above, property (2) of the definition of almost-isometry, and the height $h>1$, we have that the straight line joining $f(p_i)$ and $f(q_i)$ is $\epsilon$-almost-vertical, for each $i$. We call the resulting collection of almost-rectangles $R^\prime_1,R^\prime_2,\ldots, R^\prime_n$. We extend the boundary map $f$ to map each line from $p_i$ to $q_i$ to the line from $f(p_i)$ and $f(q_i)$ by an affine stretch (of dilatation $1 + O(\epsilon)$).\newline

By Lemma \ref{lem:qcstraight} and a horizontal affine scaling we have an almost-conformal map $h$ from each $R_i$ to $R^\prime_i$. To correct for this map differing from the map $f$ on $\partial R_i$, we consider the map $f\circ h^{-1}\vert_{\partial R^\prime_i}:\partial R^\prime_i \to \partial R^\prime_i$. This has dilatation $1 + O(\epsilon)$ and can be extended to an almost-conformal map $g$ by the Ahlfors-Beurling extension (the moduli of $R^\prime_i$ also lie in a compact subset slightly larger than $[1,2]$).  The map $g\circ h:R_i\to R^\prime_i$ agrees with $f$ on $\partial R_i$.\newline

These almost-conformal maps of each $R_i$ to $R^\prime_i$ piece together to give an almost-conformal map of $R$ to $R^\prime$. (The property of almost-conformality extends across the intermediate arcs.) \end{proof}

\textit{Remark.} Such an almost-conformal extension may fail to exist for a boundary map that is merely $C^1$ with small dilatation, without the additional condition (2) of Definition \ref{defn:aisom}. 

\subsection{Map for a rectangular piece}

Consider a typical rectangle $R=R_i$  in our decomposition of the grafted surface $X_t$ .  $R$ is bounded by geodesics on each vertical side and by leaves of the transverse horocyclic foliation on each horizontal side. This horocyclic foliation $\mathcal{F}\cap R$ gives a $C^1$ foliation of the rectangle.  Geodesic arcs belonging to the lamination $\lambda$ cut across the rectangle transverse to the foliation, and $R \setminus \lambda$ has countably many hyperbolic components, bounded by horizontal horocyclic arcs and vertical geodesic arcs. The goal of the section is to construct a quasiconformally equivalent `euclidean' model for $R$.

\subsubsection*{Working in the universal cover}

We shall work in the universal cover $\widetilde{X_t}$ of $X_t$, where we consider a (fixed) lift $\tilde{R}$ of $R$. Moreover we shall assume that the developing map $dev:\widetilde{X_t} \to \mathbb{C}P^1$ of the complex projective structure on $X_t$ is injective (and a homeomorphism) on $\tilde{R}$. It is injective whenever the transverse measure across $R$ is sufficienty small, so this condition can be ensured by subdividing $R$ vertically. The map for $R$ having arbitrary transverse width can then be obtained by piecing these divisions together: properties of quasiconformal extension tell us that if the map for each piece is almost-conformal, so is the concatenated map.\newline

By abuse of notation, we shall identify $\tilde{R}$ with its homeomorphic image on $\mathbb{C}P^1$, and consider it a planar domain (since it is a proper subset of $\mathbb{C}P^1$ it lies in an affine chart). \newline

The horizontal foliation $\mathcal{F}_{\vert R}$ lifts to the universal cover and to $\tilde{R}$ via the developing map. We denote it by $\widetilde{\mathcal{F}}$. The Thurston metric on $R\subset X_t$ is locally isometric to the projective metric on $\tilde{R}$ via $dev\circ u^{-1}$ where $u:\widetilde{X_t}\to X_t$ is the universal covering  (see Definition \ref{defn:projmet}).

\subsubsection*{A finite approximation}

$\widetilde{X_t}$ (identified as a domain of $\mathbb{C} \subset \hat{\mathbb{C}}$) is thought of as obtained by grafting the upper half plane identified as the universal cover of $X$, along the lifted measured lamination $\tilde{\lambda}$.  The grafting locus consists of a collection of infinitely many geodesics that can be approximated by a sequence of  \textit{finite} weighted subsets that produce approximations $\widetilde{X_i}$. (This can be thought of as approximating the Borel measure induced by $\lambda$ on $S^1\times S^1 \setminus \Delta$ by a sequence of sums of Dirac measures). We can further assume that these finite approximations\newline
(i) always include the geodesics $\gamma_l,\gamma_r$ that form the left and right edge of $\tilde{R}$ on $\tilde{X}$, and\newline
(ii) is maximal in the sense that the complement consists of ideal triangles.\newline

By (ii), there is a piecewise-horocyclic foliation $\widetilde{\mathcal{F}_i}$ on each finite approximation $\widetilde{X_i}$.\newline

\textit{Notation.} In what follows,  given a sub-arc $s$ of a leaf of $\mathcal{F}$, we shall denote  its \textit{hyperbolic} length as $l_h(s)$,  its \textit{total} length in the Thurston metric as $l(s)$, and  its \textit{euclidean} length as $l_e(s)$, which is defined as the difference $l(s) -l_h(s)$.\\

Let $s \subset \gamma_l$ denote the left edge of $\tilde{R}$. Then by (i) we can define a rectangle $\widetilde{R_i}$ on $\widetilde{X_i}$ as having $s$ as the left edge, leaves of the foliation $\widetilde{\mathcal{F}_i}$ as the two horizontal edges, and a segment on $\gamma_r$ as the right edge. Note that for all $i$, we can define the leaf  $l_i^y \in \widetilde{\mathcal{F}_i}\cap \widetilde{R_i}$ at `height $y$' to be the leaf intersecting $s$ at a distance of $y$ from the lower endpoint of $s$. We also define $l^y$ to be the leaf of $\widetilde{\mathcal{F}}\cap \tilde{R}$ at height $y$. \\

\textit{Remark.} Since the holonomy along $\widetilde{\mathcal{F}_i}$  (and $\widetilde{\mathcal{F}}$) preserves the hyperbolic length along leaves of $\widetilde{X_i}$,  this definition of `height' is well-defined,  that is, the leaf $l_i^y$ (and $l^y$) are at height $y$ all along  $\widetilde{R_i}$ (and $\widetilde{R}$).

\begin{defn}
A \textit{conformal metric} on a Riemann surface $\Sigma$ is a metric given by $\rho(z) \lvert dz\rvert^2$ in each local coordinate chart, for some function $\rho$  (the \textit{conformal factor}) on $\Sigma$. It is said to be of class $C^{1,1}$ if $\rho$ is differentiable with Lipschitz derivatives, in which case its $C^{1,1}$-\textit{norm} is defined to be $\lVert \rho\rVert_{1,1}$. A family of conformal metrics are said to \textit{converge pointwise} if the corresponding conformal factors converge pointwise.
\end{defn}

The following lemma states the known convergence results that have also been mentioned in \S 2 while describing grafting for general laminations.

\begin{lem}\label{lem:approx}
For the above sequence of finite approximations $\widetilde{X_i}$ the following are true:\newline
(i) The Thurston (or projective) metric on $\widetilde{X_i}$ is a conformal metric of class $C^{1,1}$.  They converge pointwise to the Thurston metric on $\widetilde{X_t}$. Moreover, the $C^{1,1}$ norms remain bounded as $i\to \infty$.\newline
(ii) The horocyclic foliations $\widetilde{\mathcal{F}_i}\to \widetilde{\mathcal{F}}$ in the sense that for all $0\leq y \leq l(s)$ we have $l_i^y \to l^y$ in the Hausdorff metric on compact subsets of $\mathbb{C}$. \newline
(iii) The rectangles $\tilde{R_i} \to \tilde{R}$ in the Hausdorff metric on compact subsets of $\mathbb{C}$.
\end{lem}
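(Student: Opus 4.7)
The plan is to derive each of (i), (ii), and (iii) from the foundational convergence results for grafting along a general measured lamination that were already invoked in \S 2, specifically those of Epstein--Marden \cite{EpMar} and Bonahon \cite{Bon}. The central input is that as $\tilde{\lambda_i}\to \tilde{\lambda}$ in the weak-$*$ topology the associated convex pleated planes converge in the Gromov--Hausdorff sense in $\mathbb{H}^3$, whence the developing maps $dev_i$ converge uniformly on compact subsets to $dev$, and the holonomy representations converge algebraically. Since the Thurston metric in a neighborhood of any point is the Poincar\'{e} pullback through the developing map from a maximal embedded disk in the image, uniform convergence of $dev_i$ on compacta immediately delivers the pointwise convergence of the conformal factors asserted in (i).

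For the $C^{1,1}$ regularity portion of (i), first note that on each finite approximation $\widetilde{X_i}$ the lamination $\tilde{\lambda_i}$ consists of finitely many weighted bending geodesics, so the Thurston metric is locally hyperbolic inside each ideal-triangle region of the complement and locally euclidean inside each grafted crescent; the only possible loss of regularity occurs across a crescent boundary. There, however, the metric is $C^{1,1}$, which follows from the theorem of Epstein--Marden that the boundary of the convex hull of a pleated plane in $\mathbb{H}^3$ is a $C^{1,1}$ surface. On any given compact subset of $\widetilde{X_t}$, the total bending measure of $\tilde{\lambda_i}$ is uniformly bounded in $i$, so the effective estimates of \cite{EpMar} furnish a uniform upper bound on the $C^{1,1}$-norms.

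For (ii), the horocyclic foliation on each $\widetilde{X_i}$ is constructed by fitting horocyclic arcs in the ideal triangles complementary to $\tilde{\lambda_i}$ and concatenating them across grafted crescents by horizontal segments orthogonal to the bending geodesic; the resulting leaves depend continuously on the bending data. Since $\tilde{\lambda_i}\to \tilde{\lambda}$ in Hausdorff metric on compacta, the leaf $l_i^y$---determined by its starting-point on the fixed sub-arc $s\subset \gamma_l$ together with the cumulative transverse data it traverses up to height $y$---converges to $l^y$ in the Hausdorff metric on compact subsets of $\mathbb{C}$. Assertion (iii) is then immediate: the vertical sides of $\widetilde{R_i}$ are $\gamma_l$ and $\gamma_r$, which are common to all approximations by condition (i) of the construction, and the horizontal sides $l_i^0$ and $l_i^{l(s)}$ converge by (ii). The principal obstacle is the uniform $C^{1,1}$ bound in (i), since Gromov--Hausdorff convergence of the pleated planes alone only yields $C^0$ control on the metrics, not control of their second derivatives; the resolution is to invoke the more quantitative regularity estimates from \cite{EpMar} rather than just the qualitative convergence statement.
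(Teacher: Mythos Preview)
Your approach is essentially the same as the paper's: both derive (ii) from the Gromov--Hausdorff convergence of the pleated planes (Epstein--Marden, Bonahon) and the resulting uniform convergence of developing maps, and both obtain (iii) as an immediate consequence of (ii) together with the fact that the vertical sides $\gamma_l,\gamma_r$ are fixed.

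There is one point worth correcting in (i). Your direct argument that the Thurston metric on a finite approximation is $C^{1,1}$ across a crescent boundary is fine (one can simply check that $|dz|/\mathrm{Im}(z)$ and $|dz|/|z|$ agree to first order along the imaginary axis), but the attribution to the Epstein--Marden theorem on the $C^{1,1}$ regularity of convex-hull boundaries is misplaced: that result concerns an embedded surface in $\mathbb{H}^3$, not the conformal factor of the Thurston metric on $\mathbb{C}P^1$. The paper instead cites Kulkarni--Pinkall \cite{KulPink} and Scannell--Wolf \cite{ScanWolf} (Lemma~2.3.1) for the $C^{1,1}$ regularity and the uniform bound on the norms; the ``effective estimates'' in \cite{EpMar} do not directly yield $C^{1,1}$ control of the projective metric. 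A second small point: in (ii) the paper is slightly more careful than you are, first arguing that $l_i^y$ converges to \emph{some} horizontal leaf of $\tilde{R}$ (from convergence of developing maps) and then using the metric convergence in (i) to conclude that the limiting leaf is at height exactly $y$; you implicitly fold these two steps together.
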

\begin{proof}
Part (i) involving the regularity of the Thurston metrics is a standard result (see \cite{KulPink} or Lemma 2.3.1 in \cite{ScanWolf}). Part (ii) follows from part (i) and the `bending' description of grafting (see \S2): in our choice of approximates the corresponding locally convex pleated planes corresponding to $\widetilde{F_i}$ converge in the Gromov-Hausdorff sense to the pleated plane corresponding to $\widetilde{F}$ by work of Epstein-Marden (\cite{EpMar}) and Bonahon(\cite{Bon}), and the developing maps (which are the hyperbolic Gauss maps from these pleated planes) converge uniformly on compact sets. This implies that as $i\to \infty$ the leaf segments $l_i^y$ on $\tilde{R_i}$ converge to \textit{some} horizontal leaf segment of $\tilde{R}$ in the Hausdorff metric, and part (i) implies that the height of the latter is also $y$.\\ Part (iii) is now an immediate consequence, since the rectangles  $\tilde{R_i}$ are defined in terms of the vertical left edge $\gamma_l$ and the segments $l_i^y$ for $0\leq y \leq l(s)$, which converge to those of $\tilde{R}$.
\end{proof}

\subsubsection*{Map for the finite approximation}

\begin{lem}\label{lem:finitemodel}
If the hyperbolic width of $\widetilde{R_i}$ is $\epsilon$ (sufficiently small) then there exists a $(1 + C\epsilon)$-quasiconformal map $\widetilde{f_i}$ from $\widetilde{R_i}$ to the euclidean plane, for some  universal constant $C>0$, that satisfies the following:\newline
(i) It takes the lower endpoint of the left vertical geodesic side $s$ to the origin.\newline
(ii) It is an isometry of $s$ onto a segment on the $y$-axis.\newline
(iii) Each leaf of $\widetilde{\mathcal{F}}\cap \widetilde{R_i} $  is mapped to a horizontal line in a length-preserving way.
\end{lem}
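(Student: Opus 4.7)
The finite approximation $\widetilde{X_i}$ has only finitely many grafting geodesics inside $\widetilde{R_i}$, which partition it into alternating hyperbolic strips $H_0,H_1,\ldots,H_k$ and euclidean crescents $E_1,\ldots,E_k$ (the inserted bending regions). My approach is to define $\widetilde{f_i}$ directly as the map that reads off each horocyclic leaf of $\widetilde{\mathcal{F}_i}\cap\widetilde{R_i}$ as a horizontal interval of length equal to its total Thurston length, so that conditions (ii)--(iii) are built in by construction; the almost-conformality will then be verified piece by piece by comparing this map with the explicit model maps of Lemma \ref{lem:qcthin} and Lemma \ref{lem:qceuc}.

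\textbf{Construction.} I would parametrize each leaf $l_i^y$ by Thurston arclength starting from its intersection with $s$, and send the arclength-$t$ point to $(t,y)$. The lower endpoint of $s$ maps to the origin, $s$ maps isometrically onto a segment of the $y$-axis, and each leaf maps isometrically onto a horizontal segment, giving (i)--(iii). The image of $\widetilde{R_i}$ is then a planar region bounded by $s$ on the left, by two horizontal segments on top and bottom, and on the right by the graph $x=L(y)$ where $L(y)$ is the total Thurston length of $l_i^y$.

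\textbf{Almost-conformality.} Restricted to a hyperbolic strip $H_j$, the map $\widetilde{f_i}$ factors as $\tau_j\circ f_j^H$, where $f_j^H$ is the qcthin map of Lemma \ref{lem:qcthin} applied to $H_j$ (taking its left edge to the $y$-axis and preserving horocyclic lengths) and $\tau_j(x,y)=(x+\sigma_j(y),y)$ is a horizontal shear whose value $\sigma_j(y)$ equals the total Thurston width to the left of $H_j$ at height $y$. Each euclidean crescent $E_\ell$ contributes a constant to $\sigma_j(y)$, so $\sigma_j'(y)$ reduces to a sum of the height-derivatives of horocyclic widths of the earlier hyperbolic strips. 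Normalizing coordinates so that $\widetilde{R_i}$ lies above the horocycle $y=1$, one checks that the pointwise variation of horocyclic width inside a geodesic-bounded hyperbolic strip is bounded by its width itself, and that the total hyperbolic width across $\widetilde{R_i}$ is $\epsilon$; together these force $|\sigma_j'(y)|=O(\epsilon)$. The shear $\tau_j$ is then $(1+O(\epsilon))$-quasiconformal by property A, and its composition with the almost-conformal $f_j^H$ is almost-conformal on $H_j$. An analogous factorization $\tau_j'\circ f_j^E$, now using the isometric model of Lemma \ref{lem:qceuc}, handles each euclidean crescent. Property B extends the almost-conformality across the measure-zero union of grafting geodesics, producing a $(1+C\epsilon)$-quasiconformal map on all of $\widetilde{R_i}$.

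\textbf{Main obstacle.} The delicate point is securing the pointwise bound $|\sigma_j'(y)|=O(\epsilon)$ uniformly in the number of pieces: a na\"ive sum of the individual horocyclic-width derivatives across the finitely many earlier hyperbolic strips could compound into an error much larger than $\epsilon$ as the finite approximation $\widetilde{X_i}$ becomes richer. The key is that the derivatives of horocyclic widths behave like the widths themselves once $y\geq 1$, so that the relevant sum is controlled by the \emph{total} hyperbolic width of $\widetilde{R_i}$ rather than by a count of the pieces, giving the required bound independently of $i$.
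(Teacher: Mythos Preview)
Your argument is correct, but it takes a genuinely different route from the paper's. Both proofs begin by observing that conditions (i)--(iii) determine $\widetilde{f_i}$ uniquely and that it is $C^1$; they diverge in how almost-conformality is checked.

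The paper first \emph{collapses} all the euclidean crescents, obtaining a single hyperbolic rectangle $\widetilde{R'}$ of total width $O(\epsilon)$, applies Lemma~\ref{lem:qcthin} \emph{once} to the whole of $\widetilde{R'}$ to get $f_0$, and then inductively splices the euclidean crescents back in via Lemma~\ref{lem:qceuc}. Each splicing shifts the portion to the right by a \emph{constant} (the euclidean width), so these insertions are conformal translations and no shear estimate is ever needed: the single application of Lemma~\ref{lem:qcthin} handles all hyperbolic pieces simultaneously.

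You instead treat each hyperbolic strip $H_j$ separately and must then control the shear $\tau_j$. Your summing argument for $|\sigma_j'(y)|=O(\epsilon)$ is valid: the per-strip bound $|w_\ell'|\le C\,w_\ell$ is a genuine fact about disjoint geodesics in $\mathbb{H}^2$ (for small widths the perpendicular distance satisfies $d''\approx d$ with nonnegative coefficients, hence $|d'|\le d$), and summing against total hyperbolic width $\epsilon$ gives the uniform bound independent of the number of pieces. A slicker justification, closer in spirit to the paper, is to note that $\sigma_j'(y)$ equals the derivative of the horocyclic distance from $s$ to a single separating geodesic---this is exactly the ``almost-vertical'' conclusion of Lemma~\ref{lem:qcthin} applied to the collapsed $\widetilde{R'}$---so the sum telescopes and the per-strip bound becomes unnecessary. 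The paper's collapse-then-splice organization thus sidesteps your main obstacle entirely, while your piecewise factorization is more explicitly constructive.
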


\begin{figure}\label{fig:graft1}
  \centering
  \includegraphics[scale=0.52]{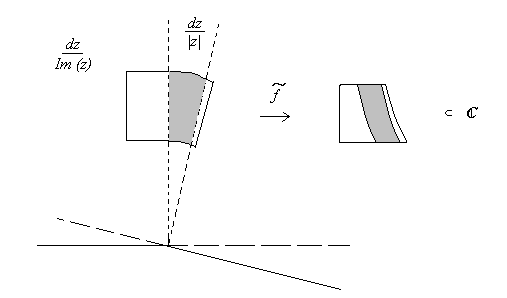}\\
  \caption{The map for the finite approximation case. The figure on the left shows a rectangle $\widetilde{R_i}$: the hyperbolic part (unshaded) is mapped by the straightening map of Lemma \ref{lem:qcthin}, and the euclidean part (shown shaded) is then spliced in by the map from Lemma \ref{lem:qceuc}.}
  \end{figure}

\begin{proof}
The map $\widetilde{f_i}$ is uniquely determined and injective by conditions (i)-(iii), and is also $C^1$ since the the grafted metrics are $C^1$ and the foliation is $C^1$, the horizontal leaves being integral curves of a nowhere-zero Lipschitz vector field. In particular it is a quasiconformal map, and it remains to show that the dilatation is $1 + O(\epsilon)$, and it is enough to check that almost everywhere.\newline
  
Recall that the projective metric (Definition \ref{defn:projmet}) is the Poincar\'{e} metric on the maximal disk at every point. If one starts with a maximal disk in the upper half plane model with metric $dz/Im(z)$,  and the $x=0$ axis is the lift of the grafting curve, then grafting introduces a sector with the (euclidean) metric $dz/\lvert z\rvert$.\newline

$\widetilde{R_i}$ consists of regions that alternately lie in the hyperbolic part (of width $O(\epsilon)$) and the euclidean part (see the figure) with finitely many separating geodesic arcs.\newline

Collapse the euclidean regions of $\widetilde{R_i}$ to get a rectangle $\widetilde{R^\prime}$ of hyperbolic width $O(\epsilon)$. By Lemma \ref{lem:qcthin} there is a map $f_0$ of $\widetilde{R^\prime}$ to $\mathbb{R}^2$ satisfying (i)-(iii) above. Note that the proof of Lemma \ref{lem:qcthin} shows that separating geodesic arcs are mapped to almost-vertical arcs on the plane.\newline

Starting with $f_0$ we now inductively splice in each euclidean region to $\widetilde{R^\prime}$ and extend the map already constructed, to the larger domain, such that (iii) is satisfied. ((i) and (ii) are automatically satisfied for all these extensions.) By the above observation on the interface arcs being almost-vertical, and Lemma \ref{lem:qceuc}, these extensions are almost-conformal on each region.\newline

Since the interface arcs are of measure zero, the final map $f_n = \widetilde{f_i}$ thus constructed is $(1 + O(\epsilon))$-quasiconformal almost everywhere, as required.\newline
\end{proof}

\subsubsection*{Taking a limit}
By the above lemma we now have a sequence 
\begin{equation*}
\widetilde{f_i}:\widetilde{R_i}\to \mathbb{R}^2
\end{equation*}
of almost-conformal maps.

\begin{lem}\label{lem:model1}
The maps $\widetilde{f_i}$  converge uniformly to an almost-conformal map $\tilde{f}:\tilde{R}\to \mathbb{R}^2$, that satisfies the conditions (i)-(iii) in the above lemma.
\end{lem}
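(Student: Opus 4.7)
The plan is to define the candidate limit map $\tilde{f}:\tilde{R}\to \mathbb{R}^2$ directly from the three conditions (i)--(iii), show that $\widetilde{f_i}\to \tilde{f}$ locally uniformly, and then deduce almost-conformality from the classical stability of quasiconformality under uniform limits. First I would set, for each $p\in \tilde{R}$, $y(p)$ to be the height at which the leaf of $\widetilde{\mathcal{F}}$ through $p$ meets the left edge $s$, and $d(p)$ to be the Thurston length of the sub-arc of that leaf from $s$ to $p$, and define $\tilde{f}(p) = (d(p), y(p))$. Conditions (i)--(iii) then hold by construction.

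Next I would establish pointwise convergence $\widetilde{f_i}(p) \to \tilde{f}(p)$ for each $p \in \tilde{R}$. By Lemma \ref{lem:approx}(iii), $p\in \widetilde{R_i}$ for all large $i$; the leaf of $\widetilde{\mathcal{F}_i}$ through $p$ meets $s$ at some height $y_i(p)$, and the horizontal coordinate of $\widetilde{f_i}(p)$ is the length $d_i(p)$ along that leaf from $s$ to $p$, measured in the $\widetilde{X_i}$-Thurston metric. Using Lemma \ref{lem:approx}(ii) to pass the leaves to the limit, and Lemma \ref{lem:approx}(i) (pointwise convergence of conformal factors with uniformly bounded $C^{1,1}$ norms) to pass the length integrals to the limit, I obtain $y_i(p)\to y(p)$ and $d_i(p)\to d(p)$. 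To promote pointwise to locally uniform convergence on any compact $K\subset \tilde{R}$ (which is contained in $\widetilde{R_i}$ for large $i$), I would invoke the standard compactness property of families of $(1+C\epsilon)$-quasiconformal maps normalized to fix the origin: this yields equicontinuity on $K$, and equicontinuity plus pointwise convergence yields uniform convergence. The resulting $\tilde{f}$ is therefore continuous, and since it agrees with an isometry on $s$ it is non-constant; the classical theorem that a locally uniform limit of $K$-quasiconformal homeomorphisms is either constant or $K$-quasiconformal with the same $K$ (see, e.g., \cite{Ah1}) then gives that $\tilde{f}$ is $(1+C\epsilon)$-quasiconformal, i.e.\ almost-conformal.

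The step I expect to be the main obstacle is the convergence $d_i(p)\to d(p)$ of the Thurston lengths along the moving horocyclic leaves, since these integrals are taken against distinct conformal metrics along distinct leaves. Reconciling them requires using the uniform $C^{1,1}$ bound of Lemma \ref{lem:approx}(i) in tandem with the Hausdorff convergence of leaves from Lemma \ref{lem:approx}(ii): together they provide enough regularity for length integrals along Hausdorff-convergent arcs $l_i^y$, with respect to pointwise-convergent uniformly bounded conformal factors, to pass to the limiting integral along $l^y$. Once this convergence is established, the remaining appeals to normal-family equicontinuity and quasiconformal stability are routine, and verification of (i)--(iii) for $\tilde{f}$ is immediate from the construction.
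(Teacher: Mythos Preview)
Your overall strategy---define the candidate limit $\tilde f$ directly from (i)--(iii), establish convergence, and invoke stability of the dilatation bound under locally uniform limits---is the same as the paper's, and your identification of the length convergence $d_i(p)\to d(p)$ as the crux is accurate. The paper argues uniform convergence in exactly the way you outline, combining Lemma~\ref{lem:approx}(i) and (ii).

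Where the arguments diverge is in the passage to almost-conformality. You restrict the $\widetilde{f_i}$ to a compact $K\subset\tilde R$ which you assert is contained in $\widetilde{R_i}$ for all large $i$, and then run the normal-family/stability argument on that common domain. But Hausdorff convergence $\widetilde{R_i}\to\tilde R$ (which is all Lemma~\ref{lem:approx}(iii) gives) does not by itself guarantee that interior points of $\tilde R$ eventually lie in $\widetilde{R_i}$: the $\widetilde{R_i}$ are regions in \emph{different} grafted planes $\widetilde{X_i}\subset\mathbb{C}$, and one needs more than Hausdorff closeness to get the Kuratowski-type containment you use. The paper sidesteps this by passing to the \emph{inverses} $\widetilde{g_i}=\widetilde{f_i}^{-1}$: the images $\widetilde{f_i}(\widetilde{R_i})\subset\mathbb{R}^2$ are explicit near-rectangular regions (left side on the $y$-axis, width converging), so one can readily exhibit a fixed open set $\tilde U$ around $\tilde f(x)$ eventually contained in all of them. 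The stability theorem is then applied to the $\widetilde{g_i}$ on this fixed $\tilde U$, and one transfers back via $\tilde f=\tilde g^{-1}$.

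So your proposal is correct in outline but has a soft spot precisely where the paper inserts its inverse-map trick; you would either need to justify the domain containment $K\subset\widetilde{R_i}$ from the specific geometry of the construction (shared left edge $s$, right edge on $\gamma_r$, convergent horizontal leaves), or adopt the paper's device of switching to the images, where the required containment is transparent.
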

\begin{proof}
The uniform convergence follows from parts (i) and (ii) of Lemma \ref{lem:approx}: part (ii) says the each leaf of $\widetilde{R_i}$ converges \textit{as a set} to the corresponding leaf of $\tilde{R}$, and by part (i) the \textit{lengths} also converge, and then the uniform convergence follows from the definition of $f$ (distance preserving along the leaves). To get the statement on almost-conformality we employ a trick of considering the sequence of \textit{inverse} maps $\widetilde{g_i} = \widetilde{f_i}^{-1}$. For an arbitrary $x\in \tilde{R}\setminus \partial\tilde{R} $ there is, by part (iii) of Lemma \ref{lem:approx}, an open neighborhood $\tilde{U}\subset \mathbb{R}^2$ containing $\tilde{f}(x)$, such that $\tilde{U}$ is contained in $\widetilde{f_i}(\widetilde{R_i})$ for sufficiently large $i$. The sequence $\widetilde{g_i}\vert_{\tilde{U}}$ are a uniformly converging sequence of $(1+O(\epsilon))$-quasiconformal maps from a fixed domain $\tilde{U}$ to $\mathbb{C}$. The limit $\tilde{g}=\tilde{f}^{-1}$ is hence $(1+O(\epsilon))$-quasiconformal on $\tilde{U}$, and so is $\tilde{f} = \tilde{g}^{-1}$ in a neighborhood of $x$.\newline
Note that $f$ is height-preserving: this follows from parts (ii) and (iii) of Lemma \ref{lem:finitemodel} (which are preserved in the limit).
\end{proof}

\subsubsection*{The almost-conformal model}
By the previous lemma we have obtained an almost-conformal map $\tilde{f}$ from $\tilde{R}$ to $\mathbb{R}^2$. Together with the local isometry $dev\circ p^{-1}$ this gives an almost-conformal map of $R\subset X_t$ to a planar domain. We conclude the construction of a quasiconformal model for $R$ by noting that this planar domain can be almost-conformally straightened to a rectangle.\newline

\textit{Notation.} Recall that the total width $W$ of the rectangle $R$ is the supremum of the lengths of the leaves of $\mathcal{F}\cap R$, and the euclidean width $W_e$ of $R$ is the supremum of the \textit{euclidean}  lengths of the leaves of $\mathcal{F}\cap R$.\\

By construction (see Lemma \ref{lem:thinrect}), the hyperbolic width of $R$ is less than $\epsilon$, and its height $h$ is greater than $1$. We also assume that its total width $W$ is greater than $1$, that is, one has grafted enough for Lemma \ref{lem:widerect} to hold.

\begin{lem}[Map for a rectangle]\label{lem:rectModel}
With the above assumptions, there exists a  height-preserving $(1+C\epsilon)$-quasiconformal map $\bar{f}$ from $R$ to a euclidean rectangle of width $W_e$ and height $h$. (Here $C>0$ is some universal constant.)
\end{lem}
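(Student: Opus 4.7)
The plan is to post-compose the almost-conformal map $\tilde f$ from Lemma \ref{lem:model1} with the local isometry $dev\circ u^{-1}$ to obtain an almost-conformal map $\phi: R \to D \subset \mathbb{R}^2$ onto a planar domain $D$, then to straighten $D$ to a euclidean rectangle using Lemma \ref{lem:qcstraight}, and finally to apply a small horizontal rescaling to adjust the width from $\sup g$ to exactly $W_e$.

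The domain $D$ is bounded by a vertical segment $\{0\}\times[0,h]$ on the left (by conditions (i)--(ii) of Lemma \ref{lem:model1}), horizontal segments along the top and bottom (the images of the boundary leaves of $\mathcal{F} \cap R$, by condition (iii)), and on the right by a graph $x = g(y)$, where $g(y)$ denotes the total Thurston length of the horizontal leaf at height $y$. Since the transverse measure of $\lambda$ is invariant under the horocyclic holonomy, the euclidean contribution to every leaf of $\mathcal{F}\cap R$ is the same constant $W_e$, so $g(y) = W_e + l_h(y)$ where $l_h(y) \in [0,\epsilon]$ by Lemma \ref{lem:thinrect}; in particular $g(y) \in [W_e, W_e+\epsilon]$.

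The essential estimate is the derivative bound $|g'(y)| = O(\epsilon)$ almost everywhere. This is built into the construction of $\tilde f$ in Lemma \ref{lem:finitemodel}: inside each euclidean sub-region the map of Lemma \ref{lem:qceuc} is an isometry sending the (vertical) leaf of $\lambda$ that bounds the region on the right to a vertical segment, while inside each hyperbolic sub-region of width less than $\epsilon$ the straightening map of Lemma \ref{lem:qcthin} sends the right boundary to an $\epsilon$-almost-vertical arc (the concluding assertion of that lemma). This pointwise bound persists under the uniform convergence established in Lemma \ref{lem:model1}.

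With these estimates in hand, the hypotheses of Lemma \ref{lem:qcstraight} are verified with $g_1 \equiv 0$ and $g_2 = g$: condition (i) is trivial, (ii) is the derivative bound above, and (iii) holds because $W_e \geq W - \epsilon$ is large by Lemma \ref{lem:widerect}, so $|b/g(y) - 1| \leq \epsilon/W_e \ll \epsilon$, where $b := \sup g \in [W_e, W_e+\epsilon]$. Applying the lemma yields a height-preserving almost-conformal map from $D$ onto a euclidean rectangle of height $h$ and width $b$. A concluding horizontal affine rescaling by the factor $W_e/b = 1 + O(\epsilon/W_e)$ adjusts the width to exactly $W_e$ while remaining almost-conformal and height-preserving, and composing everything produces the desired map $\bar f$. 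I expect the main technical hurdle to be the derivative estimate on $g$, whose justification amounts to tracing the almost-vertical character of the right boundary through the piecewise construction of Lemma \ref{lem:finitemodel} and then through the limiting argument of Lemma \ref{lem:model1}.
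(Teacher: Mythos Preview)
Your proposal is essentially the same as the paper's: compose the map $\tilde f$ from Lemma~\ref{lem:model1} (via $dev\circ u^{-1}$) with the straightening map of Lemma~\ref{lem:qcstraight}, then correct the width. The paper even omits your final rescaling step, writing ``width $W_e$'' directly, so your version is slightly more careful there.

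The one point where you diverge is the justification of $|g'(y)|=O(\epsilon)$. The paper argues this directly from hyperbolic geometry: since $g(y)=W_e+W_h(y)$ with $W_e$ constant, and $W_h(y)$ is the hyperbolic distance at height $y$ between two geodesics that start within $O(\epsilon)$ of each other and have length $h>1$, elementary hyperbolic geometry gives $|W_h'(y)|=O(\epsilon)$. Your route instead traces the almost-vertical property through the piecewise construction of Lemma~\ref{lem:finitemodel} and then through the limit in Lemma~\ref{lem:model1}. This is morally fine, but note that uniform convergence alone does not transfer a pointwise derivative bound to the limit; what survives is a Lipschitz bound, and Lemma~\ref{lem:qcstraight} as stated asks for $C^1$ data. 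The cleanest fix is simply to observe, as the paper does, that the limit $g$ is already known explicitly as $W_e+W_h(y)$, a smooth function with the required derivative bound --- so your limiting argument is unnecessary here.
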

\begin{proof}
By the previous lemma we have a height-preserving almost-conformal map $f = \tilde{f}\circ dev\circ p^{-1}$ from $R$ to a planar region $D$. The euclidean width $W_{e}(y)$ (total minus hyperbolic) of a leaf at height $y$ is a constant independent of height (it only depends on the total transverse measure of $R$). Since the hyperbolic width $W_h(y)$ at height $y$ is $O(\epsilon)$ (Lemma \ref{lem:thinrect}), we have that $\frac{W}{W(y)} = 1 + O(\epsilon)$ where $W(y) = W_h(y) + W_e(y)$ is the total width of $D$ (and also of $R$) at height $y$. Since the two vertical sides of $R$ are geodesic segments of length $h>1$ on the (ungrafted) hyperbolic surface at distance $O(\epsilon)$, it follows from hyperbolic geometry that $\left\lvert \frac{dW_h(y)}{dy} \right\rvert = O(\epsilon)$. Hence by an application of Lemma \ref{lem:qcstraight} we obtain a height-preserving almost-conformal map from $D$ to a euclidean rectangle of width $W_e$, and the required map $\bar{f}$ from $R$ to $W_e$ is obtained by precomposing this with $f$.
\end{proof}

\begin{cor}[Almost-isometric]\label{cor:rectnoskew}
Let $L$ and $\bar{L}$ be the top and bottom edges of $R$. The map $\bar{f}$ constructed above is an $\epsilon$-almost-isometry on $L$ and $\bar{L}$, and isometric on the other two sides. 
\end{cor}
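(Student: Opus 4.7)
The plan is to track the map $\bar f$ through the two stages of its construction in Lemma \ref{lem:rectModel}: first the height-preserving almost-conformal map $f = \tilde{f}\circ dev \circ p^{-1}$ from $R$ to the planar domain $D$, and then the horizontal straightening of Lemma \ref{lem:qcstraight} from $D$ to the target Euclidean rectangle of width $W_e$ and height $h$. Since both stages are height-preserving and explicit, the four sides can be analyzed one by one.

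For the vertical sides I argue isometry directly. On the left edge $s$, condition (ii) of Lemma \ref{lem:finitemodel} says $\tilde f$ is an isometry onto a segment of the $y$-axis, and the subsequent straightening map $(x,y)\mapsto (b(x-g_1(y))/(g_2(y)-g_1(y)),y)$ fixes the $y$-axis pointwise (since $g_1\equiv 0$ there), so $\bar f|_s$ is an isometry onto the left side of the target rectangle. For the right edge, the key observation is that Thurston's horocyclic foliation $\mathcal{F}$ has holonomy along leaves of $\lambda$ that preserves hyperbolic arc length, so parametrizing the right edge (a geodesic subarc of a leaf of $\lambda$) by hyperbolic arc length from its bottom endpoint agrees with the global ``height'' parameter used to label leaves of $\mathcal{F}$. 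Combined with condition (iii) of Lemma \ref{lem:finitemodel} and the height-preservation of the straightening map, and the fact that the image of the right edge is the vertical segment $\{W_e\}\times [0,h]$, this yields isometry on the right edge as well.

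For the horizontal edges $L$ and $\bar L$, which are leaves of $\mathcal F$ at heights $h$ and $0$ respectively, condition (iii) of Lemma \ref{lem:finitemodel} makes $\tilde f$ length-preserving, producing a horizontal segment of length $W(h)=W_h(h)+W_e$ in $D$ (and similarly for $W(0)$). The straightening map then acts along this segment as a uniform affine scaling by the factor $W_e/W(h)$. Since $W_h(h)\leq W_h<\epsilon$ by Lemma \ref{lem:thinrect} and $W_e>1$ by Lemma \ref{lem:widerect} (applied after the fixed trimming by $4$ in the decomposition $\mathcal D$), the derivative of $\bar f$ along $L$ (parametrized by Thurston arc length) is the constant
\[
d = \frac{W_e}{W(h)}, \qquad |d-1| = \frac{W_h(h)}{W_h(h)+W_e} \leq W_h(h) < \epsilon,
\]
verifying condition~(1) of Definition \ref{defn:aisom}. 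The additive length discrepancy on a sub-interval of length $\ell\leq W(h)$ is
\[
\ell\cdot\frac{W_h(h)}{W(h)} \leq W_h(h) < \epsilon,
\]
which is condition~(2). The same computation at height $0$ handles $\bar L$.

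The only delicate point is matching hyperbolic arc length along the right vertical geodesic with the height parameter, which rests on Thurston's measure-preserving horocyclic-foliation holonomy; once that identification is made, all remaining verifications reduce to the explicit formulas assembled for $\bar f$ in Lemma \ref{lem:rectModel}.
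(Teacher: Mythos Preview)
Your proof is correct and follows essentially the same route as the paper's. The paper's argument is more compressed: for the horizontal edges it simply notes that $\bar f|_L$ is an isometry (from Lemma~\ref{lem:model1}) followed by an affine stretch (from Lemma~\ref{lem:qcstraight}), hence affine, and then invokes Lemma~\ref{lem:noskew1} directly rather than computing the dilatation and additive error by hand as you do; for the vertical edges it just says ``height-preserving implies isometric,'' relying implicitly on the earlier Remark (after the definition of $l_i^y$) that the horocyclic-foliation holonomy makes the height parameter agree with arc length on both geodesic sides. Your unpacking of that Remark for the right edge and your explicit verification of conditions (1) and (2) of Definition~\ref{defn:aisom} are accurate and amount to the same content.
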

\begin{proof}
Consider the top edge $L$. The map of Lemma \ref{lem:model1} is an isometry of $L$ and the map (from Lemma \ref{lem:qcstraight}) used in straightening step in the proof of the previous lemma is affine on horizontal lines, hence the composition (the map $\bar{f}$) is affine on $L$. Since $l(L) = W(h)$ and $l(\bar{f}(L))=W_e$ differ by $O(\epsilon)$ we can apply Lemma \ref{lem:noskew1} to conclude that $\bar{f}$ is an $\epsilon$-almost-isometry on $L$.
The proof for the bottom edge $\bar{L}$ is identical.\\
The property of being ``height-preserving" implies that $\bar{f}$ is isometric on the vertical (left and right) sides of $R$.
\end{proof}

\subsection{Map for a pentagonal piece}
The purpose of this section is to define a quasiconformal map of each pentagonal piece $P$ of the decomposition $\mathcal{D}$ (\S4.1.3) to a planar region that is almost-conformal for ``most" of $P$ (Lemma \ref{lem:hexmap}). The map of is obtained by ``straightening" leaves of a foliation through $P$. These maps together with those from the previous section, will be assembled in \S4.5 to define the map of the grafted surface.

\subsubsection*{The foliation $\mathcal{F}$} An ideal hyperbolic triangle has a partial foliation by horocyclic arcs which restricts to give a partial foliation of the ``$2\pi/3$-sector" $\hat{S}$. When realized in the upper half plane $\mathbb{H}^2$ with $\gamma$ a vertical geodesic and $p$ the point $\frac{i\sqrt{3}}{2}$, the leaves of \textit{non-negative height} are the horizontal segments starting at $y=1$. In general, the \textit{height} of a leaf is the logarithm of its $y$-coordinate of the point where it intersects $\gamma$. We shall work with a horizontal foliation (that we continue to denote by $\mathcal{F}$) that extends the horocylic foliation to the whole of  $\hat{S}$ as follows:  \newline

Let $a$ be the geodesic arc from $p$ which is orthogonal to $\gamma$. This divides $\hat{S}$ into two parts, and on one of them we define the modified $C^1$ foliation to be one that \\
(1) agrees with the horocyclic foliation for height greater than $D= \ln(1/\epsilon)$ (when the leaves have width less than $\epsilon$),\\
(2) interpolates between the leaf at height $D$ and the arc $a$ in such a way that the lengths of the leaves is a decreasing $C^1$ function of (nonnegative) height, and\\
(3) has each leaf orthogonal to $\gamma$.\\

The foliation on the other part of $\hat{S}$ is obtained by reflecting across $a$. The above length function is $C^1$ except at height $0$.

\begin{figure}
  \centering
  \includegraphics[scale=0.3]{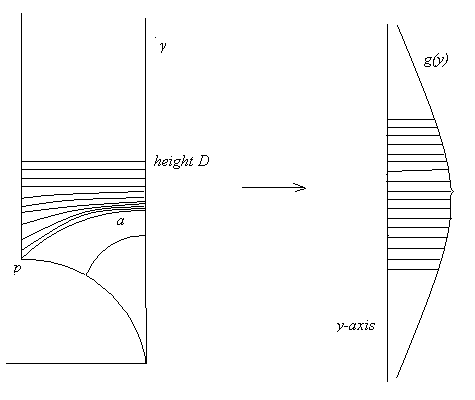}\\
  \caption{The map for the sector (shown in the upper half space model of $\mathbb{H}^2$) in Lemma \ref{lem:trunc} straightens the interpolating foliation between $a$ and the horocyclic leaf at height $D$. }\label{fig:img}
\end{figure}

\subsubsection*{Constructing the map} Let $S_H$ denote a ``truncated $2\pi/3$-sector" truncated at leaves of $\hat{\mathcal{F}}$ at height $H$ (we take $H> D =  \ln(1/\epsilon)$).
Recall from \S 4.1.3 that a pentagonal piece $P$ is made by appending a strip $S$ of the adjacent grafted rectangles of (euclidean) width $2$ to the geodesic side of such an $S_H$ coming one of the ideal hyperbolic triangles from the collection $\{T_j\}_{1\leq j\leq m}$.\\

That is, we have:
\begin{center}
$P= S_H\cup S$ and $S_H\cap S = \gamma$.
\end{center}

Condition (3) above ensures that this foliation  $\mathcal{F}$ on $\hat{S}$ matches in a $C^1$ way with the partial foliation $\mathcal{F}$ on the grafted rectangles on the other side of $\gamma$, producing a ``horizontal" foliation of $P$. Each leaf of $\mathcal{F}$ in $P$ has a height (in the interval $[-H,H]$) obtained by following it to meet the geodesic $\gamma$, and considering the height of that point of intersection. \\

We define the map of the pentagonal piece $P$  by defining it on the two pieces $S_H$ and $S$: the next lemma deals with the former, and for the piece $S$ we already have Lemma \ref{lem:rectModel}, these are put together in Lemma \ref{lem:hexmap}.

\begin{lem}[Straightening $S_H$]\label{lem:trunc}
There is a quasiconformal homeomorphism $f:S_H\to \mathbb{R}^2$ such that \newline
(i) Each leaf of ${\mathcal{F}}$ is mapped isometrically to a horizontal line segment.\newline
(ii) For all $y$, the left endpoint of the image of the leaf at height $y$ is $(0,y)$.\newline
(iii) The quasiconformal distortion of $f$ is $1 + O(\epsilon)$ at all points of $S_H$ at height $\lvert h\rvert > D = \ln(1/\epsilon)$.\newline
(iv) $\gamma$ is mapped to a graph of a function $g$ over the $y$-axis that is  $C^1$ except at $0$, and is almost-vertical and $\lvert g(y)\rvert = O(\epsilon)$ at points with $\lvert y\rvert > D$. Moreover, $\sup\limits_{y} g(y) = g(0)<1$.
\end{lem}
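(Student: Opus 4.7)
My approach is to define $f$ leaf-by-leaf: for each $y \in [-H,H]$, let $\ell_y$ denote the leaf of $\mathcal{F}$ at height $y$ and $L(y)$ its hyperbolic length inside $S_H$, and set $f$ restricted to $\ell_y$ to be the unique orientation-preserving isometry onto $[0,L(y)]\times\{y\}$ sending the endpoint of $\ell_y$ away from $\gamma$ to $(0,y)$. Properties (i) and (ii) then hold by construction, and the $\gamma$-endpoints of leaves assemble to $\{(L(y),y) : |y|\leq H\}$, so $f(\gamma)$ is the graph of $g(y):=L(y)$ over the $y$-axis.

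Property (iii) together with the ``almost-vertical'' and $|g(y)|=O(\epsilon)$ clauses of (iv) reduce to Lemma~\ref{lem:qcthin}. On the horocyclic piece $\{|y|>D\}\subset S_H$, the foliation $\mathcal{F}$ is genuinely horocyclic and the ambient region is a thin hyperbolic strip of width $<\epsilon$ bounded by $\gamma$ and the outer side of $\hat{S}$. Modulo reflection across $a$ for the negative-height half, $f$ there coincides with the horocycle-straightening map of Lemma~\ref{lem:qcthin} and hence is $(1+O(\epsilon))$-quasiconformal. In the normalized upper half-plane with $p=i\sqrt{3}/2$, the outer side of $\hat{S}$ asymptotes to a vertical geodesic $x=x_\infty$ as one approaches the ideal vertex at $\infty$, so for $h>D$ the horocyclic leaf has hyperbolic length $L(h)=|x_\infty|/e^h=O(\epsilon)$ and $|L'(h)|$ is of the same order; this yields $|g(y)|=O(\epsilon)$ and the almost-verticality of $f(\gamma)$ for $|y|>D$, with the negative-height bound following by reflection across $a$.

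For the remaining assertions of (iv), the length function $L(y)$ is $C^1$ away from $y=0$ because $\mathcal{F}$ is $C^1$ off of $a$; it is strictly decreasing on $[0,D]$ by item (2) in the construction of $\mathcal{F}$; and it is symmetric about $0$ by reflection across $a$. Thus $g$ is $C^1$ except at $0$ and attains its supremum there. The explicit value $g(0)$ is the hyperbolic length of the semicircle $a=\{x^2+y^2=3/4\}$ from $p=(0,\sqrt{3}/2)$ to the sector's inner corner --- the centroid of the underlying ideal triangle at $(\sqrt{3}/4,3/4)$ in this normalization. Parametrizing $a$ by $(\sqrt{3}/2)(\cos\psi,\sin\psi)$ and integrating the hyperbolic arclength $d\psi/\sin\psi$ from $\pi/3$ to $\pi/2$ gives $g(0)=\tfrac{1}{2}\ln 3 < 1$.

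Global quasiconformality of $f$ then needs to be checked on the interpolating annulus $\{|y|\leq D\}$. There $\mathcal{F}$ is $C^1$ away from $a$, the leaves and their lengths vary $C^1$ in $y\neq 0$, and the region is compact with geometry depending only on $\epsilon$. Writing $f^{-1}$ via the hyperbolic arclength parametrization of the leaves and estimating its partials shows $f$ is a $C^1$ diffeomorphism with finite (possibly $\epsilon$-dependent) dilatation on each half of the annulus, and property B above upgrades this to quasiconformality of $f$ across the measure-zero arc $a$. I expect the main technical nuisance to be verifying that the outer boundary of $\hat{S}$ really does asymptote rapidly enough to the vertical geodesic $x=x_\infty$ for $L'(h)$ to be $O(\epsilon)$ at $h>D$, as this asymptotic estimate is what delivers the almost-verticality required by (iv).
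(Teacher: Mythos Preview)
Your approach is the paper's: define $f$ leaf-by-leaf so that (i) and (ii) hold by construction, identify $g$ with the leaf-length function $L$, invoke a thin-strip straightening lemma for (iii) and the quantitative parts of (iv), and argue $C^1$-regularity plus compactness on the interpolating region for global quasiconformality. Your value $g(0)=\tfrac12\ln 3=\ln\sqrt 3$ agrees with the paper.

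Two small points. First, in the paper's conventions $p$ \emph{is} the centroid (the $2\pi/3$-vertex of the sector), not a point on $\gamma$; the arc $a$ runs from the centroid to $\gamma$, and its length is the inradius $\ln\sqrt 3$ of the ideal triangle. Your computation reaches the same number via a different placement of the triangle, so no harm done. Second --- and this dissolves the ``technical nuisance'' you flag at the end --- the outer boundary of $S_H^+$ at heights $>D$ is not merely asymptotic to a vertical geodesic, it \emph{is} one: the edge from the centroid to the midpoint of the horocyclic truncation near the vertex at $\infty$ lies on the axis of symmetry of the ideal triangle through that cusp, which is a vertical line in the upper half plane. Thus for $h>D$ the sector is exactly a strip between two parallel vertical geodesics of hyperbolic width $O(\epsilon)$, the length $L(h)$ is an explicit decaying exponential with $|L'(h)|=L(h)=O(\epsilon)$, and the straightening lemma applies directly with no asymptotic estimate needed. (The paper cites Lemma~\ref{lem:thinflipped} rather than Lemma~\ref{lem:qcthin}, since the edge sent to the $y$-axis is the one opposite $\gamma$; the two lemmas differ only by a reflection.)
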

\begin{proof}
We map the part $S_H^+$ above $a$ (of nonnegative height) and extend to the whole of $S_H$ by reflection. We denote this map of $S_H^+$ by $f_+$.\newline

Note that (i) and (ii) uniquely determines $f_+$, and ensures it is injective. The fact that the foliation is $C^1$ and the lengths of the leaves is $C^1$ in (positive) height ensures that $f_+$ is $C^1$, and is hence a homeomorphism to its image. (iii) follows from Lemma \ref{lem:thinflipped}.\newline
The fact that $f_+$ is isometric on the leaves implies that the function $g$ that describes the image of $\gamma$ as a graph is the length-function of the leaves. This is $C^1$, except at $0$, by property (2) of the foliation $\mathcal{F}$. Part (iv) again follows from Lemma \ref{lem:thinflipped} (a calculation similar to Lemma \ref{lem:qcthin}). In fact, $\lvert g(y)\rvert \to 0$ exponentially as $\lvert y\rvert \to \infty$. The last statement of the lemma follows from the fact that the length function was chosen to be decreasing for increasing height (or decreasing height, by reflection) and the length at height $0$ is the length of the geodesic arc $a$, which is $\ln(\sqrt 3) \approx 0.54$.
\end{proof}

\begin{lem}[Map of a pentagonal piece]\label{lem:hexmap}
There exists a quasiconformal map $f$ from $P$ to a euclidean rectangle of height $2H$ and width $2$, such that\newline
(i) $f$ is height-preserving.\newline
(ii) On the top and bottom sides, $f$ is $\epsilon$-almost-isometric.\newline
(iii) The quasiconformal distortion is $1 + O(\epsilon)$ on points of $P$ outside a $D$-neighborhood of the $2\pi/3$-angled vertex of $P$ (Recall $D=\ln(1/\epsilon)$).
\end{lem}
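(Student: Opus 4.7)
The plan is to build $f$ piecewise, using Lemma~\ref{lem:trunc} on the sector $S_H$ and Lemma~\ref{lem:rectModel} on the grafted strip $S$, and then to glue the two by an affine-in-$x$ straightening that turns the graph $x=g(y)$ (the image of $\gamma$ under the sector map) into a vertical segment. The almost-verticality of $g$ for $|y|>D$ supplied by Lemma~\ref{lem:trunc}(iv) is precisely what makes this straightening almost-conformal outside the $D$-neighborhood of the centroid.

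First I would apply Lemma~\ref{lem:trunc} to $S_H$, obtaining a quasiconformal $f_1\colon S_H\to D_1$ where $D_1=\{(x,y):0\le x\le g(y),\ -H\le y\le H\}$. By its properties, $f_1$ is height-preserving, sends $\gamma$ to the graph $x=g(y)$ by height parametrization, and has distortion $1+O(\epsilon)$ at all points of height $|y|>D$. Next, since $S$ is a subrectangle of euclidean width $2$ of the grafted rectangle adjacent to $\gamma$, Lemma~\ref{lem:rectModel} together with Corollary~\ref{cor:rectnoskew} yields a height-preserving almost-conformal $f_2'\colon S\to[0,2]\times[-H,H]$ that is isometric on its two vertical sides (sending $\gamma$ to $\{0\}\times[-H,H]$) and $\epsilon$-almost-isometric on the top and bottom.

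To glue the two pieces, I would introduce the height-preserving map
\[ \phi(x,y)=\bigl(g(y)+(1-g(y)/2)\,x,\ y\bigr) \]
from $[0,2]\times[-H,H]$ onto the region $\{(x,y):g(y)\le x\le 2,\ -H\le y\le H\}$, an affine stretch in $x$ that sends $\{0\}\times[-H,H]$ onto the graph $x=g(y)$. A direct computation gives $\phi_x=(1-g(y)/2,0)$ and $\phi_y=(g'(y)(1-x/2),1)$, which together with the estimates $|g(y)|,|g'(y)|=O(\epsilon)$ from Lemma~\ref{lem:trunc}(iv) show $\phi$ is $(1+O(\epsilon))$-quasiconformal on the strip $|y|>D$; for $|y|\le D$, the strict bound $g(0)<1$ together with the $C^1$-regularity of $g$ away from $0$ gives a universal (genus-dependent) bound on the distortion. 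Setting $f_2=\phi\circ f_2'$ and defining $f=f_1$ on $S_H$ and $f=f_2$ on $S$, the two agree along $\gamma$ because both send the point of $\gamma$ at height $y$ to $(g(y),y)$. Thus $f$ is a homeomorphism from $P$ onto $[0,2]\times[-H,H]$, quasiconformal off the $C^1$ arc $\gamma$ (a set of measure zero), and hence quasiconformal everywhere by fact B of the quasiconformal compendium.

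Finally I would verify the three conclusions. Item (i) is immediate from the height-preservation of $f_1$, $f_2'$, and $\phi$. Item (iii) holds because at points of height $|y|>D$ both $f_1$ and $\phi\circ f_2'$ have distortion $1+O(\epsilon)$, and the $D$-neighborhood of the $2\pi/3$-vertex $p_j$ is contained in $\{|y|\le D\}$ (as $p_j$ projects to $y=0$ on $\gamma$). For item (ii), on the top edge $y=H$ the restriction $f_1|_{y=H}$ is an isometry of a segment of length $g(H)=O(\epsilon)$, while $\phi(\cdot,H)$ is an affine map whose endpoint shift is $g(H)=O(\epsilon)$, hence an $\epsilon$-almost-isometry by Lemma~\ref{lem:noskew1}; composing with the $\epsilon$-almost-isometry $f_2'|_{y=H}$ of Corollary~\ref{cor:rectnoskew} via Lemma~\ref{lem:noskew0} and concatenating with $f_1|_{y=H}$ along $\gamma$ via Lemma~\ref{lem:noskew2} yields an almost-isometry of the entire top edge; the bottom edge is symmetric. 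The main technical subtlety I anticipate is the quasiconformal control of $\phi$ near $y=0$: one needs the strict inequality $g(0)<1$ and the $C^1$-regularity of $g$ away from $0$ to secure a genuine universal distortion bound, so that fact B actually applies across the possible $C^0$-kink of $g$ at $y=0$ as well as across $\gamma$.
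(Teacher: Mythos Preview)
Your proof is correct and follows essentially the same approach as the paper's own argument: map $S_H$ by Lemma~\ref{lem:trunc}, map $S$ by Lemma~\ref{lem:rectModel}, and then compose with a height-preserving straightening that moves the vertical edge of the rectangle onto the graph $x=g(y)$ so the two pieces glue along $\gamma$. The only cosmetic difference is that the paper invokes Lemma~\ref{lem:qcstraight} for this last step, whereas you write down the explicit affine-in-$x$ map $\phi$ and compute its partials directly; these are the same map, and your direct verification of the distortion in the region $|y|>D$ is exactly the content of conditions (i)--(iii) of Lemma~\ref{lem:qcstraight}.
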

\begin{proof}
The map $f$ restricted to the truncated sector $S_H\subset P$ shall be the map from the previous lemma (Lemma \ref{lem:trunc}). Note that $f$ satisfies (i), (ii) and  (iii) on $S_H$. In fact it is isometric on the top and bottom horocyclic edges, which is stronger than (ii).\newline

Recall also that this map sends $\gamma$ to a graph of a function $g$ that satisfies:\newline
(1) It is $C^1$ except at one point.\newline
(2) It is almost-vertical for height more than $D$.\newline
(3) $\lvert g(y)\rvert = O(\epsilon)$ for $\lvert y\rvert >D$.\newline
(4)  $\sup\limits_{y} g(y) = g(0)<1$.\newline

\begin{figure}
  \centering
  \includegraphics[scale=0.45]{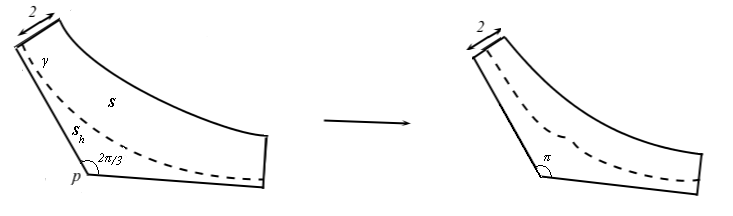}\\
  \caption{The map for a pentagonal piece (shown on the left) to a euclidean rectangle. The region $S_H$ to the left of $\gamma$ is mapped by Lemma \ref{lem:trunc} and the region $S$ to its right by Lemma \ref{lem:rectModel}, adjusted by an application of Lemma \ref{lem:qcstraight}. The map on $\gamma$ is the same as they both preserve height.}
\end{figure}

We can consider this image of $\gamma$ as the graph of the function $w(y) = 2-g(y)$ on a segment on the vertical line $y=2$ (by abuse of orientation the `positive' side of the vertical segment is to its left).  Now by Lemma \ref{lem:rectModel} we map the grafted rectangle $S$ to a euclidean rectangle $S^\prime$ of height $2H$ and width $2$ by an almost conformal height-preserving  map $f_1$. By Lemma \ref{lem:qcstraight} there is a height-preserving quasiconformal map $f_2$  of this rectangle to the planar region bounded by a vertical segment on the line $y=2$ and the graph of $w(y)$ over this segment (which lies to its left).\newline

The map $f$ restricted to $S\subset P$ shall be the composition $f_2\circ f_1$. By properties (2) and (3) of the image of $\gamma$, the appropriate conditions of Lemma \ref{lem:qcstraight} are satisfied and $f_2$ is almost-conformal for  all points of height more than $D$. Since $f_1$ is almost-conformal everywhere, the composition satisfies (iii). Since both $f_1$ and $f_2$ are height-preserving, (i) is satisfied. Finally, it can be checked that (ii) holds because of Corollary \ref{cor:rectnoskew} and Lemmas \ref{lem:noskew0} and \ref{lem:noskew1}.\newline

Thus the map $f$ is defined on $S_H$ and $S$, and hence on their union $P$ (since it is height-preserving on both they match along $\gamma$). Another application of Lemma \ref{lem:noskew1} implies that it satisfies (ii). The usual property of quasiconformal maps (Property B at the beginning of \S 4.2) implies that it is quasiconformal everywhere, and (iii) is satisfied since it is satisfied on both $S_H$ and $S$.
\end{proof}

\subsection{Mapping the grafted surface}

In this section we shall use the decomposition $\mathcal{D}$ of $X_t$ into pentagons  $\{P_j\}_{1\leq j\leq 3m}$ and rectangles $\{R_i\}_{1\leq i\leq n}$, as described at the end of \S 4.1. \newline

Recall from \S3.1 that $\hat{X_t}$ is the singular flat surface obtained by collapsing the ideal triangle components of $X_t\setminus \lambda$ along the leaves of $\mathcal{F}$ and preserving the transverse measure (one could use a Cantor function, as in \cite{CassBl}). Recall that the triangular regions in the complement of the partial foliation $\mathcal{F}$ collapse to the singularities (cone points of  angle $3\pi$). \newline

The pentagons in the decomposition of $X_t$ are mapped by the collapsing map to euclidean rectangles $\{S_j\}$ (of euclidean width $2$), which together with the collapsed images $\{R^\prime_i\}$ of the trimmed-rectangles form a rectangular decomposition of $\hat{X_t}$ with the same combinatorics as $\mathcal{D}$.\newline

Let $K$ denote the $1$-skeleton of the decomposition $\mathcal{D}$ of $X_t$, and  let $\hat{K}$ the corresponding $1$-skeleton on the singular flat surface $\hat{X_t}$.  An edge $E\subset K$ is either \textit{horizontal} (if it is formed of segments of the horizontal foliation) or \textit{vertical}, and we denote by $K_H$ the collection of horizontal edges.\newline

When the hyperbolic part is collapsed, a segment of a leaf of $\mathcal{F}$ of total width $W$ collapses to a segment of width $W_e$ (which, recall, is the \textit{euclidean} width of the segment). In particular, since the hyperbolic width is $O(\epsilon)$ for every horizontal edge $E$, the corresponding edge $\hat{E}$ of $\hat{K}$ differs in length by $O(\epsilon)$.\newline

For each pentagonal piece $P_j$ consider the map $f^P_j:P_j\to S_j$ which is the map $\bar{f}$ obtained from Lemma \ref{lem:hexmap}. Let $\bar{f^P_j}$ denote the restriction of the map to the horizontal edges of $\partial P_j$. These are $\epsilon$-almost-isometric (part (ii) of Lemma \ref{lem:hexmap}). Choose a map $\bar{g}:K_H\to \hat{K_H}$ that is equal to $\bar{f^P_j}$ for each horizontal edge of a pentagonal piece, and affine on each remaining horizontal edge. By the above observation on the difference of lengths of the edges, and Lemmas \ref{lem:noskew1} and \ref{lem:noskew2}, $\bar{g}$ satisfies an $M\epsilon$-almost-isometry condition on each horizontal edge of the $1$-skeleton for some $M$ that depends on the genus (the number of sub-edges of each horizontal edge is determined by the number of rectangles and pentagons in the decomposition that depends only on the genus). We can henceforth absorb that constant in the $O(\epsilon)$ term in the definition of almost-isometry, and refer to the above map as being $\epsilon$-almost-isometric. \newline

For each rectangle $R_i$ in the decomposition, consider the map $f^R_i:R_i\to R^\prime_i$ obtained from Lemma \ref{lem:rectModel}. Let $\bar{f^R_i}$ denote the restriction of the map to $\partial R_i$. Choose a map $\bar{h_i}:\partial R_i \to \partial R_i$ that is isometric on the vertical edges and agrees with $\bar{g}\circ \bar{f^R_i}^{-1}$ on the horizontal edges. By Corollary \ref{cor:rectnoskew}, and Lemma \ref{lem:noskew0}, this map is $\epsilon$-almost-isometric on the horizontal edges. The modulus of $R^\prime_i$ is greater than $1$ by Lemma \ref{lem:widerect}, so we can apply Lemma \ref{lem:qcnoskew} and extend $\bar{h_i}$ to an almost-conformal self-map $h_i$ of $R^\prime_i$.  The map $h_i\circ f^R_i$ now maps $R_i$ to $R^\prime_i$ such that the map agrees with $\bar{g}$ on $K_H$. \newline

The maps $\{h_i\circ f^R_i\}$  and $\{f^P_i\}$ agree on each vertical edge $E$ since they are height-preserving. Hence these maps  agree on the $1$-skeleton $K$ and form a continuous map from $X_t$ to $\hat{X_t}$ that is quasiconformal on each piece $P_i$ and $R_i$ , and is hence quasiconformal everywhere.\newline

Each map from the collection $\{h_i\circ f^R_i\}$ is almost-conformal (since $h_i$ is almost-conformal from the above construction, and $f^R_i$ is almost-conformal by Lemma \ref{lem:rectModel}). Each map $f^P_i$ is almost-conformal away from a set of diameter $O(\ln(1/\epsilon))$, by part (iii) of Lemma \ref{lem:hexmap}.\newline

We summarize this discussion in the following lemma.

\begin{lem}[Map of grafted surface]\label{lem:Map}
There exists a $T>0$ such that for all $t>T$ there is a quasiconformal homeomorphism $f:X_t \to \hat{X_t}$ such that the quasiconformal distortion is $1+ O(\epsilon)$ away from finitely many simply-connected subsets $K_1,K_2,\ldots K_m \subset X_t$ of diameter $O(\ln(1/\epsilon))$ in the Thurston metric. Moreover, each subset $K_j$ is contained in a simply connected set $D_j$ of diameter $\frac{1}{\epsilon^2}$.
\end{lem}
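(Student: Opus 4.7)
The plan is to assemble the local models already constructed in Lemma \ref{lem:rectModel} (for rectangles) and Lemma \ref{lem:hexmap} (for pentagons) into a single quasiconformal homeomorphism $f:X_t \to \hat{X_t}$. Since $\hat{X_t}$ is obtained from $X_t$ by collapsing the complementary ideal triangles along leaves of $\mathcal{F}$ while preserving transverse measure, it inherits a rectangular decomposition with the same combinatorics as $\mathcal{D}$: each pentagon $P_j$ is sent to a euclidean rectangle $S_j$ of euclidean width $2$, and each trimmed rectangle $R_i$ to a euclidean rectangle $R_i'$. The main obstacle I anticipate is that the maps of $P_j$ and $R_i$ given by the previous lemmas, taken piece-by-piece, will not automatically agree on the shared $1$-skeleton $K$; I have to modify them carefully to glue along $K$ without destroying the $(1+O(\epsilon))$-distortion bound. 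This is precisely what the almost-isometry technology (Definition \ref{defn:aisom}, Lemmas \ref{lem:noskew0}--\ref{lem:qcnoskew}) was built for.

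First I would fix a target boundary map on the $1$-skeleton. On each horizontal edge lying on the boundary of some pentagon, set the target map equal to $\bar{f}^P_j$, which is $\epsilon$-almost-isometric by Lemma \ref{lem:hexmap}(ii). On the remaining horizontal edges (the tops and bottoms of rectangles not adjacent to pentagons) use the obvious affine map; the hyperbolic width being $O(\epsilon)$ by Lemma \ref{lem:thinrect} ensures the total length and its collapsed image differ by $O(\epsilon)$, so Lemma \ref{lem:noskew1} makes it $\epsilon$-almost-isometric. Combining pieces on each edge using Lemma \ref{lem:noskew2} (with the number of sub-edges bounded in terms of genus alone) produces a boundary map $\bar{g}:K_H\to \hat{K}_H$ that is $\epsilon$-almost-isometric on every horizontal edge.

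Next I would reconcile the rectangle maps with $\bar{g}$. For each $R_i$, Lemma \ref{lem:rectModel} provides a height-preserving almost-conformal map $f^R_i:R_i \to R_i'$ whose boundary restriction $\bar{f}^R_i$ is $\epsilon$-almost-isometric on horizontal sides and isometric on vertical sides (Corollary \ref{cor:rectnoskew}). Define a self-map $\bar{h_i}$ of $\partial R_i'$ by the identity on vertical edges and by $\bar{g}\circ (\bar{f}^R_i)^{-1}$ on horizontal edges; using Lemma \ref{lem:noskew0} this is $\epsilon$-almost-isometric. Since Lemma \ref{lem:widerect} guarantees that $R_i'$ has modulus $\geq 1$ for $t$ large enough, Lemma \ref{lem:qcnoskew} extends $\bar{h_i}$ to an almost-conformal self-map $h_i$ of $R_i'$. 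The composition $h_i\circ f^R_i$ then agrees with $\bar{g}$ on horizontal edges, while height-preservation makes it agree with every adjacent $f^P_j$ on their shared vertical edges. So the family $\{f^P_j\}\cup\{h_i\circ f^R_i\}$ glues to a continuous map $f:X_t\to \hat{X_t}$ which is quasiconformal piecewise, hence quasiconformal globally by Property B of \S4.2.

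Finally I would identify the bad regions. Each $h_i\circ f^R_i$ is a composition of almost-conformal maps, hence almost-conformal everywhere on $R_i$. Each $f^P_j$ is almost-conformal off a $D$-neighborhood $K_j$ (with $D=\ln(1/\epsilon)$) of the $2\pi/3$-vertex of $P_j$, by Lemma \ref{lem:hexmap}(iii); this gives the required sets $K_j$ of diameter $O(\ln(1/\epsilon))$. Each $K_j$ sits inside the pentagon $P_j$, whose total diameter is bounded by the widths of the adjacent rectangles, which exceed $1/\epsilon^2$ by Lemma \ref{lem:widerect}; taking $D_j$ to be the simply-connected pentagon (or a slight enlargement) yields the enclosing sets of diameter $1/\epsilon^2$. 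Choosing $T$ large enough that Lemma \ref{lem:widerect} applies for all $t>T$ completes the proof.
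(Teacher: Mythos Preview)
Your assembly argument is essentially identical to the paper's: you fix a boundary map $\bar g$ on the horizontal $1$-skeleton using the pentagon maps and affine fillers, then correct each rectangle map by an almost-conformal self-map via Lemma~\ref{lem:qcnoskew}, and glue using height-preservation on vertical edges. This is exactly what the paper does in \S4.5.

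The one place you diverge from the paper is in the last paragraph, and there your reasoning is slightly off. First, the bad regions should be indexed by the $m$ ideal-triangle centers, not by the $3m$ pentagons: the $2\pi/3$-vertex of $P_j$ is shared by three pentagons, and the $D$-neighborhood of that vertex in $X_t$ is the union of the bad parts of all three. Second, your justification for $D_j$ is muddled: the pentagon $P_j$ has horizontal width only about $2$ (the appended strip) plus $<1$ (the sector), so its diameter does \emph{not} come from ``the widths of the adjacent rectangles''; those rectangles have been trimmed and lie outside $P_j$. The paper instead takes $D_j$ to be a disk of diameter $1/\epsilon^2$ centered at the center of $T_j$ and extending into the three adjacent rectangles, invoking both Lemma~\ref{lem:thinrect} (for height $\geq 1/\epsilon^2$) and Lemma~\ref{lem:widerect} (for width $\geq 1/\epsilon^2$). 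Your pentagon technically has diameter $\geq 1/\epsilon^2$ (from its height, via Lemma~\ref{lem:thinrect}), so Lemma~\ref{lem:Map} as stated is satisfied, but the thin pentagon would fail in the very next step (Lemma~\ref{lem:largemod}), where one needs $D_j\setminus K_j$ to have large modulus---a width-$2$ strip minus a central ball does not.
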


Each $2\pi/3$-angled vertex of a pentagonal piece in the decomposition $\mathcal{D}$ is a centre of an ideal triangle complement of the lamination $\lambda$ (in fact each such center is common to three pentagons). The subsets $K_1,K_2,\ldots K_m \subset X_t$ in the lemma are the $O(\ln(1/\epsilon))$-neighborhoods of the finitely many centres of the ideal triangle complements of $\lambda$, by part (iii) of Lemma \ref{lem:hexmap}. The last statement follows from Lemmas \ref{lem:thinrect} and \ref{lem:widerect}: the three rectangles adjacent to a truncated ideal triangle $T_j$ are of height and width at least $\frac{1}{\epsilon^2}$ when the grafting time $t>T$ is sufficiently large, hence one can embed a disk $D_j$ of diameter $\frac{1}{\epsilon^2}$ centered at the center of $T_j$, containing $K_j$.

\begin{figure}
  \centering
  \includegraphics[scale=0.5]{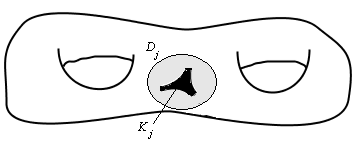}\\
  \caption{A typical pair $(D_j,K_j)$ on the grafted surface. The map in Lemma \ref{lem:Map} is almost-conformal outside $K_j$ (shaded darker). The annular region $D_j\setminus K_j$ (shaded lighter) has large modulus (Lemma \ref{lem:largemod}). }\label{fig:img}
\end{figure}

\begin{lem}\label{lem:largemod}
For large enough $t$ (as in Lemma \ref{lem:Map}), each annular region $D_j\setminus K_j$ for $1\leq j\leq m$ where $D_j$ and $K_j$ are the simply-connected subsets as above, has modulus greater than $2\pi\ln(1/\epsilon)$.
\end{lem}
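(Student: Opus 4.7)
I would lower-bound $\mathrm{mod}(D_j\setminus K_j)$ by exhibiting a round flat sub-annulus and invoking the monotonicity of modulus under inclusion. The two relevant length scales are the Thurston-diameter $O(\log(1/\epsilon))$ of $K_j$ and the Thurston-diameter $1/\epsilon^2$ of $D_j$; since these differ by a factor $\gtrsim 1/(\epsilon^2\log(1/\epsilon))$, the modulus should grow like $\log(1/\epsilon)$, which, for $t$ (hence $1/\epsilon$) sufficiently large, suffices to exceed the stated bound.

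\textbf{Step 1 (local geometry).} By Lemma~\ref{lem:thinrect} the truncated ideal triangle $T_j$ has hyperbolic diameter of order $\log(1/\epsilon)$, and by Lemmas~\ref{lem:thinrect} and~\ref{lem:widerect} the three grafted rectangles adjacent to $T_j$ carry the flat part of the Thurston metric and have both width and height at least $1/\epsilon^2$. Consequently $D_j$ decomposes as a small hyperbolic hub (containing $T_j$, and hence $K_j$ up to a bounded thickening into the arms) together with three Euclidean ``arms'' protruding through the three geodesic sides of $T_j$; together these arms fill out the entire $2\pi$-angle around $c_j$.

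\textbf{Step 2 (flat sub-annulus).} Fix a universal constant $C>0$ large enough that the Thurston ball $B(c_j,r)$ with $r=C\log(1/\epsilon)$ strictly contains $K_j$ and also swallows $T_j$, and take $R=1/(3\epsilon^2)$ so that $B(c_j,R)$ is compactly contained in $D_j$. Past $T_j$ the Thurston metric is flat, and since the three arms jointly fill the full $2\pi$-angle around $c_j$, the Thurston sphere $\{d_{\mathrm{Thu}}(\cdot,c_j)=\rho\}$ is, for every $\rho\in[r,R]$, a single simple closed curve winding once around $K_j$. Hence
\[
A' := B(c_j,R)\setminus B(c_j,r)
\]
is a genuine (conformally round) flat sub-annulus of $D_j\setminus K_j$, and Gr\"otzsch's monotonicity gives $\mathrm{mod}(D_j\setminus K_j)\geq \mathrm{mod}(A')$.

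\textbf{Step 3 (compute).} A round flat annulus of inner and outer radii $r,R$ has modulus $\frac{1}{2\pi}\log(R/r)$, so
\[
\mathrm{mod}(D_j\setminus K_j)\;\geq\;\frac{1}{2\pi}\log\!\left(\frac{1}{3C\,\epsilon^2\,\log(1/\epsilon)}\right),
\]
a positive multiple of $\log(1/\epsilon)$ for small $\epsilon$, which in particular exceeds $2\pi\log(1/\epsilon)$ once $t$ is sufficiently large.

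\textbf{Main obstacle.} The subtle point is Step~2: one must verify that the Thurston distance sphere $\{d_{\mathrm{Thu}}(\cdot,c_j)=\rho\}$ (for $\rho$ well beyond the diameter of the hyperbolic hub) is a single simple closed curve that glues consistently across the three arms into the boundary of a genuinely round flat annulus. This relies on the explicit product-like structure of the Thurston metric on each grafted rectangle (flat, with vertical geodesic sides of length $\geq 1/\epsilon^2$), together with the observation that at radius $\rho\gg\log(1/\epsilon)$ one has fully escaped the hyperbolic part of $T_j$. Choosing the constant $C$ large enough to guarantee this separation is the key geometric input.
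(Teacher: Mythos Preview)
Your Step 2 contains a genuine geometric error: the annulus $A'=B(c_j,R)\setminus B(c_j,r)$ in the Thurston metric is \emph{not} a round flat annulus, so the formula $\frac{1}{2\pi}\log(R/r)$ is not available. Two things go wrong with your picture. First, the truncated ideal triangle $T_j$ is not small: its three cuspidal spikes extend to the truncation height $H$, and by the proof of Lemma~\ref{lem:thinrect} this height equals the height of the adjacent rectangles, hence $H>1/\epsilon^2$. No ball of radius $r=C\log(1/\epsilon)$ can swallow $T_j$; your $A'$ therefore contains long portions of these hyperbolic spikes, where the metric is curved. Second, even in the flat part, the three grafted arms do not fill a $2\pi$-angle around $c_j$: they are attached along the three geodesic sides of $T_j$ and are separated from one another by the spikes. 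After collapsing the hyperbolic part the correct conformal model near $c_j$ is a cone of angle $3\pi$ (a trivalent singularity), not a smooth plane. Your ``main obstacle'' is real, but enlarging $C$ cannot resolve it.

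The paper avoids working directly in the Thurston metric. It uses the map $f$ of Lemma~\ref{lem:Map}, which is already known to be almost-conformal on $D_j\setminus K_j$, to push the problem to the singular flat surface $\hat X_t$. There the image $f(D_j\setminus K_j)$ honestly sits around a flat cone point, one can embed a genuine round sub-annulus $A_j$ in the cone metric with inner radius $\asymp\log(1/\epsilon)$ and outer radius $\asymp 1/\epsilon^2$, compute its modulus explicitly, and then pull the lower bound back via the almost-conformality of $f$. The map $f$ is exactly what turns the hybrid Thurston geometry into a setting where a round-annulus estimate is legitimate; trying to bypass it is the gap in your argument.
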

\begin{proof}
Consider the annular region that is the image of $f(D_j\setminus K_j)$ on the singular flat surface $\hat{X_t}$. From the construction of $f$ (in particular its property of being height-preserving and almost-isometric along $\mathcal{F}$) it follows that one can embed a flat annulus $A_j$ of large modulus in $f(D_j\setminus K_j)$, In fact, since the diameter of $f(D_j)$ is greater than $\frac{1}{\epsilon^2}$ and the diameter of $f(K_j)$ is at most $O(\ln\frac{1}{\epsilon})$, the modulus of $A_j$   satisfies
\begin{equation*}
mod(A_j) = \frac{1}{3\cdot 2\pi}\ln\frac{R_{out}}{R_{inn}} \geq A\ln\frac{1/\epsilon^2}{\ln\frac{1}{\epsilon}} + B > 4\pi\ln\frac{1}{\epsilon}
\end{equation*}
for $\epsilon$ sufficiently small ($A$ and $B$ are some positive constants). \newline

Since $f$ is almost-conformal on $D_j\setminus K_j$, we have an embedded annulus $f^{-1}(A_j)$ of large modulus in $D_j\setminus K_j$ (in the Thurston metric). In particular, the modulus 
\begin{equation*}
mod(D_j\setminus K_j) >  \frac{4\pi}{1 + \epsilon}\ln\frac{1}{\epsilon} > 2\pi\ln\frac{1}{\epsilon}
\end{equation*} \end{proof}

\subsection{Modifying the map to almost-conformality}

In this section we modify the map $f:X_t\to \hat{X_t}$ from Lemma \ref{lem:Map} such that it is almost-conformal. To do this we shall redefine the map in the subsets $D_1,D_2,\ldots D_m$. The crucial fact that allows this modification is that the lack of almost-conformality for $f$ is contained in the subsets $K_j$, $1\leq j\leq m$ which have diameter much smaller than the diameter of $D_j$.

\subsubsection{A quasiconformal extension lemma}
Let $\mathbb{D}$ be the unit disk in $\mathbb{C}$ and let $B_r$ be a closed ball of radius $r$ about the origin.\newline

The following result is probably well-known to experts, however it does not seem to be readily available in the literature on the subject. A proof is provided in Appendix A.
\begin{lem}\label{lem:qclem}
For any $\epsilon>0$ sufficiently small and any $0\leq r\leq \epsilon$ if $f:\mathbb{D}\to \mathbb{D}$ satisfies\newline
(1) $f$ is a quasiconformal map,\newline
(2) The quasiconformal distortion is $(1+ C\epsilon)$ on $\mathbb{D}\setminus B_r$,\newline
then the map $f$ extends to a $(1 + C^\prime\epsilon)$-quasisymmetric map of the boundary, where $C^\prime$ is a constant depending only on $C$.
\end{lem}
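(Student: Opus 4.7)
The plan is to reduce to the classical fact that a uniformly $(1+C''\epsilon)$-quasiconformal self-homeomorphism of $\mathbb{D}$ induces a $(1+C''\epsilon)$-quasisymmetric boundary map. I will \emph{replace} $f$ on a small neighborhood of $B_r$ (not touching $S^1$) by an almost-conformal map whose boundary values on $S^1$ coincide with those of $f$; since the boundary map is unchanged, its quasisymmetric constant can then be read off from the modified, uniformly almost-conformal map.

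The first step is to bound the size of $f(\overline{B_r})$. The annulus $A=\mathbb{D}\setminus\overline{B_r}$ has modulus $\tfrac{1}{2\pi}\log(1/r)\geq \tfrac{1}{2\pi}\log(1/\epsilon)$, and $f|_A$ is $(1+C\epsilon)$-quasiconformal, so the image annulus $f(A)=\mathbb{D}\setminus\overline{f(B_r)}$ still has modulus at least $(1-C\epsilon)\cdot\tfrac{1}{2\pi}\log(1/\epsilon)$. By the Teichm\"uller module--diameter inequality, after a preliminary M\"obius normalization placing $f(\overline{B_r})$ in the interior of $\mathbb{D}$, the Euclidean diameter of $f(\overline{B_r})$ is smaller than any fixed positive power of $\epsilon$. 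Enclose $f(\overline{B_r})$ in a round disk $D\subset\mathbb{D}$ of, say, twice this diameter, and set $\Omega=f^{-1}(D)\supset \overline{B_r}$.

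On the annular buffer $D\setminus\overline{f(B_r)}$ the inverse $f^{-1}$ is $(1+C\epsilon)$-quasiconformal, and the buffer has bounded positive modulus; Koebe-type distortion estimates then force $\partial\Omega=f^{-1}(\partial D)$ to be a nearly-round Jordan curve, and the composition of $f:\partial\Omega\to\partial D$ with Riemann maps of $\Omega$ and $D$ onto $\mathbb{D}$ to be a $(1+C'\epsilon)$-quasisymmetric self-map of $S^1$. Applying the Beurling--Ahlfors extension (whose dilatation tends to $1$ linearly in the quasisymmetric constant near the identity) and conjugating back produces a $(1+C'\epsilon)$-quasiconformal homeomorphism $\tilde g:\Omega\to D$ with $\tilde g|_{\partial\Omega}=f|_{\partial\Omega}$. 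Defining $\tilde f=\tilde g$ on $\overline{\Omega}$ and $\tilde f=f$ on $\mathbb{D}\setminus\Omega$, the two definitions agree on the Jordan curve $\partial\Omega$ and each is $(1+C'\epsilon)$-quasiconformal on a one-sided collar (the outer collar lies in $\mathbb{D}\setminus B_r$, where $f$ is already $(1+C\epsilon)$-quasiconformal). Hence $\tilde f$ is a $(1+C'\epsilon)$-quasiconformal self-map of $\mathbb{D}$, and since $\tilde f|_{S^1}=f|_{S^1}$, the classical theorem delivers the required $(1+C'\epsilon)$-quasisymmetric bound on the common boundary map.

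I expect the main obstacle to be the third step: verifying that the boundary map $f:\partial\Omega\to\partial D$, read through the two Riemann maps as a self-map of $S^1$, is quasisymmetric with \emph{additive} smallness of order $\epsilon$. This requires promoting the near-conformality of $f^{-1}$ on a fixed-modulus collar of $\partial D$ to near-linearity via Koebe distortion, and checking that the Riemann maps of the nearly-round Jordan domains $\Omega$ and $D$ onto $\mathbb{D}$ act as approximate M\"obius transformations on a neighborhood of their boundaries, so that conjugation does not degrade the quasisymmetric constant beyond order $\epsilon$. Once this is in place, the Beurling--Ahlfors extension and the gluing argument are standard.
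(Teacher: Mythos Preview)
Your overall strategy --- replace $f$ on a small neighborhood of $B_r$ by an almost-conformal map with the same boundary values, then invoke the classical result for globally $(1+C\epsilon)$-quasiconformal maps --- is natural, but the difficulty you yourself flag in Step~3 is not merely technical: it is circular.

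After uniformizing, Step~3 asks the following: given a quasiconformal self-map of the disk (namely $f^{-1}$ conjugated from $D$ to $\mathbb{D}$) whose dilatation is $(1+C\epsilon)$ outside a concentric subdisk (the image of $f(B_r)$), show that the induced boundary map is $(1+C'\epsilon)$-quasisymmetric. This is precisely the statement of the lemma you are trying to prove. Your buffer $D\setminus\overline{f(B_r)}$ has only \emph{bounded} modulus (since you chose $D$ to have twice the diameter of $f(B_r)$), so no ``large modulus washes out the bad core'' argument is available here. The appeal to ``Koebe-type distortion'' does not help: Koebe distortion concerns univalent \emph{conformal} maps, and a $(1+C\epsilon)$-quasiconformal map on a bounded-modulus annulus can send a round circle to a $(1+C\epsilon)$-quasicircle that is not ``nearly round'' in any sense that controls the quasisymmetry constant of the boundary map to additive order $\epsilon$. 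In short, Step~3 requires the conclusion of the lemma, applied to the pair $(D,f(B_r))$ in place of $(\mathbb{D},B_r)$.

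The paper avoids this circularity by working directly with the extremal-length characterization of quasisymmetry: the boundary map is $(1+O(\epsilon))$-quasisymmetric provided $\lambda_{\mathbb{D}}(f(Q))/\lambda_{\mathbb{D}}(Q)=1+O(\epsilon)$ for every quadrilateral $Q$ (pair of disjoint boundary arcs). The key technical ingredient is an elementary Gr\"otzsch-type estimate: for any such $Q$, excising $B_r$ from $\mathbb{D}$ changes the modulus by a factor at most $1+O(r)$, \emph{uniformly in $Q$}. (One maps $Q$ conformally to a rectangle and observes that the image of $B_r$ has diameter $O(r)$ by genuine Koebe distortion, so discarding a horizontal strip of that height only loses $O(r)$ in the Gr\"otzsch inequality.) Combining this on both the domain and target sides --- using your correct first-step bound $\mathrm{diam}(f(B_r))=O(\epsilon)$ --- together with near-conformality of $f$ on $\mathbb{D}\setminus B_r$ gives the modulus comparison directly, with no need to modify $f$ or invoke Beurling--Ahlfors extension.
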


An immediate consequence of the Ahlfors-Beurling extension is
\begin{cor}\label{cor:qcext}
Let  $\epsilon>0$ be sufficiently small, $r\leq \epsilon$ and let $f:\mathbb{D}\to \mathbb{D}$ satisfy (1) and (2) as in the previous lemma. Then there exists an almost-conformal map $g:\mathbb{D}\to \mathbb{D}$ such that $f_{\vert \partial \mathbb{D}} = g_{\vert \partial \mathbb{D}}$.
\end{cor}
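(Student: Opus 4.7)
The plan is to chain Lemma \ref{lem:qclem} together with the Ahlfors--Beurling extension theorem \cite{AB}, obtaining the desired almost-conformal map as a two-step construction: pass to the boundary, then re-extend.

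First, I would apply Lemma \ref{lem:qclem} directly. Under the hypotheses (1) and (2), it provides that the boundary values $f\vert_{\partial \mathbb{D}}$ define a $(1+C'\epsilon)$-quasisymmetric self-homeomorphism of the unit circle, where $C'$ depends only on $C$. This is exactly the ingredient needed to invoke the classical extension theorem.

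Next, I would invoke the Ahlfors--Beurling extension, which sends any quasisymmetric self-homeomorphism of $\partial \mathbb{D}$ to a quasiconformal self-map of $\mathbb{D}$ realizing those boundary values, with maximal dilatation bounded by a function $K(\rho)$ of the quasisymmetry constant $\rho$ satisfying $K(\rho)\to 1$ as $\rho\to 1$. Applied to the boundary map $f\vert_{\partial\mathbb{D}}$ with $\rho = 1+C'\epsilon$, this produces a $(1+C''\epsilon)$-quasiconformal map $g:\mathbb{D}\to\mathbb{D}$ for some universal $C''$, hence an almost-conformal map in the sense of Definition \ref{defn:ac}. By construction $g$ and $f$ coincide on $\partial\mathbb{D}$, which is the conclusion.

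The only point requiring care is the quantitative near-identity behavior of the Ahlfors--Beurling extension, namely that a $(1+\delta)$-quasisymmetric boundary map extends to a $(1+O(\delta))$-quasiconformal disk map. This is a standard (if somewhat technical) computation from the explicit Beurling--Ahlfors formula and is not a genuine obstacle; the real content of the corollary lies entirely in Lemma \ref{lem:qclem}, which reduces the problem from an interior regularity issue (the bad distortion inside the small ball $B_r$) to a boundary regularity statement that the extension theorem can absorb.
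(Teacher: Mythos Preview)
Your proposal is correct and matches the paper's approach exactly: the paper states this corollary as ``an immediate consequence of the Ahlfors-Beurling extension'' applied to the quasisymmetric boundary map furnished by Lemma~\ref{lem:qclem}, and the quantitative near-identity behavior you mention is precisely what the paper records as Lemma~\ref{lem:AB2} in the appendix.
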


We shall use the above for modifying a map between Riemann surfaces:

\begin{cor}\label{cor:qcext2}
Let $\Sigma,\Sigma^\prime$ be homeomorphic Riemann surfaces. Let $f:\Sigma\to \Sigma^\prime$ be a quasiconformal map and $K\subset D\subset \Sigma$ be concentric embedded disks such that\newline
(i) The modulus of the annulus $D\setminus K$ is at least $2\pi\ln\frac{1}{\epsilon}$.\newline
(ii) $f$ is almost-conformal on $\Sigma \setminus K$.\newline
Then there is a quasiconformal map $g:\Sigma\to \Sigma^\prime$ such that $f\vert_{\Sigma\setminus D} = g\vert_{\Sigma\setminus D}$ and $g$ is almost-conformal on $D$.
\end{cor}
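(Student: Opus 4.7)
The plan is to reduce Corollary \ref{cor:qcext2} to the planar Corollary \ref{cor:qcext} by uniformizing the embedded disks $D \subset \Sigma$ and $f(D) \subset \Sigma'$. First, I would apply the Riemann mapping theorem to obtain conformal identifications $\phi : D \to \mathbb{D}$ and $\psi : f(D) \to \mathbb{D}$, and conjugate $f$ to form a quasiconformal homeomorphism $F := \psi \circ f \circ \phi^{-1} : \mathbb{D} \to \mathbb{D}$. Since $\phi,\psi$ are conformal and $f$ is almost-conformal on $D \setminus K$, the dilatation of $F$ is $1 + O(\epsilon)$ on $\mathbb{D} \setminus \phi(K)$.

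The crucial step is then to arrange that $\phi(K) \subset B_r$ for some $r \leq \epsilon$, so that Corollary \ref{cor:qcext} is applicable. Because $\phi$ is conformal, $\mathrm{mod}(\mathbb{D} \setminus \phi(K)) = \mathrm{mod}(D \setminus K) \geq 2\pi \ln(1/\epsilon)$. Since $K$ (and hence $\phi(K)$) is simply connected, a Gr\"{o}tzsch-type modulus estimate bounds the conformal radius of $\phi(K)$ about any of its points by $e^{-2\pi M}$, where $M$ is the modulus of the complementary annulus; with our hypothesis this is far smaller than $\epsilon$. Precomposing $\phi$ with an appropriate M\"{o}bius automorphism of $\mathbb{D}$ to move a point of $\phi(K)$ to the origin and then invoking the Koebe distortion estimate then yields $\phi(K) \subset B_r$ with $r \leq \epsilon$, and we redefine $F$ accordingly (note that only the domain uniformization needs normalizing).

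Corollary \ref{cor:qcext} now supplies an almost-conformal map $G : \mathbb{D} \to \mathbb{D}$ with $G\vert_{\partial \mathbb{D}} = F\vert_{\partial \mathbb{D}}$. I would then transport this back to $\Sigma$ by setting $g := \psi^{-1} \circ G \circ \phi$ on $D$ and $g := f$ on $\Sigma \setminus D$. Since $G$ and $F$ agree on $\partial \mathbb{D}$, the two definitions match on $\partial D$, giving a well-defined continuous map that is quasiconformal on each of $D$ and $\Sigma \setminus D$ and agrees with $f$ off $D$. On $D$ it is almost-conformal because $\phi,\psi$ are conformal and $G$ is almost-conformal. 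The removable-singularity property for quasiconformal maps across a $C^1$ curve (Property B of \S 4.2) then promotes $g$ to a quasiconformal map on all of $\Sigma$.

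The main obstacle is the second step: converting the quantitative modulus hypothesis $\mathrm{mod}(D \setminus K) \geq 2\pi \ln(1/\epsilon)$ into an honest Euclidean diameter bound $r \leq \epsilon$ in the uniformized picture. This is exactly where the particular quantitative form of the modulus lower bound in the hypothesis, as established in Lemma \ref{lem:largemod}, is used; the rest of the argument is essentially bookkeeping around the planar lemma.
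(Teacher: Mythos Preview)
Your proposal is correct and follows essentially the same route as the paper: uniformize $D$ and $f(D)$ to $\mathbb{D}$, use the modulus hypothesis to force the image of $K$ into a ball of radius at most $\epsilon$, apply Corollary~\ref{cor:qcext}, and transport back while invoking removability across $\partial D$. The only cosmetic difference is that the paper exploits the ``concentric'' hypothesis to normalize $\phi$ (and $\psi$) so that the center goes to $0$ and then quotes the annulus estimate~(\ref{eq:ann}) directly, whereas you post-compose with a M\"{o}bius automorphism and appeal to Gr\"{o}tzsch/Koebe; either way the diameter bound $\phi(K)\subset B_r$, $r\le\epsilon$, drops out.
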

\begin{proof}
Let $\phi:D\to \mathbb{D}$ and $\psi:f(D)\to \mathbb{D}$ be uniformizing maps to the unit disk, normalized such that the centers are taken to $0\in \mathbb{D}$. Then by (i) above $\phi(K)$ and $\psi(f(K))$ have diameter $O(\epsilon)$  (this is an application of equation (\ref{eq:ann}) in Lemma \ref{lem:qconf}) and contain $0\in \mathbb{D}$. The map $g=\psi\circ f\circ \phi^{-1}:\mathbb{D}\to \mathbb{D}$ satisfy the requirements of Corollary \ref{cor:qcext} and hence can be replaced by an almost-conformal map $h:\mathbb{D}\to \mathbb{D}$ that has the same map as $g$ on $\partial \mathbb{D}$. We replace $f:D\to f(D)$ by the almost-conformal map $\psi^{-1}\circ h\circ \phi:D\to f(D)$. This restricts to the same map as $f$ on $\partial D$. Together with the map $f$ on $\Sigma \setminus D$ it defines a continuous map of $\Sigma$ to $\Sigma^\prime$. This map  is quasiconformal on $D$ and $\Sigma \setminus D$, and is hence quasiconformal, since $\partial D$ is a measure zero set.
\end{proof}

\subsection{Proof of Proposition \ref{prop:maxcase}}

\begin{proof}[Proof of Proposition \ref{prop:maxcase}]

We start with the map from Lemma \ref{lem:Map} and modify it by applying Corollary \ref{cor:qcext2} (taking $\Sigma = X_t$, $\Sigma^\prime = \hat{X_t}$, $D=D_j$, $K=K_j$),  for each $1\leq j\leq m$ in succession. This is possible since each annulus $D_j\setminus K_j$ has modulus at least $2\pi\ln\frac{1}{\epsilon}$ by Lemma \ref{lem:largemod} (so (i) of Cor. \ref{cor:qcext2} holds). The final map $f:X_t\to \hat{X_t}$ is almost-conformal on $D_1\cup D_2 \cup \cdots D_m$ and agrees with the original $f$, and is hence almost-conformal  on the complement $X_t \setminus D_1\cup D_2 \cup \cdots D_m$. The property of almost-conformality extends across the measure zero set consisting of the union of the $\partial D_j$s.\newline

This completes the construction of an almost-conformal map $f:X_t\to \hat{X_t}$, for all $t$ sufficiently large.\newline

The singular flat surface $\hat{X_0}$ has a horizontal foliation, and a vertical foliation which is measure equivalent to $\lambda$. Now $\hat{X_t}$ can be obtained from $\hat{X_0}$ by scaling the horizontal foliation on $\hat{X_0}$ by a factor of $t$, and keeping the vertical foliation the same, which is conformally equivalent to scaling the horizontal foliation by a factor of $\sqrt t$ and the vertical foliation by a factor of $1/\sqrt t$. The surface $\hat{X_t}$ thus lies on the Teichm\"{u}ller ray from $\hat{X_0}$ determined by $\lambda$, at a distance of $\frac{1}{2}\ln t$.\newline

Since our choice of $\epsilon>0$ throughout was arbitrary, this shows the grafting ray based at $X=X_0$ determined by the lamination $\lambda$ is asymptotic  to the Teichm\"{u}ller ray based at $Y=\hat{X_0}$.
\end{proof}

The proofs of Corollaries 1.2 and 1.3 only require Theorem 1.1 in the arational case, hence we provide them here:

\begin{proof}[Proof of Corollary 1.2]
Pick any Teichm\"{u}ller ray determined by such an arational $\lambda$. By Proposition \ref{prop:maxcase}, the two grafting rays are both asymptotic to it, and hence to each other.\\

When $\lambda$ is also uniquely-ergodic, then for any other choice of basepoint $Y$, the Teichm\"{u}ller ray $Y_t$ determined by $\lambda$ is asymptotic to the above Teichm\"{u}ller ray, by the result of Masur (\cite{Mas}), and hence by the triangle inequality, the grafting ray based at $X$ is asymptotic to the Teichm\"{u}ller ray $Y_t$.
\end{proof}

\begin{proof}[Proof of Corollary 1.3]
Arational laminations form a full measure set in $\mathcal{ML}$ (with respect to the Thurston measure on $\mathcal{ML}$). It is known that for a generic choice of such an arational $\lambda$ and any choice of basepoint the corresponding Teichm\"{u}ller ray is dense in moduli space (this follows from the ergodicity of the Teichm\"{u}ller geodesic flow, proved in \cite{MasErg} - see \cite{Mas2} for explicit examples of such rays). By Proposition \ref{prop:maxcase}, a grafting ray determined by a generic $\lambda$ is then asymptotic to a dense Teichm\"{u}ller ray, and is hence itself dense.
\end{proof}

\textit{Remark.} It was pointed out to the author by Jayadev Athreya that it is easy to show that Theorem \ref{thm:thm1} implies that such a grafting ray is in fact equidistributed in moduli space. A subsequent article shall investigate some further dynamical and measure-theoretic features of the Teichm\"{u}ller geodesic flow inherited by grafting flowlines.

\section{The multi-curve case}
In this section we prove Theorem 1.1 in the case when $\lambda$ is a multi-curve (Proposition \ref{prop:scc}) following the outline in \S 3.2. 

\subsection{A quasiconformal interpolation}
We start with a lemma about quasiconformal maps that shall be useful later. Throughout, $\mathbb{D}$ shall denote the unit closed disk on the complex plane, and $B_r$ shall denote the closed disk of radius $r$ centered at $0$. We show that a conformal map defined on $\mathbb{D}$ can be adjusted to be the identity near $0$, without too much quasiconformal distortion.

\begin{lem}\label{lem:gfix}
Let $g:\mathbb{D}\to g(\mathbb{D})\subset \mathbb{C}$ be a univalent conformal map such that $g(0)=0$ and $g^\prime(0)=1$. Then for any $\epsilon>0$ (sufficiently small) there exists an $0<s<1$ and a map $G:\mathbb{D}\to g(\mathbb{D})$ such that\newline
(1) $G$ is $(1 + \epsilon)$-quasiconformal.\newline
(2) $G$ restricts to the identity map on $B_{s}$.\newline
(3) $G\vert_{\partial \mathbb{D}} = g\vert_{\partial \mathbb{D}}$.
\end{lem}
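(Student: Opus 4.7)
\textbf{Proof plan for Lemma \ref{lem:gfix}.}

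The plan is to construct $G$ as a smooth convex-combination interpolation between the identity near $0$ and $g$ near $\partial \mathbb{D}$, exploiting the fact that a univalent $g$ with $g(0)=0, g'(0)=1$ is very close to the identity on a small disk. Concretely, writing $g(z) = z + a_2 z^2 + a_3 z^3 + \cdots$, univalence on $\mathbb{D}$ forces the coefficients $|a_n|$ to be uniformly bounded (for instance, by the de Branges bound $|a_n|\le n$, though a crude bound via the Koebe distortion theorem is enough), so on a small disk $B_{2s}\subset \mathbb{D}$ we have the $C^1$-estimates
\[
|g(z) - z| = O(s^2), \qquad |g'(z) - 1| = O(s),
\]
with constants depending only on universal quantities.

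Fix a smooth cutoff $\phi:[0,1]\to[0,1]$ with $\phi\equiv 0$ on $[0,s]$, $\phi\equiv 1$ on $[2s,1]$, and $|\phi'|=O(1/s)$, and define
\[
G(z) = \phi(|z|)\,g(z) + \bigl(1-\phi(|z|)\bigr)\,z.
\]
Then $G$ equals the identity on $B_s$ and agrees with $g$ on $\mathbb{D}\setminus B_{2s}$ (in particular on $\partial\mathbb{D}$), so (2) and (3) hold automatically. All the work is in estimating the Beltrami coefficient of $G$ on the annulus $A=\{s\le |z|\le 2s\}$. A direct computation, using $\partial_z|z|=\bar z/(2|z|)$ and $\partial_{\bar z}|z|=z/(2|z|)$ together with the holomorphicity of $g$, gives
\[
\partial_{\bar z}G = \phi'(|z|)\,\frac{z}{2|z|}\,\bigl(g(z)-z\bigr), \qquad \partial_z G = 1 + \phi\,(g'(z)-1) + \phi'(|z|)\,\frac{\bar z}{2|z|}\,\bigl(g(z)-z\bigr).
\]
Plugging in the bounds above, $|\phi'|\cdot |g-z| = O(1/s)\cdot O(s^2) = O(s)$, and similarly the non-identity terms in $\partial_z G$ are $O(s)$. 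Hence $|\partial_{\bar z}G|/|\partial_z G| = O(s)$ on $A$, while $G$ is conformal on $\mathbb{D}\setminus A$. Choosing $s$ small enough that this $O(s)$ is at most $\epsilon$ yields (1).

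The one subtlety is to verify that $G$ is a genuine homeomorphism onto $g(\mathbb{D})$, not merely a $C^1$ map with small dilatation. This reduces to checking injectivity on the interpolation annulus $A$: since $\|g-\mathrm{id}\|_{C^1(B_{2s})}=O(s)$, the map $G$ is itself an $O(s)$ $C^1$-perturbation of the identity on $B_{2s}$, hence a diffeomorphism onto its image there (the Jacobian $|\partial G|^2-|\partial_{\bar z}G|^2$ stays positive, and the boundary circles $\{|z|=s\}$ and $\{|z|=2s\}$ are mapped injectively onto their obvious target curves). Gluing with the identity on $B_s$ and with $g$ on $\mathbb{D}\setminus B_{2s}$, both of which are embeddings, produces the desired homeomorphism $G:\mathbb{D}\to g(\mathbb{D})$.

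The main (mild) obstacle is the injectivity/global homeomorphism check; the quasiconformal estimate itself is a routine consequence of the quadratic vanishing of $g(z)-z$ at the origin together with the $1/s$ blow-up of $\phi'$, which are perfectly matched in the annulus of width $s$.
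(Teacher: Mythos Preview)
Your proof is correct and is essentially identical to the paper's: writing the convex combination $G(z)=\phi(|z|)g(z)+(1-\phi(|z|))z$ is exactly $G(z)=z+\phi(|z|)\psi(z)$ with $\psi=g-\mathrm{id}$, which is the paper's construction, and both arguments use Koebe distortion to get $|\psi(z)|=O(s^2)$ on the annulus and balance this against $|\phi'|=O(1/s)$. Your version is slightly more careful in that you explicitly verify global injectivity of $G$, a point the paper only asserts.
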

\begin{proof}
Since $g(0)=0$ and $g$ is conformal, there exists an an expansion
\begin{equation*}
g(z) = z + a_2z^2 + a_3z^3 +\cdots =  z + \psi(z)
\end{equation*}
where $a_i\in \mathbb{C}$ for $i\geq 2$.\newline

We shall have $s=\epsilon$. Let $\phi_\epsilon:[0,1]\to [0,1]$ be a smooth bump function such that\newline
(1) $\phi_\epsilon(t) = 0$ for $0\leq  t \leq \epsilon$.\newline
(2) $\phi_\epsilon(t) = 1$ for $2\epsilon\leq  t \leq 1$ .\newline
(3) $\lvert \phi_\epsilon^\prime(t)\lvert = O(1/\epsilon)$ for all $t\in [\epsilon,2\epsilon]$.\newline
and define
\begin{equation*}
\psi_\epsilon(z) = \phi_\epsilon(\lvert z\rvert)\psi(z)
\end{equation*}
for all $z\in \mathbb{D}$.\newline 

Define the map $G: \mathbb{D} \to \mathbb{C}$ as
\begin{equation*}
G(z) = z + \psi_{\epsilon}(z)
\end{equation*}
for $z\in \mathbb{D}$.\newline

From the Koebe distortion theorem (see for example Theorem 1.3 of \cite{Pom}) we have
\begin{equation*}
\frac{1}{(1 + \lvert z \rvert)^2} \leq \left\lvert \frac{g(z)}{z}\right\rvert \leq \frac{1}{(1 - \lvert z \rvert)^2}
\end{equation*}
and that implies
\begin{equation*}
\left\lvert \frac{g(z)}{z} -1 \right\rvert \leq \frac{4\lvert z\rvert}{(1 - \lvert z\rvert^2)^2}
\implies
\lvert \psi(z)\rvert \leq \frac{4\lvert z\rvert^2}{(1 - \lvert z\rvert^2)^2}
\end{equation*}

For $\epsilon\leq \lvert z\rvert \leq 2\epsilon$ we therefore have 
\begin{equation*}
\lvert \psi(z)\rvert \leq C\epsilon^2
\end{equation*}
for some universal constant $C$ (we know $\epsilon$ is sufficiently small).\newline
From this and (3) above it is easy to check that 
\begin{equation*}
\lvert \partial_{\bar{z}} G\rvert = \lvert \partial_{\bar{z}} \psi_{\epsilon}\rvert = O(\epsilon)
\end{equation*}
and 
\begin{equation*}
\lvert \partial_{z} G\rvert = \lvert 1 +  \partial_{z} \psi_{\epsilon}\rvert = 1 + O(\epsilon)
\end{equation*}
for each $z\in\mathbb{D}$.\newline

$G$ is hence a diffeomorphism of quasiconformal dilatation of $1 + O(\epsilon)$ that restricts to the identity map on $B_{\epsilon}$ and to $g$ on $\partial \mathbb{D}$ as required.
\end{proof}

\textit{Remark.} By conjugating by the dilation $z\mapsto (1/r)z$ the above result holds (for some $0<s< r$)  if the conformal map $g$ is defined only on $B_r\subset \mathbb{D}$.

\subsection{Proof of the multi-curve case}

Let $\lambda$ be a multicurve, namely a collection of (weighted) disjoint simple closed geodesics $\{ \gamma_1,\gamma_2,\ldots,\gamma_n\}$ on the closed hyperbolic surface $X$, with weights $c_i>0$ and  lengths $l_i$, where $1\leq i\leq n$.\newline

Let $X^\infty$ denote the infinitely grafted surface, obtained by cutting $X$ along $\lambda$ and gluing a semi-infinite euclidean cylinder at each of the resulting $2n$ boundary components. This resulting Riemann surface can be thought of as the conformal limit of the grafting ray $X_t =gr_{t\lambda}X$. (Recall that a time-$t$ grafting along a simple closed curve $\gamma$ inserts a euclidean cylinder of width $t$ at $\gamma$).\newline

We shall use the following result due to Strebel (\cite{Streb}):

\begin{prop}\label{prop:streb} Let $\Sigma$ be a Riemann surface of genus $g$, and $x_1,x_2,\ldots x_m$ be marked points on $\Sigma$ such that $2g-2+m>0$ . Then for any collection of real numbers $p_1,p_2,\ldots,p_m$ there exists a quadratic differential $\phi$ on $\Sigma$ such that \newline
(i) $\phi$ is holomorphic on $\Sigma\setminus \{x_1,\ldots,x_m\}$.\newline
(ii) $\phi$ has a double pole at $x_i$ with residue $-(\frac{p_i}{2\pi})^2$, for each $1\leq i\leq m$.\newline
(iii) All horizontal trajectories of $\phi$ are closed, and they foliate $m$ annular domains which are disks with punctures at $x_1,\ldots,x_m$.
\end{prop}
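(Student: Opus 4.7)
The strategy is Strebel's classical variational argument based on extremal length. First, I would formulate an extremal problem: among all configurations $\mathcal{A}=\{A_1,\ldots,A_m\}$ of pairwise disjoint open annuli on $\Sigma$, where $A_i$ is a punctured-disk neighborhood of $x_i$ (its inner ``boundary'' being the puncture), maximize the weighted modulus
\[
\Phi(\mathcal{A}) \;=\; \sum_{i=1}^m p_i^{\,2}\cdot \mathrm{mod}(A_i).
\]
The hypothesis $2g-2+m>0$ rules out the exceptional conformal types (spheres with few punctures, tori) where such annular decompositions degenerate.

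Second, I would establish existence of a maximizer $\mathcal{A}^\ast$ by a normal-families argument. The functional $\Phi$ is bounded above: in any fixed background hyperbolic metric on $\Sigma \setminus \{x_i\}$, each annulus has modulus controlled by area divided by the square of the length of its core geodesic, and these core lengths cannot all shrink to zero without collision. A maximizing sequence of configurations then admits a Carath\'{e}odory-style subsequential limit, and upper semicontinuity of conformal modulus under such limits produces an actual maximizer.

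Third, I would analyze $\mathcal{A}^\ast$ via Teichm\"{u}ller's reciprocal (``length-area'') inequality. On each extremal annulus $A_i^\ast$ the extremal metric realizing $\mathrm{mod}(A_i^\ast)$ has the form $|\phi_i|\,|dz|^2$ for a holomorphic quadratic differential $\phi_i$ on $A_i^\ast$ whose horizontal trajectories are precisely the homotopically non-trivial closed loops foliating $A_i^\ast$. In a local coordinate $w$ at $x_i$, the condition that the trajectories close up around $0$ forces $\phi_i(w) = -a_i/w^2 + (\text{holomorphic})$, and the weight $p_i$ entering $\Phi$ normalizes the flat circumference of each trajectory to $p_i$, giving $a_i = (p_i/2\pi)^2$ and hence the prescribed residue. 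Maximality of $\Phi$ then forces the boundary arcs between adjacent $A_i^\ast$, $A_j^\ast$ to be critical trajectories along which $|\phi_i|$ and $|\phi_j|$ match: any mismatch would permit a Whitehead-type local move bending the interface into the annulus with smaller $|\phi|$, strictly increasing $\Phi$. Consequently the local $\phi_i$ assemble into a single meromorphic quadratic differential $\phi$ on $\Sigma$, holomorphic away from $\{x_1,\ldots,x_m\}$ with closed horizontal trajectories foliating $\bigsqcup_i A_i^\ast$.

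The main obstacle is this last assembly step: promoting agreement in $|\phi|$ across the boundary arcs to agreement as genuine quadratic differentials, and simultaneously verifying that the complement $\Sigma \setminus \bigsqcup_i A_i^\ast$ is a $1$-dimensional critical graph rather than a thick subsurface carrying nontrivial moduli of its own. Both ingredients rely on fine local perturbation arguments showing that $\Phi$ can be strictly improved by specific slit insertions or edge contractions whenever the candidate maximizer fails these properties. A conceptually cleaner alternative, when one is willing to invoke more machinery, is the Hubbard--Masur parametrization of holomorphic quadratic differentials by measured foliations, restricted to the subclass of foliations with all leaves closed and prescribed cylinder heights $p_i$; but the direct variational approach is more transparent for identifying the polar parts.
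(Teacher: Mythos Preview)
The paper does not prove this proposition; it is quoted as a known result of Strebel (cited as \cite{Streb}) and used as a black box in the construction of $Y^\infty$. So there is no ``paper's own proof'' to compare against, and your proposal is an attempt to supply what the paper deliberately omits.

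Your sketch follows the spirit of Strebel's variational method, but the extremal problem as you have written it is ill-posed. If $A_i$ is a punctured-disk neighborhood of $x_i$ with the puncture as its inner boundary, then $\mathrm{mod}(A_i)=\infty$ and $\Phi(\mathcal{A})\equiv+\infty$ for every admissible configuration, so there is nothing to maximize. If instead you mean genuine annuli in the homotopy class of a small loop around $x_i$ (not containing the puncture), then $\Phi$ is finite on each configuration but has infinite supremum, since one can push the inner boundary arbitrarily close to $x_i$; again no maximizer exists. Either way, the compactness step in your second paragraph fails at the outset, and the subsequent analysis of $\mathcal{A}^\ast$ never gets off the ground.

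The standard repair is to replace $\mathrm{mod}(A_i)$ by the \emph{reduced modulus} relative to a fixed local coordinate at $x_i$: one subtracts the divergent logarithmic term so that the resulting quantity is finite and depends only on the outer boundary of $A_i$. The weighted sum of reduced moduli is then bounded above (this is where $2g-2+m>0$ enters genuinely), a maximizer exists by normal-families arguments, and the rest of your outline---the length--area analysis forcing the local extremal metrics to glue into a global meromorphic $\phi$---goes through essentially as you describe. Your identification of the residue from the circumference normalization is correct once the functional is set up properly. The alternative you mention at the end, via Hubbard--Masur on the punctured surface, is also viable and is arguably the more modern route.
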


Applying the above proposition with $\Sigma = X^\infty$, we can obtain such a \textit{Jenkins-Strebel differential} $\phi$ on a Riemann surface conformally equivalent (as marked conformal structures in $\mathcal{T}_{g,2n}$) to $X^\infty$, with $n$ pairs of marked points, each pair having residue $-(\frac{l_i}{2\pi})^2$. We denote this surface equipped with the quadratic differential metric as $Y^\infty$: this then is a singular flat surface comprising $n$ pairs of infinite euclidean cylinders, each pair having circumference $l_i$.  Let $g$ be the conformal map from $X^\infty$ to $Y^\infty$ that preserves the marking.\newline

\begin{figure}
  \centering
  \includegraphics[scale=0.35]{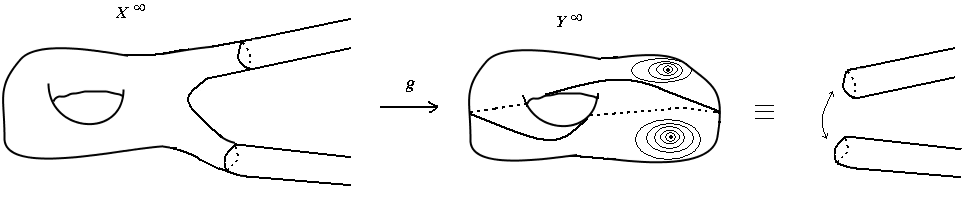}\\
  \caption{The surface $X^\infty$ on the left is the ``infinitely" grafted surface. The conformally equivalent singular flat surface $Y^\infty$ has a quadratic differential metric with a pair of double poles, and metrically it is equivalent to two semi-infinite euclidean cylinders glued along the boundary.}
\end{figure}

Let $Y$ denote the surface obtained from $Y^\infty$ by truncating the infinite cylinders and gluing up the pairs (of matching lengths)  so that they give euclidean cylinders of circumference $l_i$ and height $c_i$, in the homotopy class of $\gamma_i$, for each $1\leq i\leq m$. This will be the basepoint of the Teichm\"{u}ller ray $Y_t$ that we shall show is asymptotic to the grafting ray $X_t$. Recall that the surface $Y_t$ is obtained from $Y$ (which is also $Y_0$ in our notation) by stretching along the horizontal foliation, and in particular the euclidean cylinders on $Y_t$ have height $c_ie^{2t}$.\newline

\textit{Notation.} For a semi-infinite cylinder (homeomorphic to $S^1\times [0,\infty)$) denoted by $C$ ,we denote by $C_{\geq h}$ (resp. $C_{\leq h}$) the infinite subcylinder of all points of $C$ of height greater (resp. lesser) than $h$. 

\begin{lem}\label{lem:conf1}
Let $C, C^\prime$ be two semi-infinite Euclidean cylinders of the same circumference, and let $f:C\to C^\prime$ be a conformal map which is a homeomorphism onto its image. Then for any $H_0>0$  there exists an $H_1>H_0$ and a map $F:C\to f(C)\subset C^\prime$ such that\newline
(1) $F$ is $(1 + \epsilon)$-quasiconformal.\newline
(2) $F$ is isometric on $C_{\geq H_1}$.\newline
(3) $F$ restricts to $f$ on $C_{\leq H_0}$.\\
(Here $H_1$ depends on $\epsilon$ and $H_1\to \infty$ as $\epsilon\to0$.)
\end{lem}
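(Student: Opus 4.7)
The plan is to uniformize the cylinder ends as punctured disks, observe that $f$ extends holomorphically across the puncture as an ``asymptotically linear'' map, and then invoke Lemma \ref{lem:gfix} to replace $f$ by its linear asymptote near the end with only a small quasiconformal cost. The punchline is that the identity map $z \mapsto z$ being the asymptote for a conformal map tangent to it is exactly what Lemma \ref{lem:gfix} was set up to exploit, and the translation invariance of the flat cylinder turns an affine map on the disk into an isometry on the cylinder.

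First I would uniformize. Let $L$ be the common circumference of $C$ and $C'$, and identify each of them with the closed punctured unit disk $\{0 < |w| \leq 1\}$ via $w = \exp(-2\pi(h+i\theta)/L)$, where $(h,\theta)$ are the cylinder coordinates. The ideal end $h = +\infty$ becomes $w = 0$, and the flat metric pulls back to $\tfrac{L}{2\pi}|dw|/|w|$; in these coordinates every map of the form $w \mapsto \beta w$ with $\beta \in \mathbb{C}^*$ is an isometry, since it amounts to the translation $(h,\theta) \mapsto (h - \tfrac{L}{2\pi}\log|\beta|, \theta + \tfrac{L}{2\pi}\arg\beta)$. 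Second, I would extend $f$ across the puncture. Viewing $f$ as a univalent holomorphic map $\tilde f : \mathbb D^* \to \mathbb D^*$, Riemann's removable-singularity theorem gives a holomorphic extension $\tilde f : \mathbb D \to \mathbb D$; since $f$ is a homeomorphism onto its image it is $\pi_1$-injective, so a small loop around the puncture must map to a loop still enclosing the puncture in $C'$, forcing $\tilde f(0) = 0$. Expanding, $\tilde f(w) = \alpha w + O(w^2)$ with $\alpha = \tilde f'(0)$ and $|\alpha| \leq 1$ by Schwarz.

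The main step is the interpolation. Set $r_0 := e^{-2\pi H_0/L}$, so that the annulus $\{r_0 \leq |w| \leq 1\}$ corresponds to $C_{\leq H_0}$. On $B_{r_0}$, let $\eta(w) := \tilde f(w)/\alpha$; this is univalent conformal with $\eta(0) = 0$ and $\eta'(0) = 1$. Applying Lemma \ref{lem:gfix}, together with the concluding remark allowing the domain to be a disk $B_r$, produces for any sufficiently small $\epsilon > 0$ some $0 < s < r_0$ and a $(1+\epsilon)$-quasiconformal map $\eta_\epsilon : B_{r_0} \to \eta(B_{r_0})$ which equals the identity on $B_s$ and equals $\eta$ on $\partial B_{r_0}$. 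I then define
\begin{equation*}
F_{\mathrm{disk}}(w) = \begin{cases} \alpha\, \eta_\epsilon(w), & |w| \leq r_0, \\ \tilde f(w), & r_0 \leq |w| \leq 1. \end{cases}
\end{equation*}
The two pieces agree on $\partial B_{r_0}$ because $\alpha \eta = \tilde f$, and post-composition by the conformal factor $w \mapsto \alpha w$ does not alter the dilatation, so $F_{\mathrm{disk}}$ is $(1+\epsilon)$-quasiconformal on all of $\mathbb D$.

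To finish I transport back to the cylinder to obtain $F : C \to f(C) \subset C'$. By step one, $F_{\mathrm{disk}}|_{B_s}(w) = \alpha w$ is an isometry, so $F$ is isometric on $C_{\geq H_1}$ with $H_1 := -\tfrac{L}{2\pi}\log s$, while $F = \tilde f$ on the outer annulus gives $F|_{C_{\leq H_0}} = f|_{C_{\leq H_0}}$. Since the $s$ produced by Lemma \ref{lem:gfix} tends to $0$ as $\epsilon \to 0$, one indeed has $H_1 \to \infty$. The only conceptually nontrivial point is verifying that $\tilde f$ fixes $0$ (rather than, say, hitting some interior point), which is where the $\pi_1$-injectivity of $f$ enters; once that is in hand all the quasiconformal surgery is packaged by Lemma \ref{lem:gfix}, so I do not anticipate a serious obstacle.
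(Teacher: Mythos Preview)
Your argument follows the paper's proof essentially verbatim: uniformize the cylinders as punctured disks, extend $f$ holomorphically across the puncture, rescale to arrange derivative $1$ at the origin, and invoke Lemma~\ref{lem:gfix} to interpolate between $f$ and its linearization; a dilation on the disk side becomes an isometric translation on the cylinder side. You are in fact slightly more careful than the paper in securing condition~(3), since by applying Lemma~\ref{lem:gfix} on $B_{r_0}$ (via the remark following it) and retaining $\tilde f$ on the outer annulus you obtain $F=f$ on all of $C_{\le H_0}$, whereas the paper's wording only explicitly records agreement on $\partial\mathbb D$.

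The one point to flag is your justification of $\tilde f(0)=0$. A homeomorphism onto its image is $\pi_1$-injective as a map $C\to f(C)$, but the composite $\pi_1(C)\to\pi_1(C')$ need not be: for instance $\tilde f(w)=\tfrac{w}{4}+\tfrac{1}{4}+\delta$ for small $\delta>0$ embeds the closed disk into $\mathbb D^*$ and sends $0$ to $\tfrac14+\delta\neq 0$, so the small loop about the domain puncture maps to a loop contractible in $C'$. Thus the lemma as stated really needs an additional hypothesis (properness, or that $f$ preserves the end). The paper glosses over this too --- it simply asserts $g(0)=0$ --- and in the only application (Proposition~\ref{prop:scc}) it is automatic, since there $f$ is the restriction of a global conformal homeomorphism $X^\infty\to Y^\infty$ carrying punctures to punctures.
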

\begin{proof}
The cylinders $C$ and $C^\prime$ are conformally equivalent to the punctured unit disk $\mathbb{D}^*$ by a conformal map $\phi$ that takes $\infty$ to $0$, and maps the round circle at height $h$ (for each $0\leq h< \infty$) to a circle of radius $r(h) = e^{-2\pi h}$ centered at the origin. Thus the conformal map $f$ conjugates to a conformal map $g = \phi\circ f\circ \phi^{-1}$  from the punctured disk to itself, which can be extended to a conformal map of $\mathbb{D}$ into itself, such that $g(0)=0$ and $g^\prime(0) = c$.\newline
We can now apply Lemma \ref{lem:gfix} to the rescaled conformal map $(1/c)g$ to obtain an almost-conformal map $G$ that restricts to a dilatation $z\mapsto cz$ on $B_s$ for some $0<s<1$ and agrees with $g$ on $\partial \mathbb{D}$. Since a dilatation conjugates back to an isometric translation of the semi-infinite cylinders, the map $F = \phi^{-1}\circ G\circ \phi$ satisfies (1) to (3).
\end{proof}

\begin{figure}
  \centering
  \includegraphics[scale=0.35]{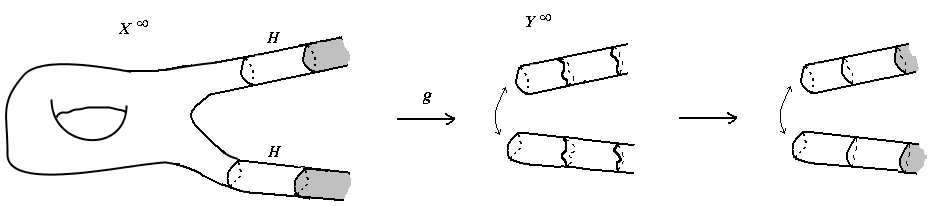}\\
  \caption{Lemma \ref{lem:conf1} allows one to adjust $g$ such that it isometric on the circle at height $H$. Discarding the shaded regions and gluing up along the truncating circles gives a grafted surface $X_t$ on the left and the surface along the Teichm\"{u}ller ray on the right. }
\end{figure}

\begin{prop}\label{prop:scc}
For any (sufficiently small) $\epsilon>0$ there exists a (sufficiently large) $T>0$ such that for every $t>T$ there is a $(1 + \epsilon)$-quasiconformal map from the grafted surface $X_t$ to a singular flat surface $Y_s$ for some $s$. 
\end{prop}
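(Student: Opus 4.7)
The plan is to build an almost-conformal map $X_t\to Y_s$ from the Strebel conformal map $g:X^\infty\to Y^\infty$, by locally adjusting $g$ on each cylindrical end using Lemma \ref{lem:conf1}. For a prescribed $\epsilon>0$ I first apply Lemma \ref{lem:conf1} separately on each of the $2n$ semi-infinite cylindrical ends of $X^\infty$ (paired with their images under $g$, which have matching circumference $l_i$). Gluing the local modifications together with $g$ on the compact core produces a $(1+\epsilon/2)$-quasiconformal map $F:X^\infty\to Y^\infty$ that agrees with $g$ at heights $\leq H_0$ on each end and is an isometric translation $(z,h)\mapsto (z+a_i^\pm,\,h+K_i^\pm)$ at heights $\geq H_1=H_1(\epsilon)$.

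Next, choose $T$ so that $Tc_i/2>H_1$ for every $i$. For $t>T$, cut each pair of semi-infinite cylinders of $X^\infty$ at height $tc_i/2$ and re-glue by the canonical identification coming from $X$; this reconstructs the grafted surface $X_t$. The map $F$ carries each truncating circle isometrically to a circle in the corresponding end of $Y^\infty$, shifted by the fixed height $K_i^\pm$ and fixed rotation $a_i^\pm$. Gluing the truncated pieces of $Y^\infty$ along these image circles via the isometry induced by $F$ produces a singular flat surface $Y'$ with cylinders of circumference $l_i$ and heights $tc_i+K_i$ (where $K_i=K_i^++K_i^-$), onto which $F$ descends as a $(1+\epsilon/2)$-quasiconformal homeomorphism $X_t\to Y'$.

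It remains to identify $Y'$ with a surface $Y_s$ on the Teichm\"uller ray, up to small distortion. Set $s$ by $e^{2s}=t$, so that $Y_s$ has cylinders of height $tc_i$ and circumference $l_i$. Then $Y'$ and $Y_s$ differ only in the cylinders, each by an affine rescaling of the height by factor $1+K_i/(tc_i)=1+O(1/t)$ plus a bounded twist adjustment comparing the $F$-induced gluing on $Y'$ with the canonical twist of the Strebel differential. A piecewise affine map that is the identity outside the cylinders and an affine stretch-plus-twist inside each produces a map $Y'\to Y_s$ of dilatation $1+O(1/t)$, which is at most $1+\epsilon/2$ once $t$ is large enough. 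Composing with $F$ then yields the required $(1+\epsilon)$-quasiconformal map $X_t\to Y_s$.

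The principal technical obstacle is this final comparison between $Y'$ and $Y_s$: although $F$ has already matched the cylindrical geometry far out at infinity, the bounded but nonzero height defects $K_i$ and the residual twist discrepancy still must be absorbed into an almost-conformal map. The crucial observation making this possible is that the cylinders of $Y'$ have moduli $(tc_i+K_i)/l_i\to\infty$, so a bounded height stretch and a bounded twist both produce arbitrarily small quasiconformal dilatation as $t\to\infty$.
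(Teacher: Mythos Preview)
Your proof is correct and follows the same strategy as the paper: modify the Strebel conformal map $g:X^\infty\to Y^\infty$ via Lemma~\ref{lem:conf1} on each cylindrical end so that it becomes an isometric translation far out, then truncate and glue to descend to a map between closed surfaces. The paper simply asserts that the glued truncated $Y^\infty$ ``is $Y_s$ for some $s$'' and stops there; your final paragraph is more careful, recognizing that the bounded height offsets $K_i$ and twist discrepancies coming from Lemma~\ref{lem:conf1} mean the glued surface $Y'$ need not lie exactly on the Teichm\"uller ray (the heights $tc_i+K_i$ are not proportional to $c_i$ unless the $K_i$ happen to be), and you correct this with an explicit almost-conformal affine stretch-plus-twist in each cylinder, using that the moduli tend to infinity. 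This extra step is genuinely needed in the multicurve case when the weights $c_i$ differ, so your argument is in fact tighter than the paper's at this point.
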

\begin{proof}
We start with the conformal map $g:X^\infty \to Y^\infty$. Consider its restriction to one of the semi-infinite euclidean cylinders $C$, and let $C^\prime$ be the corresponding infinite cylinder on $Y^\infty$ such that $g(C)\cap C^\prime \neq \phi$. By properness, for $H_0$ sufficiently large, the image under $g$ of the $C_{\geq H_0}$ will be strictly contained in $C^\prime$. Applying Lemma \ref{lem:conf1} we have some $H_1>H_0$ and a map $F_C$ on the truncated cylinder $C_{\leq H_1}$ that  agrees with $g$ on $C_{\leq H_1}$ and is isometric on the circle at height $H_1$.\newline

Repeating this for each infinite cylinder on $X^\infty$, we obtain truncations of each and almost-conformal maps such that together we have an almost-conformal map $F$ to $Y^\infty$, with each of its cylinders also truncated at a circle at some height. The map $F$ is isometric on the boundary circles, and in particular its restrictions to truncated paired cylinders agree on the truncating round circles. On gluing these maps we obtain an almost-conformal map from the truncated $X^\infty$ glued along the boundaries of the paired cylinders, to the truncated $Y^\infty$ glued along the boundaries of its paired cylinders. By definitions of the surfaces, the latter is $Y_s$ for some $s$, and the former is $X_T$ for some $T$. By choosing to truncate at  higher heights (greater than $H_1$ as in $C$ above) one can obtain a almost-conformal maps from $X_t$ for any $t>T$, to a surface along the Teichm\"{u}ller ray starting at $Y$.\\

This ``almost-conformal" map is $(1 + C\epsilon)$-quasiconformal for some universal constant $C>0$ (see for example the lemmas in \S5.1), and so by choosing a smaller $\epsilon$ (and consequently a larger $T$ for the above construction) on can obtain a $(1+\epsilon)$-quasiconformal map, as stated.
\end{proof}
\vspace{0.13in}

Recall that the surfaces $X_t$ form the grafting ray determined by the multicurve $\lambda$, and the surfaces $Y_s$ lie along the Teichm\"{u}ller ray determined by $\lambda$ and with basepoint $Y=Y_0$. Thus together with Proposition \ref{prop:maxcase} this concludes the proof of Theorem 1.1.

\section{Proof of Theorem 1.4}

In this section we prove the following theorem stated in \S1:

\begin{thm14}
Let $\mathcal{P}_\rho$ be the set of complex projective structures on a surface $S_g$ with a  fixed holonomy $\rho\in Rep(\pi_1(S_g),PSL_2(\mathbb{C}))$. Then for any Fuchsian representation $\rho$, the projection of $\mathcal{P}_\rho$ to $\mathcal{M}_g$ has a dense image.
\end{thm14}

Recall that $\pi:\mathcal{T}_g\to \mathcal{M}_g$ denotes the projection to moduli space, and $\mathcal{P}_\rho \subset \mathcal{P}(S_g)$ denotes the set of complex projective structures that have holonomy $\rho$. Let $p:\mathcal{P}(S_g) \to \mathcal{T}_g$ be the ``forgetful" projection.\\

The following is a consequence of the work of Goldman (see \S 2.14 in \cite{Gold}):\\

\textbf{Theorem}.(\cite{Gold}) \textit{Let $X$ be a hyperbolic surface, and let $\rho:\pi_1(S_g)\to PSL_2(\mathbb{R})$ be the corresponding Fuchsian representation. Then grafting along a multi-curve $\gamma$ on $X$ with weight $2\pi n$, for some integer $n\geq 1$, preserves Fuchsian holonomy. In fact, any other element $Y\in\mathcal{P}_\rho$ is obtained by such a grafting.}\\

\textit{Remark.} We shall refer to grafting a hyperbolic surface along a multi-curve with weight $2\pi$ as  \textit{integer}-grafting or $2\pi$-grafting.\\

It immediately follows that Theorem 1.4 is a consequence of  the following proposition:

\begin{prop}\label{prop:prop0}
For any $X\in\mathcal{T}_g$, the set of integer graftings $\{\pi(gr_{2\pi\gamma}X)\vert$ $\gamma\in \mathcal{S}\}$ is dense in $\mathcal{M}_g$.
\end{prop}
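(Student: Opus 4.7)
The plan is to combine Corollary \ref{cor:cor1} (a generic grafting ray is dense in moduli space) with an approximation of the arational direction by a weighted multicurve carried by the same train-track, and then to compare the two grafted surfaces almost-conformally via the machinery of \S4.

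\emph{Setup and integer approximation.} Fix an arational uniquely-ergodic $\lambda$ for which the ray $t\mapsto \pi(gr_{t\lambda}X)$ is dense in $\mathcal{M}_g$. Given $[Z]\in \mathcal{M}_g$ and $\delta>0$, choose $t_0$ with $\pi(gr_{t_0\lambda}X)$ within $\delta/2$ of $[Z]$ in the quotient metric. It then suffices to find $\gamma\in\mathcal{S}$ with $d_\mathcal{T}(gr_{t_0\lambda}X,\, gr_{2\pi\gamma}X) < \delta/2$. Represent $\lambda$ on the train-track $\mathcal{T}_\epsilon$ of \S4.1 with branch weights $w_1,\ldots,w_n$. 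The switch conditions form a homogeneous linear system whose coefficients lie in $\{-1,0,1\}$ and whose shape depends only on the combinatorics of $\mathcal{T}_\epsilon$. A standard rounding argument on the associated rational polyhedral cone produces nonnegative integers $m_1,\ldots,m_n$ satisfying the switch conditions with
\[
\bigl| \tfrac{t_0}{2\pi} w_i - m_i \bigr| \leq C_g, \qquad 1\leq i\leq n,
\]
for some $C_g$ depending only on the genus. These $m_i$ define a simple closed multicurve $\gamma\in\mathcal{S}$ carried by $\mathcal{T}_\epsilon$, with $m_i$ parallel strands in branch $R_i$.

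\emph{Almost-conformal comparison.} On $X_{t_0}:=gr_{t_0\lambda}X$ and on $X^\gamma:=gr_{2\pi\gamma}X$ the decomposition by $\mathcal{T}_\epsilon$ produces the same hyperbolic complementary pieces (untouched by grafting) and, on each branch $R_i$, quasi-rectangles of the same hyperbolic height and hyperbolic width $O(\epsilon)$ but with total euclidean widths $t_0 w_i$ and $2\pi m_i$ respectively. By Lemma \ref{lem:widerect} both exceed $1/\epsilon^2$ for large $t_0$, while the integer approximation gives $\bigl|\tfrac{2\pi m_i}{t_0 w_i} - 1\bigr| = O(\epsilon)$. Lemma \ref{lem:rectModel} produces a height-preserving almost-conformal model euclidean rectangle for each quasi-rectangle on either surface. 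Composing the model map on $X_{t_0}$ with the inverse of the one on $X^\gamma$ and absorbing the $O(\epsilon)$ width discrepancy by the horizontal stretch of Lemma \ref{lem:qcstraight} yields an almost-conformal map on each branch, affine (hence $\epsilon$-almost-isometric) on the horizontal edges and isometric on the vertical ones. Taking the identity on the hyperbolic pieces and adjusting at horizontal interfaces via Lemma \ref{lem:qcnoskew}, exactly as in \S4.5, one assembles a globally continuous $(1+O(\epsilon))$-quasiconformal homeomorphism $X_{t_0}\to X^\gamma$, so that $d_\mathcal{T}(X_{t_0}, X^\gamma) = O(\epsilon)$. Choosing $\epsilon$ small at the outset makes this less than $\delta/2$, finishing the proof.

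\emph{Main obstacle.} The crux is the uniform integer approximation above: the bound $C_g$ must not depend on $t_0$, so that the \emph{relative} error $C_g/(t_0 w_i)$ becomes negligible as the grafting scale grows, driving the resulting dilatation to $1$. This rests on the combinatorial fact that the nonnegative integer solutions of the switch equations form a finitely generated submonoid of the kernel lattice (a Hilbert-basis-type statement), whose generating data is determined by $\mathcal{T}_\epsilon$; hence any real nonnegative switch-solution lies within a genus-dependent distance of some integer one. With that in hand, the rest is a direct re-use of the rectangle-by-rectangle quasiconformal technology from the arational case.
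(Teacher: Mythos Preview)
Your approach is essentially the paper's: use Corollary \ref{cor:cor1} to find a dense grafting ray, approximate the train-track weights by integers within a genus-only bound (this is Lemma \ref{lem:linalg}), and compare the two grafted surfaces branch-by-branch via the rectangle models of \S4, taking the identity on the hyperbolic complement. The one refinement you should make explicit is that the horizontal-edge length discrepancy between corresponding model rectangles is the bounded constant $2\pi C_g$ rather than $\epsilon$, so Lemma \ref{lem:qcnoskew} does not literally apply; the paper introduces $(\epsilon,C)$-almost-isometries (Definition \ref{defn:ecisom}) and the extension Lemma \ref{lem:ext}, which reduces to Lemma \ref{lem:qcnoskew} after rescaling by the height since $C_g/h \le C_g\epsilon^2 < \epsilon$.
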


\textit{Plan of the proof.} Fix any $X\in \mathcal{T}_g$ and pick an arbitrary $Y\in \mathcal{M}_g$ and a sufficiently small $\epsilon>0$.  To establish Proposition \ref{prop:prop0} it is enough to show that for this choice we have:

\begin{prop}\label{prop:prop1}
There exists a $\gamma\in \mathcal{S}$ such that $d_\mathcal{T} (\pi(gr_{2\pi\gamma}X), Y) < 2\epsilon$.
\end{prop}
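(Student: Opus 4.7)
The plan is to combine the density of generic grafting rays (Corollary~\ref{cor:cor1}) with an almost-conformal comparison between an arational grafting ray and an integer grafting. First, invoke Corollary~\ref{cor:cor1} to fix, once and for all, an arational lamination $\lambda \in \mathcal{ML}$ whose projected grafting ray $\{\pi(gr_{t\lambda}X)\}_{t \geq 0}$ is dense in $\mathcal{M}_g$. Density produces a sequence $t_k \to \infty$ with $d_\mathcal{T}(\pi(gr_{t_k\lambda}X), Y) < \epsilon$ for every $k$. It therefore suffices to exhibit, for some $t = t_k$ taken sufficiently large, a multicurve $\gamma \in \mathcal{S}$ with $d_\mathcal{T}(\pi(gr_{t\lambda}X), \pi(gr_{2\pi\gamma}X)) < \epsilon$; the proposition will then follow from the triangle inequality.

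To produce $\gamma$, fix a train track $\tau$ carrying $\lambda$, with real branch weights $(w_i)_{i=1}^N$ satisfying the switch conditions. For each $t > 0$ the scaled weights $(t w_i/(2\pi))$ form a real solution of the switch system. Since the system has integer coefficients, Lemma~\ref{lem:linalg} furnishes nonnegative integers $(m_i)$ satisfying the switch conditions with $|m_i - t w_i/(2\pi)| \leq C$, where $C$ depends only on $\tau$ (hence only on genus). These integer weights encode a multicurve $\gamma = \gamma(t) \in \mathcal{S}$ carried by $\tau$, and the euclidean width of branch $i$ in $gr_{2\pi\gamma}X$ is $2\pi m_i$, differing from the corresponding width $t w_i$ in $gr_{t\lambda}X$ by at most $2\pi C$.

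The almost-conformal comparison between $gr_{t\lambda}X$ and $gr_{2\pi\gamma}X$ is then essentially a piecewise affine rescaling (the content of Lemma~\ref{lem:close}). Both surfaces admit the decomposition $\mathcal{D}$ of \S 4.1.3 for the same $\tau$, and they share as identical metric pieces the truncated ideal-triangle complements of $\tau$ in $X$ and the pentagonal pieces $\{P_j\}$, which depend only on $X$, the combinatorics of $\tau$, and the fixed width-$2$ trims. They differ only in the euclidean widths of the rectangular pieces $\{R_i\}$: namely $t w_i - 4$ on $gr_{t\lambda}X$ versus $2\pi m_i - 4$ on $gr_{2\pi\gamma}X$. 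Define the comparison map to be the identity on each $P_j$ and, on each $R_i$, the affine horizontal stretch that fixes the (identical) hyperbolic cross-section and rescales the euclidean width from $t w_i - 4$ to $2\pi m_i - 4$. For $t > T := 8\pi C/(\epsilon \min_i w_i)$ each stretch has dilatation $1 + O(\epsilon)$, and the pieces glue continuously along shared interfaces, since the pentagonal pieces coincide and the vertical heights on rectangle-pentagon boundaries match by construction. Property B of \S 4.2 then makes the assembled map globally $(1 + O(\epsilon))$-quasiconformal, giving $d_\mathcal{T}(\pi(gr_{t\lambda}X), \pi(gr_{2\pi\gamma}X)) = O(\epsilon)$.

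To conclude, choose $k$ with $t_k > T$ and set $\gamma := \gamma(t_k)$; the triangle inequality yields $d_\mathcal{T}(\pi(gr_{2\pi\gamma}X), Y) < 2\epsilon$. The only real subtlety is securing Lemma~\ref{lem:linalg} with a constant $C$ independent of $t$, a standard fact about integer solutions of integer-coefficient linear systems parametrised by a real solution. The almost-conformal comparison itself is direct because the two grafted surfaces carry decompositions with the \emph{same} combinatorics and with metric pieces that differ only in the euclidean widths of the rectangular branches, in sharp contrast to the more delicate singular-flat comparisons of \S 4.
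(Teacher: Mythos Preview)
Your global strategy matches the paper's exactly: pick an arational $\lambda$ with dense projected grafting ray, approximate $t\lambda$ by an integer multicurve via Lemma~\ref{lem:linalg}, and conclude with the triangle inequality. The gap lies in your construction of the almost-conformal comparison between $gr_{t\lambda}X$ and $gr_{2\pi\gamma}X$.

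First, the claim that the pentagonal pieces $P_j$ are ``identical metric pieces'' in the two surfaces is not correct. Each $P_j$ in the decomposition $\mathcal{D}$ is a truncated sector $S_H$ together with a width-$2$ strip taken from the \emph{adjacent grafted region}. In $gr_{t\lambda}X$ that strip comes from grafting along leaves of $\lambda$ abutting the ideal triangle; in $gr_{2\pi\gamma}X$ the grafting is along the geodesic arcs of $\gamma$, which are different curves and need not have any strand adjacent to the triangle side. So there is no identity map on the $P_j$'s. The paper in fact does \emph{not} use the pentagonal decomposition for this comparison at all; it works with the rectangles $R_i$ and the isometric hyperbolic complement $X\setminus\mathcal{T}_\epsilon$.

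Second, your ``affine horizontal stretch that fixes the hyperbolic cross-section and rescales the euclidean width'' is not a well-defined homeomorphism between the two rectangles. In $R_i^t\subset gr_{t\lambda}X$ the euclidean mass is distributed through a Cantor set of leaves (the Thurston-metric limit), whereas in the corresponding piece of $gr_{2\pi\gamma}X$ it is concentrated in finitely many inserted strips at the arcs of $\gamma\cap R_i$; a map that is the identity on the hyperbolic part and ``rescales'' the euclidean part would have to collapse one distribution onto the other and cannot be a quasiconformal homeomorphism. The paper resolves this by passing through \emph{euclidean model rectangles} on both sides (Lemma~\ref{lem:model} for $R_i^t$ and Corollary~\ref{cor:smooth1} for the smoothed finitely-grafted rectangle $R_i^{\prime\prime}$), doing the affine stretch between the models, and using the $(\epsilon,C)$-good boundary control together with Lemma~\ref{lem:ext} to glue.

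Finally, you omit the step (Lemma~\ref{lem:gh}) ensuring the geodesic multicurve $\gamma_t$ actually lies inside $\mathcal{T}_\epsilon$ for $t$ large; without this, grafting along $\gamma$ may disturb the complement of the train track and your ``identity on the complement'' fails. This uses the Hausdorff convergence $\gamma_t\to\lambda$ of geodesic representatives and the maximality of $\lambda$.
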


By Corollary 1.3 we have a $\lambda \in \mathcal{ML}$ such that\newline
(1) $\lambda$ is arational, and\newline
(2) The projection of the grafting ray determined by $(X,\lambda)$ to moduli space is dense.\newline
In particular, we have a sequence of times $t_i\to \infty$ such that 
\begin{equation}\label{eq:ti}
d_{\mathcal{T}}(\pi(gr_{2\pi t_i\lambda}(X)),Y)<\epsilon
\end{equation}

\bigskip
The argument for the proof of Proposition \ref{prop:prop1} carried out in \S6.1-5 consists of choosing an appropriate approximation of one of the $2\pi t_i\lambda$ by a multicurve and showing that the corresponding grafted surface is close to the surface obtained by grafting along this multicurve (Lemma \ref{lem:close}). Proposition \ref{prop:prop1} then follows easily from the triangle inequality (\S 6.6).

\subsection{Almost-conformal constructions}

In \S 4.2 we developed the notion of ``almost-isometries"  (see Definition \ref{defn:aisom}) and some related constructions of almost-conformal maps. 
In this section, we weaken the definition by introducing a further bounded additive error $C$ (see Definitions \ref{defn:ecisom} and \ref{defn:ecgood}). This additive error shall be small relative to the other dimensions, however, and will still permit the construction of almost-conformal maps (as in Lemma \ref{lem:ext}).

\begin{defn}\label{defn:ecisom}
A homeomorphism $f$ between two $C^1$-arcs on a conformal surface is an \textit{$(\epsilon,C)$-almost-isometry} if $f$ is continuously differentiable with dilatation $d$ (the supremum of the derivative of $f$ over the domain arc) that satisfies $\lvert d-1\rvert \leq \epsilon$ and such that the lengths of any subinterval and its image differ by an additive error of at most $C$.
\end{defn}

\textit{Remark.} As before, we my sometimes say `$(\epsilon, C)$-almost-isometric' to mean `$(M\epsilon,C)$-almost-isometric for some (universal) constant $M>0$'.\\

The following are analogues of Lemmas \ref{lem:noskew0}, \ref{lem:noskew1} and \ref{lem:noskew2}, and we omit their (easy) proofs:

\begin{lem}\label{lem:lem1}
Let $I_1,I_2$ be arcs of lengths $l_1$ and $l_2$ such that $\lvert l_1-l_2\rvert <C$ and $C/l_1 < \epsilon$. Then the orientation-preserving affine (stretch) map $f:I_1\to I_2$ is $(\epsilon,C)$-almost-isometric.
\end{lem}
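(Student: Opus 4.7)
The plan is to unpack both conditions of Definition \ref{defn:ecisom} directly from the formula for the affine stretch and verify each in turn; since the affine map from $I_1$ to $I_2$ is just multiplication by the scalar ratio $l_2/l_1$ after parametrizing both arcs by arclength, essentially nothing beyond bookkeeping of $C$ and $\epsilon$ is required. I expect no real obstacle here: the lemma is meant purely as the analogue of Lemma \ref{lem:noskew1} with the additional additive slack $C$ built in, and both conditions reduce to a single estimate on $|l_2-l_1|$.

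First I would write the affine map $f$ in arclength parameters as $f(s) = (l_2/l_1)s$, so that its derivative is the constant $d = l_2/l_1$. For the dilatation condition, I would note
\[
|d-1| = \left| \frac{l_2}{l_1} - 1\right| = \frac{|l_2 - l_1|}{l_1} < \frac{C}{l_1} < \epsilon,
\]
using the two hypotheses $|l_1 - l_2| < C$ and $C/l_1 < \epsilon$ in succession. This verifies the first clause of the definition.

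For the additive clause, I would take any subinterval $J \subset I_1$, of length $\ell \le l_1$. Its image $f(J)$ has length $\ell \cdot (l_2/l_1)$, so
\[
\bigl| \ell \cdot (l_2/l_1) - \ell \bigr| = \ell \cdot \frac{|l_2 - l_1|}{l_1} \le |l_2 - l_1| < C,
\]
since $\ell/l_1 \le 1$. Both conditions of Definition \ref{defn:ecisom} are therefore satisfied, and $f$ is $(\epsilon, C)$-almost-isometric as claimed. The entire argument is two short computations, so I would keep the written proof to just a few lines.
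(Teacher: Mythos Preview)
Your proof is correct. The paper actually omits the proof of this lemma entirely (stating ``we omit their (easy) proofs'' for Lemmas \ref{lem:lem1}--\ref{lem:lem3}), and your two short computations verifying the dilatation and additive-error clauses of Definition \ref{defn:ecisom} are exactly the intended straightforward verification.
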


\begin{lem}\label{lem:lem2}
Let $f,g:I\to I$ be maps of an arc $I$ that are $(\epsilon,C)$-almost-isometric and $(\epsilon,C^\prime)$-almost-isometric  respectively. Then $f^{-1}$ is $(\epsilon,C)$-almost-isometric, and $f\circ g$ is $(\epsilon,C+C^\prime)$-almost-isometric.
\end{lem}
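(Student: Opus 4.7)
The plan is to verify each of the two assertions directly from the two defining clauses of Definition \ref{defn:ecisom}, namely the dilatation bound and the additive length-defect bound, using the remark that constant factors on $\epsilon$ may be absorbed.

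For the inverse $f^{-1}$, I would first treat the dilatation: if $f$ has pointwise derivative $d(x)$ with $|d(x)-1|\leq \epsilon$, then the derivative of $f^{-1}$ at $y=f(x)$ equals $1/d(x)$, and the elementary estimate $|1/d - 1| = |d-1|/|d| \leq \epsilon/(1-\epsilon) \leq 2\epsilon$ (for small $\epsilon$) gives the dilatation control. For the additive bound I would let $J' \subset I$ be an arbitrary subarc and set $J := f^{-1}(J')$; then the hypothesis on $f$ applied to $J$ yields $|l(J') - l(J)| = |l(f(J)) - l(J)| \leq C$, which is exactly the additive bound required for $f^{-1}$.

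For the composition $f \circ g$, I would handle the dilatation by the chain rule: the pointwise derivative is $f'(g(x)) \cdot g'(x)$, so its absolute value lies in $[(1-\epsilon)^2, (1+\epsilon)^2] \subset [1-2\epsilon, 1+3\epsilon]$, yielding a dilatation bound of the form $1+O(\epsilon)$ which by the remark is again an $\epsilon$-bound. For the additive defect, I would fix any subarc $J \subset I$ and apply the triangle inequality to
\begin{equation*}
\bigl|\,l(f(g(J))) - l(J)\,\bigr| \leq \bigl|\,l(f(g(J))) - l(g(J))\,\bigr| + \bigl|\,l(g(J)) - l(J)\,\bigr| \leq C + C',
\end{equation*}
where the first term is bounded by the $(\epsilon, C)$-almost-isometry hypothesis on $f$ applied to the subarc $g(J)$, and the second by the $(\epsilon, C')$-hypothesis on $g$ applied to $J$.

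There is no real obstacle here; the statement is bookkeeping. The only mild subtlety is the factor-of-two loss in the dilatation bound for $f^{-1}$ and the similar loss for $f \circ g$, but these are precisely the sort of universal constants the remark following Definition \ref{defn:ecisom} permits us to absorb into the meaning of ``$(\epsilon,C)$-almost-isometric.''
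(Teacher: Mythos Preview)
Your proposal is correct and is precisely the kind of direct verification the paper has in mind: the paper omits the proof entirely, writing ``we omit their (easy) proofs'' for this lemma and its neighbors. Your argument from the dilatation bound via $1/d$ and the chain rule, together with the triangle inequality for the additive length defect, is exactly the intended bookkeeping, and your observation that the small constant losses are absorbed by the remark following Definition~\ref{defn:ecisom} is on point.
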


\begin{lem}\label{lem:lem3}
Let $I=I_1\cup I_2\cup \cdots I_N$ be a partition of the arc $I$ into sub-arcs with disjoint interior. Then any  continuously differentiable map $f:I\to I$ with $(\epsilon,C)$-almost-isometric restrictions to $I_1,\ldots I_N$ is $(\epsilon,NC)$-almost-isometric on $I$.\newline
Conversely, the restriction of an $(\epsilon,C)$-almost-isometry to a sub-arc is also an $(\epsilon,C)$-almost-isometry to its image.
\end{lem}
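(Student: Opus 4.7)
The lemma is a bookkeeping statement about how the two clauses of Definition~\ref{defn:ecisom}---a uniform derivative bound and an additive length error---behave under refinement or coarsening of a partition. I will verify each clause separately in both directions, and the plan is essentially to add errors and take suprema.

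For the forward direction, the derivative bound is free: since $f$ is continuously differentiable on all of $I$, one has $\sup_I |f'| = \max_{1\le k\le N} \sup_{I_k} |f'|$, which lies in $[1-\epsilon,1+\epsilon]$ by the hypothesis on each piece. For the additive length bound, given an arbitrary sub-arc $J\subset I$, I decompose along the partition as $J = \bigcup_k (J\cap I_k)$; each non-empty piece $J\cap I_k$ is a sub-arc of $I_k$, so the hypothesis applied there gives $|l(f(J\cap I_k)) - l(J\cap I_k)|\le C$. Summing at most $N$ such inequalities and using length-additivity on both sides yields
\[
|l(f(J))-l(J)|\;\le\;\sum_{k=1}^N |l(f(J\cap I_k)) - l(J\cap I_k)|\;\le\;NC,
\]
which is exactly the required estimate.

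The converse direction is almost immediate: if $f$ is $(\epsilon,C)$-almost-isometric on $I$, then restricting the supremum of $|f'|$ to $I_k\subset I$ only makes it no larger, so the upper bound $\le 1+\epsilon$ passes to each piece; the corresponding lower bound on each $I_k$ follows from the additive length constraint, since a uniformly small derivative on $I_k$ would produce a linear length deficit on $I_k$ incompatible with the bound $C$. Moreover, any sub-arc of $I_k$ is already a sub-arc of $I$, so the additive error $C$ transfers verbatim.

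The only genuine---and very minor---point to watch in the forward direction is length-additivity on the target side in the displayed inequality, which requires that the images $f(J\cap I_k)$ have pairwise disjoint interiors. This is free from the hypotheses: being $C^1$ with derivative bounded below by $1-\epsilon$ on each piece and continuous across the partition endpoints, $f$ is a local diffeomorphism near every point of $I$, hence a homeomorphism of $I$ onto its image, and disjoint-interior sub-arcs of $I$ therefore map to disjoint-interior sub-arcs of $f(I)$.
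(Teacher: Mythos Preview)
Your argument is correct and is precisely the elementary bookkeeping the paper has in mind; indeed the paper omits the proof entirely, stating only that it is easy. One small remark on the converse: your attempt to recover the lower bound $\sup_{I_k}|f'|\ge 1-\epsilon$ from the additive length constraint does not quite go through when $I_k$ is short relative to $C/\epsilon$, but this is an artifact of reading Definition~\ref{defn:ecisom} too literally---in light of Definition~\ref{defn:aisom} the intended condition is the pointwise bound $|f'-1|\le\epsilon$, under which the converse is immediate.
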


\begin{defn}\label{defn:ecgood}
A map $f$ between two rectangles is \textit{$(\epsilon,C)$-good} if it is isometric on the vertical sides and $(\epsilon,C)$-almost-isometric on the horizontal sides.
\end{defn}

\textit{Remark.} A ``rectangle" in the above definition refers to four arcs in any metric space intersecting at right angles, with a pair of opposite sides of equal length being identified as \textit{vertical} and the other pair also of identical length called \textit{horizontal}.

\subsubsection{Almost-conformal extension}
The following lemma is a slight generalization of Lemma \ref{lem:qcnoskew}:

\begin{lem}\label{lem:ext}
Let $R_1$ and $R_2$ be two euclidean rectangles with vertical sides of length $h$ and horizontal sides of lengths $l_1$ and $l_2$ respectively, such that  $l_1,l_2 >h$ and $\lvert l_1-l_2\rvert <C$, where $C/h \leq\epsilon$. Then any $(\epsilon,C)$-good map $f:\partial R_1\to \partial R_2$ has a $(1+C\epsilon)$-quasiconformal extension $F:R_1\to R_2$ for some (universal) constant $C>0$.
\end{lem}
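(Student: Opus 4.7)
The statement is a direct generalization of Lemma \ref{lem:qcnoskew}, with the exact-length condition on horizontal sides replaced by the additive discrepancy $|l_1 - l_2| < C$ together with the per-subinterval additive error of Definition \ref{defn:ecisom}. My plan is to subdivide $R_1$ into a row of roughly-square sub-rectangles, transport this subdivision to $R_2$ through the boundary map $f$, straighten each resulting curvilinear piece via Lemma \ref{lem:qcstraight}, and finish by an Ahlfors--Beurling extension on each piece. The hypothesis $C/h \leq \epsilon$ will be the crucial tool, since it converts additive slack into multiplicative slack of size $\epsilon$ at the scale of a single sub-rectangle.

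\textbf{Subdivision.} First I pick division points $0 = p_0 < p_1 < \cdots < p_N = l_1$ with $p_i - p_{i-1} \in [h, 2h]$, which is possible since $l_1 > h$, and cut $R_1$ into sub-rectangles $R_1^{(i)}$ of modulus in $[1, 2]$ by drawing vertical segments between matching division points on the top and bottom edges. Let $p_i^{\mathrm{top}}$ and $p_i^{\mathrm{bot}}$ denote the images of the top and bottom copies of $p_i$ under $f$; the $(\epsilon, C)$-almost-isometry condition on each horizontal side places both within horizontal distance $C$ of $p_i$. I then join $p_i^{\mathrm{top}}$ to $p_i^{\mathrm{bot}}$ by a straight segment. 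Since its horizontal deviation is at most $2C$ over a vertical span $h$, the hypothesis $C/h \leq \epsilon$ makes these segments $2\epsilon$-almost-vertical, and they carve $R_2$ into curvilinear regions $R_2^{(i)}$ whose horizontal widths differ from those of $R_1^{(i)}$ by at most $O(C)$.

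\textbf{Straightening and extension on each piece.} For each $i$, Lemma \ref{lem:qcstraight} furnishes a height-preserving almost-conformal straightening $\sigma_i : R_2^{(i)} \to \widetilde{R}_2^{(i)}$ onto a genuine rectangle of height $h$ and modulus in a uniform compact interval slightly enlarged from $[1, 2]$. I extend the boundary correspondence $f$ to each interior dividing segment of $R_1$ by sending $(p_i, y)$ to the point of the corresponding almost-vertical arc in $R_2$ at height $y$. Then $\sigma_i \circ f$ is isometric on the vertical sides of $R_1^{(i)}$ (immediate for interior sides by the height-preservation of $\sigma_i$, and for the extreme outer sides because both are true vertical segments with $f$ already an isometry), and by Lemmas \ref{lem:lem1}--\ref{lem:lem3} it is $(O(\epsilon), O(C))$-almost-isometric on the horizontal sides with corresponding lengths agreeing up to $O(C)$. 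Ahlfors--Beurling then produces an almost-conformal extension $R_1^{(i)} \to \widetilde{R}_2^{(i)}$ with uniform distortion bound (since all moduli lie in a compact interval), and composing with $\sigma_i^{-1}$ gives the desired $F_i : R_1^{(i)} \to R_2^{(i)}$ extending $f$ on $\partial R_1^{(i)}$.

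\textbf{Assembly and main difficulty.} The pieces $F_i$ match on shared interior dividing segments by our definition of the boundary correspondence, and they agree with $f$ on $\partial R_1$, so they patch into a continuous map $F : R_1 \to R_2$ which is almost-conformal on each open $R_1^{(i)}$ and hence, across the measure-zero union of dividing arcs, on all of $R_1$. The main difficulty, and the only real new ingredient beyond Lemma \ref{lem:qcnoskew}, will be converting the additive error $C$ into uniform $O(\epsilon)$-control on both the dividing-arc slopes and the sub-rectangle moduli; this is precisely what $C/h \leq \epsilon$ accomplishes, and once this uniform control is in hand the argument proceeds essentially verbatim as in Lemma \ref{lem:qcnoskew}.
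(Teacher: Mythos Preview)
Your argument is correct and essentially reproduces the proof of Lemma~\ref{lem:qcnoskew} in this slightly more general setting. The paper, however, takes a much shorter route: it simply rescales both rectangles by the factor $1/h$. Under this rescaling the heights become $1$, the horizontal lengths $l_i/h$ remain greater than $1$, the dilatation condition $|d-1|<\epsilon$ is scale-invariant, and the additive error $C$ on horizontal sides becomes $C/h\leq\epsilon$. Thus the rescaled boundary map is $\epsilon$-almost-isometric in the sense of Definition~\ref{defn:aisom}, and Lemma~\ref{lem:qcnoskew} applies directly to produce the required extension, which one then rescales back.

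In effect you have unpacked the proof of Lemma~\ref{lem:qcnoskew} rather than reducing to it; your closing remark that ``the argument proceeds essentially verbatim as in Lemma~\ref{lem:qcnoskew}'' is exactly the point, and the paper exploits this by making the reduction explicit via rescaling. Both approaches are valid; the paper's is just a one-line observation that the hypothesis $C/h\leq\epsilon$ is precisely what makes the $(\epsilon,C)$-good condition become $\epsilon$-good after normalizing the height.
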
 
\begin{proof}
The proof follows by rescaling by a factor of $1/h$ and applying Lemma \ref{lem:qcnoskew} to the resulting map between the resulting pair of rectangles.
 \end{proof}

\subsubsection{Finitely grafted rectangle}

Let $R$ be a region in a the hyperbolic plane bounded by two ``vertical" geodesic sides of length $l$ and two ``horizontal" horocyclic sides of length $w$. Assume henceforth that  $l>1/\epsilon$ and $w<\epsilon$.\\

Let $a_1,a_2,\ldots a_k$ be a finite collection of geodesic arcs with endpoints on the horizontal sides, with corresponding weights $w_1,w_2,\ldots w_k$. Then one can obtain a \textit{finitely grafted} rectangle $R^\prime$ by inserting euclidean rectangles in the shape of truncated ``crescents" (see figure) of widths $w_1,w_2,\ldots w_k$ at the arcs $a_1,a_2,\ldots a_k$ respectively.

\begin{figure}
  \centering
  \includegraphics[scale=0.5]{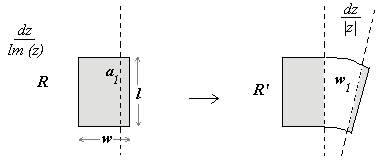}\\
  \caption{Grafting a rectangle $R$ (shown on the upper half plane model) across a single weighted geodesic arc $a_1$ gives a finitely grafted rectangle $R^\prime$ (\S 6.1.2).}
\end{figure}

\begin{lem}\label{lem:fin}
There is an almost-conformal map $f$ from $R^\prime$ to a euclidean rectangle of vertical height $l$ and horizontal width $w_1 + w_2 + \cdots +w_k$. Moreover, $f$ is ($\epsilon$,$\epsilon$)-good on the boundary.
\end{lem}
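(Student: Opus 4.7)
The plan is to adapt the ``straighten the horocyclic foliation'' construction used in Lemma \ref{lem:finitemodel}, in a simpler setting since the grafting data is finite. First I would realize $R'$ as a concrete planar region: develop $R$ in the upper half-plane so its two vertical sides are geodesic rays and insert, at each arc $a_i$, a euclidean ``sector'' of angular width $w_i$ with metric $dz/|z|$. This exhibits $R'$ as the union of $k+1$ thin hyperbolic strips (each of hyperbolic width $\leq w < \epsilon$) separated by $k$ euclidean crescents, all sharing a common transverse horocyclic foliation $\mathcal{F}$ whose leaves run from one horizontal side of $R'$ to the other.

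Next I would define $f$ on $R'$ by straightening $\mathcal{F}$: send each horizontal leaf isometrically to a horizontal segment at a height equal to its hyperbolic distance from a fixed base leaf. This is well-defined because horocyclic holonomy preserves hyperbolic length along leaves (cf.\ \S 4.3). On each hyperbolic strip, $f$ is $(1+O(\epsilon))$-quasiconformal by Lemma \ref{lem:qcthin}, and on each euclidean crescent it is an isometry by Lemma \ref{lem:qceuc}; moreover, by Lemma \ref{lem:qcthin} again, the images of the separating geodesic arcs are almost-vertical. So the image is a planar region bounded by two horizontal segments and two almost-vertical curves of vertical extent $l$, whose width at every height differs from $W:=w_1+\cdots+w_k$ by at most the hyperbolic contribution $w=O(\epsilon)$. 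Composing with the height-preserving almost-conformal straightening of Lemma \ref{lem:qcstraight} then maps onto a genuine euclidean rectangle of height $l$ and width $W$, and the width-ratio hypothesis of that lemma is satisfied by the previous estimate.

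For the $(\epsilon,\epsilon)$-good conclusion on the boundary: since the original vertical sides of $R'$ are geodesics of length $l$ and $f$ is height-preserving throughout, $f$ restricts to an isometry on the vertical sides. On each horizontal side (total length $w+W$), $f$ is isometric on the $k$ euclidean sub-arcs (of total length $W$) and of dilatation $1+O(\epsilon)$ on the $k+1$ hyperbolic sub-arcs (of total length $\leq w\leq \epsilon$) by Lemma \ref{lem:qcthin}, followed by the affine-on-horizontal-lines straightening of Lemma \ref{lem:qcstraight} which rescales by a factor $W/(W+O(w))=1+O(\epsilon)$. Combining via Lemmas \ref{lem:lem1}--\ref{lem:lem3} yields multiplicative dilatation $1+O(\epsilon)$ and additive error $\leq w\leq \epsilon$ on each horizontal side, which is exactly the $(\epsilon,\epsilon)$-good condition.

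The main bookkeeping to watch is that the final Lemma \ref{lem:qcstraight} step does not destroy the additive part of the almost-isometry: this reduces to the clean observation that the entire horizontal length discrepancy between $R'$ and its image is concentrated in the hyperbolic strips and is therefore bounded by $w<\epsilon$, independently of $k$ or the individual weights $w_i$.
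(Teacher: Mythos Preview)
Your proposal is correct and follows essentially the same approach as the paper: straighten the horocyclic foliation piece by piece using Lemma~\ref{lem:qcthin} on the thin hyperbolic strips and Lemma~\ref{lem:qceuc} on the euclidean crescents, then apply Lemma~\ref{lem:qcstraight} and a final horizontal affine scaling to reach a rectangle of width $W=w_1+\cdots+w_k$; the $(\epsilon,\epsilon)$-good conclusion follows because the leaf-straightening is isometric on the horizontal edges before the final affine step, whose additive error is bounded by the total hyperbolic width $w<\epsilon$. The paper's proof is explicitly a sketch and your write-up is slightly more detailed; one small imprecision is that Lemma~\ref{lem:qcstraight} outputs a rectangle of width $\sup(g_2-g_1)\approx W+w$ rather than $W$, so (as the paper does) you should note an additional $(1+O(\epsilon))$ horizontal affine stretch to obtain width exactly $W$ --- your final paragraph shows you are aware of this discrepancy.
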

\begin{proof}
We give a sketch of the argument, and refer to \S 4.2 and \S4.3 for details and similar constructions. We always work in the upper-half-plane model of the hyperbolic plane.\newline
First, we can map the (ungrafted) rectangle $R$ to the euclidean plane by a map that ``straightens" the horocylic foliation across $R$. Since $w<\epsilon$, this straightening map is almost-conformal (Lemma \ref{lem:qcthin}).  It also follows from some elementary hyperbolic geometry that the hyperbolic ``width" between the geodesic arcs on $R$ is a $C^1$-function of the ``height" with $\epsilon$-small derivatives, and so their images under the straightening map are $\epsilon$-almost-vertical.\newline
Next, the truncated ``crescents" are spliced in: their straightening maps to the plane are in fact conformal with rectangular images (Lemma \ref{lem:qceuc}) and hence can be adjusted by almost-conformal maps (Lemma \ref{lem:qcstraight}) to fit with the almost-vertical image arcs above.\newline
This gives a composite map that is almost-conformal with image a rectangle of height $l$ and width $w + w_1 + w_2 + \cdots +w_k$. Since $w<\epsilon$ and $w_1+w_2+\cdots+w_k>1$, one can finally compose by an almost-conformal horizontal affine stretch to a rectangle of width $w_1 + w_2 + \cdots +w_k$ as required.\newline
The statement about the almost-isometry of the sides follows from Lemma \ref{lem:lem1} since prior to the final affine dilatation the map is isometric on the horizontal sides, and the final affine stretch is to a rectangle of width differing by $w <\epsilon$.
\end{proof}

\subsubsection{Smoothing the horizontal sides.} 
In the finitely grafted rectangle $R^\prime$ above, the horizontal sides may not be differentiable arcs since the geodesic arcs $a_1,\ldots a_k$ may not intersect the horizontal sides of $R$ at right angles. However, since the rectangle $R$ prior to grafting is thin ($w<\epsilon$) and long ($l>1/\epsilon$) some elementary hyperbolic geometry implies that the geodesic arcs intersect the horizontal sides at an angle that differs from $\pi/2$ by a quantity bounded by $C\epsilon$ (for some universal constant $C>0$).\\

In $R^\prime$, the horizontal sides can then be ``smoothed" to be $C^1$ by ``trimming" the horizontal sides of each grafted euclidean ``truncated crescent":  each such horizontal segment is replaced by a $C^1$ arc  whose derivatives are $\epsilon$-small, and have specified values at the endpoints that make the entire arc $C^1$. Denote the resulting new ``smoothed" rectangle by $R^{\prime\prime}$.\\

The following lemma can be thought of as a ``vertical" version of the straightening lemma from \S 4.1 (Lemma \ref{lem:qcstraight}). 

\begin{lem}\label{lem:smooth0}
Let $S= [0,w]\times [0,l]$ be a euclidean rectangle and let $S^\prime$ be the region enclosed by the two parallel vertical line segments of $S$ and two arcs which are the graphs $y=g_1(x)$ and $y=g_2(x)$ over the interval $0\leq x\leq w$ on the $x$-axis, where $g_1$ and $g_2$ are $C^1$-functions such that\\
(1) $g_2(x)>g_1(x)>0$,\\
(2) $\lvert g_1^\prime(x)\rvert <\epsilon$ and $\lvert g_1^\prime(x)\rvert <\epsilon$,\\
(3) $\left\vert \frac{l}{g_2(x) - g_1(x)} - 1\right\vert <\epsilon$,\\
for all $0\leq x\leq w$.  Then there exists a $(1+C\epsilon)$-quasiconformal map from $S$ to $S^\prime$ which is $(\epsilon,\epsilon)$-good on the boundary. (Here $C>0$ is some universal constant.)
\end{lem}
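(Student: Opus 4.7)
The plan is to write down the natural ``vertical stretch'' analogue of the map used in Lemma \ref{lem:qcstraight}, namely
$$
f(x,y) \;=\; \Bigl(x,\; g_1(x) + \tfrac{y}{l}\bigl(g_2(x) - g_1(x)\bigr)\Bigr),
$$
and verify that it is $(1+C\epsilon)$-quasiconformal and $(\epsilon,\epsilon)$-good on the boundary. This $f$ is manifestly a $C^1$ homeomorphism from $S$ onto $S'$ that preserves the $x$-coordinate; it sends $y=0$ to the graph of $g_1$, $y=l$ to the graph of $g_2$, and each vertical side of $S$ to the corresponding vertical segment of $\partial S'$.

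First I would compute the partial derivatives, in direct analogy with the proof of Lemma \ref{lem:qcstraight}. Writing $A(x) = (g_2(x)-g_1(x))/l$, one has
$$
f_x \;=\; \bigl(1,\; g_1'(x) + y\,A'(x)\bigr), \qquad f_y \;=\; \bigl(0,\; A(x)\bigr).
$$
Hypothesis (3) gives $A(x) = 1 + O(\epsilon)$, hence $\|f_y\| = 1 + O(\epsilon)$. Hypothesis (2) gives $|g_i'(x)| < \epsilon$, and combined with $0 \leq y \leq l$ it yields $|y A'(x)| \leq y\cdot|g_2'(x)-g_1'(x)|/l \leq 2\epsilon$. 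Consequently $\|f_x\| = 1 + O(\epsilon)$ and $|\langle f_x, f_y\rangle| = O(\epsilon)$, so property A of \S4.2 gives pointwise quasiconformal dilatation $1 + C\epsilon$ everywhere.

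For the boundary, on each vertical side $x = x_0 \in \{0,w\}$ the map reduces to the affine stretch $y \mapsto g_1(x_0) + y\,A(x_0)$ from a segment of length $l$ to one of length $l\,A(x_0) = l(1+O(\epsilon))$; by Lemma \ref{lem:lem1} this is $(\epsilon,\epsilon)$-almost-isometric (interpreting the ``isometric on vertical sides'' clause of Definition \ref{defn:ecgood} up to the same error, exactly as in Corollary \ref{cor:rectnoskew}). On each horizontal side, $f$ is the parametrization of the graph of $g_i$ whose speed $\sqrt{1+g_i'(x)^2} = 1 + O(\epsilon^2)$ by (2), so the arc-length defect on any subinterval is $O(\epsilon^2)\cdot w$, again within the $(\epsilon,\epsilon)$-almost-isometry budget.

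The main point of care is that the additive defects on the boundary are genuinely absorbable into the constant $\epsilon$ rather than growing with $l$ or $w$: on the vertical sides this is the whole force of hypothesis (3), which converts the multiplicative estimate $l/(g_2(x_0)-g_1(x_0)) = 1+O(\epsilon)$ into a bounded additive error after rescaling, while on the horizontal sides it is the $\epsilon^2$ gain from the derivative bound $|g_i'|<\epsilon$. With these verifications Lemma \ref{lem:smooth0} follows by the same property B of \S4.2 that was used to upgrade pointwise to global quasiconformality in Lemma \ref{lem:qcstraight}.
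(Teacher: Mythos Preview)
Your proposal is correct and matches the paper's approach exactly: the paper omits the proof, saying only that it ``is similar to Lemma~\ref{lem:qcstraight}, involving a map that stretches vertically by the right factor along the width of $S'$,'' and your explicit map $f(x,y)=(x,\,g_1(x)+(y/l)(g_2(x)-g_1(x)))$ is precisely that vertical analogue, with the dilatation computation carried out correctly.

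One small caveat on the boundary discussion: your map is not literally isometric on the vertical sides (it is an affine stretch by $A(x_0)=1+O(\epsilon)$, giving additive defect $O(\epsilon l)$, not $O(\epsilon)$), so the strict reading of ``$(\epsilon,\epsilon)$-good'' in Definition~\ref{defn:ecgood} is not met. Your appeal to Corollary~\ref{cor:rectnoskew} as precedent is off, since there the map \emph{is} height-preserving and hence genuinely isometric on the vertical sides. This is an imprecision in the paper's stated conclusion rather than in your argument: in the actual application (Lemma~\ref{lem:smooth}) the $C^1$-trimming fixes the endpoint values of $g_1$ and $g_2$ to match the adjacent horocyclic arcs, so the vertical sides of $S$ and $S'$ coincide and your $f$ restricts to the identity there.
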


\begin{figure}
  \centering
  \includegraphics[scale=0.35]{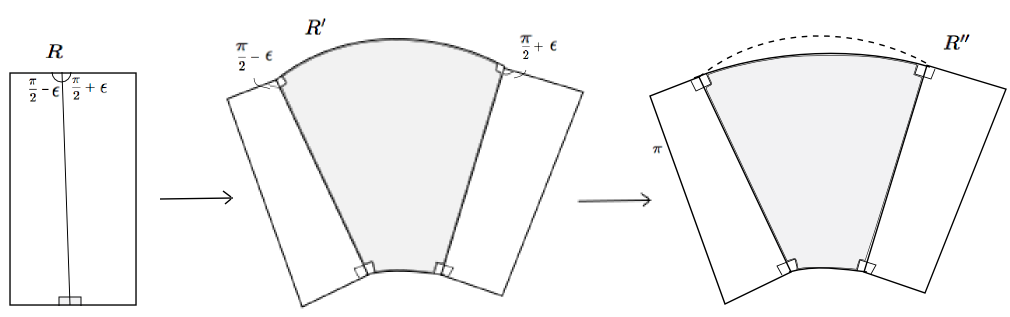}\\
  \caption{Grafting along a geodesic arc intersecting almost, but not at right angles gives a finitely grafted rectangle $R^\prime$ with the top edge having ``corners" (the grafted euclidean region is shown shaded). This can be smoothed to a $C^1$-arc together with an almost-conformal map from $R^\prime$ to the resulting rectangle $R^{\prime\prime}$ (Lemma \ref{lem:smooth}). }
\end{figure}

We omit the proof, which is similar to Lemma \ref{lem:qcstraight}, involving a map that stretches vertically by the right factor along the width of $S^\prime$.\\

By a repeated application of the above lemma on each of the grafted strips, where the graphs $g_1$ and $g_2$ for each are determined by the $C^1$-``trimming",  we have:

\begin{lem}\label{lem:smooth}
There exists an almost-conformal map from $R^\prime$ to $R^{\prime\prime}$ which is $(\epsilon,\epsilon)$-good on the boundary. 
\end{lem}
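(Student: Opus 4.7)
The plan is to build the map piecewise, using Lemma \ref{lem:smooth0} on each grafted strip and the identity on the remaining hyperbolic sub-rectangles of $R'$, then glue along vertical interfaces. Decompose $R^\prime$ as the union of the $k$ grafted euclidean strips $G_1,\dots,G_k$ (each a Euclidean rectangle of width $w_i$ and height $l$) and the hyperbolic sub-rectangles $H_0,\dots,H_k$ between consecutive strips. Because the smoothing procedure described before the lemma modifies \emph{only} the horizontal segments of the grafted strips, the vertical sides of each $G_i$ and the entire pieces $H_j$ are identical in $R^\prime$ and $R^{\prime\prime}$, and $R^{\prime\prime}$ decomposes as the union of smoothed strips $G_i^{\prime\prime}$ and the same $H_j$ glued along the same vertical interfaces.

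For each $i$, the region $G_i^{\prime\prime}$ is bounded by the two vertical sides of $G_i$ and two $C^1$ graphs over $[0,w_i]$ with derivatives of absolute value less than $\epsilon$ (by the trimming specification) and values at the two endpoints equal to the original corner heights $0$ and $l$. Since the deviation of each graph from a horizontal line is at most $\epsilon w_i$, and $w_i$ is bounded (while $l>1/\epsilon$), the vertical separation $g_2(x)-g_1(x)$ differs from $l$ by at most $2\epsilon w_i \ll \epsilon l$, so hypothesis (3) of Lemma \ref{lem:smooth0} is met. Applying Lemma \ref{lem:smooth0} with $(S,S^\prime)=(G_i,G_i^{\prime\prime})$ produces an almost-conformal map $f_i:G_i\to G_i^{\prime\prime}$ that is $(\epsilon,\epsilon)$-good on $\partial G_i$, and in particular is isometric on the two vertical sides (which we can arrange to be the identity under arclength parametrization).

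Define $F:R^\prime\to R^{\prime\prime}$ to be $f_i$ on each $G_i$ and the identity on each $H_j$. Continuity across the shared vertical interfaces is automatic since both pieces restrict there to the identity. By property B of \S 4.2, $F$ is almost-conformal on all of $R^\prime$ (the vertical interfaces form a measure-zero union of $C^1$ arcs). For the boundary: each horizontal side of $R^\prime$ decomposes into finitely many alternating sub-arcs on which $F$ is either the identity (on horocyclic sub-arcs of $\partial H_j$) or $(\epsilon,\epsilon)$-almost-isometric (on horizontal sub-arcs of $\partial G_i$, by the goodness of $f_i$). Lemma \ref{lem:lem3} then upgrades this to an $(\epsilon,k\epsilon)$-almost-isometry on each horizontal side; since $k$ is bounded by the combinatorics of the lamination (ultimately by genus), this constant is absorbed into the universal constant convention, and $F$ is $(\epsilon,\epsilon)$-good on $\partial R^\prime$. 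The vertical sides of $R^\prime$ coincide with those of $R^{\prime\prime}$ and $F$ is isometric (the identity) on them.

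The main potential obstacle is checking that the graphs $g_1,g_2$ supplied by the trimming satisfy the height hypothesis of Lemma \ref{lem:smooth0}; the observation that each $w_i$ is comparable to an individual weight while $l>1/\epsilon$ makes the vertical excursion of $g_1,g_2$ negligible compared to $l$, which resolves this cleanly. The gluing step is straightforward once the boundary maps are arranged to be the identity on vertical interfaces.
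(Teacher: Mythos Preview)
Your approach is the same as the paper's: apply Lemma~\ref{lem:smooth0} on each grafted strip, use the identity on the hyperbolic sub-rectangles, and glue along the vertical interfaces. The paper's own proof is the single sentence preceding the lemma.

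One point needs correcting. You describe each $G_i$ as ``a Euclidean rectangle of width $w_i$ and height $l$'' whose corners sit at heights $0$ and $l$, but this misses the reason smoothing is required at all. The geodesic arc $a_i$ meets the horocyclic sides of $R$ at an angle differing from $\pi/2$ by $O(\epsilon)$ (this is exactly the observation in \S6.1.3), so the top and bottom edges of the inserted strip $G_i$ are already straight segments of slope $O(\epsilon)$, not horizontals; the two vertical sides of $G_i$ have their upper corners at slightly different heights. Thus $G_i$ is itself a region of the type $S'$ in Lemma~\ref{lem:smooth0} (with \emph{linear} graphs), not the model rectangle $S$. The fix is immediate: apply Lemma~\ref{lem:smooth0} once to pass from a genuine rectangle to $G_i$ and once to pass to $G_i''$, then compose; the composition is almost-conformal and, since the smoothing fixes the corner points, restricts to the identity on the vertical sides so that your gluing step goes through unchanged.

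A smaller issue: your absorption of $k$ into the universal constant via ``$k$ is bounded by the combinatorics of the lamination (ultimately by genus)'' is not correct in the setting of \S6 where this lemma is used. There $k$ is the number of strands of the approximating multicurve $\gamma_t$ crossing the branch, which grows with $t$. What saves the estimate is that on each strip's horizontal edge the map preserves $x$-coordinate and both the original and smoothed edges are graphs with $O(\epsilon)$ derivative over the same interval, so the arclength discrepancy on any sub-arc is $O(\epsilon^2)$ times its horizontal extent rather than a fixed $\epsilon$ per strip; the errors therefore do not accumulate in the way Lemma~\ref{lem:lem3} alone would suggest.
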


Moreover, by precomposing with the almost-conformal map $f$ of Lemma \ref{lem:fin}, we have:

\begin{cor}\label{cor:smooth1}
There exists an almost-conformal map from $R^{\prime\prime}$ to a euclidean rectangle of vertical height $l$ and horizontal width $w_1 + w_2 + \cdots +w_k$ which is ($\epsilon$,$\epsilon$)-good on the boundary.
\end{cor}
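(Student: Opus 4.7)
The plan is to simply compose the map provided by Lemma \ref{lem:smooth} with the one provided by Lemma \ref{lem:fin}, in the appropriate order. Specifically, let $h\colon R^\prime\to R^{\prime\prime}$ be the almost-conformal $(\epsilon,\epsilon)$-good map of Lemma \ref{lem:smooth}, and let $f\colon R^\prime \to S$ be the almost-conformal $(\epsilon,\epsilon)$-good map of Lemma \ref{lem:fin}, where $S$ denotes the target Euclidean rectangle of height $l$ and width $w_1+\cdots+w_k$. I would define the desired map as $F := f\circ h^{-1}\colon R^{\prime\prime}\to S$.

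To see that $F$ is almost-conformal, I would invoke the standard fact that an inverse of a $(1+C\epsilon)$-quasiconformal map is again $(1+C\epsilon)$-quasiconformal (property C recalled at the beginning of \S 4.2), so that $h^{-1}$ is almost-conformal; the composition of two almost-conformal maps is then $(1+C'\epsilon)$-quasiconformal for a slightly larger universal constant $C'>0$, which we absorb into the $O(\epsilon)$ notation.

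It remains to check that $F$ is $(\epsilon,\epsilon)$-good on $\partial R^{\prime\prime}$. On the two vertical sides, both $h$ and $f$ are isometries, hence so is $h^{-1}$ and so is the composition $f\circ h^{-1}$. On each horizontal side, the restriction of $h$ is an $(\epsilon,\epsilon)$-almost-isometry, so by Lemma \ref{lem:lem2} the restriction of $h^{-1}$ is also $(\epsilon,\epsilon)$-almost-isometric, and a further application of Lemma \ref{lem:lem2} to the composition with the $(\epsilon,\epsilon)$-almost-isometric restriction of $f$ yields an $(\epsilon,2\epsilon)$-almost-isometry. Under the convention (the remark following Definition \ref{defn:ecisom}) that constants of the form $M\epsilon$ are absorbed into the symbol $\epsilon$, this is $(\epsilon,\epsilon)$-almost-isometric, so $F$ is $(\epsilon,\epsilon)$-good on $\partial R^{\prime\prime}$ as required. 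No genuine obstacle arises here; the entire content of the corollary is packaged in the two preceding lemmas, and the bookkeeping of the additive errors under composition is handled uniformly by Lemma \ref{lem:lem2}.
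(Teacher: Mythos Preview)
Your proposal is correct and follows essentially the same approach as the paper: the paper's one-line justification is simply to precompose the map of Lemma~\ref{lem:smooth} with the map $f$ of Lemma~\ref{lem:fin}, which is exactly your $F=f\circ h^{-1}$, and your bookkeeping of the boundary behaviour via Lemma~\ref{lem:lem2} correctly spells out what the paper leaves implicit.
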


\subsection{Thickening the train-track}

Recall from \S4.1 that we have the subsurface $\mathcal{T}_{\epsilon}\subset X$ containing the arational lamination $\lambda$ which is its ``$\epsilon$"-train-track neighborhood. From that section, we have a decomposition of the surface into a collection of rectangles $R_1,R_2,\ldots R_n$, and complementary regions which are truncated ideal hyperbolic triangles $T_1,T_2,...T_m$.\\

We now describe a ``thickening" to ensure that $\lambda$ is contained \textit{properly} in $\mathcal{T}_\epsilon$:\\
For each $T_1,T_2,...T_m$, choose thin strips adjacent to the geodesic sides  and bounded by another geodesic segment ``parallel" to the sides and append them to the rectangles adjacent to the sides. We continue to denote the collection of this slightly thickened rectangles by $R_1,R_2,\ldots R_n$, and their union by $\mathcal{T}_\epsilon$.

\subsection{Approximating $\lambda$ by multicurves}

Let $w_1,w_2,\ldots w_n$ be the \textit{weights} of the train track $\mathcal{T}_\epsilon$, that is, $w_i$ denotes the total transverse measure of the rectangle $R_i$. By our assumption of the maximality of $\lambda$, these weights are all positive reals.\newline

By the above construction of the train track it follows that the sub-arcs $I^\prime_1, \ldots, I^\prime_{3m+1}$ (see (\ref{eq:finer}) in \S 4.1) also have a positive transverse measures $w^\prime_1,\ldots w^\prime_{3m+1}$. This follows from the minimality of $\lambda$: the only way such a sub-arc will carry no measure is if $\lambda$ intersected it only at the endpoints, but the leaf of $\lambda$ passing through an endpoint of one of these sub-arcs is isolated on the complementary side (it is part of the boundary of one of $T_1,\ldots T_m$) and cannot be isolated inside the sub-arc also.\newline

For each $1\leq i\leq n$ the weights
\begin{equation}\label{eq:wprime}
w_i = \sum\limits_{k\in S_i} w^\prime_k
\end{equation}
where $S_i$ is a finite subset of $\{1,2,\ldots 3m+1\}$ as in equation (\ref{eq:finer}).

\begin{defn}
A $3m+1$-tuple of (non-negative) real numbers $(c_1,c_2,\ldots c_{3m+1})$ is an \textit{admissible weighting} of $\mathcal{T}_\epsilon$ if the corresponding weights on the train track given by (\ref{eq:wprime}) satisfy the switch conditions for the train track.
\end{defn}

We begin with the following observations in elementary linear algebra:

\begin{lem}\label{lem:linalg0}
Let $\mathcal{S}$ be a homogeneous system of linear equations in $N$ variables, with all coefficients in the set $\{0,1,-1\}$. Then there exists a constant $C>0$ depending only on $N$ such that for any $N$-tuple  $(x_1,x_2,\ldots,x_N)$ of real numbers that satisfies $\mathcal{S}$ there is an integer $N$-tuple $(k_1,k_2,\ldots,k_N)$ with $\lvert x_i-k_i\rvert <C$ for each $1\leq i\leq N$, which is also a solution.
\end{lem}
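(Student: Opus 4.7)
The plan is to observe that the solution set is a rational subspace of $\mathbb{R}^N$, so its integer points form a full-rank lattice, and then show the lattice has a basis of vectors of bounded norm, which forces the covering radius to be bounded.

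First I would set up the picture: let $V = \{x \in \mathbb{R}^N : \text{$x$ satisfies } \mathcal{S}\}$, a linear subspace of some dimension $d \leq N$. Because the coefficient matrix has integer entries, $V$ is rational, and $L := V \cap \mathbb{Z}^N$ is a discrete subgroup of $V$ of full rank $d$, i.e.\ a lattice in $V$.

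The main step is to produce a $\mathbb{Z}$-basis $v_1, \ldots, v_d$ of $L$ with $\|v_i\|_\infty \leq C_0(N)$, where $C_0$ depends only on $N$. One way is to argue by finiteness: among all matrices $A$ with entries in $\{-1,0,1\}$ and $N$ columns, the row space is spanned by a subset of the $3^N$ possible $\{-1,0,1\}$-row vectors, so there are only finitely many possible subspaces $V$ (and hence only finitely many possible lattices $L$); pick a basis for each and take the max of the $\|\cdot\|_\infty$. Alternatively, one can invoke Cramer/Smith-normal-form directly: choose $d$ coordinates on which $V$ projects isomorphically to $\mathbb{R}^d$; the inverse of that projection restricted to $V$ is given by rational formulas whose denominators are minors of $A$, so clearing denominators yields integer vectors spanning $L$ whose entries are bounded by subdeterminants, and Hadamard's inequality bounds any $N \times N$ minor of a $\{-1,0,1\}$-matrix by $N^{N/2}$.

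Once such a basis is in hand, the rest is immediate: given any $x \in V$, write $x = \sum_{i=1}^d t_i v_i$ with $t_i \in \mathbb{R}$, round each coordinate $t_i$ to the nearest integer $n_i$, and set $k = \sum_{i=1}^d n_i v_i \in L \subset \mathbb{Z}^N$. Then $k$ satisfies $\mathcal{S}$ (being in $V$) and
\begin{equation*}
\|x-k\|_\infty \leq \sum_{i=1}^d |t_i - n_i| \, \|v_i\|_\infty \leq \tfrac{d}{2}\, C_0(N) =: C,
\end{equation*}
which depends only on $N$. Since $\|x-k\|_\infty = \max_i |x_i - k_i|$, this is exactly the desired bound.

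The only non-cosmetic obstacle is controlling the size of the integer basis of $L$; everything else is routine rounding. The finiteness argument is the cleanest way to avoid computing explicit bounds, but the Hadamard/Cramer route has the virtue of giving an explicit constant $C(N)$, which is useful if one later wants quantitative estimates in the application (Lemma~\ref{lem:close}).
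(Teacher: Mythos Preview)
Your proposal is correct and follows essentially the same route as the paper: identify the solution set as a rational subspace, observe that it contains a full-rank lattice of integer points, bound the covering radius, and then use the finiteness of $\{-1,0,1\}$-systems in $N$ variables to make the constant depend only on $N$. The paper's version is terser (it clears denominators from a Gauss--Jordan rational basis rather than working with $V\cap\mathbb{Z}^N$ directly, and phrases the rounding as ``diameter of the quotient torus''), while your alternative Cramer/Hadamard route is a nice bonus that yields an explicit $C(N)$ the paper does not pursue.
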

\begin{proof}
Since the coefficients of the linear system $\mathcal{S}$ are integers, by Gauss-Jordan elimination there is a basis $\{\vec{v_1},\vec{v_2},\ldots,\vec{v_M}\}$ of the vector space $\mathcal{V}$ of solutions such that each $\vec{v_i}$ is a vector with \textit{rational} entries. Let $L$ be the integer that is the least common multiple of all the denominators of the rational entries, such that $ \vec{w_i} = L\vec{v_i}$ is an integer vector for each $1\leq i\leq M$.  Then this set of $M$ linearly-independent integer  vectors $W$ spans a lattice in $\mathcal{V}$.  Let $C$ be the diameter of the torus $\mathbb{T}^M$ that is a quotient of $\mathcal{V}$ by the action of $W$, or equivalently, the radius of a fundamental domain in $\mathcal{V}$.  Clearly then,  any solution in $\mathcal{V}$ is less than $C$ away from an integer vector.\\
The constant $C$ at this point depends on the linear system $\mathcal{S}$, but notice that since there are $N$ variables, and each coefficient is from the finite set $\{0,1,-1\}$, there are only finitely many possible choices of $\mathcal{S}$ (depending only on $N$), and hence we can choose $C$ to be the maximum value as we vary over all of them.
\end{proof}

\begin{cor}\label{cor:linalg1}
Let $\mathcal{S}$ be a homogeneous system as above, and let $\vec{x}= (x_1,x_2,\ldots,x_N)$ be a solution where each entry is a positive real number. Then there exists a $T_0>0$ such that for any $t>T_0$, there is an integer solution  $(k_1,k_2,\ldots,k_N)$ with each entry positive, such that $\lvert tx_i-k_i\rvert <C$ for each $1\leq i\leq N$.
\end{cor}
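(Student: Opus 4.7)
The plan is to reduce directly to Lemma \ref{lem:linalg0} applied to the rescaled solution $t\vec{x}$, and then use the positivity of the entries of $\vec{x}$ together with a sufficiently large choice of $t$ to guarantee that the resulting integer approximation is componentwise positive.

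First I would observe that since $\mathcal{S}$ is \emph{homogeneous} with coefficients in $\{0, 1, -1\}$, and $\vec{x}$ is a real solution, the scaled vector $t\vec{x} = (tx_1, tx_2, \ldots, tx_N)$ is again a real solution for every $t > 0$. Lemma \ref{lem:linalg0} then produces, for this particular solution $t\vec{x}$, an integer solution $\vec{k} = (k_1, \ldots, k_N) \in \mathbb{Z}^N$ with $|tx_i - k_i| < C$ for each $1 \leq i \leq N$, where $C$ depends only on $N$ (and not on $t$).

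It remains to arrange that each $k_i$ is strictly positive. Since each $x_i > 0$, set
\begin{equation*}
T_0 = \frac{C+1}{\min_{1 \leq i \leq N} x_i}.
\end{equation*}
Then for any $t > T_0$ and each $i$, we have $tx_i > C+1$, so that $k_i > tx_i - C > 1 > 0$. Hence $\vec{k}$ is an integer solution with positive entries satisfying the required estimate.

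I do not expect any genuine obstacle here: the corollary is essentially a one-line consequence of the preceding lemma once one notices that homogeneity makes scaling by $t$ legal, and that the additive error $C$ is independent of $t$ while the entries $tx_i$ grow linearly in $t$. The only mild care needed is to ensure positivity, which is why the statement requires $t$ to be sufficiently large rather than arbitrary.
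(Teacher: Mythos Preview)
Your proof is correct and follows essentially the same approach as the paper: apply Lemma \ref{lem:linalg0} to the rescaled solution $t\vec{x}$, then choose $T_0$ large enough (the paper takes $T_0 = C/\min_i x_i$, you take $(C+1)/\min_i x_i$) so that $tx_i > C$ forces the nearby integer $k_i$ to be positive.
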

\begin{proof}
Note that since $\mathcal{S}$ is homogenous the vector $t\vec{x}$ is also a solution. Each entry of this vector will be greater than $C$ when $t>T_0 = \frac{C}{\min\limits_{1\leq i\leq N} x_i}$. Let $(k_1,k_2,\ldots,k_N)$ be the integer solution close to $t\vec{x}$ that the previous lemma guarantees. Since for each $i$, we have
\begin{center}
$\lvert tx_i-k_i\rvert <C$ and $tx_i>C \implies k_i>0$
\end{center}
we see that each entry of this integer solution is positive.
\end{proof}

\begin{figure}
  \centering
  \includegraphics[scale=0.65]{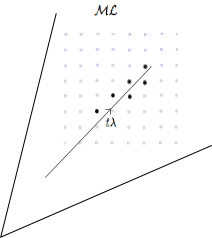}\\
  \caption{The train-track weights give a coordinate chart for measured lamination space. A ray in this convex cone is never far from an integer lattice point (Lemma \ref{lem:linalg}).}
\end{figure}

We now apply this to our setting:

\begin{lem}\label{lem:linalg}
There exists a $C>0$ and $T_0>0$ such that for any $t>T_0$ there is a tuple $\vec{k}=(k_1,k_2,\ldots k_{3m+1})$ of positive integers  such that\newline
(1) $\vec{k}$ is an admissible weighting of $\mathcal{T}_\epsilon$.\newline
(2) $\lvert tw_j^\prime -k_j\rvert < C$ for each $1\leq j\leq 3m+1$.\\
Moreover, $C$ is a constant that is independent of $\epsilon$.
\end{lem}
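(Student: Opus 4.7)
The plan is to recast admissibility of a weighting of $\mathcal{T}_\epsilon$ as a homogeneous linear system with $\{0,1,-1\}$ coefficients in the sub-arc weights $w'_j$, and then apply Corollary \ref{cor:linalg1} directly to produce the required integer approximation. The only delicate point will be ensuring that the constant $C$ does not depend on $\epsilon$ even though the train-track itself does.

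First, I would spell out the linear system. At each vertex (switch) of the train track $\mathcal{T}_\epsilon$, the branch weights $w_i$ satisfy the switch condition that the sum of incoming weights equals the sum of outgoing weights; these are homogeneous linear equations in the $w_i$ with coefficients in $\{0,1,-1\}$. Using equation (\ref{eq:wprime}), each $w_i = \sum_{k\in S_i} w'_k$, and since the collection $\{S_i\}$ is pairwise disjoint, every $w'_k$ appears in at most one $w_i$. Substituting into the switch conditions produces a homogeneous system $\mathcal{S}$ in the $N = 3m+1$ variables $w'_j$, still with coefficients in $\{0,1,-1\}$. The real vector $\vec{x} = (w'_1,\ldots,w'_{3m+1})$ coming from $\lambda$ itself is a positive solution of $\mathcal{S}$, the positivity of each $w'_j$ being the minimality argument for $\lambda$ recalled at the start of \S 6.3.

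Second, I apply Corollary \ref{cor:linalg1} to the pair $(\mathcal{S},\vec{x})$. It produces a threshold $T_0>0$ and, for every $t>T_0$, an integer vector $\vec{k}=(k_1,\ldots,k_{3m+1})$ with strictly positive entries, with $\lvert tw'_j - k_j\rvert < C$ for each $j$, and with $\vec{k}$ itself a solution of $\mathcal{S}$. Being a positive-integer solution of $\mathcal{S}$, the tuple $\vec{k}$ is an admissible (integer) weighting of $\mathcal{T}_\epsilon$, so conditions (1) and (2) of the lemma both hold.

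The one point that requires comment — and which I expect to be the main (minor) obstacle — is that $C$ is independent of $\epsilon$. By Lemma \ref{lem:linalg0}, $C$ depends only on the number of variables $N = 3m+1$. Although the combinatorics of $\mathcal{T}_\epsilon$ (in particular the number $n$ of branches and the number of switch equations in $\mathcal{S}$) may vary with $\epsilon$, the number $m$ of complementary ideal triangles of an arational lamination on a closed surface of genus $g$ is determined by the Euler characteristic count $m = 4g-4$, hence depends only on $g$. The proof of Lemma \ref{lem:linalg0} already takes a maximum over the (finite, since coefficients are in $\{0,1,-1\}$) set of possible homogeneous systems in $N$ variables, so this same $C$ works simultaneously for every $\mathcal{T}_\epsilon$, giving the claimed $\epsilon$-uniformity. (The threshold $T_0$ may of course depend on $\epsilon$ through $\min_j w'_j$, but the lemma only asserts its existence.)
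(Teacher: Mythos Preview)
Your proof is correct and follows essentially the same approach as the paper: both recast admissibility as a homogeneous $\{0,1,-1\}$ linear system in the $3m+1$ sub-arc weights, invoke Corollary~\ref{cor:linalg1} with the positive solution $(w'_1,\ldots,w'_{3m+1})$ coming from $\lambda$, and then observe that the constant $C$ from Lemma~\ref{lem:linalg0} depends only on the number of variables $3m+1$, which is fixed by the genus. Your exposition is in fact slightly more detailed (you note explicitly that the disjointness of the $S_i$ keeps the coefficients in $\{0,1,-1\}$ after substitution, and that $T_0$ may depend on $\epsilon$ while $C$ does not), but there is no substantive difference.
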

\begin{proof}
The admissible weights on the train track $\mathcal{T}_\epsilon$ satisfy a linear system $\mathcal{S}$ in $3m+1$ variables corresponding to the switch conditions and equations (\ref{eq:wprime}), which have coefficients in the set $\{0,1,-1\}$. Hence one can apply Corollary \ref{cor:linalg1} with $\vec{x}$ being the positive solution $(w_1,w_2,\ldots w_{3m+1})$ corresponding to the transverse measures of the lamination $\lambda$, and this yields (1) and (2). Also by the lemma, $C$ depends only on $m$, which in turn depends only on the topology of the surface (see \S4.1), and hence is independent of $\epsilon$.
\end{proof}

\begin{defn}
For $t>T_0$, let $\gamma_{t}$ denote the geodesic multicurve corresponding to the admissible integer weighting $\vec{k}$ on the train-track $\mathcal{T}_\epsilon$ satisfying (2) of Lemma \ref{lem:linalg}.
\end{defn}

\begin{lem}\label{lem:gh}
There exists a $T_1>T_0$ such that for any $t>T_1$ the multicurve $\gamma_t$ is contained in $\mathcal{T}_\epsilon \subset X$.
\end{lem}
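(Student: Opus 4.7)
The strategy is to show that as $t \to \infty$ the rescaled multicurves $\frac{1}{t}\gamma_t$, viewed as measured laminations, converge to $\lambda$ in $\mathcal{ML}$, and then to invoke continuity of the geodesic-support map to deduce Hausdorff convergence of supports on $X$. Since $\lambda$ lies in the interior of $\mathcal{T}_\epsilon$ by the thickening in \S 6.2, the geodesic realization of $\gamma_t$ must eventually lie inside $\mathcal{T}_\epsilon$.

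First I would observe that by Lemma \ref{lem:linalg}(2) one has $\lvert w_j^\prime - k_j/t\rvert < C/t$ for each $j$, so the vector $\vec{k}/t$ of rescaled weights on the sub-arcs $I_1^\prime,\ldots,I_{3m+1}^\prime$ converges to $\vec{w}^\prime = (w_1^\prime,\ldots,w_{3m+1}^\prime)$ as $t \to \infty$. Since the positive admissible weights on the train-track $\mathcal{T}_\epsilon$ furnish a (PL) coordinate chart on an open neighborhood of $\lambda$ in $\mathcal{ML}$ (this is the standard piecewise-linear structure on $\mathcal{ML}$ mentioned in \S 2), this entrywise convergence is exactly convergence in $\mathcal{ML}$: $\frac{1}{t}\gamma_t \to \lambda$.

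Next I would invoke the standard fact that the map
\[
\mathcal{ML} \longrightarrow \{\text{geodesic laminations on } X\},
\qquad \mu \longmapsto |\mu|,
\]
sending a measured lamination to its geodesic support is continuous when the target carries the Hausdorff topology on closed subsets of $X$. Applied to the convergence above this yields that the supports $|\gamma_t|$ (which, since $\gamma_t$ is a geodesic multicurve, are $\gamma_t$ itself) converge in the Hausdorff topology to $\lambda$.

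Finally, the thickening construction of \S 6.2 guarantees that $\lambda$ is contained in the \emph{interior} of the compact subsurface $\mathcal{T}_\epsilon$; thus there is an open neighborhood $U$ with $\lambda \subset U \subset \mathcal{T}_\epsilon$. Hausdorff convergence of closed subsets then gives some $T_1 > T_0$ such that $\gamma_t \subset U \subset \mathcal{T}_\epsilon$ for all $t > T_1$, which is the desired conclusion. The only point that takes care is verifying that the entrywise convergence of train-track coordinates really does lie inside the PL chart for $\mathcal{ML}$ near $\lambda$ (i.e.\ that the $k_j/t$ stay in the positive cone), but this is automatic from $w_j^\prime > 0$ and Corollary \ref{cor:linalg1}; no substantive obstacle is expected.
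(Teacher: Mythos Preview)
Your approach is essentially the same as the paper's: show convergence of (projectivized or rescaled) train-track weights, deduce Hausdorff convergence of geodesic supports, and then use that $\lambda$ sits properly inside $\mathcal{T}_\epsilon$ after the thickening. The paper phrases the first step in $\mathcal{PML}$ rather than via the rescaling $\frac{1}{t}\gamma_t$ in $\mathcal{ML}$, but this is cosmetic.

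There is, however, a genuine slip in the middle step. The map
\[
\mathcal{ML}\longrightarrow\{\text{closed subsets of }X\},\qquad \mu\longmapsto |\mu|,
\]
is \emph{not} continuous for the Hausdorff topology; this is not a ``standard fact''. A simple failure: if $\mu_n=\lambda_0+\tfrac{1}{n}\gamma$ with $\gamma$ a simple closed geodesic disjoint from $|\lambda_0|$, then $\mu_n\to\lambda_0$ in $\mathcal{ML}$ while $|\mu_n|=|\lambda_0|\cup\gamma$ is constant and does not Hausdorff-converge to $|\lambda_0|$. What \emph{is} true is the semicontinuity statement: any Hausdorff subsequential limit of $|\gamma_t|$ is a geodesic lamination $\lambda'$ with $\lambda'\supseteq|\lambda|$. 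The paper invokes exactly this, and then uses that $\lambda$ is \emph{maximal} (being arational) to force $\lambda'=\lambda$; it even flags the point parenthetically (``in general it is only true that the supports $|\gamma_t|\to|\lambda'|\supset|\lambda|$''). Once you replace your continuity claim with this semicontinuity-plus-maximality argument, your proof is complete and coincides with the paper's.
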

\begin{proof}
Notice that the induced weights $\bar{k}_1,\ldots\bar{k}_n$ on the {branches} $R_1,\ldots R_n$ of the train track $\mathcal{T}_\epsilon$ (obtained from equation (\ref{eq:wprime})) satisfy:
\begin{equation}\label{eq:wapp}
\lvert tw_i - \bar{k}_i\rvert < (3m+1)C
\end{equation}
for each $1\leq i\leq n$.\newline
This implies that $[\gamma_t]\to [\lambda]$ in $\mathcal{PML}$ as $t\to \infty$, and hence the corresponding geodesic representatives on the surface converge to $\lambda$ in the Hausdorff topology - see \cite{FLP}. (The maximality of $\lambda$ is used here too, since in general it is only true that the supports $\lvert \gamma_t\rvert \to \lvert \lambda^\prime\rvert \supset \lvert \lambda\rvert$.) Since $\lambda$ is a proper subset of the closed set $\mathcal{T}_\epsilon$ (this uses the ``thickening" defined in \S6.2), so is $\gamma_t$ for large enough $t$.
\end{proof}

\subsection{Model rectangles}

Recall that the train-track decomposition of $X$ (described in \S 2.1) persists as we graft along $\lambda$, with the \textit{total width} of the rectangles $R_1,R_2,\ldots,R_n$ increasing along the $\lambda$-grafting ray. We denote the grafted rectangles on $gr_{2\pi t\lambda}X$ by $R_1^t,R_2^t,\ldots R_n^t$.\newline

Here, the \textit{total width} $w_i(t)$  is the maximum width of the rectangle $R_i^t$ in the Thurston metric on $gr_{2\pi t\lambda}X$, and we have
\begin{equation}
\lvert w_i(t) - 2\pi tw_i\rvert <\epsilon
\end{equation}
since the initial \textit{hyperbolic} widths of the rectangles on $X$ is less than $\epsilon$ by Lemma \ref{lem:thinrect}.\newline

We shall use the construction of an almost-conformal \textit{euclidean model rectangle} for a rectangular piece $R^t$ from the collection $\{R_1^t,R_2^t,\ldots,R_n^t\}$ proved in \S 4.3. We restate the results of that section as follows:

\begin{lem}[Lemma \ref{lem:rectModel} and Corollary \ref{cor:rectnoskew}]\label{lem:model}
For any $t>1$, there is a $(1+ C\epsilon)$-quasiconformal map from $R^t$ to a euclidean rectangle of width $2\pi tw_i$ which is  $(\epsilon,\epsilon)$-good on the boundary. (Here $C>0$ is some universal constant.)
\end{lem}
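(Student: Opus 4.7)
The claim is essentially a repackaging of Lemma \ref{lem:rectModel} and Corollary \ref{cor:rectnoskew} from \S4, up to a short affine adjustment of the target width. The plan is to invoke those results verbatim to produce the desired quasiconformal map onto a euclidean rectangle of width equal to the euclidean width $W_e$ of $R^t$, and then postcompose with a horizontal affine stretch that corrects the width to the nominal value $2\pi t w_i$. The bookkeeping of the boundary estimates is then handled using the tools from \S6.1.

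First, I would observe that for $t>1$ large enough, the rectangle $R^t$ satisfies the hypotheses of Lemma \ref{lem:rectModel} (height $>1$, hyperbolic width $<\epsilon$ by Lemma \ref{lem:thinrect}, and total width $>1$). Applying Lemma \ref{lem:rectModel} yields a height-preserving $(1+C\epsilon)$-quasiconformal map $\bar f : R^t \to E$, where $E$ is a euclidean rectangle of height $h$ and width $W_e$, and by Corollary \ref{cor:rectnoskew} this $\bar f$ is isometric on the two vertical sides and $\epsilon$-almost-isometric on the top and bottom. In the language of \S6.1 (Definition \ref{defn:ecgood}), $\bar f$ is already $(\epsilon,\epsilon)$-good on $\partial R^t$.

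Next I would compare $W_e$ with $2\pi t w_i$. By the displayed estimate $\lvert w_i(t)-2\pi t w_i\rvert < \epsilon$ and the fact that the hyperbolic width of $R^t$ is at most $\epsilon$ (Lemma \ref{lem:thinrect}), we have $\lvert W_e - 2\pi t w_i\rvert < 2\epsilon$. Let $S:E\to E'$ be the horizontal affine stretch onto the euclidean rectangle $E'$ of width $2\pi t w_i$ and height $h$. Since the horizontal scale factor is $1+O(\epsilon/W_e)=1+O(\epsilon)$, the map $S$ is $(1+O(\epsilon))$-quasiconformal; $S$ is isometric on the vertical sides, and on the horizontal sides Lemma \ref{lem:lem1} (applied with $l_1=W_e$, $l_2=2\pi t w_i$, $C=2\epsilon$, and $C/l_1<\epsilon$ since $W_e>1$) gives that $S$ restricted to each horizontal side is $(\epsilon,\epsilon)$-almost-isometric.

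Finally, set $f := S\circ \bar f : R^t \to E'$. Composition of $(1+C\epsilon)$-quasiconformal maps yields a $(1+C'\epsilon)$-quasiconformal map, so the first assertion is immediate. For the boundary behaviour, on the vertical sides both $\bar f$ and $S$ are isometries, so $f$ is an isometry; on the horizontal sides, $\bar f$ is $(\epsilon,\epsilon)$-almost-isometric and $S$ is $(\epsilon,\epsilon)$-almost-isometric, so by Lemma \ref{lem:lem2} the composition is $(\epsilon,2\epsilon)$-almost-isometric, which after absorbing the multiplicative constant into $C$ is $(\epsilon,\epsilon)$-good in the sense of Definition \ref{defn:ecgood}. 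Since there is no genuinely new analytic content beyond what was established in \S4 together with the elementary linear comparison in Lemma \ref{lem:lem1}, no step should present a serious obstacle; the only care needed is to check that the additive error $2\epsilon$ incurred in the affine stretch is comparable to the $\epsilon$ appearing in the $(\epsilon,\epsilon)$-good conclusion, which it is.
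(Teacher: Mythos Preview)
Your proposal is correct and matches the paper's approach: the lemma is explicitly labeled as a restatement of Lemma \ref{lem:rectModel} and Corollary \ref{cor:rectnoskew}, and the paper's ``proof'' is just a one-paragraph recap of the strategy from \S4.3, so there is nothing beyond invoking those results. One small simplification: since the euclidean width of $R^t$ equals the transverse measure $2\pi t w_i$ exactly (see \S4.1.3), your affine-stretch step is in fact the identity and can be omitted; the map $\bar f$ from Lemma \ref{lem:rectModel} already lands on a rectangle of the correct width.
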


We recapitulate the proof briefly: one first  approximates the uncountable collection of geodesic arcs $R^t\cap \lambda$ by a sequence of \textit{finite, weighted} collections of arcs. For each such finite approximation, one can show that if $t$ is large in proportion to the hyperbolic width the map to the complex plane that \text{straightens} the transverse foliation is an almost-conformal map, and then one takes a limit.

\subsection{Grafted surfaces are close} Let $T_1>0$ be as in Lemma \ref{lem:gh}.
The goal of this section is to prove:

\begin{lem}\label{lem:close}
There exists a (sufficiently large) $T_2>T_1$ such that for any $t>T_2$, we have that $d_\mathcal{T}(gr_{2\pi t\lambda}X,gr_{2\pi \gamma_t}X)< \epsilon$.
\end{lem}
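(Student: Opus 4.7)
The plan is to build an almost-conformal homeomorphism $F_t : gr_{2\pi t\lambda}X \to gr_{2\pi\gamma_t}X$ directly; by definition of the Teichm\"uller metric, this forces $d_\mathcal{T}(gr_{2\pi t\lambda}X, gr_{2\pi\gamma_t}X) = O(\epsilon)$, and re-choosing $\epsilon$ small enough at the start yields the stated bound. The construction parallels \S 4.5: both surfaces inherit the common train-track decomposition $\mathcal{T}_\epsilon = R_1 \cup \cdots \cup R_n$ of \S 4.1 together with the complementary truncated ideal triangles $T_1, \ldots, T_m$. By Lemma \ref{lem:gh}, both $\lambda$ and $\gamma_t$ sit inside $\mathcal{T}_\epsilon$, so the complement $X \setminus \mathcal{T}_\epsilon$ is identical on the two grafted surfaces, and we declare $F_t$ to be the identity there.

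On each rectangle $R_i$, denote by $R_i^\lambda \subset gr_{2\pi t\lambda}X$ and $R_i^\gamma \subset gr_{2\pi\gamma_t}X$ its two grafted images; they share the same vertical geodesic sides of height $h_i$. Lemma \ref{lem:model} provides an almost-conformal, $(\epsilon,\epsilon)$-good map $g_\lambda : R_i^\lambda \to \mathcal{R}_\lambda$ onto a euclidean rectangle of height $h_i$ and width $2\pi t w_i$. Because $\gamma_t \cap R_i$ is a finite collection of weighted geodesic arcs with integer weights $k_j$ summing to $\bar{k}_i$, Lemma \ref{lem:fin} together with Corollary \ref{cor:smooth1} yields an analogous almost-conformal, $(\epsilon,\epsilon)$-good map $g_\gamma : R_i^\gamma \to \mathcal{R}_\gamma$ onto a euclidean rectangle of height $h_i$ and width $2\pi\bar{k}_i$.

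The decisive step compares the two euclidean models. By Lemma \ref{lem:linalg} and the aggregated estimate \eqref{eq:wapp}, the widths of $\mathcal{R}_\lambda$ and $\mathcal{R}_\gamma$ differ by a constant $C' := 2\pi(3m+1)C$ that is independent of both $t$ and $\epsilon$, while the common height satisfies $h_i > 1/\epsilon^2$ by Lemma \ref{lem:thinrect}; hence $C'/h_i < \epsilon$ for $\epsilon$ small enough. Prescribing a boundary map $\partial\mathcal{R}_\lambda \to \partial\mathcal{R}_\gamma$ that is isometric on the vertical sides and equals $g_\gamma \circ g_\lambda^{-1}$ on the horizontal sides, Lemmas \ref{lem:lem1}--\ref{lem:lem3} show it is $(\epsilon, C')$-good, so Lemma \ref{lem:ext} extends it to a $(1+C\epsilon)$-quasiconformal map $\phi : \mathcal{R}_\lambda \to \mathcal{R}_\gamma$. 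Setting $F_t := g_\gamma^{-1} \circ \phi \circ g_\lambda$ on each $R_i^\lambda$, the composition equals the identity on every horizontal arc of $\partial R_i^\lambda$ shared with a triangle $T_j$ and is an isometry on vertical sides shared with adjacent rectangles, so the pieces glue continuously to the identity on the complement. Each restriction is a composition of three almost-conformal maps and therefore almost-conformal, and since the gluing locus $\partial\mathcal{T}_\epsilon$ has measure zero, $F_t$ is almost-conformal on the whole surface.

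The main obstacle is securing a \emph{uniform} (in $t$) error in approximating $\lambda$ by integer multicurves: for Lemma \ref{lem:ext} to apply, the two euclidean model widths must differ by a constant rather than a quantity growing with $t$, so that the error-to-height ratio $C'/h_i$ is $O(\epsilon^2)$. This uniformity is exactly the content of the linear-algebra Lemma \ref{lem:linalg} (via the induced bound \eqref{eq:wapp}); without it, the widths would drift apart and the almost-conformal extension would fail.
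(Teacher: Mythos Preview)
Your overall strategy is the same as the paper's, and you correctly identify the crucial estimate (the uniform bound from Lemma~\ref{lem:linalg}). However, there is a genuine gap in the gluing step.

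The boundary map you prescribe on horizontal sides, $\phi = g_\gamma \circ g_\lambda^{-1}$, is not well-defined: $g_\lambda^{-1}$ sends the horizontal side of $\mathcal{R}_\lambda$ into $R_i^\lambda \subset gr_{2\pi t\lambda}X$, while $g_\gamma$ has domain $R_i^\gamma \subset gr_{2\pi\gamma_t}X$. These are sub-arcs of the \emph{different} arcs $\tau_\lambda$ and $\tau_\gamma$, of different lengths, so the composition does not type-check. What is missing is an explicit intermediate map $h:\tau_\gamma\to\tau_\lambda$ (or its inverse). This map must do two things at once: it must be a \emph{global} homeomorphism of the transversal arc so that the maps on different rectangles $R_i$ agree where their horizontal sides overlap along $\tau$, and it must restrict to an isometry on each ungrafted sub-arc $J_1,\ldots,J_{3m}$ so that the rectangle maps glue with the identity on the complementary triangles. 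Your assertion that ``the composition equals the identity on every horizontal arc of $\partial R_i^\lambda$ shared with a triangle $T_j$'' is exactly this second requirement, but nothing in your construction forces it.

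The paper handles this by building $h$ directly from the finer decomposition of $\tau$ into the sub-arcs $I'_1,\ldots,I'_{3m+1}$ (between the $J_k$) and $J_1,\ldots,J_{3m}$: on each $J_k$ it is the isometry, and on each $I'_k$ it is the affine stretch between segments of lengths $2\pi k_j$ and $2\pi t w'_j$, which differ by at most $2\pi C$ by part~(2) of Lemma~\ref{lem:linalg}. This produces a single $(\epsilon,50mC)$-almost-isometry on all of $\tau$, and the per-rectangle boundary maps $g_i|_\partial \circ h_i \circ f_i|_\partial^{-1}$ are then $(\epsilon, 50mC+2\epsilon)$-good and extend by Lemma~\ref{lem:ext}. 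Note that this uses the finer bounds $|tw'_j-k_j|<C$ on the $I'_k$-weights, not just the aggregated bound \eqref{eq:wapp} on the branch weights $w_i$; the latter alone would not guarantee the isometry on the $J_k$.
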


Here's a brief summary of the proof prior to the details:\\

On the initial (ungrafted) surface $X$ one has a train-track $\mathcal{T}_\epsilon$ containing the lamination $\lambda$  which is decomposed into rectangles (corresponding to the branches) by a choice of transverse arc $\tau$. The multi-curve approximation $\gamma_t$ is also contained in $\mathcal{T}_\epsilon$.\\

On the grafted surface $gr_{2\pi t\lambda}X$ the rectangles in the train-track decomposition widen to have more (euclidean) width. Similarly on $gr_{2\pi \gamma_t}X$ the rectangles in the initial decomposition are wider, though not with $C^1$-boundary as the arcs of $\gamma_t$ might intersect $\tau$ at an angle slightly off $\pi/2$.  $\tau$ is then replaced by its ``smoothed" arc which gives the correct rectangle decomposition on $gr_{2\pi \gamma_t}X$.\\

Each rectangle on $gr_{2\pi t\lambda}X$ is then mapped almost-conformally to the corresponding one on  $gr_{2\pi \gamma_t}X$ via their euclidean ``models", by first mapping the boundary and then using the almost-conformal extension lemma (Lemma \ref{lem:ext}). The complement of the train-tracks are isometric as grafting leaves them unaffected, and these put together give the required almost-conformal map between the two surfaces.

\begin{proof}[Proof of Lemma \ref{lem:close}] Since $t>T_1$ we have that $\gamma_t \subset \mathcal{T}_\epsilon$ by Lemma \ref{lem:gh}. We let $R_1^\prime,R_2^\prime,\ldots R_n^\prime$ be the rectangles obtained by grafting $R_1,R_2,\ldots R_n$ along $2\pi \gamma_t$. Note that $\gamma_t \cap R_i$ are a finite collection of geodesic arcs, and grafting along them gives \textit{finitely} grafted rectangles as in \S 6.1.2.\newline

Recall that all horizontal sides of the rectangles $R_1,R_2,\ldots R_n$ (on the surface $X$) lie on the arc $\tau$ which is a segment of a leaf of the horocyclic foliation. We chose the arc $\tau$ to be sufficiently small (see the comment following Lemma \ref{lem:thinrect}) and we can assume that its hyperbolic length $w$ is less than $\epsilon$.\newline

After grafting along $2\pi t\lambda$, this arc is converted to an arc  on $gr_{2\pi t\lambda} X$ of length $w+ 2\pi tw_1 + 2\pi tw_2 + \cdots + 2\pi tw_n$ which we denote by $\tau_\lambda$ (recall that as one grafts, the horocyclic foliation extends to a foliation on the grafted surface). After grafting along $2\pi \gamma_t$, the same arc $\tau$ is converted to an arc $\tau^\prime$ on $gr_{2\pi \gamma_t}X$ of length $w + 2\pi \bar{k}_1 + 2\pi \bar{k}_2 +\cdots + 2\pi \bar{k}_n$ which we can smooth to a $C^1$ arc (see \S 6.1.3) on the grafted surface that we denote by $\tau_\gamma$. This simultaneously ``smooths" the finitely grafted rectangles $R_1^\prime,R_2^\prime,\ldots R_n^\prime$ to a new collection $R_1^{\prime\prime},R_2^{\prime\prime},\ldots R_n^{\prime\prime}$ which we use for the rest of the construction.\newline

Recall also that there are the sub-arcs $J_1,J_2,\ldots J_{3m}$ of $\tau$ that are the horocyclic edges of the complementary regions $T_1,\ldots T_n$. These remain isometrically embedded in the arcs $\tau_\lambda$ and $\tau^\prime$, and also in $\tau_\gamma$ (smoothing of $\tau^\prime$ affects only the segments lying in the grafted part).\newline

\textit{Claim. For sufficiently large $t$ there is an $(\epsilon, 50mC)$-almost-isometry $h$ from $\tau_\gamma$ to $\tau_\lambda$ that restricts to an isometry between the sub-arcs corresponding to $J_1,J_2,\ldots J_{3m}$.}\newline

\textit{Proof of claim.} Recall that the sub-arcs inbetween the $J_1,\ldots, J_{3m}$ are the sub-arcs $I^\prime_1,\ldots I^\prime_{3m+1}$ that had weights $w^\prime_1,\ldots, w^\prime_{3m+1}$ on the surface $X$. On $\tau_\gamma$ and $\tau_\lambda$, the  lengths of these become  $2\pi k_i$ and $2\pi tw^\prime_i$ respectively, which differ by at most $2\pi C$ by (2) of Lemma \ref{lem:linalg}. For large $t$, we have that by Lemma \ref{lem:lem1}, the affine maps between these sub-arcs are $(\epsilon, 2\pi C)$-almost-isometries. By Lemma \ref{lem:lem3} the concatenated map of these together with isometries between the sub-arcs corresponding to $J_1,J_2,\ldots J_{3m}$ give an  $(\epsilon, 7m\cdot 2\pi C)$-almost-isometry from the entire arc $\tau_\gamma$ to $\tau_\lambda$.\qed\newline

By Lemma \ref{lem:thinrect}, for $t$ sufficiently large the height of the finitely grafted rectangle $R_i^\prime$ is sufficiently large and the width is sufficiently small so that one can apply Corollary \ref{cor:smooth1} and obtain, for each $1\leq i\leq n$, an almost-conformal map $f_i$ from $R_i^{\prime\prime}$ to a euclidean rectangle of width $2\pi k_i$ which is $(\epsilon,\epsilon)$-good on the boundary.\newline

Recall from \S6.4 that $R_i^t$ denotes the rectangle $R_i$ on $X$ after grafting along $2\pi t\lambda$. By Lemma \ref{lem:model} for $t$ sufficiently large there exists,  for each $1\leq i\leq n$, an almost-conformal map $g_i$ from $R_i^t$ to a euclidean rectangle of width $2\pi tw_i$ which is $(\epsilon,\epsilon)$-good on the boundary.\newline

From the above claim, the map $h_i$ from the rectangle $\partial S_i$ on $gr_{2\pi \gamma_t}X$ to the rectangle $\partial R_i^t$ on  $gr_{2\pi t\lambda}X$ that is isometric on the vertical sides and restricts to the map $h$ on the horizontal sides (which lie on the arc $\tau_\gamma$) is $(\epsilon,50mC)$-good.\newline

Consider the composition $g_i\vert_\partial \circ h_i \circ f_i\vert_\partial^{-1}$ where $f_i\vert_\partial$ and $g_i\vert_\partial $ are the restrictions of $f_i$ and $g_i$ to the boundary of the rectangles where they are defined. By Lemma \ref{lem:lem2} this is an $(\epsilon, 50mC + 2\epsilon)$-good map between two euclidean rectangles and hence by Lemma \ref{lem:ext} (the height of these rectangles is sufficiently large when $\epsilon$ is sufficiently small) it extends to an almost-conformal map $H_i$ between them. The composition $g_i^{-1}\circ H_i \circ f_i: S_i \to R_i^t$ is an almost conformal map that restricts to $h:\tau_\gamma\to \tau_\lambda$ on the horizontal sides and is isometric on the vertical sides.\newline

The collection of maps $\{H_1,H_2,\ldots H_n\}$ give an almost-conformal map from $S_1\cup \cdots S_n \subset gr_{2\pi \gamma_t}X$ to $R_1^t\cup\cdots R_n^t \subset gr_{2\pi t\lambda}X$ that is isometric on the geodesic sides. Since grafting does not affect the surface $X$ in the complement of $\mathcal{T}_\epsilon$, there is an isometry between $gr_{2\pi \gamma_t}X \setminus S_1\cup \cdots S_n$ and $gr_{2\pi t\lambda}X \setminus R_1^t\cup \cdots R_n^t$. Together with the above collection of maps this isometry defines the almost-conformal map between $gr_{2\pi \gamma_t}X$ and  $gr_{2\pi t\lambda}X$.
\end{proof}

\subsection{Completing the proof}

\begin{proof}[Proof of Proposition \ref{prop:prop1}]

We choose a $t_i>T_2$ (where $T_2$ is as in Lemma \ref{lem:close}) that satisfies equation (\ref{eq:ti}). By the triangle inequality,
\begin{equation*}
d_\mathcal{T} (\pi(gr_{2\pi \gamma_{t_i}}X), Y) \leq d_{\mathcal{T}}(\pi(gr_{2\pi t_i\lambda}(X)),Y) + d_\mathcal{T}(gr_{2\pi t_i\lambda}X,gr_{2\pi \gamma_{t_i}}X) 
\end{equation*}
The first term on the right is less than $\epsilon$ by equation (\ref{eq:ti}) and the second term is $O(\epsilon)$ by Lemma \ref{lem:close}.
\end{proof}

From the discussion at the beginning of \S 6, this proves Proposition \ref{prop:prop0} and hence Theorem 1.4.

\appendix
\section{Proof of Lemma \ref{lem:qclem}}

The purpose of this section is to provide a proof of the following:

\begin{prop}[Lemma \ref{lem:qclem}]
For any $\epsilon>0$ sufficiently small and any $0\leq r\leq \epsilon$ if $f:\mathbb{D}\to \mathbb{D}$ satisfies\newline
(1) $f$ is a quasiconformal map,\newline
(2) The quasiconformal distortion is $(1+ C\epsilon)$ on $\mathbb{D}\setminus B_r$,\newline
then the map $f$ extends to a $(1 + C^\prime\epsilon)$-quasisymmetric map of the boundary, where $C^\prime$ is a constant depending only on $C$.
\end{prop}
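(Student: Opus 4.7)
The plan is to exploit the fact that the restriction of $f$ to the round annulus $A = \mathbb{D}\setminus \bar{B}_r$ is itself a quasiconformal homeomorphism between two topological annuli with dilatation $1+C\epsilon$, and then to invoke the quantitative boundary-extension theorem for quasiconformal maps of annuli.

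Since $f:\mathbb{D}\to\mathbb{D}$ is a homeomorphism, the image $f(\bar{B}_r)$ is a closed Jordan disk in $\mathbb{D}$ and its complement $f(A) = \mathbb{D}\setminus f(\bar{B}_r)$ is a topological annulus whose outer boundary is $\partial\mathbb{D}$. By hypothesis $f|_A:A\to f(A)$ is a $(1+C\epsilon)$-quasiconformal homeomorphism between these two annuli, and the boundary map it induces on the outer boundary $\partial\mathbb{D}$ is precisely the homeomorphism $f|_{\partial\mathbb{D}}$ whose quasisymmetry we wish to control. Note that no hypothesis is placed on what $f$ does inside $B_r$, and none is needed below: the argument will only see $f$ through its restriction to $A$.

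I would then invoke the following standard fact: a $(1+\delta)$-quasiconformal homeomorphism between two planar topological annuli induces on each boundary component a $(1+C_0\delta)$-quasisymmetric circle homeomorphism, with $C_0$ a universal constant. One proves this by first uniformizing both annuli to round annuli (a conformal change of coordinates, which preserves dilatation); for a $(1+\delta)$-quasiconformal map between round annuli of close moduli, a Beurling-Ahlfors-type estimate — or, equivalently, a modulus-of-quadrilateral computation in a thin collar of the boundary circle — shows that, after normalizing out the rotational ambiguity, the induced boundary map lies within $C_0\delta$ of an isometry of the circle. Applying this to $f|_A$ with $\delta = C\epsilon$ yields the desired $(1+C'\epsilon)$-quasisymmetry of $f|_{\partial\mathbb{D}}$, with $C' = C_0 C$.

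The main obstacle is pinning down the linear quantitative form $\eta(1+\delta) = 1 + O(\delta)$ in the annulus boundary-extension step, rather than a merely qualitative $\eta(K)\to 1$ as $K\to 1$. The cleanest route localizes the argument to a thin collar of $\partial\mathbb{D}$ lying inside $A$, well away from the potentially wild region $B_r$; on this collar a direct modulus computation for a family of quadrilaterals abutting $\partial\mathbb{D}$ converts the infinitesimal $(1+C\epsilon)$-dilatation into a multiplicative $(1+C'\epsilon)$-distortion of chord-length ratios on $\partial\mathbb{D}$. A helpful preliminary (though strictly speaking not needed for the final bound) is that the large modulus $\tfrac{1}{2\pi}\log(1/r)$ of the annulus $A$ forces $\mathrm{diam}(f(\bar{B}_r)) = O(\epsilon)$ via a Grötzsch-module estimate, confirming that the wild inner piece of the annulus is geometrically small and therefore irrelevant to the extremal-length computations driving the quasisymmetry bound.
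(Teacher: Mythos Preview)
There is a genuine gap. The $(1+C\epsilon)$-quasiconformality of $f$ on the annulus $A=\mathbb{D}\setminus\overline{B_r}$ controls only the \emph{restricted} extremal-length ratio $\lambda_{f(A)}(f(Q))/\lambda_A(Q)$ for quadrilaterals $Q$ with sides on $\partial\mathbb{D}$, whereas quasisymmetry of $f|_{\partial\mathbb{D}}$ is governed by the \emph{full} ratio $\lambda_{\mathbb{D}}(f(Q))/\lambda_{\mathbb{D}}(Q)$. Passing from $\lambda_A$ to $\lambda_\mathbb{D}$ on the domain side costs a factor $1+O(r)=1+O(\epsilon)$, but on the image side it costs $1+O(\operatorname{diam} f(B_r))$, and nothing in your annulus black box bounds this diameter --- so no universal $C_0$ can come out. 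Concretely, in your uniformization step the conformal map $\psi$ taking $f(A)=\mathbb{D}\setminus f(\overline{B_r})$ to a round annulus has a boundary trace on $\partial\mathbb{D}$ whose distortion depends on the size and position of $f(\overline{B_r})$; this is exactly the uncontrolled quantity. The thin-collar idea does not rescue the argument either: when the boundary arcs of $Q$ are comparable to the full circle, the extremal metric for $Q$ is supported throughout $\mathbb{D}$, so a collar estimate captures only the small-scale part of the quasisymmetry.

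The paper supplies precisely the two ingredients you are missing. First, an excision lemma: for any $E\ni 0$ of diameter $d$ one has $1\le \lambda_{\mathbb{D}\setminus E}(Q)/\lambda_\mathbb{D}(Q)\le 1+O(d)$ \emph{uniformly in $Q$}; this is proved by mapping $Q$ to its extremal rectangle, using Koebe distortion to confine the image of $E$ to a horizontal strip of height $O(d)$, and running the length--area inequality on the complementary strips. Second, the diameter bound $\operatorname{diam} f(B_r)=O(r^{1-\epsilon})=O(\epsilon)$, obtained by comparing $\operatorname{mod}(A)=\tfrac{1}{2\pi}\log(1/r)$ with the Gr\"otzsch/Teichm\"uller upper bound for $\operatorname{mod} f(A)$ in terms of $\operatorname{diam} f(B_r)$. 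With both excised sets of diameter $O(\epsilon)$, the three ratios chain to give $\lambda_\mathbb{D}(f(Q))/\lambda_\mathbb{D}(Q)=1+O(\epsilon)$ for every $Q$, which yields the $(1+C'\epsilon)$-quasisymmetry. The diameter estimate is therefore not a side remark but the step that makes the argument close.
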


For $r=0$ the above result is an easy consequence of the work of Ahlfors and Beurling that we recall as Lemma \ref{lem:AB1} below. \newline

We begin by recalling the definitions and relevant known results in \S A.1, and prove a lemma about moduli of quadrilaterals in \S A.2, from which the proof of the theorem follows. 

\subsection{Background}
A starting point for the rich theory of quasiconformal mappings can be Ahlfors' lectures \cite{Ah1}.\newline

Let $\Gamma$ be a family of (rectifiable) curves in $\mathbb{D}$, and let $\rho$ be a non-negative measurable function on the disk $\mathbb{D}$ satisfying
\begin{equation}\label{eq:f1}
l_\gamma(\rho) = \int\limits_{\gamma} \rho \geq 1
\end{equation}
for all $\gamma \in \Gamma$ and
\begin{equation}\label{eq:f2}
A(\rho) = \iint\limits_\mathbb{D} \rho^2 dz d\bar{z} \neq 0,\infty
\end{equation}
Note that $\rho$ can be thought of as the conformal factor for a metric conformally equivalent to the standard metric on the unit disk.

\begin{defn}\label{defn:ext}
The \textit{extremal} length of $\Gamma$ is denoted by $\lambda(\Gamma)$ is defined as
\begin{equation}\label{eq:def}
\lambda(\Gamma) = \sup\limits_{\rho} A(\rho)^{-1}
\end{equation}
where $\rho$ varies over all non-negative measurable functions satisfying (\ref{eq:f1}) and (\ref{eq:f2}). This is a conformal invariant.
\end{defn}

 \begin{defn}\label{defn:quad}
A \textit{quadrilateral} $Q$ is the unit disk $\mathbb{D}$ together with two disjoint arcs $A$ and $B$ on its boundary. There is a conformal map from $Q$ to a rectangle in $\mathbb{R}^2$ of height $1$ and length $m$  that takes the two arcs to vertical sides. The positive real number $m$ is the \textit{modulus} of the quadrilateral, denoted $mod(Q)$. 
\end{defn}

One of the basic results asserts:

\begin{lem}(Gr\"{o}tzsch)\label{lem:gro} If $\Gamma$ is the collection of all rectifiable curves in $\mathbb{D}$ joining the boundary arcs $A$ and $B$, then $\lambda(\Gamma) = mod(Q)$.
\end{lem}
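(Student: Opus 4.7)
\medskip

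\textbf{Proof plan.} The strategy is the classical one: transport the problem by a conformal map to a euclidean rectangle, where both $\mathrm{mod}(Q)$ and $\lambda(\Gamma)$ can be computed directly. Since $\lambda(\Gamma)$ is a conformal invariant (Definition \ref{defn:ext}), and since by Definition \ref{defn:quad} there is a conformal map $\Phi:Q\to R_m=[0,m]\times[0,1]$ sending the distinguished arcs $A,B$ to the two vertical sides, it suffices to prove $\lambda(\Phi(\Gamma))=m$ where $\Phi(\Gamma)$ is the family of rectifiable curves in $R_m$ joining $\{0\}\times[0,1]$ to $\{m\}\times[0,1]$. One then proves the two inequalities $\lambda(\Phi(\Gamma))\geq m$ and $\lambda(\Phi(\Gamma))\leq m$ separately.

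\medskip

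For the lower bound $\lambda(\Phi(\Gamma))\geq m$, the plan is to exhibit a single admissible conformal factor: take $\rho_0\equiv 1/m$ on $R_m$. Any rectifiable curve $\gamma$ joining the two vertical sides has euclidean length at least $m$ (its projection onto the $x$-axis covers $[0,m]$), so $\int_\gamma\rho_0\,|dz|\geq 1$ and $\rho_0$ satisfies \eqref{eq:f1}. A direct computation gives $A(\rho_0)=\iint_{R_m}m^{-2}\,dx\,dy=1/m$, so $A(\rho_0)^{-1}=m$, and taking the supremum in \eqref{eq:def} yields $\lambda(\Phi(\Gamma))\geq m$.

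\medskip

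For the upper bound $\lambda(\Phi(\Gamma))\leq m$, the idea is to restrict attention to the distinguished subfamily of horizontal segments. For each $y\in[0,1]$, the segment $\gamma_y(x)=(x,y)$, $x\in[0,m]$, lies in $\Phi(\Gamma)$, so any admissible $\rho$ must satisfy $\int_0^m\rho(x,y)\,dx\geq 1$. Applying Cauchy--Schwarz to this slice,
\begin{equation*}
1\leq\left(\int_0^m\rho(x,y)\,dx\right)^{2}\leq m\int_0^m\rho(x,y)^{2}\,dx,
\end{equation*}
and integrating in $y$ over $[0,1]$ gives $A(\rho)\geq 1/m$, hence $A(\rho)^{-1}\leq m$. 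Taking the supremum over all admissible $\rho$ yields $\lambda(\Phi(\Gamma))\leq m$.

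\medskip

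Combining the two inequalities gives $\lambda(\Gamma)=\lambda(\Phi(\Gamma))=m=\mathrm{mod}(Q)$, as required. The only conceptual point to be careful with is the direction of the sup/inf in Definition \ref{defn:ext}: the admissible $\rho$ is a lower bound on the metric, so a small $A(\rho)$ (large $1/A(\rho)$) certifies a large lower bound on extremal length, which is why the lower bound argument exhibits $\rho$ and the upper bound argument bounds $A(\rho)$ from below. The Cauchy--Schwarz step in the upper bound is the only genuinely nontrivial ingredient; everything else is bookkeeping and the conformal reduction.
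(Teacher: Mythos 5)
Your proof is correct: it is the classical Gr\"{o}tzsch length--area argument (conformal reduction to the rectangle, the metric $\rho_0\equiv 1/m$ for one inequality, Cauchy--Schwarz on horizontal slices for the other), and both computations check out against the paper's normalization $\lambda(\Gamma)=\sup_\rho A(\rho)^{-1}$ with $l_\gamma(\rho)\geq 1$. The paper itself states this lemma without proof, citing it as a basic fact from the quasiconformal literature, but the slicing argument you give is exactly the one the paper later adapts in the proof of Lemma \ref{lem:a14}, so your route is the intended one.
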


If $S\subset \mathbb{D}$ is a closed subset containing the boundary $\partial \mathbb{D}$, then we can restrict $\Gamma$ in the lemma above to a collection $\Gamma^\prime$ of curves that are contained in $S$. We denote the corresponding extremal length $\lambda_{S}(Q) = \lambda(\Gamma^\prime)$. We shall also call this the \textit{extremal length restricted to S}. Note that by the above lemma $\lambda_{\mathbb{D}}(Q) = mod(Q)$.\newline
From the definition of extremal length, it is easy to check:

\begin{lem}\label{lem:inc}
If $S\subseteq\mathbb{D}$ then $\lambda_{S}(Q)\geq\lambda_{\mathbb{D}}(Q)$.
\end{lem}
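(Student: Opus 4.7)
The plan is to observe that this inequality is a direct consequence of the definition of extremal length as a supremum, together with the fact that restricting the family of curves only enlarges the set of admissible conformal factors.

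First I would unpack the two sides. Let $\Gamma$ denote the family of all rectifiable curves in $\mathbb{D}$ joining the two boundary arcs of the quadrilateral $Q$, so that by Lemma \ref{lem:gro} we have $\lambda_{\mathbb{D}}(Q)=\lambda(\Gamma)$. Let $\Gamma^\prime\subseteq\Gamma$ be the subfamily of those curves whose images lie in $S$, so that by definition $\lambda_S(Q)=\lambda(\Gamma^\prime)$.

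The core observation is then that any non-negative measurable $\rho$ on $\mathbb{D}$ satisfying the normalization $l_\gamma(\rho)\geq 1$ for every $\gamma\in\Gamma$ satisfies the \emph{same} condition \emph{a fortiori} for every $\gamma\in\Gamma^\prime$, simply because $\Gamma^\prime\subseteq\Gamma$. Hence the class of admissible densities for $\Gamma^\prime$ contains the class of admissible densities for $\Gamma$. Taking the supremum of $A(\rho)^{-1}$ (as in Definition \ref{defn:ext}) over a larger class can only increase the value, so
\begin{equation*}
\lambda(\Gamma^\prime)\;=\;\sup_{\rho\text{ adm.\ for }\Gamma^\prime}A(\rho)^{-1}\;\geq\;\sup_{\rho\text{ adm.\ for }\Gamma}A(\rho)^{-1}\;=\;\lambda(\Gamma),
\end{equation*}
which is exactly $\lambda_S(Q)\geq\lambda_{\mathbb{D}}(Q)$.

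There is essentially no obstacle here: the proof is a monotonicity statement for the supremum in the variational definition of extremal length. The only edge case worth a brief remark is that $\Gamma^\prime$ may be very small or empty for certain choices of $S$; but in that degenerate situation $\lambda(\Gamma^\prime)$ is large (infinite by the standard convention), so the inequality is only more strongly satisfied.
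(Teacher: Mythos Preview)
Your proof is correct and is exactly the monotonicity argument the paper has in mind: the paper does not write out a proof at all, simply remarking that the lemma ``is easy to check'' from the definition of extremal length, and your argument via the inclusion $\Gamma'\subseteq\Gamma$ of curve families and the resulting containment of admissible densities is precisely that check.
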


\begin{defn}\label{defn:qc}
Let $\Omega,\Omega^\prime$ be two domains in $\mathbb{C}$. Then a homeomorphism $f:\Omega\to\Omega^\prime$ is said to be $K$-quasiconformal if\newline
(1) $f$ has locally integrable distributional derivatives.\newline
(2) The ratio
\begin{equation}\label{eq:D}
\frac{\lvert f_{z} \rvert - \lvert f_{\bar{z}} \rvert}{\lvert f_{z} \rvert + \lvert f_{\bar{z}} \rvert} \leq K
\end{equation}
almost everywhere in $\Omega$.\newline
The \textit{quasiconformal distortion} at a point in $\Omega$ is defined to be the value of the left hand side of equation (\ref{eq:D}).
\end{defn}

\begin{defn}
A homeomorphism $g:\mathbb{R}\to \mathbb{R}$ is $M$-quasisymmetric if for every $x, t\in \mathbb{R}$, we have
\begin{equation*}
\frac{1}{M}\leq \frac{f(x+t) - f(x)}{f(x) - f(x-t)} \leq M
\end{equation*}
\end{defn}

\begin{defn}
A homeomorphism $g:\partial\mathbb{D}\to \partial\mathbb{D}$ is $M$-quasisymmetric if $h\circ g\circ h^{-1}:\mathbb{H}^2\to \mathbb{H}^2$ is $M$-quasisymmetric when restricted to $\mathbb{R}$, where $h$ is a conformal map from the unit disk $\mathbb{D}$ to the upper half plane $\mathbb{H}^2$.
\end{defn}

The following lemma can be culled from the discussion in section 4 of \cite{AB}.\newline
For $A$,$B$ two intervals in $\mathbb{R}$,  let $\lambda(A,B)$ denote the extremal length of the set of rectifiable paths in $\mathbb{H}^2$ from $A$ to $B$. (To use the above definition of extremal length we first map $\mathbb{H}^2$ conformally to $\mathbb{D}$.)

\begin{lem}\label{lem:qs}
If $g:\mathbb{R}\to\mathbb{R}$ be a homeomorphism such that
\begin{equation*}
\frac{1}{m}\leq \frac{\lambda(g(A),g(B))}{\lambda(A,B)}\leq m
\end{equation*}
for all disjoint intervals $A$, $B$.\newline
Then $g$ is $M$-quasisymmetric, where $M = e^{A(m-1)}$ where $A\approx 0.228$ is a universal constant.
\end{lem}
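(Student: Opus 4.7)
The plan is to use Lemma \ref{lem:qs} and reduce the quasisymmetric control of $f|_{\partial\mathbb{D}}$ to a control of extremal-length ratios. Concretely, it suffices to show that for every pair of disjoint boundary arcs $A,B\subset \partial\mathbb{D}$ one has
\[
\frac{\lambda(f(A),f(B))}{\lambda(A,B)} \;=\; 1 + O(\epsilon),
\]
since Lemma \ref{lem:qs} will then deliver the quasisymmetric constant $M = e^{A(m-1)} = 1 + O(\epsilon)$. First one extends $f$ continuously to $\overline{\mathbb{D}}$ by standard QC theory. Given such $A,B$, let $\Gamma$ be the family of rectifiable paths in $\mathbb{D}$ from $A$ to $B$, and let $\Gamma_S$ be the subfamily disjoint from the small ball $B_r$. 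By Lemmas \ref{lem:gro} and \ref{lem:inc}, $\lambda(A,B) = \lambda_{\mathbb{D}}(\Gamma) \leq \lambda_S(\Gamma_S)$, and analogously $\lambda(f(A),f(B)) \leq \lambda_{f(S)}(f(\Gamma_S))$.

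The estimate is assembled from three ingredients. First, because $f$ is $(1+C\epsilon)$-quasiconformal on $S = \mathbb{D}\setminus B_r$ and the curves in $\Gamma_S$ together with their $f$-images stay in $S$ and $f(S)$, the quasi-invariance of extremal length under QC maps yields $\lambda_{f(S)}(f(\Gamma_S)) = (1\pm C\epsilon)\,\lambda_S(\Gamma_S)$. Second, I shall prove a \emph{small-insulator} perturbation
\[
\lambda_S(\Gamma_S) \;=\; \lambda_{\mathbb{D}}(\Gamma)\cdot \bigl(1 + O(r^2)\bigr),
\]
i.e.\ that removing a small interior disk (with free/Neumann boundary, as is implicit in the family $\Gamma_S$) perturbs extremal length only quadratically in $r$. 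This will follow from the matched-asymptotic expansion for the mixed Dirichlet–Neumann harmonic function $u$ realising $M(\Gamma) = \int_{\mathbb{D}} |\nabla u|^2$: its restriction to $S$ is admissible for the perturbed variational problem, and the true minimiser differs from $u$ by a dipole correction of size $r^2|\nabla u(0)|$, giving a Dirichlet-energy shift of order $r^2$. Third, to apply the same small-insulator estimate on the image side I need to show that $f(B_r)$ is comparably small: since $\mathbb{D}\setminus B_r$ has modulus $\frac{1}{2\pi}\log(1/r) \geq \frac{1}{2\pi}\log(1/\epsilon)$ and $f|_S$ distorts moduli by at most the factor $(1+C\epsilon)$, the annulus $\mathbb{D}\setminus f(B_r)$ still has large modulus, and the standard diameter bound for sets separated from $\partial \mathbb{D}$ by a large-modulus annulus places $f(B_r)$ inside a round disk $B_{r'}$ with $r' = O(\epsilon)$.

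Chaining these three estimates gives
\[
\lambda(f(A),f(B)) \leq \lambda_{f(S)}(f(\Gamma_S)) \leq (1+C\epsilon)\,\lambda_S(\Gamma_S) \leq (1+C\epsilon)(1+O(r^2))\,\lambda(A,B),
\]
and a symmetric chain on the image side (using the $r'$-insulator estimate to bound $\lambda_{f(S)}(f(\Gamma_S))$ above by $(1+O(\epsilon^2))\lambda(f(A),f(B))$) yields the reverse inequality, so $\lambda(f(A),f(B))/\lambda(A,B) = 1 + O(\epsilon)$ uniformly in $A,B$. Applying Lemma \ref{lem:qs} then finishes the proof. The main obstacle is the $O(r^2)$ small-insulator estimate: the naive argument via subadditivity of modulus merely bounds the family of paths crossing $B_r$ by $2\pi/\log(1/r)$, which with $r\leq\epsilon$ would yield only a $(1+O(1/\log(1/\epsilon)))$-quasisymmetric conclusion — dramatically weaker than the required $(1+C'\epsilon)$. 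Securing the genuine quadratic rate requires the potential-theoretic dipole expansion, which crucially exploits that the extremal paths joining two boundary arcs of $\mathbb{D}$ need not enter a small interior disk and therefore "feel" the hole only as a regular (not singular) perturbation.
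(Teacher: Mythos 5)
There is a fundamental mismatch here: what you have written is not a proof of Lemma \ref{lem:qs} at all. Lemma \ref{lem:qs} is the statement that a homeomorphism $g$ of $\mathbb{R}$ which distorts the extremal distances $\lambda(A,B)$ between disjoint intervals by at most a multiplicative factor $m$ is $M$-quasisymmetric with the explicit bound $M=e^{A(m-1)}$, $A\approx 0.228$. Your proposal instead sketches a proof of Proposition A.1 (Lemma \ref{lem:qclem}), the statement that a map of $\mathbb{D}$ which is $(1+C\epsilon)$-quasiconformal off a small ball $B_r$ has a $(1+C'\epsilon)$-quasisymmetric boundary extension, and it does so by invoking Lemma \ref{lem:qs} as its final step. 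In other words, you assume precisely the statement you were asked to prove and prove something else with it; as an argument for Lemma \ref{lem:qs} the proposal is circular and contains no content. A genuine proof must convert the hypothesis on extremal-length ratios into control of the quasisymmetry quotient $\bigl(g(x+t)-g(x)\bigr)/\bigl(g(x)-g(x-t)\bigr)$: one considers the quadrilateral determined in $\mathbb{H}^2$ by the boundary points $x-t,x,x+t,\infty$, whose module is an explicit monotone function of that quotient and equals the symmetric value when the quotient is $1$; the hypothesis forces the module of the image configuration to lie within the factor $m$ of the symmetric value, and the quantitative (Lipschitz) behaviour of the quotient as a function of the module near that value is exactly what produces the universal constant $A\approx 0.228$. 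The paper does not reprove this; it explicitly culls it from Section 4 of Ahlfors--Beurling \cite{AB}, and any self-contained attempt would have to reconstruct that computation.

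As a side remark, even if your text were being judged as a proof of Lemma \ref{lem:qclem}, it diverges from the paper's Appendix in ways worth noting: the paper's small-insulator estimate (Lemma \ref{lem:a14}) is obtained by a short Gr\"{o}tzsch-type length--area argument giving a $1+O(r)$ factor, which is all that is needed since $r\leq\epsilon$, so your insistence on an $O(r^2)$ dipole expansion is unnecessary extra machinery; and the smallness of $f(B_r)$ is obtained in the paper via the modulus estimate of Lemma \ref{lem:qconf}, giving $diam\bigl(f(B_r)\bigr)=O(r^{1-\epsilon})=O(\epsilon)$, essentially as you indicate. But none of this repairs the central problem that the statement actually assigned to you was never addressed.
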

The following are the fundamental results of Ahlfors and Beurling (\cite{AB}) (for the version stated here see \cite{Bish}). Briefly, quasisymmetric maps of the boundary circle extend to quasiconformal maps of the unit disk, and vice versa, with the distortion constants ($K$ and $M$ as above) being close to $1$ if one of them is.

\begin{lem}\label{lem:AB1}
For any $K>1$ there is an $M>1$ such that if $f:\mathbb{D}\to \mathbb{D}$ is a $K$-quasiconformal map then it extends to an $M$-quasisymmetric homeomorphism of the boundary. Moreover, there is a $K_0>1$ and $C_0<\infty$ such that if $K= 1+ \epsilon < K_0$ then we can take $M\leq 1 + C_0\epsilon$.
\end{lem}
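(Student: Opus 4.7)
The plan is to reduce quasisymmetry of $f|_{\partial\mathbb{D}}$ to a modulus-preservation statement for quadrilaterals in $\mathbb{D}$, and then to prove that statement by comparing extremal lengths of curve families that do and do not meet the exceptional ball $B_r$. By Lemma \ref{lem:qs}, it suffices to show that for every pair of disjoint arcs $A,B\subset\partial\mathbb{D}$ and the corresponding quadrilateral $Q=(\mathbb{D};A,B)$, one has $\lambda(f(A),f(B))/\lambda(A,B)\in[(1+C'\epsilon)^{-1},1+C'\epsilon]$.

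The heart of the argument is the lemma to be proved in \S A.2: for any quadrilateral $Q$, the extremal length $\lambda_{\mathbb{D}\setminus B_r}(Q)$ of curves joining $A$ to $B$ in $\mathbb{D}\setminus B_r$ satisfies
\[
mod(Q)\leq \lambda_{\mathbb{D}\setminus B_r}(Q)\leq (1+C_1\epsilon)\,mod(Q).
\]
The left inequality is immediate from Lemma \ref{lem:inc}. The right inequality is obtained by decomposing $\Gamma=\Gamma_1\cup\Gamma_2$, with $\Gamma_1$ the curves passing through $B_r$ and $\Gamma_2$ those avoiding it, and using the subadditivity of modulus $1/\lambda(\Gamma)\leq 1/\lambda(\Gamma_1)+1/\lambda(\Gamma_2)$, which rearranges to $\lambda(\Gamma_2)\leq mod(Q)/(1-mod(Q)/\lambda(\Gamma_1))$. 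What is needed is a lower bound $\lambda(\Gamma_1)\geq mod(Q)/(C\epsilon)$, proved by constructing an admissible metric for $\Gamma_1$ concentrated on a neighborhood of $B_r$ that exploits the smallness of $r\leq\epsilon$.

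Having established the key lemma, the proof of Lemma \ref{lem:qclem} assembles in three steps. First, the lemma applied to $Q$ itself gives $\lambda_{\mathbb{D}\setminus B_r}(Q)=(1+O(\epsilon))\,mod(Q)$. Second, the lemma applied on the image side requires knowing that $f(B_r)$ is contained in a small round disk: since $f$ is $(1+C\epsilon)$-quasiconformal on the annulus $\mathbb{D}\setminus B_r$ of modulus $\tfrac{1}{2\pi}\log(1/r)$, the image annulus $\mathbb{D}\setminus f(B_r)$ has essentially the same large modulus, and a Gr\"otzsch-type capacity estimate (as referenced for Lemma \ref{lem:qconf}) produces a concentric disk of radius $O(\epsilon)$ containing $f(B_r)$, to which the lemma applies. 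Third, on the annular region $\mathbb{D}\setminus B_r$ the map $f$ is itself $(1+C\epsilon)$-quasiconformal, so it distorts $\lambda(\Gamma_2)$ by at most that factor. Chaining these three multiplicative estimates yields $\lambda(f(Q))/\lambda(Q)=1+O(\epsilon)$, and Lemma \ref{lem:qs} then gives the $(1+C'\epsilon)$-quasisymmetry with $C'$ depending only on $C$.

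The main obstacle is the admissible-metric bound on $\lambda(\Gamma_1)$ in the key lemma. A crude choice of radial metric $\rho=1/(|z|\log(1/r))$ on the annulus around $B_r$ yields only $\lambda(\Gamma_1)\gtrsim \log(1/r)/(2\pi)$, giving the inadequate rate $O(1/\log(1/\epsilon))$. To achieve the linear rate in $\epsilon$, one must construct a finer admissible metric, for instance by combining the extremal metric $\rho_0$ of $Q$ (whose $\rho_0^2$-area on $B_r$ is of order $r^2|\phi'(0)|^2/m^2$, where $\phi:\mathbb{D}\to R$ is the conformal map to the rectangle model) with a localized penalty supported in a thin annular collar of $\partial B_r$, calibrated so that every curve of $\Gamma_1$ accumulates length at least $1$ while the extra area contribution is of order $\epsilon\cdot A(\rho_0)$. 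Carrying out this combined metric construction, and checking that all constants that enter are independent of the specific quadrilateral $Q$, is the principal technical content of the appendix.
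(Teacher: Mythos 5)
You have not actually engaged with the statement at hand. Lemma \ref{lem:AB1} is the classical Beurling--Ahlfors result, and the paper offers no proof of it: it is quoted from \cite{AB} (in the form given in \cite{Bish}) and then used as a black box. What you have written is instead an outline of the proof of Lemma \ref{lem:qclem} (Proposition A.1): the exceptional ball $B_r$ appears nowhere in the hypotheses of Lemma \ref{lem:AB1}. Read as a proof of Lemma \ref{lem:AB1}, your argument is both misdirected and incomplete: you never establish that a $K$-quasiconformal self-map of $\mathbb{D}$ extends continuously (let alone homeomorphically) to $\partial\mathbb{D}$, which is part of the assertion and is the input Lemma \ref{lem:qs} requires; you do not address the general-$K$ statement at all; and in the nearly conformal regime the whole $B_r$ apparatus is superfluous, since global $(1+\epsilon)$-quasiconformality together with Lemma \ref{lem:qc} already bounds the distortion of $\lambda(A,B)$ for every quadrilateral by a factor $1+O(\epsilon)$, after which Lemma \ref{lem:qs} gives $M\le 1+C_0\epsilon$ (and, for general $K$, $M=e^{A(K-1)}$).

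Even read as a blind proof of Lemma \ref{lem:qclem}, the key step fails. Your subadditivity scheme needs $\lambda(\Gamma_1)\ge \mathrm{mod}(Q)/(C\epsilon)$ uniformly over quadrilaterals and over all $0\le r\le\epsilon$, and this is false precisely in the relevant case $r\asymp\epsilon$: take $Q$ of modulus comparable to $1$ with $B_\epsilon$ centered at the origin; every curve of $\Gamma_1$ contains a subcurve joining $\partial B_\epsilon$ to $\partial\mathbb{D}$, and an averaging argument over, say, broken radial curves through $\partial B_\epsilon$ shows $\lambda(\Gamma_1)\asymp\log(1/\epsilon)$, which is far smaller than $1/(C\epsilon)$. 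No ``finer admissible metric'' can repair this: an admissible metric only certifies a lower bound, and the supremum over admissible metrics is the true extremal length, which is of logarithmic order; so your route cannot do better than the rate $1+O(1/\log(1/\epsilon))$ that you yourself flagged as inadequate (that weaker rate would also not prove the lemma as stated). The paper's Lemma \ref{lem:a14} gets the linear rate $1+O(r)$ by a different mechanism: map $Q$ conformally to the rectangle model, use the Koebe distortion theorem (Lemma \ref{lem:conf}) to see that the image of $B_r$ has diameter $O(r)$ uniformly in $Q$, and then run the Gr\"otzsch length--area argument using only the horizontal lines lying outside a strip of height $O(r)$; the loss there is an $O(r)$ fraction of the extremal curves, not a modulus-subadditivity estimate over the family of curves meeting $B_r$. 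If you want to salvage your write-up, prove Lemma \ref{lem:a14} by that length--area argument and keep your three-step assembly (which does mirror the paper's proof of Proposition A.1), but none of this constitutes a proof of Lemma \ref{lem:AB1}.
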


\begin{lem}\label{lem:AB2}
Any $M$-quasisymmetric homeomorphism of $\partial\mathbb{D}$ can be extended to a $K$-quasiconformal map of $\mathbb{D}$.  Moreover, there is a $M_1>1$ and $C_1<\infty$ such that if $M= 1+ \epsilon < M_1$ then we can take $K\leq 1 + C_1\epsilon$.
\end{lem}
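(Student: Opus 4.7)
The plan is to verify the Ahlfors--Beurling extremal length criterion, Lemma \ref{lem:qs}: it suffices to show that for every pair of disjoint boundary arcs $A,B\subset\partial\mathbb{D}$, the moduli $m = \mathrm{mod}(\mathbb{D};A,B)$ and $m' = \mathrm{mod}(\mathbb{D};f(A),f(B))$ satisfy $m'/m = 1 + O(\epsilon)$; the quasisymmetry constant $1+C'\epsilon$, with $C'$ depending only on $C$, will then follow from the quantitative form of that lemma.

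First, I would bound the image of the bad set. Since $f$ restricts to a $(1+C\epsilon)$-quasiconformal homeomorphism of the annulus $\mathbb{D}\setminus B_r$ (of modulus $\geq \frac{1}{2\pi}\log(1/\epsilon)$) onto $\mathbb{D}\setminus V$, where $V := f(B_r)$, the image annulus has comparably large modulus. By the standard estimate for the inner complement of a large-modulus annulus in $\mathbb{D}$ (compare the computation used in the proof of Corollary \ref{cor:qcext2}), $V$ has \emph{hyperbolic} diameter $O(\epsilon)$.

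Next, with $S^* = \mathbb{D}\setminus B_r$ and $S = \mathbb{D}\setminus V$, I would chain three inequalities for the curve families $\Gamma$ joining $A,B$ in $\mathbb{D}$ and $\Gamma'$ joining $f(A),f(B)$ in $\mathbb{D}$:
\begin{equation*}
m = \lambda(\Gamma) \leq \lambda_{S^*}(\Gamma) \leq (1+C\epsilon)\,\lambda_{S}(\Gamma') \leq (1+C''\epsilon)\,\lambda(\Gamma') = (1+C''\epsilon)\,m'.
\end{equation*}
The first inequality is Lemma \ref{lem:inc}; the second is the standard fact that a $K$-quasiconformal homeomorphism distorts extremal length by a factor at most $K$, applied to the bijective $(1+C\epsilon)$-quasiconformal map $f\colon S^* \to S$, which takes $\Gamma\cap S^*$ onto $\Gamma'\cap S$. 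The reverse chain applied to $f^{-1}$ gives $m' \leq (1+C''\epsilon)\,m$, and Lemma \ref{lem:qs} then yields that $f|_{\partial\mathbb{D}}$ is $(1+C'\epsilon)$-quasisymmetric.

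The main obstacle is the third inequality, which asserts that excising a hyperbolically small $V$ from $\mathbb{D}$ changes the extremal length of an arbitrary boundary-arc curve family by a factor at most $1+O(\epsilon)$. By conformal invariance of extremal length, one may apply a Möbius automorphism of $\mathbb{D}$ (just for the purpose of this computation, without affecting $f$) to assume $V \subset B_{C\epsilon}$ centered at the origin. The strategy is then to build a competitor metric for $\lambda(\Gamma')$ from a near-optimal $\rho^*$ for $\lambda_{S}(\Gamma')$ by augmenting it on a slightly thickened annular neighborhood $V^+ \supset V$ with a logarithmically-weighted cap, just large enough to force every curve in $\Gamma'$ crossing $V$ to have $\rho^{**}$-length $\geq 1$, while the added $L^2$-mass remains negligible. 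Subtlety arises in obtaining $O(\epsilon)$-precision uniformly in the configuration of $A,B$, including the regime $m' \to 0$ of adjacent tiny arcs --- in that regime the near-optimal curves hug $\partial\mathbb{D}$ hyperbolically far from $V$, so the metric modification effectively does not alter the $L^2$-mass along these curves.
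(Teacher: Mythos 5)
There is a fundamental mismatch here: your proposal does not address the statement of Lemma \ref{lem:AB2} at all. Lemma \ref{lem:AB2} is the Beurling--Ahlfors \emph{extension} theorem: the hypothesis is only an $M$-quasisymmetric homeomorphism of $\partial\mathbb{D}$, and the conclusion is the existence of a $K$-quasiconformal extension to the interior of the disk, with $K\leq 1+C_1\epsilon$ when $M=1+\epsilon$. Your argument instead starts from a quasiconformal map $f:\mathbb{D}\to\mathbb{D}$ whose distortion is $(1+C\epsilon)$ off a small ball $B_r$, and concludes that its boundary restriction is $(1+C'\epsilon)$-quasisymmetric via the extremal-length criterion of Lemma \ref{lem:qs}. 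Those are the hypotheses and conclusion of Lemma \ref{lem:qclem} (Proposition A.1), i.e.\ essentially the reverse implication; the chain of inequalities you write (Lemma \ref{lem:inc}, quasi-invariance of extremal length, and the excision estimate for a hyperbolically small set) is precisely the paper's appendix argument for that proposition, built from Lemmas \ref{lem:a14}, \ref{lem:qconf} and Corollary \ref{cor:qconf}. In Lemma \ref{lem:AB2} there is no interior map, no $B_r$, and nothing to which extremal-length distortion can be applied, so none of your steps engage with the statement.

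Moreover, the gap cannot be patched within your framework: the criterion of Lemma \ref{lem:qs} can only certify that a given boundary map is quasisymmetric; it cannot manufacture an interior quasiconformal map. A proof of Lemma \ref{lem:AB2} requires an actual extension construction --- classically, conjugating to the upper half-plane and using the Beurling--Ahlfors average extension
\begin{equation*}
F(x+iy)=\tfrac{1}{2}\int_0^1\bigl[h(x+ty)+h(x-ty)\bigr]\,dt+\tfrac{i}{2}\int_0^1\bigl[h(x+ty)-h(x-ty)\bigr]\,dt,
\end{equation*}
together with the dilatation estimate showing $K\to 1$ as $M\to 1$ (or alternatively the Douady--Earle barycentric extension). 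The paper itself does not reprove this: Lemma \ref{lem:AB2} is quoted from Ahlfors--Beurling, with the quantitative near-identity form attributed to the reference cited as \cite{Bish}. So either cite the result as the paper does, or supply such a construction; the extremal-length computation you outline belongs to the proof of Lemma \ref{lem:qclem}, not this lemma.
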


Finally, we note the following standard consequence of quasiconformality (eg, see Chapter II of \cite{Ah1}):

\begin{lem}\label{lem:qc}
Let $\Omega,\Omega^\prime \subset \mathbb{D}$ be domains. If $f:\Omega\to\Omega^\prime$ is a $K$-quasiconformal map, then for any collection $\Gamma$ of rectifiable curves in $\Omega$, we have
\begin{equation}
\frac{1}{K}\leq \frac{\lambda_{\Omega^\prime}(f(\Gamma))}{\lambda_{\Omega}(\Gamma)}\leq K
\end{equation}
\end{lem}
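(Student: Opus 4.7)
The plan is to prove this classical quasi-invariance of extremal length directly from Definition \ref{defn:ext}, by pulling back an admissible conformal factor from $\Omega'$ to $\Omega$ through $f$ and controlling the ratio of the area integrals by the dilatation $K$. Both inequalities then follow, the upper one by the pullback construction and the lower one by applying the same argument to $f^{-1}$, which is again $K$-quasiconformal by property C at the start of \S 4.2.

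First, I would fix any $\rho'$ on $\Omega'$ that is admissible for $f(\Gamma)$, meaning $\int_{\gamma'}\rho'\geq 1$ for every $\gamma'\in f(\Gamma)$ and $0<\int_{\Omega'}(\rho')^{2}\,dA'<\infty$, and define the pullback
$$\rho(z):=\rho'(f(z))\cdot\bigl(|f_{z}(z)|+|f_{\bar z}(z)|\bigr)$$
on $\Omega$. For a curve $\gamma\in\Gamma$ with parametrization $\gamma(t)$, the chain rule gives $(f\circ\gamma)'(t)=f_{z}\gamma'(t)+f_{\bar z}\overline{\gamma'(t)}$, so $|(f\circ\gamma)'(t)|\leq(|f_{z}|+|f_{\bar z}|)|\gamma'(t)|$. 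Integrating shows
$$\int_{\gamma}\rho\,|dz|\;=\;\int \rho'(f(\gamma(t)))(|f_{z}|+|f_{\bar z}|)|\gamma'(t)|\,dt\;\geq\;\int_{f(\gamma)}\rho'\;\geq\;1,$$
so $\rho$ is admissible for $\Gamma$.

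Next, I would compare the two $L^{2}$-integrals. The change-of-variables formula applied to the homeomorphism $f$ gives
$$\int_{\Omega'}(\rho')^{2}\,dA'=\int_{\Omega}\rho'(f(z))^{2}\,J_{f}(z)\,dA,\qquad J_{f}=|f_{z}|^{2}-|f_{\bar z}|^{2},$$
while by construction
$$\int_{\Omega}\rho^{2}\,dA=\int_{\Omega}\rho'(f(z))^{2}\,(|f_{z}|+|f_{\bar z}|)^{2}\,dA.$$
The pointwise identity
$$\frac{(|f_{z}|+|f_{\bar z}|)^{2}}{|f_{z}|^{2}-|f_{\bar z}|^{2}}=\frac{|f_{z}|+|f_{\bar z}|}{|f_{z}|-|f_{\bar z}|}\leq K$$
holds almost everywhere by Definition \ref{defn:qc}, hence $\int_{\Omega}\rho^{2}\,dA\leq K\int_{\Omega'}(\rho')^{2}\,dA'$. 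Feeding this into the definition of extremal length yields
$$\lambda_{\Omega}(\Gamma)\;\geq\;\frac{1}{\int_{\Omega}\rho^{2}\,dA}\;\geq\;\frac{1}{K\int_{\Omega'}(\rho')^{2}\,dA'},$$
and taking the supremum over admissible $\rho'$ produces $\lambda_{\Omega'}(f(\Gamma))\leq K\lambda_{\Omega}(\Gamma)$. Running the same argument for $f^{-1}:\Omega'\to\Omega$ applied to the curve family $f(\Gamma)$ (noting $f^{-1}(f(\Gamma))=\Gamma$) gives $\lambda_{\Omega}(\Gamma)\leq K\lambda_{\Omega'}(f(\Gamma))$, which is the reverse bound.

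The main obstacle is not algebraic but analytic: the chain-rule inequality for $|(f\circ\gamma)'|$ and the change-of-variables formula both need justification because a $K$-quasiconformal homeomorphism is only guaranteed to lie in $W^{1,2}_{\mathrm{loc}}$ and to be ACL, so these identities hold only almost everywhere. One handles this in the standard way by restricting $\Gamma$ to the subfamily of curves along which $f$ is absolutely continuous — the exceptional curves form a family of infinite extremal length and so do not affect $\lambda_{\Omega}(\Gamma)$ — and by invoking the Lusin $(N)$-property of quasiconformal maps to validate the area formula. Once these standard facts from the analytic theory of quasiconformal mappings are in place, the short computation above goes through verbatim.
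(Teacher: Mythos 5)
Your proof is correct. Note that the paper does not actually prove Lemma \ref{lem:qc}: it is quoted as a standard consequence of quasiconformality with a pointer to Chapter II of \cite{Ah1}, so there is no internal argument to compare against. What you give is precisely the classical argument that the citation refers to: pull back an admissible density by $\rho=(\rho'\circ f)\,(|f_z|+|f_{\bar z}|)$, check admissibility for $\Gamma$ via the chain-rule bound $|(f\circ\gamma)'|\le(|f_z|+|f_{\bar z}|)|\gamma'|$, bound $\int\rho^2$ by $K\int(\rho')^2$ using the pointwise inequality $(|f_z|+|f_{\bar z}|)^2\le K J_f$ together with the change of variables, and get the opposite inequality by running the same computation for $f^{-1}$, which is again $K$-quasiconformal. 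Your closing paragraph correctly isolates the only genuinely delicate points --- absolute continuity of $f$ along curves (Fuglede: the exceptional curves form a family of infinite extremal length, so discarding them does not change $\lambda_\Omega(\Gamma)$) and the Lusin $(N)$ property needed to validate the area formula --- which is exactly the level of care the standard references take. One small remark: the displayed inequality in Definition \ref{defn:qc} of the paper has the dilatation ratio inverted (as written, $(|f_z|-|f_{\bar z}|)/(|f_z|+|f_{\bar z}|)\le K$ is vacuous for $K\ge 1$); your computation uses the intended ratio $(|f_z|+|f_{\bar z}|)/(|f_z|-|f_{\bar z}|)\le K$, which is the correct reading.
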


\subsection{An extremal length lemma}

Let $A,B\subset \mathbb{D}$ be two disjoint boundary arcs,  and $\Gamma$ the collection of rectifiable paths from $A$ to $B$ and let $E=B_r$ be the ball of radius $r<1$.\newline

In this section we show (Lemma \ref{lem:a14}) that the extremal length of $\Gamma$ changes by a multiplicative factor of $1 + O(r)$ when $E$ is excised, that is, when we restrict to the family of curves joining $A$ and $B$ and avoiding $E$. The constant in the $O(r)$ term is independent of the arcs $A$, $B$.\newline

The following consequence of the Ko\"{e}be distortion theorem is used in its proof.
\begin{lem}\label{lem:conf}
Let $E=B_r\subset \mathbb{D}$ and let $\phi:\mathbb{D}\to \mathbb{C}$ be a conformal embedding such that $\phi(0)=0$. Then
\begin{equation}\label{eq:p0}
diam(\phi(E)) < Crdist(\phi(E),\partial \phi(\mathbb{D}))
\end{equation}
for all $r$ sufficiently small, for some universal constant $C$.
\end{lem}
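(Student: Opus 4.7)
\textbf{Proof proposal for Lemma \ref{lem:conf}.}

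The plan is to reduce to a normalized univalent function and then invoke two standard consequences of the Koebe theory: the growth theorem for the diameter estimate and the one-quarter theorem for the distance-to-boundary estimate.

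First, set $a = \phi'(0)$; since $\phi$ is a conformal embedding we have $a \neq 0$. Consider the renormalized map $\psi(z) = \phi(z)/a$, which is univalent on $\mathbb{D}$ with $\psi(0) = 0$ and $\psi'(0) = 1$. I would then apply the Koebe growth theorem (Theorem 1.3 of \cite{Pom}, as already invoked in Lemma \ref{lem:gfix}) to $\psi$, giving $|\psi(z)| \leq |z|/(1-|z|)^2$ for $z \in \mathbb{D}$. For $|z| \leq r$ with $r$ sufficiently small (say $r \leq 1/2$), this bound is at most $4r$, and unwinding the normalization yields
\begin{equation*}
\mathrm{diam}(\phi(E)) \leq 2 \sup_{|z|\leq r} |\phi(z)| \leq \frac{8|a|r}{(1-r)^2} \leq C_1 |a| r
\end{equation*}
for some universal $C_1$.

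Next, the Koebe one-quarter theorem applied to $\psi$ says that $\psi(\mathbb{D}) \supset B(0,1/4)$, hence $\phi(\mathbb{D}) \supset B(0, |a|/4)$. In particular $\mathrm{dist}(0, \partial\phi(\mathbb{D})) \geq |a|/4$. Combining this with the diameter bound above and the triangle inequality,
\begin{equation*}
\mathrm{dist}(\phi(E), \partial\phi(\mathbb{D})) \geq \mathrm{dist}(0, \partial\phi(\mathbb{D})) - \mathrm{diam}(\phi(E)) \geq \frac{|a|}{4} - C_1 |a| r \geq \frac{|a|}{8}
\end{equation*}
once $r$ is small enough (independent of $\phi$) to make $C_1 r \leq 1/8$. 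Dividing the two estimates, the factors of $|a|$ cancel and we obtain
\begin{equation*}
\mathrm{diam}(\phi(E)) \leq 8 C_1 r \cdot \mathrm{dist}(\phi(E), \partial\phi(\mathbb{D})),
\end{equation*}
which is the claim with $C = 8C_1$.

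There is no real obstacle here; the only point requiring any care is ensuring that the universal constant $C$ is independent of $\phi$, which is precisely what the normalization $\psi = \phi/\phi'(0)$ and the scale-invariant Koebe estimates guarantee. The factor $|a| = |\phi'(0)|$ that measures the (unknown) overall size of $\phi$ appears identically in the numerator and denominator and so drops out.
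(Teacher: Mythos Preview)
Your proof is correct and follows essentially the same route as the paper: the growth theorem for the diameter bound, a Koebe-type estimate relating $|\phi'(0)|$ to $\mathrm{dist}(0,\partial\phi(\mathbb{D}))$, and the triangle inequality to pass from the center to the set $\phi(E)$. The only cosmetic difference is that the paper takes $\delta = \mathrm{dist}(0,\partial\phi(\mathbb{D}))$ as the scale parameter and uses the bound $|\phi'(0)| \leq 4\delta$, whereas you take $|a| = |\phi'(0)|$ as the scale parameter and use the one-quarter theorem $\delta \geq |a|/4$; the two normalizations are interchangeable and the arguments are otherwise identical.
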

\begin{proof}
Let $\delta = dist(0,\partial \phi(\mathbb{D}))$.\newline
By a consequence of the Ko\"{e}be distortion theorem (see Corollary 1.4 of \cite{Pom}) we have
\begin{equation}\label{eq:p1}
\lvert \phi^\prime(0) \rvert \leq 4\delta
\end{equation}
and by the other direction of the distortion theorem (Theorem 1.3 of \cite{Pom}) we have
\begin{equation}
\lvert \phi(z) \rvert \leq \lvert \phi^\prime(0) \rvert \frac{\lvert z\rvert}{(1-\lvert z\rvert)^2} 
\end{equation}
which using (\ref{eq:p1}) gives
\begin{equation}\label{eq:p4}
\lvert \phi(z) \rvert < 4\delta r/(1-r)^2 < 8r\delta
\end{equation}
for any $z\in E$, since then $\lvert z\rvert \leq r$ and $r$ is sufficiently small.\newline
Hence
\begin{equation}\label{eq:p2}
diam(\phi(E)) < 16r\delta
\end{equation}

Now for $\omega\in E$, let $ d_w= dist(\phi(\omega),\partial \phi(\mathbb{D})) = \lvert \phi(\omega) - \phi(s) \rvert$ for some $\phi(s)\in \partial \phi(\mathbb{D})$.\newline
We have
\begin{equation*}
\delta \leq \lvert \phi(s)\rvert \leq d_w + \lvert\phi(\omega)\rvert \leq d_w + 8r\delta
\end{equation*}
where the last inequality is by (\ref{eq:p4}) and the second the triangle inequality.\newline
Rearranging, and taking an infimum over $\omega \in E$ on the left hand side, we obtain
\begin{equation}
\delta(1-8r) \leq dist(\phi(E), \partial \phi(\mathbb{D}))
\end{equation}
which implies, for $r$ sufficiently small,
\begin{equation}\label{eq:p3}
\delta \leq (1 + C^\prime r)dist(\phi(E), \partial \phi(\mathbb{D}))
\end{equation}
for some constant $C^\prime$. The proof is complete on combining (\ref{eq:p2}) and (\ref{eq:p3}).
\end{proof}

\begin{lem}\label{lem:a14}
Let $E=B_r\subset \mathbb{D}$ and $Q$ the quadrilateral defined by the two boundary arcs $A$, $B$ as above. Then 
\begin{equation}\label{eq:rat2}
1 \leq \frac{\lambda_{\mathbb{D}\setminus E}(Q)}{\lambda_\mathbb{D}(Q)} \leq 1 + C^\prime r
\end{equation}
where $C^\prime$ is a constant  independent of $Q$.
\end{lem}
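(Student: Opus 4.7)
The lower bound $\lambda_{\mathbb{D}\setminus E}(Q) \geq \lambda_{\mathbb{D}}(Q)$ is immediate from Lemma~\ref{lem:inc}, since restricting to curves that avoid $E$ only shrinks the family. For the upper bound, the strategy is to conformally transfer the problem to the Euclidean rectangle, use Lemma~\ref{lem:conf} to show that the image of $E$ is geometrically tiny compared to the ambient scale, and then slice by horizontal segments to reduce to a one-dimensional Cauchy--Schwarz estimate.

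First, map $Q$ conformally onto its canonical Euclidean rectangle $R = [0,m] \times [0,1]$ where $m = \operatorname{mod}(Q) = \lambda_{\mathbb{D}}(Q)$, with $A, B$ sent to the vertical sides. Translating $R$ so that the map $\phi:\mathbb{D}\to R$ satisfies $\phi(0) = 0$, Lemma~\ref{lem:conf} yields
\[
s := \operatorname{diam}(\phi(E)) \leq Cr \cdot \operatorname{dist}(\phi(E), \partial R) \leq Cr/2,
\]
where the last inequality uses that the inradius of any rectangle of height $1$ is at most $1/2$, independent of $m$.

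Next, consider the subfamily $\Gamma'_0 \subset \Gamma'$ of horizontal line segments $\gamma_y(x) = (x,y)$ lying entirely in $R \setminus \phi(E)$, i.e. at heights $y$ outside the vertical projection $\pi_2(\phi(E))$, which has Lebesgue measure at most $s$. Since $\Gamma'_0 \subset \Gamma'$, the monotonicity of extremal length gives $\lambda(\Gamma') \leq \lambda(\Gamma'_0)$, so it suffices to bound the latter from above. For any metric $\rho$ admissible for $\Gamma'_0$, the condition $\int_0^m \rho(x,y)\,dx \geq 1$ on each good slice combined with Cauchy--Schwarz yields $\int_0^m \rho(x,y)^2\,dx \geq 1/m$; integrating in $y$ over the good set gives
\[
A(\rho) \geq \frac{1-s}{m},
\]
so $\lambda(\Gamma'_0) \leq m/(1-s) \leq m(1 + 2s) \leq m(1 + C'r)$, which is the desired upper bound. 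The constant $C'$ depends only on the universal Koebe-type constant from Lemma~\ref{lem:conf}, hence is independent of the quadrilateral $Q$.

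The approach has no serious obstacle beyond bookkeeping. The one step that might initially look worrisome---controlling $\operatorname{dist}(\phi(E), \partial R)$ uniformly in the aspect ratio $m$---dissolves once one notes that this distance is bounded by the inradius of $R$, which is at most $1/2$ no matter how long or short $R$ is. The essential insight is that we do not need to analyze all of $\Gamma'$: the horizontal-slice subfamily $\Gamma'_0$ is already large enough that Cauchy--Schwarz on each unobstructed slice, paired with the $O(r)$ measure bound on the obstructed slices supplied by Lemma~\ref{lem:conf}, closes the argument.
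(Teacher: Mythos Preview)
Your proof is correct and follows essentially the same route as the paper: both arguments reduce to the canonical rectangle via the conformal map $\phi$, invoke Lemma~\ref{lem:conf} to bound $\operatorname{diam}(\phi(E))$ by $O(r)$ uniformly in $m$, and then run the Gr\"otzsch length--area (Cauchy--Schwarz) argument on the horizontal slices that miss $\phi(E)$. The only cosmetic differences are that you pass through the subfamily $\Gamma'_0$ and apply Cauchy--Schwarz slice by slice, whereas the paper works directly with admissible metrics for $\Gamma'$ and applies Cauchy--Schwarz once to the double integral; and you bound $\operatorname{dist}(\phi(E),\partial R)$ by the inradius $1/2$ rather than by $\min\{1,m\}\leq 1$.
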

\begin{proof}
The first inequality of (\ref{eq:rat2}) follows from Lemma \ref{lem:inc} so we need only to prove the other inequality.\newline

Let $\phi$ be a conformal map from $\mathbb{D}$ to a rectangle $R$  of vertical height $1$ and horizontal length $m = mod(Q)$ such that the arcs $A$ and $B$ are taken to the left and right vertical sides respectively. We further require that $\phi(0)=0$. Such a map can be defined using elliptic integrals (see for example Chapter III of \cite{Ah1}).\newline
It is well-known (see Definition \ref{defn:quad} and Lemma \ref{lem:gro}) that this conformal domain realizes the extremal length of $Q$: the conformal metric $\rho\equiv 1$ on the rectangle pulled back via $\phi$ realizes the supremum in Definition \ref{defn:ext}.\newline

Let $\Gamma$ be the set of all rectifiable paths in $R$  between the vertical sides , and let $\Gamma^\prime$ be the subcollection of $\Gamma$ of paths disjoint from $\phi(E)$.\newline
We shall adapt the Gr\"{o}tzsch argument to show that $\rho$ is close to being extremal for the collection $\Gamma^\prime$.\newline

Let $S$ be a strip $S=[0,m]\times J$ of vertical ($y$-) height $\lvert J\rvert = diam(\phi(E))$ and horizontal($x$-) range $m$ that contains $\phi(E)$. By Lemma \ref{lem:conf}, we know that
\begin{equation}\label{eq:g2}
diam(\phi(E))< Cr
\end{equation}
for $r$ small, since $\phi(E)\subset R$ implies $dist(\phi(E),\partial \phi(\mathbb{D})) \leq min\{1,m\}\leq 1$.  \newline

Let $\rho^\prime$ be a conformal factor for $R$ that satisfies 
\begin{equation*}
l_\gamma(\rho^\prime)\geq 1 
\end{equation*}
for all $\gamma\in \Gamma^\prime$.\newline
In particular,
\begin{equation}\label{eq:g1}
\int\limits_{0}^{m}\rho^\prime(x,y)dx \geq 1
\end{equation}
for any $y$ in $[0,1]\setminus J$.\newline
By integrating (\ref{eq:g1}) over $y$ ranging over $[0,1]\setminus J$, we get
\begin{equation*}
1-Cr \leq \int\limits_{[0,1]\setminus J} 1 dy \leq \int\limits_{[0,1]\setminus J}\int\limits_{0}^{m} \rho^\prime(x,y)dxdy = \iint\limits_{R\setminus S}  \rho^\prime(x,y)dxdy
\end{equation*}
Squaring, and using the Cauchy-Schwarz inequality for the right hand term, we get
\begin{equation*}
(1-Cr)^2\leq \iint\limits_{R\setminus S}  (\rho^\prime)^2dxdy \iint\limits_{R\setminus S}  1^2dxdy \leq(\iint\limits_{R}  (\rho^\prime)^2dxdy)m
\end{equation*}
So
\begin{equation}\label{eq:g3}
 (\iint\limits_{R} (\rho^\prime)^2 dxdy )^{-1}\leq m/(1-Cr)^2 \leq m(1 + O(r))
 \end{equation}
 for $r$ sufficiently small.\newline
Taking a supremum over $\rho^\prime$ as in Definition \ref{defn:ext} we get
\begin{equation*}
\lambda_{\mathbb{D}\setminus E}(Q) = \lambda_{R\setminus \phi(E)}(Q) = \lambda(\Gamma^\prime) \leq m(1+O(r))
\end{equation*}
where the first equality holds since extremal length is a conformal invariant. Since $m = \lambda_{\mathbb{D}}(Q)$, this is the right hand equality of (\ref{eq:rat2}) and the proof is complete.
\end{proof}

\subsection{Proof of proposition}
Henceforth let $f:\mathbb{D}\to\mathbb{D}$ be the quasiconformal map with quasiconformal distortion $1 +C\epsilon$ off a small ball $B_r$, as in Proposition A.1.\newline

We need the following "quasiconformal" version of Lemma \ref{lem:conf} for maps of the unit disk:\begin{lem}\label{lem:qconf}
$diam(f(B_r)) =O(r^{1-\epsilon})$.
\end{lem}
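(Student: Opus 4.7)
The strategy will be a standard H\"{o}lder-type modulus argument. First I would consider the round annulus $A = \{r < |z| < 1/2\}$, which lies entirely in $\mathbb{D}\setminus B_r$ and on which $f$ is therefore $(1+C\epsilon)$-quasiconformal by hypothesis (2). This annulus has modulus $mod(A) = \frac{1}{2\pi}\ln\frac{1}{2r}$, and the quasi-invariance of extremal lengths (Lemma \ref{lem:qc}) applied to the path family joining its two boundary circles gives
\begin{equation*}
mod(f(A)) \geq \frac{mod(A)}{1+C\epsilon} = \frac{1}{2\pi(1+C\epsilon)}\ln\frac{1}{2r}.
\end{equation*}

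Next I would bound $mod(f(A))$ from above in terms of $d := diam(f(B_r))$, using the Teichm\"{u}ller extremal modulus theorem. Since $f$ is a self-homeomorphism of $\mathbb{D}$, the image $f(\overline{B_{1/2}})$ is a compact subset of $\mathbb{D}$; hence the complement $\mathbb{C}\setminus f(A)$ splits into a bounded component $E_1 = \overline{f(B_r)}$ of diameter $d$, and an unbounded component $E_2 \supset \mathbb{C}\setminus\mathbb{D}$. After translating so that a chosen point of $E_1$ lies at the origin, $E_1$ contains two points at distance $d$ while $E_2$ still contains a finite point at distance at most $3$ from the origin (for instance the translate of $2 \in \mathbb{C}\setminus\mathbb{D}$). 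The Teichm\"{u}ller bound then yields
\begin{equation*}
mod(f(A)) \leq \frac{1}{2\pi}\ln\frac{C_0}{d}
\end{equation*}
for a universal constant $C_0$, valid whenever $d$ is sufficiently small.

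Combining the two inequalities gives $\ln(1/2r) \leq (1+C\epsilon)\ln(C_0/d)$, which rearranges to
\begin{equation*}
d \leq C' r^{1/(1+C\epsilon)} = O(r^{1-\epsilon}),
\end{equation*}
absorbing the constants into the $O$-notation. The main delicate point is the upper modulus bound: one must check that both complementary components of $f(A)$ are non-degenerate continua, with $E_2$ reaching outside the unit disk, so that the Teichm\"{u}ller constant $C_0$ is independent of the particular $f$. Hypothesis (1) provides this, since $f$ being a quasiconformal self-homeomorphism of $\mathbb{D}$ forces $f(\overline{B_{1/2}})$ to be compactly contained in $\mathbb{D}$, ensuring $E_2 \supset \mathbb{C}\setminus \mathbb{D}$.
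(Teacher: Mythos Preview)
Your proof is correct and follows essentially the same modulus argument as the paper: lower-bound the modulus of the image annulus via quasi-invariance (Lemma \ref{lem:qc}), upper-bound it by a Teichm\"{u}ller-type estimate in terms of $d=diam(f(B_r))$, and combine. The only cosmetic difference is that the paper uses the full annulus $\mathbb{D}\setminus B_r$ (with outer boundary $\partial\mathbb{D}$) rather than your $\{r<|z|<1/2\}$, and quotes the explicit bound (\ref{eq:ann}) in place of the Teichm\"{u}ller modulus theorem.
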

\begin{proof} 
Let $d = diam(f(B_r))$. For convenience we shall assume $C=1$.\newline
It is well-known (eg. see III.A of \cite{Ah1}) that for an annular domain on the plane that contains $0,1$ in the bounded component of its complement, and the interval $[c,\infty)$ in the unbounded component where $c>1$, we have
\begin{equation}\label{eq:ann}
\lambda(P) < \frac{1}{2\pi}\ln 16c
\end{equation}
where $P$ is the set of rectifiable curves connecting the inner boundary component to the outer boundary component.\newline

Since $f$ is a homeomorphism, if  $A= \mathbb{D}\setminus B_r$ and $\Gamma$ the set of paths between the boundary components of $A$ then $f(A)$  is topologically an annulus, and by rotation and scaling we see that we see that it satisfies the above condition with  $c = 1/d$. So we have
\begin{equation}\label{eq:e1}
\lambda(f(\Gamma)) < \frac{1}{2\pi}\ln \frac{16}{d}
\end{equation}
By Lemma \ref{lem:qc} and the fact that $f$ is $(1 +\epsilon)$-quasiconformal on $\mathbb{D}\setminus B_r$ we know that
\begin{equation}\label{eq:e2}
(1-\epsilon)\lambda(\Gamma) \leq \frac{1}{1+\epsilon}\lambda(\Gamma) \leq \lambda(f(\Gamma))
\end{equation}
but since
\begin{equation*}
\lambda(\Gamma) = \frac{1}{2\pi}\ln \frac{1}{r}
\end{equation*}
we obtain from (\ref{eq:e1}) and (\ref{eq:e2}) that
\begin{equation*}
d \leq 16r^{1-\epsilon}
\end{equation*}
\end{proof}
\begin{cor}\label{cor:qconf} If $r\leq \epsilon$ then $diam(f(B_r))=O(\epsilon)$.\end{cor}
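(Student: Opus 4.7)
The plan is to deduce the corollary directly from Lemma \ref{lem:qconf}. That lemma gives the bound $\mathrm{diam}(f(B_r)) \leq 16\, r^{1-\epsilon}$. Under the hypothesis $r \leq \epsilon$, and using monotonicity of $x \mapsto x^{1-\epsilon}$ on $[0,1]$ (valid since $1-\epsilon > 0$ for $\epsilon$ sufficiently small, as is assumed throughout), one immediately obtains
\begin{equation*}
\mathrm{diam}(f(B_r)) \leq 16\, \epsilon^{1-\epsilon}.
\end{equation*}

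It remains only to absorb the sublinear factor $\epsilon^{-\epsilon} = \epsilon^{1-\epsilon}/\epsilon$ into the implicit constant of the $O(\cdot)$ notation. The elementary observation is that $\epsilon^{-\epsilon} = \exp(-\epsilon \ln \epsilon)$, and $-\epsilon \ln \epsilon \to 0^+$ as $\epsilon \to 0^+$, so $\epsilon^{-\epsilon} \to 1$. In particular, for all $\epsilon > 0$ sufficiently small (which is the standing hypothesis of Lemma \ref{lem:qclem} and hence of this corollary) we have $\epsilon^{-\epsilon} \leq 2$, whence
\begin{equation*}
\mathrm{diam}(f(B_r)) \leq 32\, \epsilon,
\end{equation*}
which is $O(\epsilon)$ as claimed.

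No step is expected to present an obstacle: the corollary is a routine specialization of Lemma \ref{lem:qconf}, and the only substantive point is the elementary limit $\epsilon^{-\epsilon} \to 1$, which justifies treating the exponent $1-\epsilon$ as essentially $1$ on the scale relevant to the small-$\epsilon$ regime in which Proposition~A.1 is formulated.
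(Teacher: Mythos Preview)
Your proof is correct and follows the same approach as the paper: both bound $r^{1-\epsilon}\leq \epsilon^{1-\epsilon}$ and then control the factor $\epsilon^{-\epsilon}$ by a universal constant. The only cosmetic difference is that the paper invokes the global maximum $\sup_{x\in(0,1)} x^{-x}=e^{1/e}\approx 1.44$ to obtain $\epsilon^{1-\epsilon}<1.45\,\epsilon$ for all $\epsilon\in(0,1)$, whereas you use the limit $\epsilon^{-\epsilon}\to 1$ to get $\epsilon^{-\epsilon}\leq 2$ for sufficiently small $\epsilon$; either is adequate in the small-$\epsilon$ regime assumed throughout.
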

\begin{proof} The maximum of $x^{-x}$ is $e^{1/e}\approx 1.44$. So $r^{1-\epsilon}\leq \epsilon^{1-\epsilon}< 1.45\epsilon$.
\end{proof}

\begin{figure}
  \centering
  \includegraphics[scale=0.45]{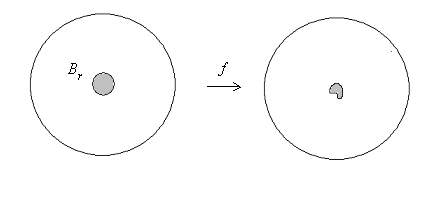}\\
  \caption{The image of $B_r$ is of small diameter. By Lemma \ref{lem:a14}, the extremal lengths change by a small factor when restricted to the complement of the  shaded regions. }
\end{figure}

\begin{proof}[Proof of Proposition 1.1]
By Lemma \ref{lem:qs} is enough to show that for any pair of disjoint arcs $A$ and $B$ on $\partial D$, we have
\begin{equation}\label{eq:goal}
1- O(\epsilon)\leq \frac{\lambda_{\mathbb{D}}(f(Q))}{\lambda_{\mathbb{D}}(Q)}\leq 1+ O(\epsilon)
\end{equation}
where $Q$ is the corresponding quadrilateral, and the constant in $O(\epsilon)$ is independent of $Q$.\newline
This is because in the notation of Lemma \ref{lem:qs}, if $m = 1 + O(\epsilon)$ then $M=e^{A(m-1)} =1+ O(\epsilon)$.\newline

Let $r\leq \epsilon$. By Lemma \ref{lem:a14} we have
\begin{equation*}
1 \leq \frac{\lambda_{\mathbb{D}\setminus B_r}(Q)}{\lambda_\mathbb{D}(Q)} \leq 1 + O(\epsilon)
\end{equation*}
and by Corollary \ref{cor:qconf} and Lemma \ref{lem:a14} we have
\begin{equation*}
1 \leq \frac{\lambda_{\mathbb{D}\setminus f(B_r)}(f(Q))}{\lambda_\mathbb{D}(f(Q))} \leq 1 + O(\epsilon)
\end{equation*}
Now by Lemma \ref{lem:qc}, since $f$ is almost-conformal on $\mathbb{D}\setminus B_r$, we have
\begin{equation*}
1- O(\epsilon) \leq \frac{\lambda_{\mathbb{D}\setminus f(B_r)}(f(Q))}{\lambda_{\mathbb{D}\setminus B_r}(Q)} \leq 1 + O(\epsilon)
\end{equation*}
The required (\ref{eq:goal}) follows easily from the above three inequalities.
\end{proof}

\bibliographystyle{amsalpha}
\bibliography{qcref}

\end{document}